\newcommand\myshade{85}
\colorlet{mylinkcolor}{violet}
\colorlet{mycitecolor}{YellowOrange}
\colorlet{myurlcolor}{Aquamarine}
\theoremstyle{plain}
	\newtheorem{theorem}{Theorem}[section]
	\newtheorem{corollary}[theorem]{Corollary}
	\newtheorem{lemma}[theorem]{Lemma}
	\newtheorem{proposition}[theorem]{Proposition}
\theoremstyle{remark}
	\newtheorem{remark}[theorem]{Remark}
\theoremstyle{definition}
	\newtheorem{definition}[theorem]{Definition}
\newtheorem{assumption}[theorem]{Assumption}
\DeclarePairedDelimiter\ceil{\lceil}{\rceil} 
\DeclarePairedDelimiter\floor{\lfloor}{\rfloor} 
\DeclareMathAlphabet{\mathbf}{OT1}{cmr}{bx}{it}
\DeclareMathOperator{\Var}{\boldsymbol{\mathsf {Var}}}
\DeclareMathOperator{\Cov}{\boldsymbol{\mathsf {Cov}}}
\DeclareMathOperator*{\essinf}{ess\,inf}
\DeclareMathOperator*{\esssup}{ess\,sup}
\DeclareMathOperator{\trace}{tr}
\DeclareMathOperator{\supp}{supp}
\DeclareMathOperator{\interior}{int}
\renewcommand{\d}{\mathrm d}
\newcommand{\e}{\mathrm e}
\newcommand{\spF}{\mathscr F}
\newcommand{\spS}{\mathscr S}
\newcommand{\pspace}{(\Omega,\mathfrak{A},\prob)}
\newcommand{\prob}{{\boldsymbol {\mathsf P}}}
\newcommand{\ev}[1]{ {\boldsymbol{\mathsf  E}} \left[  #1 \right]}
\newcommand{\tripleBar}[1]{{\left\vert\kern-0.25ex\left\vert\kern-0.25ex\left\vert #1 
    \right\vert\kern-0.25ex\right\vert\kern-0.25ex\right\vert}}
\newcommand{\ve}{\mathbf e}
\newcommand{\vn}{\mathbf n}
\newcommand{\N}{\mathbb{N}}
\newcommand{\Z}{\mathbb{Z}}
\newcommand{\Q}{\mathbb{Q}}
\newcommand{\R}{\mathbb{R}}
\renewcommand{\C}{\mathbb{C}}
\newcommand{\valpha}{{\boldsymbol{\alpha}}}
\newcommand{\vbeta}{{\boldsymbol{\beta}}}
\newcommand{\oge}[1]{\textcolor{magenta}{#1}}
\begin{document}
	\title{Integrability and Approximability of Solutions to the Stationary Diffusion Equation with Lévy Coefficients}
	\author{Oliver G. Ernst\thanks{Department of Mathematics, TU Chemnitz, Germany}
	\and 
	Hanno Gottschalk\thanks{School of Mathematics and Natural Sciences, University of Wuppertal, Germany}
	\and
	Thomas Kalmes\footnotemark[1]
	\and
	Toni Kowalewitz\footnotemark[1]
	\and
	Marco Reese\footnotemark[2]\textsuperscript{~}
	}
\maketitle

\begin{abstract}
We investigate the stationary diffusion equation with a coefficient given by a (transformed) Lévy random field. 
Lévy random fields are constructed by smoothing Lévy noise fields with kernels from the Matérn class. 
We show that Lévy noise naturally extends Gaussian white noise within Minlos' theory of generalized random fields. 
Results on the distributional path spaces of Lévy noise are derived as well as the amount of smoothing to ensure such distributions become continuous paths. 
Given this, we derive results on the pathwise existence and measurability of solutions to the random boundary value problem (BVP). 
For the solutions of the BVP we prove existence of moments (in the $H^1$-norm) under adequate growth conditions on the Lévy measure of the noise field. 
Finally, a kernel expansion of the smoothed Lévy noise fields is introduced and convergence in $L^n$ ($n\geq 1$) of the solutions associated with the approximate random coefficients is proven with an explicit rate.
\end{abstract}

\textbf{Keywords:} 
random differential equation,
random diffusion coefficient, 
generalized random field, 
Lévy random field, 
Lévy noise,
extreme value theory,
approximation with finite dimensional distributions

\textbf{AMS subject classification:}
35R60, 
60G10, 
60G60, 
60G20, 
60H25, 
65C30  

\section{Introduction}

Random differential equations, i.e., initial and boundary value problems with uncertain data modeled as random variables or stochastic processes, have become an active area of research as a mathematical framework for uncertainty quantification.
The stationary linear diffusion problem 
\begin{equation} \label{eq:random-diff}
    -\nabla \cdot (a \nabla u) = f, \quad u|_{\partial D} = 0, \qquad D \subset \mathbb R^d \text{ bounded,}  
\end{equation}
with diffusion coefficient $a$ given as a random field has been extensively investigated as a model problem for a variety of numerical approximation methods.
Numerous physical phenomena can be modeled by the random diffusion equation, among these Darcy flow in a porous medium with an uncertain spatial variation in hydraulic conductivity.
In this setting the stochastic conductivity model $a = a(x,\omega)$ is typically assumed to follow a lognormal distribution, with the mean and covariance structure of the underlying Gaussian random field $\log a$ estimated using a variety of geostatistical methods.
In each case, the random variation of the input data serves as a mathematical model for the uncertainty associated with these quantities, and the objective is typically to derive the statistical properties of functionals of the solution often refered to as \emph{quantities of interest}.

Early existence and uniqueness results for the random linear elliptic diffusion problem \eqref{eq:random-diff} in an uncertainty quantification (UQ) setting for variational formulations in Bochner spaces modelling the combined deterministic and stochastic variation were presented in 
\cite{
DebEtAl2001,
BabuskaEtAl2003,
BabuskaEtAl2004,
BabuskaEtAl2005,
MatthiesKeese2005,
FrauenfelderEtAl2005%
}
with a focus on numerical methods for their approximate solution.
Subsequent work extended these initial results, formulated for random fields characterized by finite-dimensional parameters, to the infinite-dimensional setting
\cite{
CohenEtAl2010,
CohenEtAl2011,
HoangSchwab2014,
ChkifaEtAl2014,
ChkifaEtAl2015,
NobileEtAl2016,
BachmayrEtAl2017a,
BachmayrEtAl2017b,
ErnstEtAl2018%
}.
In the absence of uniform ellipticity, as in the case of a lognormal diffusion field $a$, variational formulations in the Bochner space setting require more intricate well-posedness analysis
\cite{
GalvisSarkis2009,
Gittelson2010,
MuglerStarkloff2013%
}.

These analysis and approximation methods are based on the assumption that realizations of the diffusion coefficient lie in $L^\infty(D)$ or a subspace of smoother functions.
Moreover, it is assumed that the diffusion field displays positive covariance between distinct locations, as this is typically the case for many modeled phenomena originating in the physical and engineering sciences; such differential equation models featuring correlated data are sometimes distinguished by the term \emph{random} differential equations from the more general designation of \emph{stochastic} differential equations, which may contain rougher stochastic processes.
Indeed, early work on stochastic partial differential equations such as \cite{Walsh1986}
was aimed at generalizing stochastic ODEs driven by rougher processes in the It\^{o}, Skorohod or Stratonovich sense (see \cite{PrevotRockner2007} for a more recent account).
For stationary PDEs, Holden et al.\ \cite{HoldenEtAl1996} considered random diffusion coefficients with values in spaces of distributions, i.e., distribution-valued random variables, interpreting the product $a \nabla u$ as a \emph{Wick product}.
Regularity results on the stationary diffusion equation in Wick sense are given in \cite{BenthTheting2002}.
See  \cite{MatthiesBucher1999,Theting2000} for numerical methods for Wick-random PDEs.

While much of the cited work employs random models based on transformed Gaussian random fields,
there are effects which a Gaussian model cannot capture, particularly discontinuities and heavy-tail behavior, which nonetheless occur in applications such as flow in fractured media, anomalous diffusion and the modeling of heterogeneous materials \cite{torquato2013random,chiu2013stochastic}.
It is thus of interest to consider more general stochastic models for the diffusion coefficient, and in this work we extend the Gaussian model to random fields which follow a Lévy distribution \cite{levy1954theorie,applebaum2009levy,gottschalk2007determine}.

L\'evy random fields have been studied in a number of contexts, including among others stochastic analysis \cite{albeverio1998parabolic}, physics \cite{albeverio96}, statistics \cite{wolpert1998poisson} and simulation \cite{wolpert1998poisson}. For extensions that include interaction between the discrete, discontinuous particle sources of L\'evy fields, see \cite{albeverio05,gottschalk2007determine}.  

In geostatistical applications, Gaussian random fields with Mat\'ern covariance function have been obtained by a stochastic pseudodifferential equation driven by Gaussian noise \cite{Lindgren2011}. 
In this work, we generalise this approach by carefully analyzing the notion of noise as generalized random fields in the sense of Minlos \cite{minlos59,gelfand64}. 
In fact, the resulting class of L\'evy fields coincides with those studied in \cite{albeverio96} in the context of (Euclidean) quantum field theory.  
We also mention recent work on numerical methods for Lévy diffusion fields in \cite{BarthStein2018} .

Our work is most closely related to that of Sarkis et al. \cite{RomanSarkis2006,GalvisSarkis2009,GalvisSarkis2012}, who consider \eqref{eq:random-diff} in the absence of uniform ellipticity and boundedness, allowing for a diffusion coefficient which is a smooth transformation of Gaussian white noise.

The contribution of this work consists of three parts: First, we show that L\'evy random fields naturally extend \cite{Lindgren2011} and \cite{RomanSarkis2006,GalvisSarkis2009,GalvisSarkis2012} by passing from Gaussian white noise to L\'evy noise, see e.g. \cite{gelfand64,albeverio1989representation}. 
To this end, we characterize noise fields as generalized (distribution-valued) random fields ``$Z(x)$'' in the sense of Minlos \cite{minlos59,gelfand64,ito84}, where $Z(f)=$``$\int Z(x) f(x) \d x$'' is only defined as a random variable after ``integrating'' the distribution-valued random variable $Z$ against a test function $f$. 
Here we restrict to fields that can also be ``integrated'' over bounded regions $\Lambda$, i.e., for which $Z(\mathds{1}_\Lambda)=$ ``$\int_\Lambda Z(x)\, \d x$'' can be suitably defined, where $\mathds{1}_\Lambda$ denotes the indicator function of the set $\Lambda$. 
Noise fields can be characterized by the property of independent increments, i.e., $Z(\mathds{1}_{\Lambda_j})$  are independent random variables for mutually disjoint sets $\Lambda_j$. 
Furthermore, we are interested in \emph{stationary} noise fields $Z$ for which, roughly speaking, translations  ``$Z(x-c)$'' follow the same statistical distribution for any shift $c$. 
Under conditions made precise below, we show that all such noise fields are actually L\'evy noise fields. 
In this sense, passing from Gaussian white noise to L\'evy noise naturally extends the approach in \cite{Lindgren2011}. 
We also give sufficient conditions on Mat\'ern smoothing kernels $k(x)$ under which the smoothed noise field has continuous paths. 
This analysis relies on a detailed analysis of Hilbert-Schmidt embeddings of test function spaces to prove that the paths of L\'evy noise fields lie within certain dual spaces. 
Technically, this is based on the harmonic analysis of the Hamiltonian operator of the harmonic oscillator $|x|^2-\Delta$, following It\^o \cite{ito84}. We determine conditions under which such dual spaces are mapped to continuous functions by Mat\'ern smoothing kernels and thus conclude that $Z_k(x)$=``$\int k(x-y) Z(y) \d y$'' has continuous paths. 
By composition with a continuous positive function $T$ of real arguments, we obtain L\'evy models $a(x)=T(Z_k(x))$ of strictly positive random coefficients.
The associated random boundary value problem \eqref{eq:random-diff} can then be solved in a strong sense, i.e., path-wise for almost all paths $a(x)$.  
$T(z)=\exp(z)$ should be seen as the standard choice for the transformation $T(z)$ leading to the well-established log-normal random coefficients, if the noise field is Gaussian white noise. 
Other choices of $T(z)$ may also be of interest, e.g. a (smoothed) step function which, in combination with purely Poisson noise, results in a smoothed version of the penetrating spheres model for random two-phase composite materials, see e.g. \cite{torquato2013random}.     

Our second contribution is a proof of integrability of the random solutions of \eqref{eq:random-diff}. 
In place of the variational approach in both deterministic and random variables, we base our investigations on \emph{a priori estimates} of elliptic partial differential equations.
These crucially depend on the minimal value of the coefficient $a(x)$ on the domain $D$. 
Given that $T(z)$ assumes finite minimal values on bounded intervals, the minimal value problem for $T(Z_k(x))$ turns into an extremal value problem for $|Z_k(x)|$ on the domain $D$. 
We thus have to control the tails of the distribution of $\sup_{x\in D}|Z_k(x)|$. 
This set of problems has been intensively studied in the context of \emph{empirical processes} by exploiting metric entropy estimates and concentration phenomena \cite{gine2016mathematical,talagrand2014upper,vanhandel2014probability}. Here we follow this approach for the Gaussian part of the L\'evy field,  relying on a metric entropy estimate by Talagrand \cite{Tal94}. 
For its Poisson part, standard metric entropy estimates are not available (see however \cite[Chapter 11]{talagrand2014upper} for some results for a different class of L\'evy processes), as these are based on the theory of sub-Gaussian processes, and the Poisson part of a L\'evy field is non sub-Gaussian. 
We instead develop Chernov-like bounds for the non-Gaussian part under the assumption that the L\'evy measure defining the Poisson contributions has a Laplace transform. 
Instead of developing a general theory based on chaining-like arguments, we exploit the explicit representation of the Poisson part as an infinite sum of smoothed point processes. 
Combining both estimates for the suprema of Gaussian and Poisson parts with the a priori estimate, we prove that $u\in L^n(\pspace;H^1(D))$, where $n\geq1$, $\pspace$ is the underlying probability space and $H^1(D)$ is the Sobolev space of weakly differentiable functions.     

Our paper's third contribution consists of a suitable adaptation of the Karhuhnen-Lo\`eve (KL) expansion for smoothed L\'evy noise fields. As L\'evy fields, unlike the Gaussian case, are not determined by their covariance function, we  expand the smoothing kernel function $k(x-y)$ instead of the covariance function. 
This kernel, by translation invariance, always has a continuous spectrum as an integral operator on $L^2(\R^d,\d x)$. 
Restriction of the field to the domain $D$, however, only constrains $x$, and we therefore have to `cut off' the noise field ``$Z(y)$'' and restrict it to some larger domain $\Lambda$ containing $D$. 
With both variables in $k(x-y)$ restricted to $\Lambda$, Mercer's theorem (e.g.\ \cite[Theorem~1.80]{LordEtAl2014}) provides us with an expansion for $k(x-y)$ as an integral kernel on $L^2(\Lambda,\d x)$.  
If $k(x)$ is of Matérn type, the effect of the cut-off outside $\Lambda$ vanishes exponentially in the distance between $D$ and the boundary of $\Lambda$. 
The expansion of the kernel function restricted to $\Lambda$ then gives $k(x-y)=\sum_{i=1}^\infty \lambda_i e_i(x)e_i(y)$ with eigenfunctions $e_i(x)$ and eigenvalues $\lambda_i$ that depend on $\Lambda$. 
Rate estimates for the uniform convergence of this series are obtained via a thorough spectral analysis of the integral operator defined by $k(x-y)$ using circular embeddings and refining techniques from \cite{BachmayrEtAl2018}. 
Truncating the expansion at $N$, we obtain an approximation of $Z_k(x)$ by $Z_{k,N}(x)=\sum_{j=1}^N\lambda_j e_j(x)Z(e_j)$ which thus only depends on the finite-dimensional L\'evy distribution of the random vector $(Z(e_1),\ldots,Z(e_N))$. 
We obtain approximate solutions $u_N$ to the solution $u$ of \eqref{eq:random-diff} with $a(x)=T(Z_k(x))$ by replacing $Z_{k}(x)$ by $Z_{k,N}(x)$.
We then prove, under suitable conditions, convergence $u_N\to u$ in $L^n(\pspace;H^1(D))$. 
We also provide convergence rates for the combined  exponential decay in the cut-off to $\Lambda$ and the truncation of the series via a suitable choice of $\Lambda(N)$.

Our work thus provides a detailed exposition of Lévy generalized random fields and how they may be smoothed to yield random diffusion coefficients leading to well-posed problems for the random PDE. 
In addition, we show how the ususal Karhunen-Loève expansion approach can be modified to yield a convergent approximation sequence obtained from truncations of the smoothed Levy field. 

The paper is organised as follows: In Section \ref{sec:random_fields} we recall the theory of generalized random fields and state results on path properties in dual spaces to certain spaces of test functions. 
We also determine the amount of smoothing with Matérn kernels needed to map such distributions to continuous functions and thus prove continuity of the paths for smoothed generalized random fields.
In Section \ref{sec:LevyRandomField} we introduce L\'evy noise and present a classification theorem which states that noise fields with certain continuity properties allowing for 'integration over a region' and possessing finite expectation are L\'evy noise fields. 
We also recall the well-known decomposition of L\'evy noise into deterministic, Gaussian and Poisson contributions and derive the representation of Poisson noise as an infinite sum over compound Poisson point processes.  
Section \ref{section:existence_of_moments} establishes the existence of solutions to the random PDE \eqref{eq:random-diff} and their integrability properties. 
In Section \ref{section:approx} we present the convergence of solutions obtained from finite-dimensional approximations of smoothed L\'evy random fields and establish convergence rates. 
In the final Section \ref{sec:outlook} we briefly comment on possible future research directions. 
A number of technical results are presented in Appendices \ref{appendix b}--\ref{app:conv_rates}.

\section{Smoothing of Generalized Random Fields} \label{sec:random_fields}

In this section, we define L\'evy noise fields as a generalization of Gaussian random fields. 
In contrast to the latter, realizations of random fields which follow a L\'evy distribution cannot be represented as functions with values defined pointwise, and so more general mathematical concepts are needed. 
This is reflected in the term \emph{noise}, which besides connoting the perturbation of a signal also refers to the lack of spatial correlation of such random fields, a feature already exhibited by Gaussian white noise. 
Such a mathematical framework is provided by the theory of \emph{generalized random fields}, the definition and basic properties of which we recall for the reader's convenience below.

\subsection{Generalized Random Fields} \label{sec:minlos}

Rather than by points in a subset of $\mathbb R^d$, \emph{generalized} random fields are families of random variables indexed by elements of an abstract vector space $V$, 
which we will take to be a locally convex space over the real numbers.
Specifically, $V$ is a topological vector space possessing a base of convex zero-neighborhoods. 
The assumption of local convexity ensures the existence of a nontrivial dual space.

For a probability space $\pspace$ we denote by $L^0\pspace$ the vector space of Borel measurable random variables.
As usual, we do not distinguish notationally between a random variable and its equivalence class resulting from almost sure (a.s.) equality. 
Moreover, we set
\[
    \|X\|_{L^0} 
    := 
    \ev{ |X| \wedge 1 }
    =
    \int_\Omega (|X|\wedge 1) \;\d \prob,
    \qquad
    X \in L^0\pspace,
\]
where $a \wedge b := \min\{a,b\}$ for $a,b \in \mathbb R$,
and
\[
   d_0(X,Y) := \|X-Y\|_{L^0}, \qquad
   X, Y \in L^0\pspace.
\]
It is easily seen that $d_0$ is a (translation-invariant) metric on $L^0\pspace$ making $L^0\pspace$ a Hausdorff topological vector space. 
Moreover, since for any $\varepsilon \in (0,1)$ and $X \in L^0\pspace$ we have 
\[
   \varepsilon \, \prob(|X|>\varepsilon)
   \leq 
   \|X\|_{L^0}
   \leq 
   \prob(|X|>\varepsilon) + \varepsilon,
\]
it follows that convergence with respect to the metric $d_0$ coincides with convergence in probability. 
It is well known that the metric space $(L^0\pspace, d_0)$ is complete (see e.g.\ \cite[Lemma 3.6]{Kallenberg}).

\begin{definition}[Generalized Random Field] \label{def:gen_rf}
A \emph{generalized random field} $Z$ indexed by a locally convex topological vector space $V$ is a collection of real-valued random variables $\{Z(f)\}_{f\in V}$ on a common probability space $\pspace$ such that the following conditions hold:
\begin{itemize}
\item[(i)] 
Linearity: $Z(\alpha f + \beta g) = \alpha Z(f) + \beta Z(g)$ 
a.s.\ for all  $f,g \in V$ and $\alpha,\beta\in \mathbb R$.
\item[(ii)]
Stochastic continuity: $f\to f_0 \text{ in } V$ implies $Z(f) \to Z(f_0)$ in probability.
\end{itemize}
Thus, a generalized random field on $\pspace$ indexed by $V$ is a continuous linear mapping $Z:V\rightarrow L^0\pspace$, where $L^0\pspace$ is endowed with the metric $d_0$.
	
We call two generalized random fields $Z$ and $\tilde{Z}$ on probability spaces 
$\pspace$ and $(\tilde{\Omega},\tilde{\mathfrak{A}}, \tilde{\prob})$ indexed by $V$ \emph{equivalent (in law)} if their finite-dimensional distributions coincide, i.e., if
\begin{equation*}
	\prob\bigl(Z(f_1)\in A_1 \wedge \dots \wedge Z(f_n) \in A_n\bigr) 
	= 
	\tilde{\prob}
	\bigl(\tilde{Z}(f_1)\in A_1 \wedge \dots \wedge \tilde{Z}(f_n) \in A_n\bigr) 
\end{equation*}
holds for all $n\in \mathbb{N}, f_1, \ldots, f_n \in V \text{ and } A_1, \ldots, A_n\in \mathfrak{B}(\mathbb{R})$, where $\mathfrak{B}(\mathbb{R})$ denotes the Borel $\sigma$-algebra on $\mathbb R$.
\end{definition}

\begin{remark} \label{rem:gen_rf}
\leavevmode
\begin{itemize}
\item[(i)] 
For the (topological) dual $V'$ of a metrizable locally convex vector space $V$ and measurable $X:\pspace\rightarrow (V',\mathfrak{B})$, with $\mathfrak{B}$ a $\sigma$-algebra on $V'$ for which the evaluation maps $V'\rightarrow\mathbb R, u\mapsto u(f), f\in V,$ are measurable, an application of Lebesgue's dominated convergence theorem shows that
\[
   Z:V\rightarrow L^0\pspace, 
   \quad 
   f\mapsto (\omega\mapsto X(\omega)(f)) =: X(f) =: Z(f,\omega)
\]
is a generalized random field. 
In other words, in this setting $V'$-valued random variables are generalized random fields.
However, for a general (metrizable) locally convex space $V$ it is not true that every generalized random field $Z$ indexed by $V$ can be realized (up to equivalence) in the above way by a $V'$-valued random variable. 
This does, however, hold for \emph{nuclear} locally convex spaces $V$ by Minlos' Theorem, see below.
\item[(ii)] 
Let $\tilde{V}$ be a locally convex space and let the subspace $V\subseteq \tilde{V}$ be dense. 
If $Z:V\rightarrow L^0\pspace$ is a generalized random field, then, due to the fact that $L^0\pspace$ endowed with the topology of convergence in probability is a complete Hausdorff space, it follows that there is a unique continuous linear extension $\tilde{Z}:\tilde{V}\rightarrow L^0\pspace$ of $Z$ (see e.g.\ \cite[Lemma 22.19]{MeVo}). 
In particular, every generalized random field $Z$ on a locally convex space $V$ can be uniquely extended to a generalized random field on the completion of $V$.
\end{itemize}
\end{remark}


In finite dimensions, Bochner's theorem \cite[Theorem 1.4.3]{Rudin1962} establishes that distributions of $\mathbb{R}^d$-valued random variables are in one--to--one correspondence with continuous, positive definite functions $\varphi : \mathbb{R}^d\to \mathbb C$ with $\varphi(0) = 1$ by way of the Fourier transform as $\ev{\e^{iX(f)}}= \varphi(f)$ with $f\in \mathbb{R}^d$ and $X(f) = X\cdot f$ denoting the Euclidean inner product on $\mathbb{R}^d$. 
Although, as mentioned above, not every generalized random field indexed by a locally convex space $V$ can be represented by a $V'$-valued random variable, the one--to--one correspondence between generalized random fields indexed by $V$ (up to equivalence in law) and characteristic functionals on $V$ remains valid in this general setting.

\begin{definition}[Characteristic Functional]
A \emph{characteristic functional} on a locally convex space $V$ is a mapping $\varphi : V \to \mathbb{C}$ with the following properties
\begin{enumerate}[(i)]
\item  
$\varphi(0)=1$,
\item
$\varphi$ is continuous,
\item
$\varphi$ is positive definite, i.e., the matrix $[\varphi(f_i-f_j)]_{i,j=1}^n$ is Hermitian and positive semidefinite for all $n \in \mathbb N$ and $f_1, \ldots, f_n\in V$.
\end{enumerate}
\end{definition}

The following basic result on generalized random fields can be found e.g.\ in \cite[Theorem 2.4.5]{ito84}.
Note that positive definite functions on a locally convex space are continuous if and only if they are continuous at $0$, which holds if and only if they are uniformly continuous, i.e., if for every $\varepsilon>0$  there is a continuous seminorm $p$ on $V$ such that $|\varphi(f)-\varphi(g)|<\varepsilon$ whenever $p(f-g)<1$ \cite{BergForst}.

\begin{theorem}
\label{thm:correspondence characteristic functional and generalized random field}
Let $V$ be a locally convex space and $\varphi : V \to \mathbb{C}$ a characteristic functional. 
Then there exists a generalized random field $Z$ indexed by $V$ which is unique (up to equivalence in law) and satisfies $\varphi(f) = \ev{\e^{iZ(f)}}, f\in V$.
Conversely, for any generalized random field $Z$ indexed by $V$, its Fourier transform $\varphi(f) := \ev{\e^{iZ(f)}}, f\in V$, is a characteristic functional.
\end{theorem}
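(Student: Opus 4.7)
The plan is to handle the two directions separately. The converse direction is straightforward: given a generalized random field $Z$, set $\varphi(f) := \ev{e^{iZ(f)}}$. Positive definiteness then follows from the identity $\sum_{j,k} c_j \bar{c}_k \varphi(f_j-f_k) = \ev{\bigl|\sum_j c_j e^{iZ(f_j)}\bigr|^2} \geq 0$, while $\varphi(0) = 1$ is immediate. Continuity of $\varphi$ at $0$ follows from stochastic continuity of $Z$ via dominated convergence (if $f_n \to 0$ then $Z(f_n) \to 0$ in probability, and $|e^{iZ(f_n)} - 1| \leq 2$); by the characterization of positive definite functions on locally convex spaces recalled just before the theorem, continuity at $0$ upgrades to (uniform) continuity on $V$.

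For the existence direction, I would proceed by Kolmogorov extension. For any finite tuple $f_1, \ldots, f_n \in V$, define $\varphi_{f_1,\ldots,f_n} : \R^n \to \C$ by $\varphi_{f_1,\ldots,f_n}(t_1,\ldots,t_n) := \varphi(\sum_{j=1}^n t_j f_j)$. This map is continuous on $\R^n$, satisfies $\varphi_{f_1,\ldots,f_n}(0) = 1$, and is positive definite, properties inherited directly from $\varphi$ by linearity of $(t_j)\mapsto \sum t_j f_j$. Bochner's theorem then yields a unique Borel probability measure $\mu_{f_1,\ldots,f_n}$ on $\R^n$ with Fourier transform $\varphi_{f_1,\ldots,f_n}$. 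Comparing characteristic functions shows that the family $\{\mu_{f_1,\ldots,f_n}\}$ is consistent under marginalization and permutation, so the Kolmogorov extension theorem furnishes a probability space $\pspace$ with $\Omega = \R^V$, $\mathfrak{A}$ the cylinder $\sigma$-algebra, and coordinate random variables $Z(f)(\omega) := \omega(f)$ having exactly these finite-dimensional distributions.

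To verify the axioms, note that for any $f, g \in V$, $\alpha, \beta \in \R$ and $t \in \R$,
\[
   \ev{e^{it\bigl(Z(\alpha f + \beta g) - \alpha Z(f) - \beta Z(g)\bigr)}}
   = \varphi\bigl(t(\alpha f + \beta g) - t\alpha f - t\beta g\bigr)
   = \varphi(0) = 1,
\]
which forces $Z(\alpha f + \beta g) - \alpha Z(f) - \beta Z(g) = 0$ almost surely, establishing linearity. For stochastic continuity, if $f_n \to f_0$ in $V$, then $\ev{e^{it(Z(f_n) - Z(f_0))}} = \varphi(t(f_n - f_0)) \to 1$ for every $t \in \R$ by continuity of $\varphi$, so the laws of $Z(f_n) - Z(f_0)$ converge weakly to $\delta_0$; since the limit is a constant, this is equivalent to convergence in probability.

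Uniqueness up to equivalence in law is then built into the construction: any two generalized random fields with characteristic functional $\varphi$ share, by the converse computation, identical finite-dimensional characteristic functions and hence identical finite-dimensional distributions. The main technical subtlety to monitor is the measurability framework of the Kolmogorov extension over the uncountable index set $V$, which restricts us to the cylinder $\sigma$-algebra on $\R^V$; however, each statement to be checked (a.s.\ linearity for fixed $f,g,\alpha,\beta$ and convergence in probability along a sequence $f_n\to f_0$) involves only countably many of the $Z(f)$, so it lives within the cylinder $\sigma$-algebra and no realization of $Z$ as a $V'$-valued random variable is required for this theorem.
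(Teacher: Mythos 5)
Your proof is correct. Note that the paper itself does not prove this statement but delegates it to It\^o's book (Theorem~2.4.5 of the cited reference), and the argument you give --- finite-dimensional restrictions $\varphi_{f_1,\ldots,f_n}$, Bochner's theorem, consistency, Kolmogorov extension on $\R^V$ with the cylinder $\sigma$-algebra, then reading off linearity and stochastic continuity from characteristic functions --- is exactly the standard route behind that citation, so there is no divergence in method to report. The only point worth tightening is that $V$ is an arbitrary locally convex space, so ``$f\to f_0$'' in Definition~2.1(ii) and continuity of $\varphi$ must be understood for nets, not just sequences: your dominated-convergence step and the appeal to L\'evy's continuity theorem are stated sequentially. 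Both survive the generalization with elementary estimates --- for the converse direction use $|\e^{ix}-1|\leq\min\{|x|,2\}$ to get $|\varphi(f_\iota)-1|\leq 2\,\prob(|Z(f_\iota)|>\varepsilon)+\varepsilon$ along any net, and for stochastic continuity of the constructed $Z$ use the uniform continuity of $\varphi$ (quoted just before the theorem) to make $\varphi\bigl(t(f_\iota-f_0)\bigr)\to 1$ uniformly for $t$ in compact intervals, which yields $\prob(|Z(f_\iota)-Z(f_0)|>\varepsilon)\to 0$ via the standard truncation inequality rather than a sequential L\'evy argument; measurability is unproblematic since each such probability depends only on the joint law of a single pair $(Z(f_\iota),Z(f_0))$.
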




As mentioned in Remark \ref{rem:gen_rf} (i), it is not always possible to represent a generalized random field $Z$ indexed by a locally convex space $V$ as a $V'$-valued random variable. 
A sufficent condition for this to hold is that the characteristic functional of $Z$ be continuous not only with respect to the topology given on $V$, but also in the Sazonov topology of $V$, which is the strongest of all multi-Hilbertian topologies which are Hilbert--Schmidt-weaker than the topology of $V$. 
For a precise definition of the Sazonov topology we refer to \cite{dalecky91} and \cite{ito84} (where it is referred to as the Kolmogorov-I-topology).

In general, the Sazonov topology on $V$ is strictly weaker than the original topology of $V$. 
However, a notable exception to this is the case when the locally convex space $V$ is nuclear. 
Recall that a locally convex space $V$ is nuclear if there is a directed family $\mathscr{P}$ of continuous Hilbert seminorms  on $V$ that generate its topology such that for every $p\in\mathscr{P}$ there exists $q\in\mathscr{P}$ with $p\leq q$ and such that the so-called canonical linking map $i_{q}^{p}:V_q\rightarrow V_p$, i.e., the extension of the inclusion from the pre-Hilbert space $(V/\{f\in V;\,q(f)=0\}, q)$ into the pre-Hilbert space $(V/\{f\in V;\,p(f)=0\}, p)$ to their respective completions $V_q$ and $V_p$, is a Hilbert--Schmidt operator.

In order to formulate the version of Minlos' Theorem which will be crucial for our considerations, we introduce the following notation. 
For a continuous seminorm $p$ on a locally convex space $V$ we denote as above by $V_p$ the local Banach space corresponding to $p$, i.e., the completion of the quotient $V/\{f\in V : \,p(f)=0\}$ equipped with the quotient norm associated with $p$. 
By abuse of notation we denote the quotient norm as well as the norm on $V_p$ again by $p$. 
Then the dual space $V_p'$ of $V_p$ can be identified in a canonical way with the subspace $\{ \omega\in V' : \,\exists\,C>0\,\forall\,f\in V:\,|\omega(f)|\leq Cp(f)\}$ of $V'$. 
Finally, we denote the Borel $\sigma$-algebra on $V'$ generated by the weak*--topology $\sigma(V',V)$ by $\mathfrak{B}(V')$. 
Due to the Banach-Alaoglu-Bourbaki Theorem, for every continuous seminorm $p$ on $V$ and every $n\in\mathbb N$ the set 
$\{\omega\in V' :  |\omega(f)|\leq n p(f) \; \forall \, f\in V\}$ is $\sigma(V',V)$-compact which implies $V_p'\in\mathfrak{B}(V')$. 
For the following version of Minlos' Theorem, see \cite[Proof of Theorem III.1.1]{dalecky91} combined with \cite[Theorem I.3.4]{VaTaCh}.

\begin{theorem}[Minlos]\label{thm:minlos}
Let $V$ be a nuclear space and $V'$ its topological dual. 
For a functional $\varphi : V\rightarrow \mathbb C$ the following are equivalent:
\begin{enumerate}[(i)]
\item
$\varphi$ is a characteristic functional.
\item
There is a probability measure $\mu$ on $(V',\mathfrak{B}(V'))$ such that its Fourier transform $\hat{\mu}$ coincides with $\varphi$, where
\begin{equation} \label{eq:FT-measure}
     \hat{\mu}(f) := \int_{V'} \e^{i\omega(f)} \; \mu(\d \omega),
    \qquad
    f\in V.
\end{equation}
\end{enumerate}
Moreover, for a characteristic functional $\varphi$ the probability measure $\mu$ in \oge{\eqref{eq:FT-measure}} is uniquely determined.

Additionally, if for a characteristic functional $\varphi$ on a nuclear space $V$ there is a continuous Hilbert seminorm $p$ on $V$ such that $\varphi$ is continuous with respect to $p$, then  for the corresponding unique probability measure $\mu$ on $(V',\mathfrak{B}(V'))$ we have that $\mu(V_q')=1$ for every continuous Hilbert seminorm $q$ on $V$ for which the canonical linking map $i_q^p:V_q\rightarrow V_p$ is Hilbert-Schmidt.
\end{theorem}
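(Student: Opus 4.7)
For (ii) $\Rightarrow$ (i), I would simply verify the three defining properties of a characteristic functional directly: $\hat\mu(0)=1$ from $\mu(V')=1$, positive definiteness from $\sum_{j,k}c_j\bar c_k\hat\mu(f_j-f_k)=\int_{V'}\bigl|\sum_j c_j\e^{i\omega(f_j)}\bigr|^2\,\mu(\d\omega)\ge 0$, and continuity via dominated convergence applied to $\omega\mapsto \e^{i\omega(f)}$, using the fact that $f_n\to f$ in $V$ implies $\omega(f_n)\to\omega(f)$ pointwise on $V'$. The substantive content lies in (i) $\Rightarrow$ (ii), whose proof I would organize in three stages: (a) build a consistent family of finite-dimensional laws, (b) lift it to a finitely additive cylindrical measure on $V'$, and (c) upgrade this cylindrical measure to a countably additive Borel measure using nuclearity. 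Stage (c) is the main obstacle.

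For the construction stage, fix $f_1,\dots,f_n\in V$. The function $\psi(t):=\varphi(\sum_j t_jf_j)$ on $\R^n$ is continuous, positive definite, and satisfies $\psi(0)=1$, so Bochner's theorem in finite dimensions supplies a unique Borel probability $\nu_{f_1,\dots,f_n}$ on $\R^n$ with characteristic function $\psi$. Consistency under marginalization and linear reparametrization follows from linearity of $\varphi$ in its argument, so by the Kolmogorov-style cylindrical construction a finitely additive set function $\mu_0$ on the algebra $\mathcal C$ of cylinder subsets of $V'$ is determined. Showing that $\mu_0$ is $\sigma$-additive on $\mathcal C$ is equivalent to exhibiting $\sigma(V',V)$-compact subsets of $V'$ of arbitrarily large $\mu_0$-mass, i.e.\ to establishing tightness of $\mu_0$ in the weak-$*$ topology.

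For the tightness step, given $\varepsilon>0$, continuity of $\varphi$ at the origin supplies a continuous Hilbert seminorm $p$ on $V$ with $|1-\varphi(f)|<\varepsilon$ whenever $p(f)\le 1$; by nuclearity I would then select a continuous Hilbert seminorm $q\ge p$ such that $i_q^p:V_q\to V_p$ is Hilbert--Schmidt, so that any orthonormal basis $(e_k)$ of $V_q$ satisfies $\sum_k p(e_k)^2=\|i_q^p\|_{HS}^2<\infty$. The crucial technical estimate, the Minlos--Sazonov inequality (obtained by integrating $1-\mathrm{Re}\,\varphi$ against a suitable centered Gaussian on $\mathrm{span}(e_1,\dots,e_n)$), then yields a bound of the form
\[
\mu_0\bigl(\{\omega\in V':\,\textstyle\sum_{k=1}^n\omega(e_k)^2>R^2\}\bigr)\le \tfrac{C}{R^2}\textstyle\sum_{k=1}^n p(e_k)^2+\varepsilon,
\]
uniform in $n$; the level sets on the left are contained in the Banach--Alaoglu ball $\{\omega\in V_q':\,\|\omega\|_{V_q'}\le R\}\subset V'$, which is weak-$*$ compact in $V'$. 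Taking $R$ large yields the tightness required to extend $\mu_0$ uniquely to a probability $\mu$ on $\mathfrak{B}(V')$ with $\hat\mu=\varphi$; uniqueness follows because cylinder sets generate $\mathfrak{B}(V')$ and all finite-dimensional marginals are forced by $\varphi$. Letting $n\to\infty$ and $R\to\infty$ in the same estimate gives $\mu(V_q')=1$, establishing the additional claim. The hardest single step throughout is the Minlos--Sazonov inequality, which is exactly where the Hilbert--Schmidt hypothesis on $i_q^p$ is put to work; the rest is bookkeeping with cylindrical measures and Banach--Alaoglu compactness.
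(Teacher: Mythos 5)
The paper does not actually prove this statement: Theorem~\ref{thm:minlos} is quoted from the literature, with the proof delegated to \cite[Proof of Theorem III.1.1]{dalecky91} together with \cite[Theorem I.3.4]{VaTaCh}. Your outline is precisely the standard argument carried out in those references: finite-dimensional Bochner plus Kolmogorov-type consistency to get a cylindrical set function $\mu_0$, the Minlos--Sazonov Gaussian-averaging inequality to get uniform-in-$n$ control of $\mu_0$ on complements of $q$-balls using $\sum_k p(e_k)^2=\|i_q^p\|_{HS}^2<\infty$, Banach--Alaoglu compactness of those balls in $\sigma(V',V)$ to conclude tightness and hence $\sigma$-additivity on $\mathfrak{B}(V')$, and the same estimate with $R\to\infty$ for the supplementary claim $\mu(V_q')=1$. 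So there is no divergence of method to report; the sketch is sound. Three small points you should tighten if you write it out in full: (a) the super-level sets $\{\omega:\sum_{k\le n}\omega(e_k)^2>R^2\}$ are of course not contained in the $q$-ball; what you need is that the intersection over $n$ of their complements equals $\{\omega\in V_q':\|\omega\|_{V_q'}\le R\}$, which is the weak-$*$ compact set carrying the mass; (b) for $\mu(V_q')=1$ you must let $\varepsilon\to 0$ as well as $R\to\infty$, which works because $p$ is fixed and $p$-continuity of $\varphi$ at $0$ supplies, for each $\varepsilon$, a radius $\delta(\varepsilon)$ to rescale the Minlos--Sazonov bound; (c) in (ii)$\Rightarrow$(i), dominated convergence along sequences only gives sequential continuity, so for a general (possibly non-metrizable) nuclear $V$ one should reduce to continuity at $0$ via the standard inequality $|\hat{\mu}(f)-\hat{\mu}(g)|^2\le 2\bigl(1-\mathrm{Re}\,\hat{\mu}(f-g)\bigr)$ and argue there (for the paper's application $V=\spS(\R^d)$ is Fr\'echet, so this is harmless), and likewise the one-line uniqueness argument via cylinder sets generating $\mathfrak{B}(V')$ is clean in the separable metrizable case treated in the cited sources.
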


\begin{remark}\label{rem:minlos}
\leavevmode
\begin{enumerate}[(i)]
\item
For an arbitrary locally convex space $V$ and any probability measure $\mu$ on $(V',\mathfrak{B}(V'))$, the mapping
\[
   (V',\mathfrak{B}(V'),\mu)\rightarrow\mathbb R, 
   \qquad 
   \omega\mapsto \omega(f)
\]
defines a (scalar) random variable for each $f\in V$. 
Therefore, the mapping
\[
   Z : V\rightarrow L^0(V',\mathfrak{B}(V'),\mu),
   \qquad
   f \mapsto (\omega\mapsto\omega(f))
\]
defines a generalized random field indexed by $V$ which is called the \emph{canonical process} associated with $\mu$. 
It should be noted that the canonical process satisfies a stronger continuity property than an arbitrary generalized random field since $(Z(f_\iota))_{\iota\in I}$ converges also pointwise on $V'$ (in particular $\mu$-almost everywhere) to $Z(f)$ whenever $(f_{\iota})_{\iota\in I}$ is a net converging to $f$ in $V$.
\item
Let $\varphi$ be a characteristic functional on the nuclear space $V$ which is continuous with respect to the continuous Hilbert seminorm $p$ and let $\mu$ be the corresponding probability measure on $(V',\mathfrak{B}(V'))$. 
Moreover, let $q\geq p$ be a continuous Hilbert seminorm on $V$ such that the canonical linking map $i_q^p$ is Hilbert-Schmidt. 
It is straightforward to show that the trace $\sigma$-algebra $\mathfrak{B}(V')\cap V_q'$ coincides with $\mathfrak{B}(V_q')$, the Borel $\sigma$-algebra of $V_q'$ generated by the weak*--topology $\sigma(V_q',V_q)$. 
Thus, for the canonical process
\[
   Z : V_q\rightarrow L^0(V_q',\mathfrak{B}(V_q'),\mu_{|V_q'})
\]
associated with the restriction $\mu_{|V_q'}$ it holds that whenever $D\subseteq\mathbb R^d$ is open and a mapping 
\[
   D\rightarrow V_q,
   \quad
   x\mapsto f_x
\]
is continuous, $(Z(f_x))_{x\in D}$ is a random field indexed by $D$ which has almost surely continuous paths. 
The characteristic function of the random variable $Z(f_x)$ is given by $\varphi(f_x)$ for each $x\in D$; note that the characteristic functional $\varphi$ is uniformly continuous with respect to $p$ by assumption and thus can be extended in a unique way to a uniformly continuous functional on $V_p\supseteq V_q$. 
\end{enumerate}
\end{remark}

We follow the approach outlined in Remark \ref{rem:minlos} (ii) below for the space $\spS(\R^d)$ of Schwartz functions on $\R^d$ as the index space of generalized random fields $Z$ whose characteristic functionals are continuous with respect to a specific norm. 
These random fields $Z$ will then be convolved with Mat\'ern kernels to yield random continuous functions on $\R^d$ with known pointwise distributions.

\subsection{Generalized Random Fields Indexed by \texorpdfstring{$\mathscr{S}$}{Lg} and Their Convolution With Mat\'ern Kernels} \label{sub:Schwartz} 

We now set the stage for a more precise characterization of the path properties of L\'evy noise fields. 
We denote by $\mathscr{S} = \mathscr{S}(\R^d)$ the space of (real-valued) rapidly decreasing smooth functions on $\R^d$ endowed with its standard topology so that $\mathscr{S}$ is a separable nuclear Fr\'echet space, see e.g.\ \cite{MeVo}. 
Clearly, $\spS$ is a subspace of $L^2(\R^d)$.

In view of Minlos' Theorem~\ref{thm:minlos}, it will be important for us to know when linking maps between local Hilbert spaces of $\spS$ are Hilbert-Schmidt. 
To facilitate this determination, we introduce a  sequence of (semi-)norms on $\spS$ which generate the same locally convex topology (cf.\ 
\cite[Section I.1.3]{ito84}, 
\cite[Example 29.5 (2)]{MeVo}, or 
\cite[Appendix to Section V.3]{ReSi} for the case $d=1$) 
but for which this property can be easily verified. 
For $k\in\N_0$ we denote by $h_k$ the $k$-th Hermite function on $\R$, defined as
\[
   h_k(x)
   :=
   (2^k k!\sqrt{\pi})^{-1/2}
   (-1)^k \e^{x^2/2} \biggl(\frac{\d}{\d x}\biggr)^k \e^{-x^2},
   \quad
   x \in \mathbb R,
\]
and for $\valpha\in\N_0^d$ we denote by $h_\valpha:=h_{\alpha_1}\otimes\dots\otimes h_{\alpha_d}$ the tensorized Hermite function $h_\valpha(x):=\prod_{j=1}^d h_{\alpha_j}(x_j)$ on $\R^d$. 
As is well known, the $(h_\valpha)_{\valpha\in\N_0^d}$ form an orthonormal basis of $L^2(\R^d)$. 
Denoting the inner product on $L^2(\R^d)$ by $(\cdot,\cdot)$, we observe that for every $p\in\R$ the set
\[
    \spS_p 
    :=
    \biggl\{
    f\in L^2(\R^d) : 
    |f|_p^2:=\sum_{\valpha\in\N_0^d}(2|\valpha|+d)^{2p}|(f,h_\valpha)|^2<\infty
    \biggr\}
\]
is a subspace of $L^2(\R^d)$ containing $\spS$, $|\cdot|_p$ is a norm on $\spS_p$ with associated inner product
\[
   (f,g)_p
   :=
   \sum_{\valpha\in\N_0^d} (2|\valpha|+d)^{2p} (f,h_\valpha) (h_\valpha,g),
   \quad
   f,g\in \spS_p,
\]
and $\spS_q\subseteq \spS_p$ with continuous (even contractive) inclusion for every $p\leq q$. 
Furthermore, we have that 
\[
    \spS
    = 
    \bigcap_{p\in\R} \spS_p
    =
    \bigcap_{p\geq 0} \spS_p
    =
    \bigcap_{p\in\N_0} \spS_p
\] 
and $(|\cdot|_p)_{p\in\R}$ is an increasing family of norms on $\spS$ which generates the standard topology on $\mathscr{S}$.

We use the above set of seminorms to construct Hilbert--Schmidt embeddings:
\begin{proposition} \label{nuclear linking map}
For each $p\in\R$ and $\ell>\frac{d}{2}$ the linking map
\[
    i_{p+\ell}^p:(\spS_{p+\ell},|\cdot|_{p+\ell})\rightarrow(\spS_p,|\cdot|_p)
\]
from the local Hilbert space $\spS_{p+\ell}$ to the local Hilbert space $\spS_p$ is Hilbert-Schmidt.
\end{proposition}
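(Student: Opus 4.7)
The plan is to exhibit an explicit orthonormal basis of $\mathscr{S}_{p+\ell}$ built from the Hermite functions and then compute the Hilbert--Schmidt norm of $i_{p+\ell}^p$ directly as a convergent multi-index series.

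First I would observe that the functions $e_\valpha := (2|\valpha|+d)^{-(p+\ell)} h_\valpha$, $\valpha\in\N_0^d$, form an orthonormal basis of $\mathscr{S}_{p+\ell}$. Orthonormality is an immediate computation from the definition of $(\cdot,\cdot)_{p+\ell}$ together with the orthonormality of $(h_\valpha)_\valpha$ in $L^2(\R^d)$. Completeness follows because for any $f\in\mathscr{S}_{p+\ell}$, the $L^2$-expansion $f=\sum_\valpha (f,h_\valpha)h_\valpha$ in fact converges to $f$ in $|\cdot|_{p+\ell}$ by the dominated-convergence principle for the defining series of $|f|_{p+\ell}^2$.

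Next I would compute the Hilbert--Schmidt norm of $i_{p+\ell}^p$ via this basis. Since $i_{p+\ell}^p(e_\valpha)=e_\valpha$, we have
\[
   \|i_{p+\ell}^p\|_{HS}^2
   =
   \sum_{\valpha\in\N_0^d} |e_\valpha|_p^2
   =
   \sum_{\valpha\in\N_0^d} (2|\valpha|+d)^{2p-2(p+\ell)}
   =
   \sum_{\valpha\in\N_0^d} (2|\valpha|+d)^{-2\ell}.
\]
The convergence of the last series would then be reduced to a combinatorial counting argument: the cardinality of $\{\valpha\in\N_0^d:|\valpha|=n\}$ equals $\binom{n+d-1}{d-1}$ and hence grows polynomially of order $n^{d-1}$. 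Therefore
\[
   \sum_{\valpha\in\N_0^d}(2|\valpha|+d)^{-2\ell}
   \leq
   C\sum_{n=0}^\infty (n+1)^{d-1}(2n+d)^{-2\ell},
\]
and this is finite exactly when $2\ell-(d-1)>1$, i.e., when $\ell>d/2$.

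The only real step worth checking carefully is the first one — that the weighted Hermite functions form an orthonormal basis of $\mathscr{S}_{p+\ell}$ in the norm $|\cdot|_{p+\ell}$ (as opposed to just being orthonormal in $L^2$ and dense in $\mathscr{S}$). Once that is in place the rest is essentially bookkeeping: the Hilbert--Schmidt norm reduces to the weighted series above, and the multi-index counting makes the threshold $\ell>d/2$ sharp. I do not foresee any genuine obstacle; the proof is structurally parallel to the classical verification that the inclusion $\mathscr{S}\hookrightarrow L^2$ factors through Hilbert--Schmidt embeddings along the Hermite scale.
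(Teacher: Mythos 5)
Your proof is correct, and it follows the same overall strategy as the paper: pick an orthonormal basis of $\spS_{p+\ell}$ built from Hermite functions, sum the squared $|\cdot|_p$-norms to get the Hilbert--Schmidt norm, and reduce convergence to counting multi-indices with $|\valpha|=n$, which gives exactly the threshold $\ell>\frac d2$.

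The only difference is the choice of basis. The paper works with the tensor-weighted functions $h_{p+\ell,\vbeta}=\prod_{j=1}^d(2\beta_j+1)^{-(p+\ell)}h_\vbeta$ and therefore needs the extra elementary step that some $\beta_j\geq|\vbeta|/d$ to bound $|h_{p+\ell,\vbeta}|_p^2\leq d^{2(p+\ell)}(2|\vbeta|+d)^{-2\ell}$ before summing; your basis $e_\valpha=(2|\valpha|+d)^{-(p+\ell)}h_\valpha$ is the one that is literally diagonal for the inner product $(\cdot,\cdot)_{p+\ell}$ as defined in the text, so the Hilbert--Schmidt norm comes out exactly as $\sum_\valpha(2|\valpha|+d)^{-2\ell}$ with no intermediate estimate. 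That is a small simplification rather than a new route. One minor caution: since $p\in\R$ is arbitrary, $\spS_{p+\ell}$ should be understood as the completion of $(\spS,|\cdot|_{p+\ell})$; your completeness argument is cleanest phrased as ``the closed span of the $e_\valpha$ contains $\spS$ (the Hermite expansion of a Schwartz function converges in every $|\cdot|_q$), hence is dense,'' which covers negative indices as well, rather than appealing to the $L^2$-expansion of a general element of $\spS_{p+\ell}$.
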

\begin{proof}
Defining
\[
   h_{p,\valpha} := \prod_{j=1}^d(2\alpha_j+1)^{-p} h_{\alpha_j},
   \qquad
   p\in\R, \quad \valpha\in\N_0^d,
\]
it follows that for fixed $p\in\R$ the family $(h_{p,\valpha})_{\alpha\in\N_0^d}$ is an orthonormal basis of the pre-Hilbert space $(\spS,|\cdot|_p)$ whose completion we denote by $(\spS_p,|\cdot|_p)$. 
Because for $\vbeta\in\N_0^d, \ell\geq 0$ we have
\[
   |h_{p+\ell,\vbeta}|_p^2
   =
   \sum_{\valpha\in\N_0^d} (2|\valpha|+d)^{2p}| (h_{p+\ell,\vbeta}, h_\valpha)|^2
   =
   \frac{(2|\vbeta|+d)^{2p}}{\prod_{j=1}^d(2\beta_j+1)^{2(p+\ell)}}
\]
and because there is $1\leq j\leq d$ with $\beta_j\geq |\vbeta|/d$, it follows that
\[
   |h_{p+\ell,\vbeta}|_p^2
   \leq
   \frac{(2|\vbeta|+d)^{2p}}{(2\frac{|\vbeta|}{d}+1)^{2(p+\ell)}}
   =
   \frac{d^{2(p+\ell)}}{(2|\vbeta|+d)^{2\ell}}.
\]
Noting that for given $k\in\N_0$ the number of $\vbeta\in\N_0^d$ for which $|\vbeta|=k$ is equal to $\binom{k+d-1}{k}$, we conclude
\begin{align*}
   \sum_{\vbeta\in\N_0^d}|h_{p+\ell,\beta}|_p^2
   &\leq 
   d^{2(p+\ell)}\sum_{k=0}^\infty \binom{k+d-1}{k}\frac{1}{(2k+d)^{2\ell}}\\
   &=
   \frac{d^{2(p+\ell)}}{(d-1)!} 
      \sum_{k=0}^\infty \frac{(k+d-1)!}{k!}\frac{1}{(2k+d)^{2\ell}}\\
   &\leq
   \frac{d^{2(p+\ell)}}{(d-1)!} \sum_{k=0}^\infty (2k+d)^{d-1-2\ell}
\end{align*}
which proves the assertion.
\end{proof}
Clearly, for $f \in \spS$
\[
   \tripleBar{f}
   :=
   \big(\|f\|_{L^1(\R^d)}^2+\|f\|_{L^2(\R^d)}^2\big)^{1/2}
   =
   \big(\|f\|^2_{L^1(\R^d)}+|f|_0^2\big)^{1/2}
\]
defines a norm on $\spS$ which is continuous in its standard topology defined 	by the increasing family of norms $|\cdot|_p, p \in \mathbb R$, defined just before Proposition~\ref{nuclear linking map}. 
For $m\in\N, m > d/2,$ we set
\[
   c_m := \int_{\R^d} \frac{\d x}{(1+|x|^2)^m} < \infty.
\]
For another suitable constant $C_m$ we conclude using H\"older's inequality that, for all $f\in\spS$,
\begin{equation}\label{mixed L^1-L^2-norm estimate}
\begin{split}
   \tripleBar{f}^2
   &\leq 
   c_m\int_{\R^d}(1+|x|^2)^m|f(x)|^2\,\d x + \int_{\R^d}|f(x)|^2\,\d x \\
   &\leq 
   (1+c_m)2^{m-1} \int_{\R^d}(1+|x|^{2m})|f(x)|^2\,\d x  \\
   &\leq 
   (1+c_m)(2d)^{m-1}\int_{\R^d} \biggl(1+\sum_{j=1}^d x_j^{2m}\biggr)|f(x)|^2\,\d x\\
   &=
   (1+c_m)(2d)^{m-1}\biggl(|f|_0^2+\sum_{j=1}^d|x_j^m f|_0^2\biggr) \\
   &\leq
   (1+c_m)(2d)^mC_m|f|_{\frac{m}{2}}^2.
\end{split}
\end{equation} 
In the last step we have used the estimate that for each $m \in \mathbb N$ there exists $C_m > 0$ such that 
\[
   |x_j^m f|_0^2 \le C_m |f|_{\frac{m}{2}}^2,
   \quad\text{ for all } 1 \le j \le d \text{ and for all } f \in \spS,
\]
which follows easily by induction from the well-known three-term recurrence relation 
\[
   x_j h_\valpha(x) 
   = 
   \sqrt{\frac{\alpha_j}{2}} h_{\valpha - \ve_j}(x)
   +
   \sqrt{\frac{\alpha_j+1}{2}} h_{\valpha + \ve_j}(x),
   \qquad 1 \le j \le d, \; \valpha \in \mathbb N_0^d, \; x \in \mathbb R^d
\]
satisfied by the Hermite functions. 
Here $\ve_j=(\delta_{\ell,j})_{1\leq \ell\leq d}$ denotes the $j$-th unit coordinate vector in $\R^d$. 
Combining the above considerations we can now easily prove the following theorem.
\begin{theorem}\label{preparation for Levy}
Let $\varphi:\spS\rightarrow\C$ be a positive definite functional which is continuous with respect to the norm $\tripleBar{\cdot}$ and which satisfies $\varphi(0)=1$.
Then there is a unique probability measure $\mu$ on $(\spS',\mathfrak{B}(\spS'))$ such that $\hat{\mu} = \varphi$. 
Moreover, $\mu(\spS'_q)=1$ if $q>\frac{3d}{4}$.
\end{theorem}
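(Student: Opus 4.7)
The plan is to recognize $\varphi$ as a characteristic functional on the nuclear Fréchet space $\spS$ whose continuity is controlled by one of the Hilbert norms $|\cdot|_p$, and then apply Minlos' theorem (Theorem~\ref{thm:minlos}) together with Proposition~\ref{nuclear linking map}. First, the pointwise bound (\ref{mixed L^1-L^2-norm estimate}) gives $\tripleBar{f}\le C_m|f|_{m/2}$ for every integer $m>d/2$, so $\tripleBar{\cdot}$-continuity of $\varphi$ upgrades automatically to continuity with respect to the continuous Hilbert seminorm $|\cdot|_{m/2}$ on $\spS$. Since $\varphi$ is positive definite with $\varphi(0)=1$, Theorem~\ref{thm:minlos} directly delivers the unique probability measure $\mu$ on $(\spS',\mathfrak{B}(\spS'))$ with $\hat\mu = \varphi$.

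For the support claim I would invoke the second assertion of Theorem~\ref{thm:minlos}: $\mu(\spS_q')=1$ for every continuous Hilbert seminorm $q$ such that the canonical linking map $i_q^{m/2}: \spS_q\to \spS_{m/2}$ is Hilbert--Schmidt. By Proposition~\ref{nuclear linking map} this holds whenever $q>m/2+d/2$. Pushing $p=m/2$ as close to $d/4$ as possible then yields $\mu(\spS'_q)=1$ for every $q>3d/4$, which is exactly the desired bound.

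The main obstacle is the integrality restriction $m\in\N$ in (\ref{mixed L^1-L^2-norm estimate}), which used as is only gives the weaker conclusion $\mu(\spS'_q)=1$ for $q>m_0/2+d/2$, with $m_0$ the smallest integer exceeding $d/2$. To recover the sharp $3d/4$, the one step relying on integrality, namely $|x_j^m f|_0\le C_m|f|_{m/2}$ (derived from the Hermite three-term recurrence), must be extended to real $m>0$. I would do this by an interpolation argument: with $H:=|x|^2-\Delta$, the family $T_z:=|x|^z H^{-z/2}$ is a product of two unitaries on the imaginary axis, while $T_k$ is bounded on $L^2(\R^d)$ for each nonnegative integer $k$ by the integer case combined with the multinomial expansion of $|x|^{2k}$; Hadamard's three-lines theorem then yields boundedness of $T_\theta$ for all real $\theta\ge 0$, so that $\||x|^m f\|_{L^2}\le C_m|f|_{m/2}$ for every real $m>0$. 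Plugging this refined bound back into the chain that produced (\ref{mixed L^1-L^2-norm estimate}) makes that estimate valid for every real $m>d/2$, completing the argument and giving $\mu(\spS'_q)=1$ for all $q>3d/4$.
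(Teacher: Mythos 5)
You follow the same route as the paper: use \eqref{mixed L^1-L^2-norm estimate} to upgrade $\tripleBar{\cdot}$-continuity of $\varphi$ to continuity with respect to a Hilbert seminorm $|\cdot|_{m/2}$, then invoke Minlos' Theorem~\ref{thm:minlos} together with the Hilbert--Schmidt linking maps of Proposition~\ref{nuclear linking map}. The subtlety you isolate is genuinely present and the paper's one-line proof passes over it silently: \eqref{mixed L^1-L^2-norm estimate} is derived only for \emph{integer} $m>d/2$ (the step $|x_j^m f|_0\le C_m|f|_{m/2}$ is obtained by induction on the Hermite three-term recurrence), while the conclusion $\mu(\spS_q')=1$ for all $q>3d/4$ requires $p=m/2$ to be pushed arbitrarily close to $d/4$, i.e.\ $m$ real and arbitrarily close to $d/2$. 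With only integer $m$ one reaches $q>m_0/2+d/2$ for $m_0=\lfloor d/2\rfloor+1$, namely $3d/4+\tfrac{1}{2}$ when $d$ is even and $3d/4+\tfrac{1}{4}$ when $d$ is odd, strictly larger than $3d/4$.

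Your interpolation repair is correct, and the hypotheses of the three-lines theorem are indeed satisfied for $T_z=|x|^zH^{-z/2}$, $H=|x|^2-\Delta$: on $\mathrm{Re}\,z=0$ both $|x|^{it}$ (a unimodular multiplier) and $H^{-it/2}$ are unitary; on $\mathrm{Re}\,z=k$, since $\bigl||x|^{it}\bigr|=1$ a.e.\ and $H^{-(k+it)/2}=H^{-k/2}H^{-it/2}$, one has $\|T_{k+it}f\|=\||x|^kH^{-k/2}(H^{-it/2}f)\|\le\|T_k\|\,\|f\|$ with $\|T_k\|<\infty$ the integer case; and for $f,g\in\spS$ the map $z\mapsto(T_zf,g)$ is holomorphic and uniformly bounded on the closed strip (the Hermite coefficients of $H^{-z/2}f$ are dominated in modulus by those of $f$ when $\mathrm{Re}\,z\ge0$, and the Hermite functions are uniformly bounded). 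The three-lines theorem then gives $\|T_\theta\|\le\|T_k\|^{\theta/k}$ for real $\theta\in[0,k]$, i.e.\ $\||x|^mf\|_{L^2}\le C_m|f|_{m/2}$ for every real $m\ge 0$. Since $|x_j|^{2m}\le|x|^{2m}$, this extends \eqref{mixed L^1-L^2-norm estimate} to all real $m>d/2$ and recovers the stated bound $q>3d/4$.
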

\begin{proof}
It follows from inequality (\ref{mixed L^1-L^2-norm estimate}) and the continuity of $\varphi$ that $\varphi$ is continuous with respect to $|\cdot|_{\frac{m}{2}}$ whenever $m>d/2$. 
Because $i^{m/2}_{m/2+\ell}$ is Hilbert-Schmidt for every $\ell > d/2$ by Proposition \ref{nuclear linking map}, the assertion follows from Minlos' Theorem~\ref{thm:minlos}.
\end{proof}

For a tempered distribution $\omega\in\spS'$ and a rapidly decreasing function $f\in\spS$ the convolution
\[
   \omega*f:\R^d\rightarrow\R, 
   \quad 
   y\mapsto \langle\omega, \tau_y (f^\vee)\rangle
   = 
   \langle\omega_x,f(y-x)\rangle
\]
is a smooth function, where as usual we denote by $u(g)=\langle u,g\rangle$ the application of $u\in\spS'$ to $g\in\spS$.
In addition, $(\tau_y g)(x):=g(x-y)$ denotes the translation of $g$ by $y \in \R^d$, 
$g^\vee (x):=g(-x)$ the reflection of $g$ at the origin, 
and the subscript $\omega_x$ indicates that the tempered distribution $\omega$ acts on test functions depending on the variable $x$.
	
Similarly, for $q\in\N_0$ and $\omega\in\spS_q'$, whenever $f\in\spS_q$ is a function such that $\tau_y(f^\vee)\in\spS_q$ for every $y\in\R^d$, the convolution
\[
   \R^d\rightarrow\R, 
   \quad
   y\mapsto \langle \omega_x, \tau_y (f^\vee)\rangle
\]
is defined and is obviously continuous whenever the mapping
\[
    \R^d\rightarrow \mathscr{S}_q(\R^d),
    \quad
    y\mapsto \tau_y (f^\vee)
\]
is continuous. 
Therefore, whenever $\varphi$ is a $\tripleBar{\cdot}$-continuous characteristic functional on $\spS$ with associated probability measure $\mu$ on $(\spS',\mathfrak{B}(\spS'))$, it follows from Theorem \ref{preparation for Levy} that for $\mu$-almost all $\omega$  the convolution $\omega*f$ is a well-defined function on $\R^d$ for each $f\in \spS_q, q>\frac{3d}{4},$ for which $\tau_y(f^\vee)\in\spS_q$ for every $y\in\R^d$, and this convolution yields a continuous function on $\R^d$ whenever
\[
    \R^d\rightarrow \mathscr{S}_q,
    \quad
    y\mapsto \tau_y (f^\vee)
\]
is continuous. 
We are particularly interested in the case when $f$ is a Mat\'ern kernel.

\begin{definition}\label{def:matern}
For $\alpha\in\R$ and $m>0$ we introduce the function
\[
   \hat k_{\alpha,m} :  \R^d \to \R,
   \qquad
   \xi \mapsto  \frac{1}{(|\xi|^2+m^2)^\alpha}
\]
and define the \emph{Mat\'ern kernel (with parameters $\alpha$ and $m$)} as the inverse Fourier transform 
\[
    k_{\alpha,m} := \spF^{-1} (\hat k_{\alpha,m}).
\]
Note that $\hat k_{\alpha,m}$ is a polynomially bounded smooth function and thus belongs to $\spS'$, hence its inverse Fourier transform is well-defined.
\end{definition}
The proof of the following lemma is somewhat technical and can be found in Appendix~\ref{appendix b}.

\begin{lemma}\label{lem:matern}
For $q\in\N_0$, $\alpha\in\R$, and $m>0$ in the following statements, (i) implies (ii), (ii) implies (iii), and (iii) implies (iv).
\begin{enumerate}[(i)]
\item 
$\alpha>\frac{d}{4}+q+\max\bigl\{0,\frac{q-3}{2}\bigr\}$.
\item
For every $y \in \R^d$ the translation $\tau_y(k_{\alpha,m}^\vee)$ lies in $\spS_q$ and the mapping
\[
    \R^d\rightarrow(\spS_q, |\cdot|_q), \quad
    y\mapsto \tau_y(k_{\alpha,m}^\vee)
\]
is  continuous.
\item
For every $y \in \R^d$ the translation $\tau_y(k_{\alpha,m}^\vee)$ lies in $\spS_q$.
\item
$\alpha>\frac{d}{4}+q$.
\end{enumerate}
In particular, if $q\in\{0,1,2,3\}$ then (ii), (iii), and (iv) above are equivalent.
\end{lemma}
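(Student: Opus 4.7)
The plan is to prove the chain (i)$\Rightarrow$(ii)$\Rightarrow$(iii)$\Rightarrow$(iv), concentrating the analytic work on (i)$\Rightarrow$(ii). The common reduction is that the Fourier transform acts isometrically on $(\spS_q,|\cdot|_q)$: since the tensorized Hermite functions $h_\valpha$ are simultaneous eigenfunctions of $\spF$ (with eigenvalues $(-i)^{|\valpha|}$) and of the harmonic oscillator (with eigenvalues $2|\valpha|+d$), one has $|f|_q=|\hat f|_q$ for every $f\in\spS_q$. Since $k_{\alpha,m}$ is real and radial ($k_{\alpha,m}^\vee=k_{\alpha,m}$), this yields
\[
    \widehat{\tau_y k_{\alpha,m}^\vee}(\xi)=e^{iy\cdot\xi}(|\xi|^2+m^2)^{-\alpha},
\]
reducing the entire question to estimating a $|\cdot|_q$-norm of a completely explicit function of $\xi$.

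The implications (ii)$\Rightarrow$(iii) and (iii)$\Rightarrow$(iv) are short. The first just drops continuity. For the second, I specialize (iii) to $y=0$, so that $k_{\alpha,m}\in\spS_q$, and apply the paper's estimate $|\xi^\valpha g|_0\le C|g|_{|\valpha|/2}$ (on the Fourier side, with $g=\hat k_{\alpha,m}$) together with $|\xi|^{2q}\le C\sum_{|\valpha|=2q}|\xi^\valpha|$ to obtain
\[
    \||\xi|^{2q}\hat k_{\alpha,m}\|_{L^2}\;\le\;C\sum_{|\valpha|=2q}\|\xi^\valpha\hat k_{\alpha,m}\|_{L^2}\;\le\;C|k_{\alpha,m}|_q<\infty,
\]
whence $\int_{\R^d}|\xi|^{4q}(|\xi|^2+m^2)^{-2\alpha}\,\d\xi<\infty$. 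A polar-coordinate tail estimate then forces $\alpha>d/4+q$.

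The core work is (i)$\Rightarrow$(ii). I would expand $H^q$ applied to $e^{iy\cdot\xi}(|\xi|^2+m^2)^{-\alpha}$ by Leibniz for $\Delta_\xi$: each derivative either lands on the exponential (producing a factor $iy_j$) or on $(|\xi|^2+m^2)^{-\alpha}$ (producing, via Fa\`a di Bruno, terms of the shape $\xi^\gamma(|\xi|^2+m^2)^{-\alpha-k}$), while the multiplicative $|\xi|^2$-factors from $H$ simply contribute additional $\xi$-weights. Collecting all terms and using Plancherel yields an upper bound of the form
\[
    |\tau_y k_{\alpha,m}^\vee|_q^2 \;\le\; P(|y|)\sum_j c_j\int_{\R^d}|\xi|^{2a_j}(|\xi|^2+m^2)^{-2(\alpha+k_j)}\,\d\xi
\]
with $P$ a polynomial and the finite index set $\{(a_j,k_j)\}$ obtained from the admissible Leibniz combinatorics. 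Each integral converges precisely when $\alpha+k_j>d/4+a_j/2$, and the worst admissible pair in the expansion reproduces exactly the threshold in (i). Continuity of $y\mapsto\tau_y k_{\alpha,m}^\vee$ in $(\spS_q,|\cdot|_q)$ then follows from applying the same expansion to $\tau_y k_{\alpha,m}^\vee-\tau_{y_0}k_{\alpha,m}^\vee$ and invoking dominated convergence on each Fourier integral, using $e^{iy\cdot\xi}\to e^{iy_0\cdot\xi}$ pointwise with a $y$-uniform dominating integrand on compacts.

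For $q\in\{0,1,2,3\}$ the correction $\max\{0,(q-3)/2\}$ vanishes, so (i) coincides with (iv) and the proven chain closes into a full equivalence. I expect the main obstacle to be the combinatorial bookkeeping in the Leibniz expansion: one must identify the worst admissible $(a_j,k_j)$-pair that saturates the integrability constraint, and verify that the extra correction $(q-3)/2$ genuinely becomes binding only when $q\ge 4$ (so that the lemma is consistent with the equivalence of the last three statements in the regime $q\le 3$).
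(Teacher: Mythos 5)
Your reduction is the same one the paper uses: since the Hermite functions are simultaneous eigenfunctions of $\spF$ and of $|x|^2-\Delta$, membership in $(\spS_q,|\cdot|_q)$ is decided by whether $(|\xi|^2-\Delta)^q\bigl[\e^{i\xi\cdot y}(|\xi|^2+m^2)^{-\alpha}\bigr]$ lies in $L^2(\R^d)$, and continuity then follows by dominated convergence. The gap is that the decisive quantitative step of (i)$\Rightarrow$(ii) is only asserted: you say the ``worst admissible pair $(a_j,k_j)$'' in the Leibniz/Fa\`a di Bruno expansion reproduces the threshold $\alpha>\frac d4+q+\max\{0,\frac{q-3}{2}\}$, but identifying that worst pair is exactly the content of the lemma, and you flag it yourself as the unresolved ``main obstacle.'' The paper's Appendix~\ref{appendix b} exists precisely to do this bookkeeping: Lemma~\ref{preparation convolution 2}~(iii) proves by induction that every term in the expansion has the form $P_{n,l}(|y|^2)Q_{n,l}(\xi\cdot y)\langle y,\nabla\rangle^{r_{n,l}}(|\xi|^2-\Delta)^l f$ subject to the degree constraints $\deg Q_{n,l}\le q-1$, $r_{n,l}\le q$, $\deg P_{n,l}+\deg Q_{n,l}+r_{n,l}+l\le q+1$, and Proposition~\ref{preparation convolution 3} shows $\langle y,\nabla\rangle^{r}(|\xi|^2-\Delta)^l(|\xi|^2+m^2)^{-\alpha}$ produces numerators of controlled degree (in particular $\deg Q_p=3p$ for the pure term). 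Only with these bounds can one check that the pure term needs $\alpha>\frac d4+q$, the two cross terms need $\alpha>\frac d4+q-\frac32$ and $\alpha>\frac d4+q-1$, and the triple sum (after Cauchy--Schwarz on the $\xi\cdot y$ factors) needs $\alpha>\frac d4+\frac32(q-1)$, whose maximum is (i); without them a crude term count does not reveal why the correction $\frac{q-3}{2}$ appears only for $q\ge 4$. So the skeleton is right, but the induction carrying the degree constraints must actually be carried out for the proof to stand.

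Your (iii)$\Rightarrow$(iv) is a genuinely different and somewhat slicker route than the paper's: you take $y=0$, pass to $\hat k_{\alpha,m}\in\spS_q$ and use the weighted Hermite estimate $|x_j^{2q}f|_0\le C|f|_q$ to conclude $\int_{\R^d}|\xi|^{4q}(|\xi|^2+m^2)^{-2\alpha}\,\d\xi<\infty$, whereas the paper computes $(|\xi|^2-\Delta)^q(|\xi|^2+m^2)^{-\alpha}=Q_q(|\xi|^2)/(|\xi|^2+m^2)^{\alpha+2q}$ with $\deg Q_q=3q$ and lower-bounds $|Q_q(|\xi|^2)|\gtrsim|\xi|^{6q}$ at infinity. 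Your version avoids that computation; just note that the paper states the weighted estimate only for single-variable monomials $x_j^m$, so replace $\sum_{|\valpha|=2q}|\xi^\valpha|$ by $|\xi|^{2q}\le d^q\sum_{j=1}^d\xi_j^{2q}$ (or extend the estimate to mixed monomials via the recurrence), and justify applying it to $\hat k_{\alpha,m}\in\spS_q\setminus\spS$ by a routine density argument.
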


We can now state sufficient conditions on the amount of smoothing required in order that a random field with a $\tripleBar{\cdot}$-continuous characteristic functional have continuous realizations after smoothing by convolution with $k_{\alpha,m}$.

\begin{theorem}\label{thm:matern}
Let $\varphi$ be a positive definite $\tripleBar{\cdot}$-continuous functional on $\spS=\spS(\R^d)$ with $\varphi(0)=1$. 
Then there is a unique probability measure $\mu$ on $(\spS',\mathfrak{B}(\spS'))$ satisfying $\hat{\mu} = \varphi$ and such that for all $\alpha>d+\max\{0,\frac{3d-12}{8}\}$, every $m>0$, the function
\begin{equation} \label{smoothed-field}
   \R^d\rightarrow\R, \quad
   y\mapsto \omega * k_{\alpha,m}(y)
   =
   \bigl\langle
     \omega,\tau_y(k_{\alpha,m}^\vee)
   \bigr\rangle
\end{equation}
is defined and continuous for $\mu$-almost all $\omega \in \spS'$. 
Moreover, for fixed $y\in\R^d$ the distribution of the random variable
\[
   (\spS',\mathcal{B}(\spS'),\mu)\rightarrow (\R,\mathfrak{B}(\R)),
   \quad
   \omega\mapsto \omega * k_{\alpha,m}(y)
\]
has the Fourier transform $\varphi\big(\tau_y(k_{\alpha,m}^\vee)\big)$.
\end{theorem}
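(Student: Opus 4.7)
The argument combines Theorem~\ref{preparation for Levy} and Lemma~\ref{lem:matern}. Applying Theorem~\ref{preparation for Levy} immediately yields the unique probability measure $\mu$ on $(\spS',\mathfrak{B}(\spS'))$ with $\hat\mu=\varphi$, together with the crucial support property $\mu(\spS_q')=1$ for every $q>3d/4$; in particular, $\mu$-almost every $\omega$ extends by continuity to a bounded linear functional on the local Hilbert space $\spS_q$.

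The heart of the proof is to choose $q>3d/4$ sufficiently close to $3d/4$ so that both $\mu(\spS_q')=1$ and condition~(i) of Lemma~\ref{lem:matern} is met, namely
\[
   \alpha > \tfrac{d}{4} + q + \max\bigl\{0,\tfrac{q-3}{2}\bigr\}.
\]
As $q$ tends to $3d/4$ from above, the right-hand side tends to $d+\max\{0,(3d-12)/8\}$, the outer maximum reflecting the case split $d\leq 4$ vs.\ $d>4$ (which governs whether the inner maximum vanishes). Since this limiting value is exactly the hypothesis on $\alpha$, strictness of the hypothesis guarantees that a permissible $q$ exists. By Lemma~\ref{lem:matern} (tacitly extended from integer to real $q$, the estimates in the appendix being continuous in $q$), the translation map $\R^d\ni y\mapsto \tau_y(k_{\alpha,m}^\vee)\in(\spS_q,|\cdot|_q)$ is then continuous. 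For $\mu$-a.e.\ $\omega\in\spS_q'$ the function $y\mapsto \omega*k_{\alpha,m}(y)=\langle\omega,\tau_y(k_{\alpha,m}^\vee)\rangle$ is therefore continuous as a composition of continuous maps, which proves the first assertion.

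For the Fourier transform identification, the identity $\int_{\spS'}\e^{i\omega(f)}\,\mu(\d\omega)=\varphi(f)$ holds for $f\in\spS$ by definition of $\mu$. Since $\varphi$ is $\tripleBar{\cdot}$-continuous, inequality~\eqref{mixed L^1-L^2-norm estimate} shows that $\varphi$ is in fact $|\cdot|_q$-continuous, and hence extends uniquely to a continuous functional on $\spS_q$ (in which $\spS$ is dense by construction). Approximating $\tau_y(k_{\alpha,m}^\vee)\in\spS_q$ by Schwartz functions $(f_n)$ in $|\cdot|_q$-norm, we have $\omega(f_n)\to\omega(\tau_y(k_{\alpha,m}^\vee))$ $\mu$-a.s.; dominated convergence (integrands bounded by $1$) together with continuity of the extension of $\varphi$ identifies the characteristic function of the random variable $\omega\mapsto \omega * k_{\alpha,m}(y)$ as $\varphi(\tau_y(k_{\alpha,m}^\vee))$.

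The main obstacle is the parameter bookkeeping in the second step: one must verify that the bound emerging from Lemma~\ref{lem:matern} as $q\searrow 3d/4$ agrees with the claimed threshold in both dimensional regimes, and must justify applying Lemma~\ref{lem:matern} at a non-integer $q$. Once these are in hand, the rest of the proof is a routine chaining together of Minlos' theorem, the Matérn smoothing lemma, and a standard density argument for the characteristic function.
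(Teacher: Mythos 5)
Your proposal is correct and follows essentially the same route as the paper's own proof: Theorem~\ref{preparation for Levy} gives $\mu$ with $\mu(\spS_q')=1$ for $q>3d/4$, one picks $q$ slightly above $3d/4$ so that Lemma~\ref{lem:matern}(i) holds (which is exactly why the threshold is $d+\max\{0,\frac{3d-12}{8}\}$), and the characteristic function is identified by $|\cdot|_q$-approximation of $\tau_y(k_{\alpha,m}^\vee)$ by Schwartz functions plus dominated convergence. The non-integer-$q$ caveat you flag is also implicit in the paper's argument, which likewise invokes Lemma~\ref{lem:matern} at a real $q>3d/4$, so your bookkeeping matches the published proof.
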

\begin{proof}
By Theorem \ref{preparation for Levy} there is a unique probability measure $\mu$ on $(\spS',\mathfrak{B}(\spS'))$ such that $\hat{\mu}=\varphi$ and $\mu(\spS_q')=1$ whenever $q>\frac{3d}{4}$. 
Now, for 
$\alpha>\frac{d}{4}+\frac{3d}{4}+\max\{0,\frac{\frac{3d}{4}-3}{2}\}
=d+\max\{0,\frac{3d-12}{8}\}$ 
there is $q>\frac{3d}{4}$ such that $\alpha>\frac{d}{4}+q+\max\{0,\frac{q-3}{2}\}$ so that by Lemma \ref{lem:matern} the mapping
\[
    \R^d\rightarrow (\spS_q,|\cdot|_q), 
    \quad y\mapsto \tau_y (k_{\alpha,m}^\vee)
\]
is correctly defined and continuous which, since $\mu(\spS_q')=1$ and hence $\mu(\spS' \setminus \spS_q')=0$, implies that for $\mu$-almost all $\omega\in\spS'$
\[
    \R^d\rightarrow\C,
    \quad
    y\mapsto
    \left\langle 
    \omega,\tau_y (k_{\alpha,m}^\vee)
    \right\rangle
    =
    \omega * k_{\alpha,m}(y)
\]
	is the composition of continuous functions and therefore continuous.
	
Finally, since $\varphi$ is $\tripleBar{\cdot}$--continuous it follows from inequality (\ref{mixed L^1-L^2-norm estimate}) that $\varphi$ is also $|\cdot|_{p}$-continuous for every $p>\frac{d}{4}$. In particular, $\varphi$ is $|\cdot|_q$-continuous for $q$ as above. Because $\tau_y(k_{\alpha,m}^\vee)$ belongs to $\spS_q$, the $|\cdot|_q$-completion of $\spS$, there is a sequence $(f_n)_{n\in\N}$ in $\spS$ which converges to $\tau_y(k_{\alpha,m}^\vee)$ with respect to $|\cdot|_q$. 
Applying Lebesgue's dominated convergence theorem thus yields
\begin{equation*}
\begin{split}
	\varphi(\tau_y(k_{\alpha,m}^\vee))
	&=
	\lim_{n\rightarrow\infty} \varphi(f_n)
	=
	\lim_{n\rightarrow\infty}
	\int_{\spS'} \e^{i\langle \omega, f_n\rangle} \; \mu(\d\omega)\\
	&=
	\lim_{n\rightarrow\infty}
	\int_{\spS_q'} \e^{i\langle\omega,f_n\rangle} \mu(\d\omega)
	=
	\int_{\spS_q'} 
	\e^{i\langle\omega,\tau_y(k_{\alpha,m}^\vee)\rangle} \;\mu(\d \omega),
\end{split}
\end{equation*}
which proves the theorem.
\end{proof}



\section{L\'evy Random Fields} \label{sec:LevyRandomField}

In this section we employ the setting introduced in Section~\ref{sec:random_fields} to construct smoothed L\'evy noise fields.
These---composed with suitable transformations---will then be used as random coefficient functions in the diffusion equation.

\subsection{Classification of Noise Fields} 
In this section we introduce and investigate the class of L\'evy noise fields.

\begin{definition}\label{def:levy}
Let $b\in\R, \sigma^2\geq 0$, and let $\nu$ be a $\sigma$-finite Borel measure on $\R\backslash\{0\}$ satisfying $\int_{\R\backslash\{0\}}\min\{1,s^2\}\; \nu(\d s)<\infty$. 
Then the function
\[
   \psi:\R\rightarrow\C,
   \quad
   \psi(t)
   :=
   ibt - \frac{\sigma^2t^2}{2}
   + 
   \int_{\R\backslash\{0\}}
  \left( \e^{i t s} - 1 - i t s\mathds{1}_{\{|s|\leq 1\}}(s)\right)\, \nu(\d s)
\]
is called the \emph{L\'evy characteristic} with \emph{characteristic triplet} $(b,\sigma^2,\nu)$.
	
A generalized random field $Z$ indexed by $\spS$ is called a \emph{L\'evy noise field} if there is a characteristic triplet $(b,\sigma^2,\nu)$ such that for the characteristic functional $\varphi$ of $Z$ there holds
\[
   \varphi(f) 
   = 
   \exp\biggl( \int_{\R^d}(\psi\circ f)(x)\,\d x \biggr),
   \qquad
   f\in\spS,
\]
where $\psi$ is the L\'evy characteristic associated with $(b,\sigma^2,\nu)$. 
(In particular, this assumes $\psi\circ f\in L^1(\R^d)$ for all $f\in\spS$.) 
We then say that $Z$ is \emph{associated with the characteristic triplet $(b,\sigma^2,\nu)$}.
\end{definition}

A classical reference for Lévy noise fields is \cite{gelfand64}. 
See also \cite{albeverio96,albeverio05}.

\begin{lemma} \label{lemma:decomposition}
A L\'evy noise field $Z$ can be decomposed as $Z = Z_D + Z_G + Z_J$ with 
deterministic part $Z_D$, 
Gaussian white noise $Z_G$, 
and pure jump noise $Z_J$, 
each of which are independent random fields with characteristic functionals 
\begin{align*}
    \varphi_{Z_D}(f) &= \exp\left( ib \int_{\R^d}f(x) \d x\right), \\ 
    \varphi_{Z_G}(f) &= \exp\left({-\frac{1}{2}\sigma^2\|f\|^2_{L^2(\R^d)}}\right), \text{ and } \\ 
    \varphi_{Z_J}(f) &= \exp\left({\int_{\R^d}\ \int_{\R \setminus\{0\}}
                    \e^{isf} - 1 - isf\mathds{1}_{\{|s|\leq 1\}} \nu(\d s) \d x}\right),
\end{align*}    
respectively.
\end{lemma}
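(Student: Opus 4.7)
The plan is to decompose the Lévy characteristic $\psi$ into its three natural summands and then assemble the corresponding fields on a product probability space so that independence is built in by construction.

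First I would write $\psi = \psi_D + \psi_G + \psi_J$, where $\psi_D(t) := ibt$, $\psi_G(t) := -\sigma^2 t^2/2$, and $\psi_J(t) := \int_{\R\setminus\{0\}}(\e^{its}-1-its\mathds{1}_{\{|s|\leq 1\}}(s))\,\nu(\d s)$. Each of these is itself a Lévy characteristic, associated with the reduced triplets $(b,0,0)$, $(0,\sigma^2,0)$, and $(0,0,\nu)$ respectively. The candidate functionals $\varphi_{Z_j}(f) := \exp\bigl(\int_{\R^d}\psi_j(f(x))\,\d x\bigr)$, $j\in\{D,G,J\}$, are then precisely the expressions stated in the lemma.

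Next I would verify that each $\varphi_{Z_j}$ is a characteristic functional on $\spS$ in the sense of the paper's Definition. Normalization $\varphi_{Z_j}(0)=1$ is immediate. Integrability of $\psi_j\circ f$ against Lebesgue measure is trivial for $j\in\{D,G\}$ since $\spS\subset L^1\cap L^2$, and for $j=J$ it follows by subtracting the first two from the hypothesized $\psi\circ f\in L^1(\R^d)$. For continuity, the deterministic and Gaussian pieces are bounded by $|b|\,\|f\|_{L^1}$ and $(\sigma^2/2)\|f\|_{L^2}^2$, while for the jump piece the standard estimate $|\e^{iu}-1-iu\mathds{1}_{\{|u|\leq 1\}}|\leq C(u^2\wedge 1)$ combined with $\int\min\{1,s^2\}\,\nu(\d s)<\infty$ gives $|\psi_J(f(x))|\leq C'\int(s^2f(x)^2\wedge 1)\,\nu(\d s)$, which is dominated by an $L^1(\R^d)$ function and yields continuity via dominated convergence. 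Positive definiteness of each $\varphi_{Z_j}$ follows from the same mechanism that makes $\varphi_Z$ itself a characteristic functional in Definition~\ref{def:levy}, since the only ingredient in that construction is that $\psi$ is a scalar Lévy exponent, a property which $\psi_D$, $\psi_G$, $\psi_J$ each satisfy.

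By Theorem~\ref{thm:correspondence characteristic functional and generalized random field}, each $\varphi_{Z_j}$ determines a generalized random field $Z_j$ on some probability space $(\Omega_j,\mathfrak{A}_j,\prob_j)$. Realizing $Z_D, Z_G, Z_J$ jointly on the product space $(\Omega_D\times\Omega_G\times\Omega_J,\mathfrak{A}_D\otimes\mathfrak{A}_G\otimes\mathfrak{A}_J,\prob_D\otimes\prob_G\otimes\prob_J)$ via the obvious coordinate embeddings makes them independent by construction. The characteristic functional of their sum then factorizes as
\[
   \ev{\e^{i(Z_D+Z_G+Z_J)(f)}}
   =
   \varphi_{Z_D}(f)\,\varphi_{Z_G}(f)\,\varphi_{Z_J}(f)
   =
   \exp\Bigl(\int_{\R^d}(\psi_D+\psi_G+\psi_J)(f(x))\,\d x\Bigr)
   =
   \varphi_Z(f),
\]
and the uniqueness clause of Theorem~\ref{thm:correspondence characteristic functional and generalized random field} identifies $Z_D+Z_G+Z_J$ with $Z$ in law, which is the asserted decomposition.

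The main obstacle is confirming that each $\varphi_{Z_j}$ is genuinely a characteristic functional, specifically positive definiteness of $\varphi_{Z_J}$, where one cannot simply appeal to a bilinear or quadratic structure. The cleanest resolution is to note that the very argument underlying Definition~\ref{def:levy}---that $\exp(\int_{\R^d}\psi(f(x))\,\d x)$ is positive definite whenever $\psi$ is a Lévy characteristic---applies verbatim to each of the sub-characteristics $\psi_D$, $\psi_G$, $\psi_J$ separately, since each corresponds to a valid characteristic triplet. Once this reduction is made, the remainder of the proof is a formal factorization exercise on a product probability space.
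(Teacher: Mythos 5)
Your proposal is correct and follows essentially the same route as the paper: the paper's proof consists precisely of the observation that the additive decomposition $\psi=\psi_D+\psi_G+\psi_J$ of the L\'evy characteristic forces the factorization $\varphi_Z=\varphi_{Z_D}\varphi_{Z_G}\varphi_{Z_J}$, from which the decomposition into independent fields follows. Your product-space construction and appeal to the uniqueness part of Theorem~\ref{thm:correspondence characteristic functional and generalized random field} simply spell out in detail what the paper leaves implicit, so there is no substantive difference in approach.
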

\begin{proof}
This is a direct consequence of Definition \ref{def:levy}, which implies the factorization of the characteristic functional $\varphi_Z(f) = \varphi_{Z_D}(f) \varphi_{Z_G}(f) \varphi_{Z_J}(f)$.
\end{proof}

In view of Lemma~\ref{lemma:decomposition}, L\'evy noise is seen to be a generalization of Gaussian noise, to which it simplifies when the pure jump part $Z_J$ is omitted.

\begin{proposition}\label{prop:levy}
Let $\psi$ be a L\'evy characteristic with triplet $(b,\sigma^2,\nu)$ in which the Lévy measure satisfies $\int_{\R\backslash\{0\}}|s|\mathds{1}_{\{|s|>1\}}(s) \; \nu(\d s) < \infty$. 
Then,
\[
    \varphi:\spS\rightarrow\C,
    \qquad
    \varphi(f) := \exp\biggl(\int_{\R^d} (\psi\circ f)(x)\,\d x\biggr)
\]
is a correctly defined characteristic functional which is continuous with respect to the norm $\tripleBar{\cdot}$. 
In particular, there is a L\'evy noise field $Z$ (unique up to equivalence in law) associated with $(b,\sigma^2,\nu)$. 
Moreover, $Z$ is continuous with respect to $\tripleBar{\cdot}$.
\end{proposition}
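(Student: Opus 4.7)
The proof has three parts: (i) $\varphi$ is a well-defined functional continuous in $\tripleBar{\cdot}$; (ii) $\varphi$ is positive definite; (iii) the L\'evy field $Z$ and its $\tripleBar{\cdot}$-continuity follow from Theorem~\ref{preparation for Levy}.

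For (i), I would estimate the L\'evy characteristic via
\[
   \bigl|\e^{its}-1-its\mathds{1}_{\{|s|\leq 1\}}\bigr|
   \leq \tfrac12(ts)^2\mathds{1}_{\{|s|\leq 1\}} + |ts|\mathds{1}_{\{|s|>1\}},
\]
whose integral against $\nu$ is finite by the L\'evy-measure condition together with the extra first-moment assumption; combined with the drift and Gaussian terms this yields $|\psi(t)|\leq C_1|t|+C_2 t^2$ for constants depending only on $(b,\sigma^2,\nu)$. For $f\in\spS$,
\[
   \int_{\R^d}|\psi(f(x))|\,\d x \leq C_1\|f\|_{L^1(\R^d)}+C_2\|f\|_{L^2(\R^d)}^2
   \leq C_1\tripleBar{f}+C_2\tripleBar{f}^2,
\]
so $\psi\circ f\in L^1(\R^d)$, $\varphi$ is well-defined with $\varphi(0)=1$, and $|\varphi(f)-1|\to 0$ as $\tripleBar{f}\to 0$. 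Once positive definiteness is established, the standard bound $|\varphi(f)-\varphi(g)|^2\leq 2(1-\Re\varphi(f-g))$ for positive definite functionals upgrades continuity at $0$ to uniform $\tripleBar{\cdot}$-continuity.

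Task (ii) is the main obstacle. Splitting $\psi$ into its drift, Gaussian and jump parts, I would factor $\varphi=\varphi_D\,\varphi_G\,\varphi_J$, with
\[
   \varphi_D(f)=\exp\bigl(ib\textstyle\int f\bigr),\quad
   \varphi_G(f)=\exp\bigl(-\tfrac12\sigma^2\|f\|_{L^2}^2\bigr),\quad
   \varphi_J(f)=\exp\biggl(\int_{\R^d}\int_{\R\setminus\{0\}}\bigl(\e^{isf(x)}-1-isf(x)\mathds{1}_{\{|s|\leq 1\}}\bigr)\,\nu(\d s)\,\d x\biggr).
\]
Since products of positive definite functions are positive definite, it is enough to treat each factor. $\varphi_D$ is the characteristic functional of the deterministic field $f\mapsto b\int f$, and $\varphi_G$ is that of Gaussian white noise on $\spS$ (classically positive definite as the exponential of a negative semi-definite quadratic form). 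For $\varphi_J$ I would truncate to $\nu_\epsilon:=\nu|_{\{|s|>\epsilon\}}$, which is finite because $\int\min(1,s^2)\,\nu(\d s)<\infty$. The truncated functional $\varphi_J^{(\epsilon)}$ is realized explicitly as the characteristic functional of a compound Poisson random field built from a Poisson point process on $\R^d\times(\R\setminus\{0\})$ of intensity $\d x\otimes\nu_\epsilon(\d s)$, with a deterministic drift correction for the $\mathds{1}_{\{|s|\leq 1\}}$ term; for $f\in\spS$ the sum $\sum sf(x)$ over Poisson points converges absolutely in $L^1(\prob)$ thanks to the first-moment hypothesis, and Campbell's formula identifies $\ev{\e^{iZ_J^{(\epsilon)}(f)}}$ with $\varphi_J^{(\epsilon)}(f)$. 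Each $\varphi_J^{(\epsilon)}$ is therefore positive definite. Dominated convergence (using the same bounds as in task (i)) yields $\varphi_J^{(\epsilon)}(f)\to\varphi_J(f)$ pointwise on $\spS$ as $\epsilon\to 0$, and positive definiteness is preserved under pointwise limits.

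For (iii), Theorem~\ref{preparation for Levy} now supplies a unique probability measure $\mu$ on $(\spS',\mathfrak{B}(\spS'))$ with $\hat\mu=\varphi$; the canonical process $Z(f)(\omega):=\omega(f)$ is a L\'evy noise field associated with $(b,\sigma^2,\nu)$, and uniqueness up to equivalence in law follows from Theorem~\ref{thm:correspondence characteristic functional and generalized random field}. Linearity of $Z$ is automatic because every $\omega\in\spS'$ is linear. For $\tripleBar{\cdot}$-continuity it suffices by linearity to check stochastic continuity at $0$: if $\tripleBar{f_n}\to 0$, then for every $t\in\R$, $\ev{\e^{itZ(f_n)}}=\varphi(tf_n)\to 1$ by the estimate from task (i), so $Z(f_n)\to 0$ in distribution and hence, the limit being deterministic, in probability.
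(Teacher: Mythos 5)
Your proof is correct, and parts (i) and (iii) essentially match the paper: the same pointwise bound on $\psi$ (quadratic on $\{|s|\le 1\}$, linear on $\{|s|>1\}$) yields $\lVert\psi\circ f\rVert_{L^1}\lesssim \tripleBar{f}+\tripleBar{f}^2$, and the field and its $\tripleBar{\cdot}$-continuity then follow from the Bochner--Minlos correspondence together with the fact that convergence of characteristic functions to $1$ gives convergence in probability to $0$.

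The divergence is in part (ii), and it is a genuine methodological difference. The paper disposes of positive definiteness by citing Gelfand--Vilenkin for the restriction of $\varphi$ to $\mathscr{D}(\R^d)$ and then extending by $\tripleBar{\cdot}$-density of $\mathscr{D}$ in $\mathscr{S}$. You instead prove it from scratch: factor $\varphi=\varphi_D\varphi_G\varphi_J$, note the first two factors are classically positive definite, truncate the L\'evy measure to $\nu_\epsilon=\nu|_{\{|s|>\epsilon\}}$ (finite, since $\int\min(1,s^2)\,\d\nu<\infty$ and the first-moment tail hypothesis controls $\{|s|>1\}$), realize $\varphi_J^{(\epsilon)}$ explicitly as the characteristic functional of a drift-corrected compound Poisson field via Campbell's formula, and pass to the pointwise limit $\epsilon\to 0$ by dominated convergence, using that positive definiteness survives pointwise limits. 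Your route is more self-contained and elementary, avoids the density step entirely (you get positive definiteness directly on all of $\spS$), and in fact anticipates the explicit compound-Poisson representations the paper develops later in Sections 3.3.2--3.3.3; the paper's route is shorter but leans on a classical reference. Both are sound.
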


\begin{proof}
For $f\in\spS$ we have $f\in L^1(\R^d)\cap L^2(\R^d)$ so that
\begin{align*}
	&\int_{\R^d}\int_{\R^\backslash\{0\}}
	  |\e^{is f(x)}-1-isf(x)\mathds{1}_{\{|s|\leq 1\}}(s)| \;\nu(\d s)\d x\\
	=&
	\int_{\R^d}\int_{\{0<|s|\leq 1\}}
	|\e^{isf(x)}-1-isf(x)| \; \nu(\d s) \d x
	+
	\int_{\R^d}\int_{\{|s|> 1\}}|\e^{isf(x)}-1| 
	\; \nu(\d s) \d x\\
	\leq&
	\int_{\R^d}\int_{\{0<|s|\leq 1\}}
	\frac{|s|^2|f(x)|^2}{2} \; \nu(\d s) \d x
	+
	\int_{\R^d}\int_{\{|s|> 1\}}|s| |f(x)| \;\nu(\d s) \d x\\
	\leq&
	\frac{1}{2}
	\int_{\R\backslash\{0\}}
	\min\{1,s^2\} \;\nu(\d s)\,\|f\|^2_{L^2(\R^d)}
	+ \int_{\{|s|> 1\}}|s| \;\nu(\d s)\,\|f\|_{L^1(\R^d)}
\end{align*}
yields $\psi\circ f\in L^1(\R^d)$ and 
\begin{equation*}
\begin{split}
   \|\psi\circ f\|_{L^1(\R^d)}
   \leq 
   &\big( 
   |b|
   + \int_{\{|s|> 1\}}|s| \;\nu(\d s)\big)\|f\|_{L^1(\R^d)} \\
   &+ 
   \left(
   	  \frac{\sigma^2+\int_{\R\backslash\{0\}}\min\{1,s^2\}\;\nu(\d s)}{2}
   \right)
   \|f\|^2_{L^2(\R^d)}.
\end{split}   
\end{equation*}
Thus,
\[
   \varphi:\spS \rightarrow\C,
   \quad
   \varphi(f):=\exp\left( \int_{\R^d} (\psi\circ f)(x) \;\d x \right)
\]
is correctly defined. 
Since $\varphi(0)=1$, the previous inequality implies that $\varphi$ is continuous at $0$ with respect to the norm $\tripleBar{\cdot}$.
	
Finally, the restriction of $\varphi$ to $\mathscr{D}(\R^d)$ is positive definite, see \cite[Theorem 6 p. 283]{gelfand64}. Since $\varphi$ is $\tripleBar{\cdot}$-continuous at $0$, the restriction of $\varphi$ to $\mathscr{D}(\R^d)$ is (uniformly) $\tripleBar{\cdot}$-continuous. Because the latter subspace of $\mathscr{S}$ is $\tripleBar{\cdot}$-dense in $\mathscr{S}$, $\varphi$ is positive definite. Therefore, by Theorem \ref{thm:correspondence characteristic functional and generalized random field} (and inequality (\ref{mixed L^1-L^2-norm estimate})) there is a generalized random field $Z$ indexed by $\spS$ (which is continuous with respect to the $\tripleBar{\cdot}$-norm) whose Fourier transform is $\varphi$, which proves the proposition. 
\end{proof}

\begin{remark} \label{rem:levy}
Convolving a compactly supported continuous function on $\R^d$ with an approximate identity shows that $\mathscr{D}(\R^d)$ is $\tripleBar{\cdot}$-dense in the compactly supported continuous functions on $\R^d$. 
Hence $\spS$ is dense in $L^1(\R^d)\cap L^2(\R^d)$ when the latter space is equipped with the norm $\tripleBar{\cdot}$. 
As noted in Remark \ref{rem:gen_rf}, it thus follows that for every $\tripleBar{\cdot}$-continuous L\'evy noise field $Z$ there is a unique generalized random field indexed by $L^1(\R^d)\cap L^2(\R^d)$ which extends $Z$. 
We denote this extension again by $Z$. 
In particular, for a Borel subset $\Lambda$ of $\R^d$ with finite Lebesgue measure, we can define the (non-normalized) $\Lambda$-average of the L\'evy noise field $Z$ by $Z(\mathds{1}_\Lambda)$.
\end{remark}

\begin{definition}[Stationary noise field]
A generalized random field $Z$ indexed by $\spS$ is called
\begin{enumerate}[(i)]
\item
a \emph{noise field} if for any choice of index functions $f_1, \ldots, f_n\in \spS$ with mutually disjoint supports the random variables $Z(f_1), \ldots, Z(f_n)$ are independent;
\item
\emph{stationary} if for every $f\in \spS$ and each $a\in\R^d$ the random variables $Z(f)$ and $Z(f_a)$ have the same probability distribution, i.e., $Z(f) \sim Z(f_a)$, where $f_a(x) = (\tau_a f)(x) = f(x-a)$; 
\item
a \emph{stationary noise field} if it is both a noise field and stationary.
\end{enumerate}
\end{definition}

Noise fields can be arbitrarily singular, as the distributional derivative of a noise field is again a noise field. 
In many situations, one would like to take spatial averages $Z(\mathds{1}_A)$ of a noise field for a bounded and measurable set $A\subseteq \R^d$ and also ensure that this quantity has finite expectation. 
Note that for such $A$, the indicator function $\mathds{1}_A\in V=L^1(\R^d)\cap L^2(\R^d)$, but $\mathds{1}_A\not \in\spS$ which rules out all too singular distributional noises. 
In the setting outlined above, we obtain the following characterization of stationarity:

\begin{theorem} \label{thm:levy}
Let $Z$ be a generalized random field on $\pspace$ indexed by $\spS$ which is $\tripleBar{\cdot}$-continuous. 
Assume that the unique $\tripleBar{\cdot}$-continuous extension of $Z$ to $L^1(\R^d)\cap L^2(\R^d)$ satisfies $Z(f)\in L^1\pspace$ for all $f\in L^1(\R^d)\cap L^2(\R^d)$.
Then the following are equivalent.
\begin{enumerate}[(i)]
\item $Z$ is a L\'evy noise field.
\item $Z$ is a stationary noise field.
\end{enumerate}
\end{theorem}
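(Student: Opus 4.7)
The plan is to prove the two implications separately; $(i) \Rightarrow (ii)$ is a direct computation, while $(ii) \Rightarrow (i)$ absorbs almost all the work.

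For $(i) \Rightarrow (ii)$, observe first that $\psi(0)=0$ from its defining integral. Stationarity then follows from the translation invariance of Lebesgue measure, since $\int_{\R^d} \psi(f(x-a))\,\d x=\int_{\R^d}\psi(f(y))\,\d y$ gives $\varphi(\tau_a f)=\varphi(f)$ and hence $Z(f)\sim Z(\tau_a f)$. For the noise property, let $f_1,\dots,f_n\in\spS$ have pairwise disjoint supports and $t_1,\dots,t_n\in\R$; at each $x$ the sum $\sum_j t_j f_j(x)$ has at most one nonzero summand, so $\psi\bigl(\sum_j t_j f_j(x)\bigr)=\sum_j\psi(t_j f_j(x))$, and exponentiating the integral factors the joint characteristic function as $\varphi(\sum_j t_j f_j)=\prod_j\varphi(t_j f_j)$. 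Independence of $Z(f_1),\dots,Z(f_n)$ follows.

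For $(ii) \Rightarrow (i)$, work with the $\tripleBar{\cdot}$-continuous extension of $Z$ to $V:=L^1(\R^d)\cap L^2(\R^d)$ from Remark \ref{rem:levy}. Approximation in $\tripleBar{\cdot}$ by Schwartz functions supported in slight thickenings, separated by thin corridors of vanishing $\tripleBar{\cdot}$-mass, transfers stationarity and the noise property to $V$: whenever $f_1,\dots,f_n\in V$ have pairwise disjoint measurable supports, $Z(f_1),\dots,Z(f_n)$ are independent. For a bounded Borel set $\Lambda$ and $t\in\R$, set $\Phi_t(\Lambda):=\varphi(t\mathds{1}_\Lambda)$; the extended noise property gives multiplicativity $\Phi_t(\Lambda_1\cup\Lambda_2)=\Phi_t(\Lambda_1)\Phi_t(\Lambda_2)$ on disjoint bounded sets, and stationarity gives $\Phi_t(\Lambda+a)=\Phi_t(\Lambda)$. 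Partitioning $[0,1]^d$ into $n^d$ translates of $[0,1/n]^d$ yields $\Phi_t([0,1]^d)=\Phi_t([0,1/n]^d)^{n^d}$ for every $n$, so $\Phi_t(\Lambda)$ is infinitely divisible (as a characteristic function) for every rectangular $\Lambda$ and hence nowhere zero, so a continuous logarithm exists. A counting argument on rectangles with rational side lengths, combined with $\tripleBar{\cdot}$-continuity of $\varphi$ (noting that $\|\mathds{1}_{\Lambda_n}-\mathds{1}_\Lambda\|_{L^1\cap L^2}\to 0$ whenever $|\Lambda_n\triangle\Lambda|\to 0$), then produces a unique continuous $\psi:\R\to\C$ with $\psi(0)=0$ such that $\Phi_t(\Lambda)=\exp(|\Lambda|\,\psi(t))$ for every bounded Borel $\Lambda$. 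In particular each $Z(\mathds{1}_\Lambda)$ has infinitely divisible characteristic function $\exp(|\Lambda|\psi(t))$, so by the L\'evy--Khintchine theorem $\psi$ is a L\'evy characteristic with some triplet $(b,\sigma^2,\nu)$; the standing hypothesis $Z(\mathds{1}_\Lambda)\in L^1\pspace$ forces $\int_{|s|>1}|s|\,\nu(\d s)<\infty$. To conclude, approximate arbitrary $f\in V$ (in particular $f\in\spS$) in the $\tripleBar{\cdot}$-norm by simple functions $f_n=\sum_j c_j^{(n)}\mathds{1}_{\Lambda_j^{(n)}}$ with disjoint supports; independence and the product formula above give $\varphi(f_n)=\exp\bigl(\int_{\R^d}\psi(f_n(x))\,\d x\bigr)$, and passing to the limit using $\tripleBar{\cdot}$-continuity of $\varphi$ together with the estimate $|\psi(s)|\le C(|s|+s^2)$ (from the proof of Proposition \ref{prop:levy}) and dominated convergence yields $\varphi(f)=\exp\bigl(\int_{\R^d}\psi(f(x))\,\d x\bigr)$, identifying $Z$ as the L\'evy noise field with triplet $(b,\sigma^2,\nu)$.

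The hard part will be the passage from mere multiplicativity plus translation invariance to the exponential form $\Phi_t(\Lambda)=\exp(|\Lambda|\psi(t))$: a naive logarithm confronts branch ambiguities, and it is only after invoking the partition argument to establish infinite divisibility (and hence non-vanishing) of $\Phi_t(\Lambda)$ that a continuous logarithm becomes available. Once that is done, the bounded, additive, translation-invariant complex set function $\log\Phi_t$ on bounded Borel sets must be proportional to Lebesgue measure by a Haar-uniqueness / Cauchy-functional-equation argument, with density $\psi(t)$. A secondary technical nuisance is the extension of the independence-on-disjoint-supports statement from $\spS$ to $V$ (since $\mathds{1}_\Lambda\notin\spS$), which is handled by mollification and separation of supports by corridors of arbitrarily small $\tripleBar{\cdot}$-mass together with continuity of joint characteristic functions.
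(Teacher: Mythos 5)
Your proof is correct and follows essentially the same route as the paper's: the $(i)\Rightarrow(ii)$ computation is identical, and for $(ii)\Rightarrow(i)$ you use the same chain — extension of $Z$ to $L^1(\R^d)\cap L^2(\R^d)$, independence of the $Z(\mathds{1}_{\Lambda_\ell})$ for disjoint boxes via mollified, corridor-separated indicators and convergence of joint characteristic functions, the partition-into-congruent-boxes argument giving infinite divisibility and Lévy--Khinchine, the $L^1\pspace$ hypothesis forcing $\int_{\{|s|>1\}}|s|\,\nu(\d s)<\infty$ (so that Proposition \ref{prop:levy} makes $\exp(\int\psi\circ f\,\d x)$ a $\tripleBar{\cdot}$-continuous functional), and $\tripleBar{\cdot}$-density of simple functions over disjoint rational boxes to conclude $\varphi=\varphi_\psi$. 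The only deviations are harmless: you pin down the common $\psi$ via an additive, translation-invariant set-function (Haar/Cauchy-equation) argument rather than the paper's direct matching of Lévy exponents over boxes with commensurable side lengths, and you assert the noise property for arbitrary disjoint measurable supports in $L^1(\R^d)\cap L^2(\R^d)$ — more than the corridor/mollification argument actually delivers for topologically inseparable supports, but also more than your proof needs, since disjoint boxes (plus approximation in measure for general Borel sets) suffice.
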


\begin{proof}
Assuming first that $Z$ is a L\'evy noise field, there is a characteristic triplet $(b,\sigma^2,\nu)$ with associated L\'evy characteristic $\psi$ such that the Fourier transform $\varphi$ of $Z$ satisfies $\varphi(f)=\exp\big(\int_{\R^d} (\psi\circ f) \,\d x\big), f\in\spS$. 
By the translation invariance of Lebesgue measure we conclude that for all $a\in\R^d$ there holds $\varphi(f_a)=\varphi(f)$, i.e., the random variables $Z(f_a)$ and $Z(f)$ have the same characteristic function and therefore $Z(f) \sim Z(f_a)$. 
Hence $Z$ is a stationary field.
	
Now, for $f_1,\ldots,f_n\in\spS$ with disjoint supports it follows for all $(\kappa_1,\ldots,\kappa_n)\in\R^n$ and every $x\in\R^d$ that $\psi\big(\sum_{j=1}^n\kappa_j f_j(x)\big)=\sum_{j=1}^n\psi(\kappa_j f_j(x))$ because at most one of the summands is different from $0$ and $\psi(0)=0$. 
Hence,
\begin{align*}
	\ev{\e^{i\sum_{j=1}^n\kappa_j Z(f_j)}}
	&=
	\ev{\e^{iZ(\sum_{j=1}^n \kappa_j f_j)}}
	=
	\exp\biggl(\int_{\R^d}\psi\bigl(\sum_{j=1}^n\kappa_j f_j(x)\bigr)\,\d x\biggr)\\		
	&=
	\prod_{j=1}^n\exp\biggl(\int_{\R^d}\psi(\kappa_j f_j(x))\,\d x\biggr)
	=
	\prod_{j=1}^n \ev{\e^{i\kappa_j Z(f_j)}},
\end{align*}
i.e., the Fourier transform of the joint distribution of the random variables $Z(f_1),\ldots,$ $Z(f_n)$ equals the product of the characteristic functions of the $Z(f_j)$, thus $Z(f_1),\ldots,$ $Z(f_n)$ are independent, so that (i) implies (ii).
	
In order to show that (ii) implies (i) we first observe that the $\tripleBar{\cdot}$-continuity of $Z$ allows, as described in Remark \ref{rem:levy}, to extend $Z$ uniquely to a generalized random field on $L^1(\R^d)\cap L^2(\R^d)$, where we equip the latter with the norm $\tripleBar{\cdot}$. 
We denote this extension again by $Z$. 
Since $\tau_a$ is a continuous linear operator on $\spS$ with respect to the norm $\tripleBar{\cdot}$, it follows by the $\tripleBar{\cdot}$-density of $\spS$ in $L^1(\R^d)\cap L^2(\R^d)$ that $Z(f_a)\sim Z(f)$ for all $f\in L^1(\R^d) \cap L^2(\R^d)$.
	
By definition, a box in $\R^d$ is a set of the form $\prod_{j=1}^d[\beta_j,\gamma_j)$ where $\beta_j, \gamma_j\in\R, 1\leq j\leq d$ with $\gamma_j-\beta_j>0$ independent of $j$, the so-called (side) length of the box. 
Let $\Lambda:=\prod_{j=1}^d[\beta_j,\gamma_j)$ be a box of length $L>0$. 
We subdivide $\Lambda$ into $n^d$ non-intersecting boxes $\Lambda_\ell$, each of side length $L/n$. 
Thus for each $1\leq \ell,k\leq n^d$ we have $Z(\mathds{1}_{\Lambda_\ell})\sim Z(\mathds{1}_{\Lambda_k})$ by the (extended) stationarity of $Z$. 
It follows from the construction that there are $\beta_j,\gamma_j\in\R, \gamma_j-\beta_j=L/n, 1\leq j\leq d$ as well as $a^{(1)},\ldots,a^{(n^d)}\in\R^d$ such that
\[
    \Lambda_\ell = \prod_{j=1}^d [a_j^{(\ell)}+\beta_j, a_j^{(\ell)}+\gamma_j),
    \qquad
    \ell = 1,2,\dots,n^d.
\]
For $\varepsilon\in (0,L/2n)$ we define
\[
   \Lambda_\ell^\varepsilon
   :=
   \prod_{j=1}^d
   [a_j^{(\ell)}+\beta_j+\varepsilon, a_j^{(\ell)}+\gamma_j-\varepsilon),
   \qquad
   \ell = 1,2,\dots,n^d.
\]
Moreover, let $\phi\in C^\infty_c(\R^d)$ be such that $\mbox{supp}\,\phi\subset(-1,1)^d$, $\phi\geq 0$, and $\int_{\R^d}\phi\,\d x=1$. 
We define $\phi_\varepsilon(x):=\varepsilon^{-d}\phi(x/\varepsilon), \varepsilon>0$. 
Then, for $\varepsilon\in (0,L/2n)$ it follows that 
$\phi_\varepsilon*\mathds{1}_{\Lambda_\ell^\varepsilon}, 1\leq \ell\leq n^d,$ 
are functions in $C^\infty_c(\R^d)$ satisfying
\begin{enumerate}[(1)]
\item 
$\forall\,1\leq \ell \leq n^d, \varepsilon\in(0,L/2n):\,\mbox{supp}\,\phi_\varepsilon*\mathds{1}_{\Lambda_\ell^\varepsilon}\subseteq \Lambda_\ell$,
\item
$\forall\,1\leq \ell\leq n^d, \varepsilon\in(0,L/2n):\,\sup_{x\in\R^d}|\phi_\varepsilon*\mathds{1}_{\Lambda_\ell^\varepsilon}|\leq 1$.
\end{enumerate}
In particular, for fixed $\varepsilon\in (0,L/2n)$ the functions 
$\phi_\varepsilon*\mathds{1}_{\Lambda_\ell^\varepsilon}, 1\leq \ell\leq n^d$, 
have mutually disjoint supports and, by Lebesgue's dominanted convergence theorem,
$\lim_{\varepsilon\rightarrow 0}\phi_\varepsilon*\mathds{1}_{\Lambda_\ell^\varepsilon}=\mathds{1}_{\Lambda_\ell}$ with respect to $\tripleBar{\cdot}$.

By the $\tripleBar{\cdot}$-continuity of $Z$, $Z(\phi_\varepsilon*\mathds{1}_{\Lambda_\ell^\varepsilon})\to Z(\mathds{1}_{\Lambda_\ell})$ in probability if $\varepsilon\searrow 0$, see Remark \ref{rem:gen_rf} (ii).
Consequently, also the vector $(Z(\phi_\varepsilon*\mathds{1}_{\Lambda_1^\varepsilon}),\ldots,Z(\phi_\varepsilon*\mathds{1}_{\Lambda_{n^d}^\varepsilon}))$ converges in probability to $(Z(\mathds{1}_{\Lambda_1}),\ldots, Z(\mathds{1}_{\Lambda_{n^d}}))$. 
As the random variables $Z(\phi_\varepsilon*\mathds{1}_{\Lambda_\ell^\varepsilon})$ are independent, their joint characteristic function factors into a product of individual characteristic functions, each converging to the characteristic function of the corresponding $Z(\mathds{1}_{\Lambda_\ell})$ (as convergence in probability is stronger than convergence in law, which is equivalent to the point-wise convergence of characteristic functions). 
Thus, the joint characteristic function of $(Z(\mathds{1}_{\Lambda_1}),\ldots, Z(\mathds{1}_{\Lambda_{n^d}}))$, which coincides with the limit of the joint characteristic function of $(Z(\phi_\varepsilon*\mathds{1}_{\Lambda_1^\varepsilon}),\ldots,Z(\phi_\varepsilon*\mathds{1}_{\Lambda_{n^d}^\varepsilon}))$, factors into a product of the characteristic functions of $Z(\mathds{1}_{\Lambda_\ell})$. 
This implies that the random variables $Z(\mathds{1}_{\Lambda_\ell})$ are independent. 

Defining
\[
    B_{\ell,n}
    :=
    \left[\beta_1+(\ell-1)\frac{L}{n},\beta_1+\ell\frac{L}{n}\right)
    \times
    \prod_{j=2}^n[\beta_j,\gamma_j),
    \qquad
    1\leq \ell\leq n,
\]
we obtain a partition of $\Lambda$ into $n$ sets of which each is a disjoint union of a mutually disjoint subfamily of the $\Lambda_1,\ldots,\Lambda_{n^d}$ such that $Z(\mathds{1}_{B_{1,n}}),\ldots,Z(\mathds{1}_{B_{n,n}})$ are i.i.d.\ random variables. 
Obviously, 
$Z(\mathds{1}_\Lambda)=\sum_{\ell=1}^n Z(\mathds{1}_{B_{\ell,n}})$ 
and, since $n\in\N$ was chosen arbitrarily, $Z(\mathds{1}_\Lambda)$ has an infinitely divisible probability law. 
    Thus, by the L\'evy-Khinchine Theorem \cite[Theorem 8.1]{Sato13} there is a uniquely determined characteristic triplet $(b_\Lambda,\sigma^2_\Lambda,\nu_\Lambda)$ with associated L\'evy characteristic $\psi_\Lambda$ such that
\[
   \ev{\e^{i Z(\mathds{1}_\Lambda)}}
   =
   \e^{|\Lambda|\psi_\Lambda(\kappa)}
   \quad\text{ and }\quad
   \ev{\e^{i\kappa Z(\mathds{1}_{\Lambda_\ell})}}
   =
   \e^{|\Lambda_\ell|\psi_\Lambda(\alpha)},
\]
for all $\kappa \in \R$ and $\ell=1,\dots,n^d$, where for a Borel set $B\subseteq\R^d$ we denote by $|B|$ its Lebesgue measure.

Let now $\Lambda'$ be a box of length $L'>0$ such that $L/L'$ is a rational number $n/m, n,m\in\N$. 
As above, we subdivide $\Lambda$ into $n^d$ mutually disjoint boxes $\Lambda_\ell$ of side length $L/n$ and  $\Lambda'$ into $m^d$ mutually disjoint boxes $\Lambda_k'$ of side length $L'/m$. 
Because $L/n=L'/m$, it follows from the (extended) stationarity of $Z$ that the random variables $Z(\mathds{1}_{\Lambda_\ell})$ and $Z(\mathds{1}_{\Lambda_k'})$ have the same distribution, $1\leq \ell\leq n^d, 1\leq k\leq m^d$. 
This implies
\[
   \e^{|\Lambda_\ell|\psi_\Lambda(\kappa)}
   =
   \e^{|\Lambda_k'|\psi_{\Lambda'}(\kappa)}
   \qquad
   \forall
   \kappa\in\R, 1\leq \ell\leq n^d, 1\leq k\leq m^d,
\]
so that by $|\Lambda_l|=(L/n)^d=(L/m)^d=|\Lambda_k'|$ and the continuity of the L\'evy characteristics $\psi_{\Lambda}$ and $\psi_{\Lambda'}$ it follows that there is $k\in\Z$ with $\psi_{\Lambda}(\kappa)=\psi_{\Lambda'}(\kappa)+2\pi\,ik$. 
Since $\psi_{\Lambda}(0)=0=\psi_{\Lambda'}(0)$ we conclude $\psi_{\Lambda}=\psi_{\Lambda'}$. Hence, there is a characteristic triplet $(b,\sigma^2,\nu)$ with associated L\'evy characteristic $\psi$ such that for all boxes $\Lambda$ with rational side length 
$\ev{\e^{i\kappa Z(\mathds{1}_\Lambda)}} = \e^{|\Lambda|\psi(\kappa)}, \kappa\in\R$. 
Because $Z(\mathds{1}_\Lambda)\in L^1\pspace$ it follows from \cite[Example 25.12]{Sato13} that $\int_{\{|s|> 1\}}|s|\; \nu(\d s)<\infty$ so that by Proposition \ref{prop:levy}
\[
    \varphi_\psi:\spS\rightarrow\C,
    \quad
    f\mapsto\exp\Big(\int_{\R^d}(\psi\circ f)(x)\,\d x\Big)
\]
is a correctly defined, positive definite functional which is $\tripleBar{\cdot}$-continuous and which can be extended in a unique way to a $\tripleBar{\cdot}$-continuous characteristic functional on $L^1(\R^d) \cap L^2(\R^d)$.	
	
Now let $\Lambda^{(1)},\ldots,\Lambda^{(n)}$ be mutually disjoint boxes in $\R^d$ of respective side lengths $L_j\in\Q$. 
By the same arguments as above for the $\Lambda_1,\ldots,\Lambda_n$ we obtain via mollification of the indicator functions of suitably shrunk boxes and the fact that $Z$ is a noise field, that $Z(\mathds{1}_{\Lambda^{(1)}}),\ldots, Z(\mathds{1}_{\Lambda^{(n)}})$ are independent. 
Considering the simple function $f=\sum_{j=1}^n\kappa_j\mathds{1}_{\Lambda^{(j)}}$, we obtain
\begin{equation}\label{fragile equality}
\begin{split}
  \varphi(f)
  &=
  \ev{\e^{iZ(f)}}
  =
  \prod_{j=1}^n \e^{|\Lambda^{(j)}|\psi}
  =
  \prod_{j=1}^n \e^{\int_{\Lambda^{(j)}}\psi(\kappa_j)\,\d x}
  =
  \e^{\int_{\R^d}\sum_{j=1}^n\psi(\kappa_j)\mathds{1}_{\Lambda^{(j)}}\,\d x}\\
  &=
  \e^{\int_{\R^d} (\psi\circ f)(x)\,\d x}
  =
  \varphi_\psi(f),
\end{split}
\end{equation}
where we have used again that for functions with mutually disjoint (essential) supports $f_1,\ldots,f_n\in L^1(\R^d) \cap L^2(\R^d)$ we have $\psi(\sum_j f_j)=\sum_j\psi(f_j)$ due to $\psi(0)=0$.
	
Finally, since simple functions of the above form are $\tripleBar{\cdot}$-dense in $L^1(\R^d)\cap L^2(\R^d)$ and $\varphi$ as well as $\varphi_\psi$ are $\tripleBar{\cdot}$-continuous it follows from (\ref{fragile equality}) that $\varphi(f)=\varphi_\psi(f)$ for all $f\in L^1(\R^d) \cap L^2(\R^d)$. 
In particular, $Z$ is a L\'evy noise field. 
\end{proof}

For computational purposes such as quadrature or Karhunen-Lo\`eve expansion, knowledge about the expectation of polynomial expressions in the random fields is needed. 
It is thus essential to calculate the moments of L\'evy noise fields.

\begin{proposition} \label{prop:moments}
Let $Z$ be a $\tripleBar{\cdot}$-continuous L\'evy noise field with characteristic triplet $(b,\sigma^2, \nu)$. 
Suppose the L\'evy measure $\nu$ is such that the following integrals exist and are finite

\[
   b_1 := \int_{\{|s|>1\}}s \,\nu(\d s)
   \qquad \text{ and } \qquad 
   b_n := \int_{\mathds{R}\setminus\{0\}} s^n \,\nu(\d s), \quad n\in \N, \quad n\geq2.
\] 
Then $Z(f)$ has moments of all orders for every $f\in\spS$ and
\begin{equation*}
	\ev{\prod^n_{j=1}Z(f_j)}
	=
	\sum_{\substack{I\in \mathscr{P}^{(n)}\\ I = \{I_1, \ldots, I_k\}}} 
	\prod_{\ell=1}^k c_{|I_\ell|}\int_{\R^d} \prod_{j\in I_\ell} f_j \,\d x.
\end{equation*}
Here, $\mathscr{P}^{(n)}$ is the collection of all partitions of $\{1, \ldots, n\}$ into non-intersecting and non-empty sets $\{I_1, \ldots, I_k\}$, where $k$ is arbitrary. 
$|I_\ell|$ denotes the number of elements in $I_\ell$ and $c_n$ is a sequence of constants defined as
\begin{equation*}
	c_n 
	= 
	\begin{cases}
		b + b_1 &: n = 1,\\
		\sigma^2 + b_2 &: n = 2,\\
		b_n &: n\geq 3.
	\end{cases}
\end{equation*}
\end{proposition}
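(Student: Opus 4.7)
The plan is to recover all mixed moments of $Z(f_1),\ldots,Z(f_n)$ from their joint characteristic function. Writing $t=(t_1,\ldots,t_n)\in\R^n$, linearity of $Z$ gives
\begin{equation*}
   \ev{\exp\Bigl(i\sum_{j=1}^n t_j Z(f_j)\Bigr)}
   =
   \exp(G(t)),\qquad
   G(t) := \int_{\R^d}\psi\Bigl(\sum_{j=1}^n t_j f_j(x)\Bigr)\,\d x.
\end{equation*}
Once the right-hand side is known to be $C^n$ in a neighbourhood of $t=0$, one has $\ev{\prod_j Z(f_j)} = (-i)^n \partial_{t_1}\cdots\partial_{t_n}\exp(G)|_{t=0}$. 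Since $G(0)=0$ (because $\psi(0)=0$), iterating the product rule yields the multivariate Fa\`a di Bruno identity for first-order mixed derivatives of an exponential,
\begin{equation*}
   \partial_{t_1}\cdots\partial_{t_n}\exp(G)\bigr|_{t=0}
   =
   \sum_{\pi\in\mathscr{P}^{(n)}}\prod_{B\in\pi}\partial_{t_B} G\bigr|_{t=0},
   \qquad
   \partial_{t_B} := \prod_{j\in B}\partial_{t_j},
\end{equation*}
whose proof is a short induction on $n$.

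I would then verify that $\psi\in C^\infty(\R)$ under the assumed moment conditions and compute $\psi^{(k)}(0)$. Differentiating under the $\nu$-integral requires dominants: on $\{|s|\leq 1\}$ the inequality $|s|^k\leq s^2$ for $k\geq 2$ combined with $\int\min\{1,s^2\}\,\nu(\d s)<\infty$ suffices, while on $\{|s|>1\}$ one uses $|s|^k\leq |s|^n$ for $k\leq n$ together with the hypothesized finiteness of $b_1$ and of $b_n$ (which implies $b_k<\infty$ for every $1\leq k\leq n$). A direct calculation then gives $\psi^{(1)}(0)=i(b+b_1)$, $\psi^{(2)}(0)=-(\sigma^2+b_2)$, and $\psi^{(k)}(0)=i^k b_k$ for $k\geq 3$; in every case $\psi^{(k)}(0)=i^k c_k$. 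The same dominations, combined with the fact that Schwartz functions lie in every $L^p(\R^d)$, justify interchanging $\partial_{t_B}$ with the spatial integral, yielding
\begin{equation*}
   \partial_{t_B} G\bigr|_{t=0}
   =
   \psi^{(|B|)}(0)\int_{\R^d}\prod_{j\in B} f_j(x)\,\d x
   =
   i^{|B|}c_{|B|}\int_{\R^d}\prod_{j\in B} f_j(x)\,\d x.
\end{equation*}
Specializing to $f_1=\cdots=f_n=f$ shows in particular that the characteristic function of $Z(f)$ is $C^\infty$ at $0$, hence $Z(f)$ possesses moments of every order.

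Finally, substituting the expression for $\partial_{t_B}G|_{t=0}$ into the Fa\`a di Bruno formula and collecting phase factors by means of $\prod_{B\in\pi} i^{|B|} = i^{\sum_B|B|}=i^n$, the overall factor $(-i)^n i^n=1$ cancels and the claimed expansion over partitions is obtained. The main obstacle is purely technical, namely setting up the dominated-convergence bounds for simultaneous differentiation under the $\nu$-integral inside $\psi$ and under the $\d x$-integral inside $G$; once those are in place, the combinatorial content is supplied entirely by Fa\`a di Bruno, and the identification $\psi^{(k)}(0)=i^k c_k$ collapses the three cases $k=1$, $k=2$, $k\geq 3$ into the single universal expression appearing in the statement.
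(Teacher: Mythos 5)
Your proposal is correct and follows essentially the same route as the paper's proof in Appendix~\ref{sec:moment_gauss}: both express the joint characteristic function as $\exp(G(t))$ with $G(t)=\int_{\R^d}\psi(\sum_j t_j f_j(x))\,\d x$, apply Fa\`a di Bruno's formula to the exponential, and compute $\partial_{t_B}G|_{t=0}$ by differentiating under the $\nu$- and $\d x$-integrals with the same dominated-convergence bounds, identifying $\psi^{(k)}(0)=i^k c_k$. Your explicit remark that $C^\infty$-smoothness of the characteristic function at $0$ yields all moments is a slightly more careful justification of the moment-existence claim than the paper spells out, but the substance of the argument is identical.
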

\begin{proof}
Note that this is the cumulant expansion of moments. 
It can be obtained by application of F\`aa di Bruno's formula to $\varphi(f)=\exp\{\int \psi\circ f \d x\}$. 
For details, see e.g.\ \cite[Proposition 3.6]{albeverio96}. A complete proof can be found in Appendix \ref{sec:moment_gauss}.
\end{proof}

\subsection{Smoothed Stationary Noise Fields}

So far we have considered $\tripleBar{\cdot}$-continuous stationary noise fields $Z$ which by definition are indexed by $\mathscr{S}(\R^d)$ but can be extended uniquely to generalized random fields indexed by $L^1(\R^d)\cap L^2(\R^d)$ and these extensions will again be denoted by $Z$. 
However, we are interested in random functions on (an open, bounded subset $D$ of) $\R^d$ which will serve as conductivity coefficients for a stationary diffusion equation. 
We therefore introduce the following notion.

\begin{definition}[Smoothed Random Fields] \label{def:smoothRF}
For a $\tripleBar{\cdot}$-continuous stationary noise field $Z$ on the probability space 
$\pspace$ and a function $k\in L^1(\R^d)\cap L^2(\R^d)$ we define the 
\emph{smoothed random field (with window function, smoothing function, or smoothing kernel $k$)} as the family of random variables
\[
  Z_k(x) := Z(k_x)\in L^0\pspace, \quad  x\in\R^d,
\]
where $k_x:=\tau_x (k^\vee)=k(x-\cdot)$.
More generally, we shall call a bivariate $k: \R^d \times \R^d \to \R$ a \emph{smoothing function (window function)} if $ k(x,\cdot) \in L^1(\R^d) \cap L^2(\R^d)$ for every $x \in \R^d$ and $\R^d\rightarrow(L^1(\R^d)\cap L^2(\R^d),\tripleBar{\cdot}),x\mapsto k(x,\cdot)$ is continuous. For a bivariate smoothing function $k$ we set (using the same notation) $k_x := k(x,\cdot)$ and define the smoothed random field with smoothing function $k$ as the family of random variables
$
  Z_k(x) := Z(k_x)\in L^0\pspace, \;  x\in\R^d.
$

\end{definition}

\begin{remark} \label{rem:smoothRF}
\hspace{2ex}
\begin{itemize}
\item[(i)] 
Since by Minlos' Theorem \ref{thm:minlos} every generalized random field $Z$ on $\pspace$ indexed by $\mathscr{S}(\R^d)$ is given by a $\mathscr{S}'(\R^d)$-valued random variable (again denoted by $Z$), it follows for a window function $k\in\mathscr{S}(\R^d)$ that
\[
   \R^d\rightarrow\R,
   \quad
   x\mapsto Z(k_x)=\langle Z,k(x-\cdot)\rangle = (Z*k)(x)
\]
is $\prob$-almost surely a smooth function as a convolution of a random tempered distribution with a Schwartz function.
\item[(ii)] 
For stationary noise fields $Z$ it follows from the definition that, for an arbitrary window function $k\in\mathscr{S}(\R^d)$, the random variables of the associated smoothed random field $Z_k$ are identically distributed, i.e., $Z_k(x_1)\sim Z_k(x_2)$ for every $x_1,x_2\in\R^d$. 
Moreover, whenever $x_1,\ldots, x_n\in\R^d$ are such that $\tau_{x_1}f,\ldots,\tau_{x_n}f$ have mutually disjoint supports, then the random variables $Z_k(x_1),\ldots,Z_k(x_n)$ are independent. 
It follows by standard arguments already employed in the proof of Theorem \ref{thm:levy} that for a $\tripleBar{\cdot}$-continuous stationary noise field $Z$ the ``noise field property" as well as  stationarity hold not only for $f_1,\ldots,f_n, f\in\mathscr{S}(\R^d)$ but for arbitrary $f_1,\ldots,f_n, f\in L^1(\R^d)\cap L^2(\R^d)$.
\item[(iii)] 
By Theorem \ref{preparation for Levy} it follows that every $\tripleBar{\cdot}$-continuous stationary noise field $Z$ is given by an $\mathscr{S}'_q(\R^d)$-valued random variable, for an arbitrary $q>\frac{3d}{4}$, where we use the notation introduced in Section \ref{sub:Schwartz}. Therefore, if $k\in\mathscr{S}_q(\R^d)$ is a function such that
\begin{equation}\label{eqn:translates in S_q}
		\R^d\rightarrow \mathscr{S}_q(\R^d),
		\qquad
		x\mapsto \tau_x k 
\end{equation}
is correctly defined, we may consider the smoothed random field $Z_k$ with window function $k$ even if $k\notin L^1(\R^d)\cap L^2(\R^d)$. Moreover, if the function in (\ref{eqn:translates in S_q}) is also continuous (when $\mathscr{S}_q(\R^d)$ is endowed with the Hilbert space norm $|\cdot|_q$) the resulting smoothed random field is almost surely a continuous function on $\R^d$. For Mat\'ern kernels as window functions we considered this smoothing procedure in Theorem \ref{thm:matern}.
\item[(iv)] 
One can also view the smoothed L\'evy field $Z_{k_{\alpha,m}}$ as the distributional solution of the linear stochastic pseudodifferential equation
\[
	(-\Delta+m^2)^\alpha Z_{k_{\alpha,m}}(f) = Z(f), \qquad f\in \mathscr{S},
\]   
where $Z$ is a L\'evy noise field and $\Delta$ denotes the Laplacian, see \cite{albeverio96,albeverio05}.
In this sense our approach directly extends that given in \cite{Lindgren2011}  for sampling Gaussian fields.
\end{itemize}
\end{remark}

\subsection{Examples}

We now present some examples for smoothed L\'evy random fields.

\subsubsection{Gaussian Fields} 

Gaussian random fields are obtained as a special case of Lévy fields by setting $\nu = 0$. 
The generalized random field associated with the characteristic triplet $(b, \sigma^2, 0)$ results in a stationary, uncorrelated noise field denoted by $G$ with characteristic functional 
$\varphi_G(f)
=
\exp\left(ib\int_{\R^d}f(y) \,\d y-\frac{\sigma^2}{2}\|f\|_{L^2(\R^d)}^2\right)$. 
Since $b$ corresponds to a deterministic background field, we obtain classical white noise for $b=0$. 
For $f\in L^1(\R^d)\cap L^2(\R^d)$, the corresponding random variable $G(f)$ has variance $\sigma^2 \|f\|_{L^2(\R^d)}$.

For the Gaussian random field smoothed with a window function $k\in L^1(\R^d)\cap L^2(\R^d)$ it follows that $G(k_{x})$ has a Gaussian distribution with mean $b\int_{\R^d}k(y) \, \d y$ and variance $\sigma^2\int_{\R^d} k^2(y) \, \d y$ for each $x\in\R^d$. 
Moreover, a straightforward calculation shows

\begin{equation} \label{eqa:CovFromSmooth}
   \Cov(G_k(x_1), G_k(x_2)) 
   = 
   \sigma^2\int_{\R^d}k(x_1-\tau)k(x_2-\tau) \,\d \tau
   =
   \sigma^2(k^\vee*k)(x_1-x_2).
\end{equation}
In particular, setting $k=k_{\alpha,m}$ we obtain  
$\Cov(G_k(x_1),G_k(x_2)) = \sigma^2 k_{2\alpha,m}(x_1-x_2)$, 
which is the usual Matérn covariance function with smoothness parameter $2\alpha$. 

\begin{remark} \label{rem:ContGauss}
In connection with Lemma \ref{lemma:mat-hold} one can see from the preceding that the lower bound $\alpha >d$ in $d=1,2,3$ obtained in Theorem \ref{thm:matern} is not optimal for the Gaussian case, where $2\alpha > d$ is sufficient to obtain a continuous modification of $Z_{k_{\alpha,m}}(x)$ by the Kolmogorov continuity criterion for random fields  \cite{kunita90}.  
\end{remark}

\subsubsection{Compound Poisson Random Fields}

As an example for a Lévy noise field, we consider a compound Poisson random field, which is a special case with finite jump activity.

\begin{definition}[Compound Poisson Random Field] 	\label{def:CPnoise}
Let $\nu$ be a finite Lévy measure on $\R\backslash\{0\}$. 
Setting $b := \int_{\{0<|s|\leq 1\}} s \, \d\nu$ we call the generalized random field associated with the characteristic triplet $(b,0,\nu)$ a \emph{compound Poisson random field}, denoted by $P$.
\end{definition}

The associated characteristic functional is given by
\begin{equation} \label{eqa:CF_CompoundPoisson}
   \varphi_P(f) 
   = 
   \exp
   \left(
   \int_{ \mathbb{R}^d} \int_{\mathbb{R} \setminus\{0\}} 
   \left(\e^{is f(x)} - 1 \right)
   \nu(\d s) \,\d x  
   \right), 
   \qquad
   f \in L^1(\R^d)\cap L^2(\R^d).
\end{equation}
Let $f\in L^1(\R^d)\cap L^2(\R^d)$ with essential support in a region $\Lambda\subseteq \R^d$ with $|\Lambda|<\infty$. 
As the L\'evy measure $\nu$ is finite, we define an intensity parameter 
$\lambda := \nu(\R\backslash\{0\})$ and  obtain a probability measure $\tilde \nu$ on $\R$ by setting 
$\tilde \nu := \lambda^{-1}\nu$ and $\tilde\nu(\{0\}) := 0$.

Now, let $\pspace$ be a probability space, let $N_\Lambda$ be a Poisson-distributed random variable with intensity $\lambda|\Lambda|$, and let $(X_1,S_1), (X_2,S_2),\ldots$ be a sequence of $\R^d\times\R\backslash\{0\}$-valued random variables which are identically distributed with $(X_1,S_1)\sim \frac{\d x}{|\Lambda|}\otimes\tilde\nu$, where $\d x$ is restricted to  $\Lambda$, and such that $N_\Lambda, (X_1,S_1), (X_2,S_2),\ldots$ are independent. 
We define $P_\Lambda := \sum_{j=1}^{N_\Lambda} S_j\delta_{X_j}$. 
Let in addition $\{\Lambda_j\}_{j\in \N}$ denote a partition of $\R^d$ such that any compact set intersects at most finitely many $\Lambda_j$ and let the $P_{\Lambda_j}$ be mutually independent. 
We set $P=\sum_{j=1}^\infty P_{\Lambda_j}$. For $f\in \mathscr{S}$ with compact support let $I=\{j\in \N: \Lambda_j\cap {\rm supp} f\not=\emptyset\}$. 
Then, 
\begin{align}\label{eqa:CF_calc}
\begin{split}
    \varphi_P(f)
    &=
    \ev{\e^{iP(f)}}
    =
    \ev{\e^{i\sum_{j\in I}P_{\Lambda_j}(f)}}
    =
    \prod_{j\in I}\ev{\e^{iP_{\Lambda_j}(f)}}\\
    &=
    \prod_{j\in I} \sum_{\ell_j=1}^\infty \prob (N_{\Lambda_j}=\ell_j) \prod_{r_j=1}^{\ell_j}\ev{\e^{iS^{(j)}_{r_j}f(X^{(j)}_{r_j})}}\\
    &=
    \prod_{j\in I} \sum_{\ell_j=1}^\infty \e^{-\lambda|\Lambda_j|} \frac{(\lambda|\Lambda_j|)^{\ell_j}}{\ell_j!} 
    \left(
    \int_{\Lambda_j} \int_{\R} 
    \e^{isf(x)}\, \tilde\nu(\d s)\, \frac{\d x}{|\Lambda_j|}
    \right)^{\ell_j}\\
    &=
    \prod_{j\in I}
    \exp
    \left(
    \lambda\int_{\Lambda_j}\int_{\R} (\e^{isf(x)}-1)\, \tilde\nu(\d s)\, \d x
    \right)\\
    &=
    \exp
    \left(
    \int_{\mathds{R}^d} \int_{\mathds{R}\setminus\{0\}} 
    (\e^{is f(x)} - 1) \,\nu(\d s) \,\textup{d}x\right).
\end{split}
\end{align}
As the set of compactly supported Schwartz test functions is dense in $\mathscr{S}$, we conclude (see Remark \ref{rem:gen_rf} (ii)) that we can extend $P(f)$ to $\mathscr{S}$. As the characteristic functionals coincide, the so constructed random field $P(f)$ coincides with the field from Theorem \ref{thm:minlos} up to equivalence in law. Using the Borel-Cantelli lemma, it is an easy exercise to show that also in this representation, the locally finite and discrete signed measure $P$ actually is a tempered distribution $\prob$-a.s..

For the smoothed compound Poisson noise field, we obtain $P_k=\sum_{j=1}^\infty P_{\Lambda_j,k}$. 
Let us assume that $k$ is a continuous, bounded function in $L^1(\R^d)$ (and hence also in $L^2(\R^d)$), then
\[
    P_{\Lambda_j,k}(x) = \sum_{\ell=1}^{N_{\Lambda_j}}S_\ell^{(j)}k(x-X_\ell^{(j)})
\]
shows that $P_{\Lambda_j,k}(x)$ is represented as a continuous function. 
Although this does not immediately imply that  this is also true for $P_k(x)$, it seems likely that this is true if $k(x)$ has some global uniform continuity and decays sufficiently fast, see \cite[Theorem 3.2]{albeverio05} for a related argument. 

\begin{remark}\label{rel:levy_pos}
As a useful estimate, we note that the absolute value of the signed measure $|P|$ is given by $\sum_{j=1}|P_{\Lambda_j}|$ and $|P_{\Lambda_j}|=\sum_{\ell=1}^{N_{\Lambda_j}}|S_\ell^{(j)}| \,  \delta_{X_\ell^{(j)}}$ holds $\prob$-a.s. 
We therefore conclude that for almost every $x \in \mathbb R^d$ there holds
\begin{equation} \label{eqa:bound}
    |P_k(x)|\leq |P|_{|k|}(x)
\end{equation}
$\prob$-almost surely. Note that the right hand side, for fixed $x$, is almost surely finite, as clearly $|P|$ also is a compound Poisson noise field with characteristic triplet $(b^+,0,\nu^+)$, where $\nu^+$ is the image measure of $\nu$ under the mapping $s\mapsto |s|$. 
Clearly, $\nu^+$ is supported on $(0,\infty)$. 
\end{remark}

\subsubsection{L\'evy Noise of Infinite Activity} \label{sec:inf_act}

Under the assumption $\int_{\{|s|\leq 1\}} |s| \,\nu(\d s)<\infty$, the deterministic compensator term for the small jumps $\int_{\R\setminus\{0\}} s \mathds{1}_{\{|s|\leq 1\}}(s)\,\nu( \d s) $ can still be subsumed into the constant $b$ and the expression for the characteristic functional \eqref{eqa:CF_CompoundPoisson} remains valid. 
This case includes important examples such as (bi-) gamma distributions with $\nu(\d s)=v\mathds{1}_{\{s>0\}}(s)\frac{\e^{-w s}}{s}\,\d s$ 
($\nu(\d s)=v\frac{\e^{-w |s|}}{|s|}\, \d s$), $v,w>0$. 
As in this case and in others the jump measure $\nu$ is infinite, the representation given in the compound Poisson case has to be extended as follows:
The sets $\Theta_0=\{s\in \R:|s|>1\}$ and $\Theta_\ell=\{s\in \R:\frac{1}{\ell}\geq |s|>\frac{1}{\ell+1}\}$ form a partition of $\R\setminus\{0\}$. 
Then the L\'evy measures $\nu_\ell(\d s)=\mathds{1}_{\Theta_\ell}(s) \nu(\d s)$ 
are all finite and define independent compound Poisson processes $P_\ell$. 
With a calculation similar to \eqref{eqa:CF_calc}, we deduce that \eqref{eqa:CF_CompoundPoisson} is still valid and the same applies 
to \eqref{eqa:bound}, where $|P|=\sum_{\ell=1}^{\infty}|P_\ell|$. 
Also, $|P|$ is a L\'evy noise with triplet $(b^+,0,\nu^+)$ and the r.h.s. of \eqref{eqa:bound} is $\prob$-a.s. true $\forall x\in \R^d$ also in this case. 
Figure~\ref{fig:paths} shows sample paths of Gaussian, Poisson (compound Poisson with $\nu=\delta_1$) and bi-directional gamma noise fields with identical covariance.

\begin{figure}
\centering
\includegraphics[height=.18\textheight]{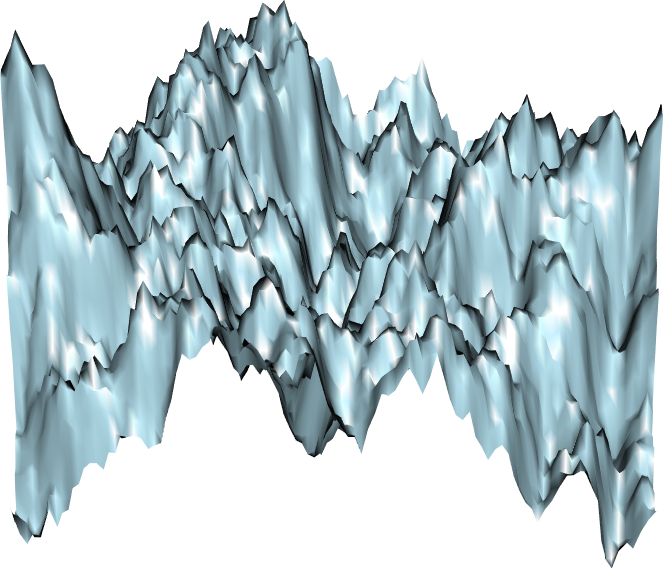}.  \hspace{.1cm}
\includegraphics[height=.18\textheight]{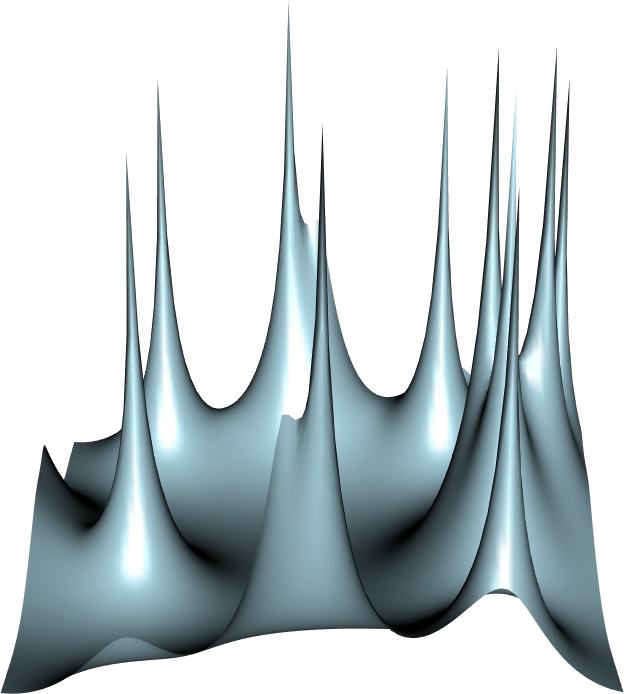} \hspace{.1cm}
\includegraphics[height=.18\textheight]{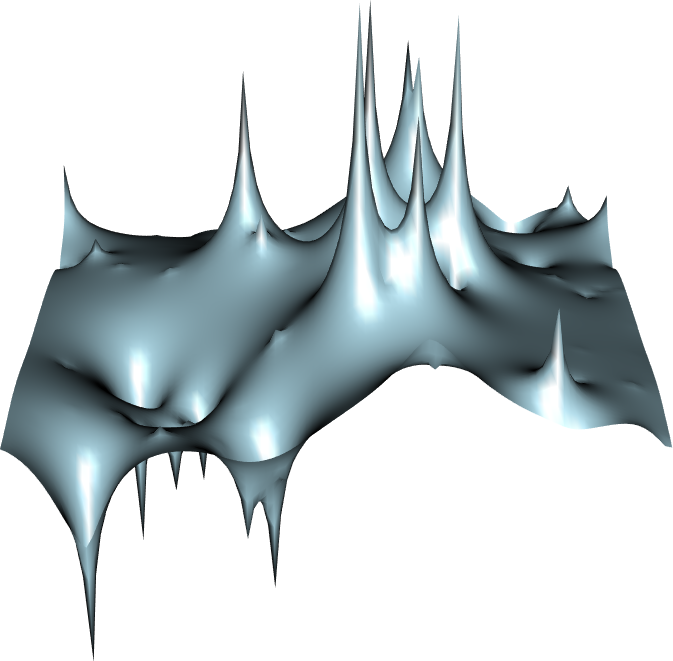}
\caption{Realizations of smoothed noise fields: Gaussian (left), Poisson (middle) and bi-gamma (right) -- each with the same Mat\'ern covariance function.}
    \label{fig:paths}
\end{figure}

\section{Existence of Moments of Solutions of the Random Diffusion Equation}
\label{section:existence_of_moments}

In this section we first construct pathwise solutions to the linear stationary diffusion equation with transformed smoothed Lévy random fields as coefficients. 
After establishing a connection to extreme value theory, based on an estimate due to Talagrand for the Gaussian part combined with a large deviation-type estimate for the Poisson part, we identify sufficient conditions for the moments of the solution exist.

\subsection{Pathwise Existence and Measurability of Solutions}

Given a domain $D \subset \mathbb{R}^d$, i.e.\ $D$ is open, bounded, and connected with a Lipschitz boundary $\partial D$, a measurable partition of its boundary $\partial D = \partial_D \cup \partial_N$ such that $\partial_D \cap \partial_N = \emptyset$ and 
such that $\partial_D$ has positive surface measure,
we consider the boundary value problem for the stationary diffusion equation
\begin{equation} \label{eq:diff_eq}
	\left\{ 
	\begin{array}{rll}
		-\nabla \cdot (a \nabla u) = f & \mbox{ in }D,\\
		u = g_D                        & \mbox{ along }\partial_D,\\
		\vn \cdot a\nabla u = g_N    & \mbox{ along }\partial_N,
	\end{array}
	\right.
\end{equation}
where $f\in L^2(D)$ is a given source term, 
$g_D \in H^{\frac{1}{2}}(\partial_D)$ denotes given Dirichlet boundary data, $g_N \in H^{-\frac{1}{2}}(\partial_N)$  given Neumann boundary data, and $\vn$ denotes the unit outward normal vector along $\partial D$. 
The coefficient function $a \in L^\infty(D)$ models the conductivity throughout the domain $D$. 
As usual, we interpret (\ref{eq:diff_eq}) in the weak sense.

The boundary value problem \eqref{eq:diff_eq} models a great variety of phenomena in the physical sciences, among these groundwater flow in a porous medium governed by Darcy's law, which expresses the (pointwise) volumetric flux  as a function of the hydraulic head $u$ by $-a(x)\nabla u(x)$.
In such a setting, the precise value of the conductivity coefficient is typically uncertain, e.g.\ derived from sparse information based on limited observations.
Modeling such uncertainty by introducing a probability distribution on the set of admissible coefficient functions $a$ results in a random PDE.

We now model the coefficient function $a$ as a transformed smoothed random field $a(x) = T(Z_k(x))$ with a suitable Borel-measurable real-valued function $T$ and a $\tripleBar{\cdot}$-continuous stationary noise field $Z$ smoothed by a window function $k\in L^1(\R^d)\cap L^2(\R^d)$.
Our goal is the estimation of quantities of interest associated with the solution $u$ of the random boundary value problem such as statistical moments, the probability of certain events or the expected or  maximal flow through a subdomain or boundary.

As a first step, we establish the pathwise existence and uniqueness of solutions. 
For each $\omega\in\Omega$, the assumption
\begin{equation} \label{ellipticity inequality}
	0 
	< 
	\essinf_{x\in D} a(x,\omega) 
	\leq 
	\esssup_{x\in D} a(x,\omega) 
	< 
	\infty
\end{equation}
ensures that the differential operator in the boundary value problem \eqref{eq:diff_eq} obtained by setting $a$ to be a realization $a(\cdot,\omega)$ of the random field $a=T\circ Z_k$ is strictly elliptic. 
Hence, there exists a unique $u=u(\cdot,\omega)\in H^1(D)$ which solves \eqref{eq:diff_eq} with $a=a(\cdot,\omega)$. 

\begin{lemma} \label{lemma:weak solution}
\hspace{2ex}
\begin{enumerate}
\item[a)] 
For $a\in L^\infty(D)$ with $\essinf a>0$, $f\in L^2(D)$, $g_D\in H^{\frac{1}{2}}(\partial_D)$, and $g_N\in H^{-\frac{1}{2}}(\partial_N)$, the problem \eqref{eq:diff_eq} has a unique solution $u\in H^1(D)$. 
Moreover, there is a constant $C\geq 1$ independent of $a,f, g_D$, and $g_N$ such that
\begin{equation} \label{eq:a-prio}
	\|u\|_{H^1(D)}
	\leq 
	C \, \frac{1+\|a\|_\infty}{\essinf a}
	\Big(
	\|f\|_{L^2(D)}  + \|g_D\|_{H^{\frac{1}{2}}(\partial_D)} + \|g_N\|_{H^{-\frac{1}{2}}(\partial_N)}
	\Big).
\end{equation}
One can choose $C=(1+C_P^2)\max\{1,2\|E\|, \| \trace \|\}$, where $C_P>0$ only depends on $D$ and $\partial_D$ and where $E:H^{1/2}(\partial_D)\rightarrow H^1(D)$ denotes an extension operator and $\trace:H^1(D)\rightarrow H^{1/2}(\partial D)$ the trace operator.
\item[b)] 
Let $Z$ be a $\tripleBar{\cdot}$-continuous generalized random field and $k\in L^1\cap L^2(\R^d)$ a window function such that the random field $(Z_k(x))_{x\in\R^d}$, has almost surely continuous paths. 
Then for a strictly positive, locally Lipschitz continuous function $T$ on $\R$, we have for the random conductivity $a:=T\circ Z_k\in L^\infty(D)$ as well as $\essinf a > 0$ almost surely. 
Denoting the (almost surely) existing solution of \eqref{eq:diff_eq} with conductivity function $a(\cdot,\omega)$ by $u(\cdot,\omega)$, the mapping $\omega\mapsto u(\cdot,\omega)$ is an $H^1(D)$-valued, Borel-measurable random variable.
\item[c)] 
Let $Z$ be a $\tripleBar{\cdot}$-continuous generalized random field and let $k_{\alpha,m}$ be a Mat\'ern kernel with $\alpha>d+\max\{0,\frac{3d-12}{8}\}$. 
Then the random field $(Z_{k_{\alpha,m}}(x))_{x\in\R^d}$ has almost surely continuous paths. The same assertion holds for $\alpha > d/2$ in case $Z$ is a Gaussian random field.  
\end{enumerate}
\end{lemma}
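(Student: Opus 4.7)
I would follow the standard homogenization of the Dirichlet condition. Set $u = w + E g_D$ with $E : H^{\frac12}(\partial_D) \to H^1(D)$ a continuous extension, reducing the problem to finding $w$ in the closed subspace $V := \{v \in H^1(D) : (\trace v)|_{\partial_D} = 0\}$. On $V$ the bilinear form $B(u,v) := \int_D a \nabla u \cdot \nabla v \, \d x$ is bounded by $\|a\|_\infty$ and coercive with constant $\essinf a / (1+C_P^2)$, where $C_P$ is the Poincar\'e-type constant associated with vanishing trace on $\partial_D$ (which depends only on $D$ and $\partial_D$ because $\partial_D$ has positive surface measure). The right-hand-side functional combines $f$, the Neumann term (bounded via $\trace$), and the absorbed term $-B(E g_D, \cdot)$. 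Lax-Milgram then yields existence, uniqueness, and, by collecting the individual bounds, the estimate \eqref{eq:a-prio} with the claimed constant $C=(1+C_P^2)\max\{1,2\|E\|,\|\trace\|\}$.

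\textbf{Plan for (b).} The continuity of paths of $Z_k$ combined with continuity and positivity of $T$ shows $a(\cdot,\omega) = T(Z_k(\cdot,\omega))$ is a.s.\ continuous on the compact set $\overline{D}$; hence it is bounded and attains a strictly positive minimum, so $a(\cdot,\omega) \in L^\infty(D)$ and $\essinf a(\cdot,\omega) > 0$ almost surely. To obtain measurability of $\omega \mapsto u(\cdot,\omega)$, I would first verify that $\omega \mapsto a(\cdot,\omega)$ is Borel-measurable as a map into $C(\overline{D})$: pick a countable dense set $\{x_n\} \subset \overline{D}$; each $\omega \mapsto a(x_n,\omega)$ is a random variable, and a.s.\ continuity of paths lets one recover the $\|\cdot\|_\infty$-norm and the function itself from this countable family, giving measurability in $C(\overline{D}) \hookrightarrow L^\infty(D)$. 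Then I would invoke the continuity of the solution operator $S : \{a \in L^\infty(D) : \essinf a > 0\} \to H^1(D)$; this is a routine consequence of the a priori estimate in (a), since if $a_n \to a$ in $L^\infty$, the difference $u_n - u$ solves a problem whose right-hand side vanishes in the limit, forcing $u_n \to u$ in $H^1(D)$. Composing the measurable $\omega \mapsto a(\cdot,\omega)$ with the continuous $S$ yields the claim.

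\textbf{Plan for (c).} The general case is an immediate application of Theorem \ref{thm:matern}, which under the stated hypothesis $\alpha > d + \max\{0,\tfrac{3d-12}{8}\}$ produces a $\mu$-almost surely continuous version of $y \mapsto \omega * k_{\alpha,m}(y) = Z_{k_{\alpha,m}}(y)$. For the Gaussian case, Remark \ref{rem:ContGauss} suggests improving the threshold to $2\alpha > d$ via Kolmogorov's continuity criterion. My implementation would be: apply formula \eqref{eqa:CovFromSmooth} with $k = k_{\alpha,m}$ to get $\Var\bigl(G_{k_{\alpha,m}}(x) - G_{k_{\alpha,m}}(y)\bigr) = 2\sigma^2\bigl(k_{2\alpha,m}(0) - k_{2\alpha,m}(x-y)\bigr)$; invoke the local H\"older regularity of $k_{2\alpha,m}$ near the origin (Lemma \ref{lemma:mat-hold}) to bound this by $C|x-y|^{2\beta}$ for some $\beta > 0$ whenever $2\alpha > d$; then, because the increments are centred Gaussian, $\ev{|G_{k_{\alpha,m}}(x) - G_{k_{\alpha,m}}(y)|^{2p}} \leq C_p |x-y|^{2p\beta}$ for every $p$, and choosing $p$ large enough that $2p\beta > d$ produces a continuous modification.

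\textbf{Main obstacle.} The genuinely delicate part is the measurability argument in (b): although intuitively obvious, one must justify carefully that $\omega \mapsto a(\cdot,\omega)$ is Borel-measurable into $L^\infty(D)$, which is most cleanly done by passing through $C(\overline{D})$ using a countable dense set, and that the nonlinear solution operator is continuous with respect to the $L^\infty$-topology on the coefficient. The remaining ingredients in (a) and (c) are standard: Lax-Milgram in the mixed-BC setting for (a), and direct appeal to Theorem \ref{thm:matern} together with Kolmogorov's criterion for (c).
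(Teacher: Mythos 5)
Your proposal follows essentially the same route as the paper in all three parts: (a) homogenize the Dirichlet data via the extension operator $E$, use Poincaré-type coercivity on $H^1_D(D)$ and Lax--Milgram/Riesz to obtain existence, uniqueness, and the a priori bound with the stated constant; (b) show $\omega\mapsto Z_k(\cdot,\omega)$ is Borel-measurable into $C(\bar D)$ via a countable dense set of evaluation points, compose with the continuous map $q\mapsto T\circ q$ and the continuous solution operator $a\mapsto u_a$; (c) invoke Theorem \ref{thm:matern} for the general case and, for the Gaussian case, the Kolmogorov continuity criterion with the Hölder estimate of Lemma \ref{lemma:mat-hold}(i) applied to $k_{2\alpha,m}$, exactly as sketched in Remark \ref{rem:ContGauss}. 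The only cosmetic difference is that the paper phrases the variational step in (a) through the weighted inner product $(\cdot,\cdot)_a$ and Riesz representation rather than Lax--Milgram, but since the form is symmetric these are interchangeable.
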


\begin{proof}
Existence and uniqueness in a) are well known and can be found in many textbooks on elliptic boundary value problems. 
However, as we could not find a reference for the a priori bound \eqref{eq:a-prio}, we provide a brief sketch of its proof for the reader's convenience. 
We seek $u\in H^1(D)$ with $\trace(u) = g_D$ on $\partial_D$ and
\begin{eqnarray}\label{eq:diff_eq_weak}
	\int_D a  \nabla u \cdot \nabla v \,\d x
	&=&
	\int_{\partial_N} g_N \trace(v) \,\d\sigma 
	+
	\int_D [fv - a \nabla (E g_D) \cdot \nabla v] \,\d x\;( =: \ell(v))
\end{eqnarray}
for all $v\in H^1_D(D):=\{w\in H^1(D);\,\trace(v)=0 \mbox{ on }\partial_D\}$, where $E$ denotes an extension operator from $H^{\frac{1}{2}}(\partial_D)$ to $H^1(D)$. 
By \cite[Theorem 6.1.5.4 (page 358)]{Triebel1992} (and \cite[Theorem 3.29, Theorem 3.30]{McLean2000}), the left-hand side of \eqref{eq:diff_eq_weak} defines an inner product $(\cdot,\cdot)_a$ on the closed subspace $H^1_D(D) \subset H^1(D)$ whose associated norm $\|\cdot\|_a$ satisfies, for some suitable $C_P>0$,
\begin{equation} \label{equivalence of Sobolev norms}
	\sqrt{\frac{\essinf a}{1+C_P^2}} \|v\|_{H^1(D)}
	\leq
	\|v\|_a\leq \|a\|_\infty\|v\|_{H^1(D)}
	\qquad \forall \,v \in H^1_D(D).
\end{equation}
Applying Riesz' Representation Theorem to the continuous linear functional $\ell$ on the right in \eqref{eq:diff_eq_weak} gives a unique $v_\ell \in H^1_D(D)$ with $(v_\ell,v)_a = \ell(v) \;\forall \,v \in H^1_D(D)$ and
\begin{equation*}
	\|v_\ell\|_{H^1 (D)}
	\leq 
	\frac{1+C_P^2}{\essinf a} 
	\Big(
	\|f\|_{L^2(D)} + \|a\|_\infty\|E\|\|g_D\|_{H^{\frac{1}{2}}(\partial_D)}
	+ \|g_N\|_{H^{-\frac{1}{2}}(\partial_N)}\|\trace\|
	\Big).
\end{equation*}
Hence $u := v_\ell + Eg_D$ is the unique (weak) solution of \eqref{eq:diff_eq} and the desired inequality follows with $C:=(1+C_P^2)\max\{1,2\|E\|, \|\trace\|\}$.
	
To prove b), we first show that $\pspace\rightarrow C(\bar{D}), \omega\mapsto Z_k(\cdot,\omega)$ is measurable with respect to the Borel $\sigma$-algebra generated by the $\|\cdot\|_\infty$-norm. 
In fact, as $Z_k(x)\in L^0\pspace$, $x\in\R^d$, for any $q\in C(\bar{D})$ and $ \varepsilon>0$ we have that
\[
   Z_k^{-1}\big(B_\varepsilon(q)\big)
   =
   \{\|Z_k-q\|_\infty\leq\varepsilon\}=\bigcap_{x\in\bar{D}\cap\Q^d}\{|Z_k(x)-q(x)|
   \leq \varepsilon\}
\]
is measurable.
Since $(C(\bar{D}),\|\cdot\|_\infty)$ is separable, every open $U\subseteq C(\bar{D})$ is a countable union of open balls $B_\varepsilon(q)$, so the above implies that $\{Z_k\in U\}$ is measurable for any open $U\subseteq C(\bar{D})$.
	
Furthermore, due to the local Lipschitz continuity of $T$, $q\mapsto T\circ q$ is $\|\cdot\|_\infty$-continuous on $C(\bar{D})$ and thus $\|\cdot\|_\infty$-Borel measurable. 
To see that for fixed $f\in L^2(D), g_D\in H^{\frac{1}{2}}(\partial_D)$, and $g_N\in H^{-\frac{1}{2}}(\partial_N)$ the solution map
\[
   C_+(\bar{D})
   :=
   \{a\in C(\bar{D});\,\inf a>0\}\rightarrow H^1(D),
   \quad 
   a\mapsto u_a
\]
is continuous, where $u_a$ denotes the unique solution to \eqref{eq:diff_eq} with conductivity $a$, cf.\ \cite{Hackbusch2017} or the methods applied in Section~5. 
Thus, $u\in L^0(\Omega, H^1(D))$, i.e.\ b) holds.
	
	Finally, c) is an immediate consequence of Theorem \ref{thm:matern} and Remark \ref{rem:ContGauss}.
\end{proof}

\subsection{Integrability of the Solution}

In this subsection we investigate the integrability of solutions to the boundary value problem \eqref{eq:diff_eq} with random diffusion coefficient $a$ given by a transformed smoothed Lévy noise field. 
More generally, we are interested in the existence of moments of the Sobolev norm of solutions.
Our first result in this direction states that the moments of the weak solution $u$ can be estimated using the extreme values of the random diffusion coefficient $a=T\circ Z_k$.

\begin{lemma} \label{lemma:p-est}
Let $Z$ be a $\tripleBar{\cdot}$-continuous generalized random field and let $k\in L^1(\R^d)\cap L^2(\R^d)$ be a window function such that $(Z_k(x))_{x\in\R^d}$ has almost surely continuous paths. 
Moreover, let $T$ be locally Lipschitz on $\R$ such that with $h\geq 0$, $B,\rho>0$ it holds that $B^{-1} \e^{-\rho |z|^h}\leq T(z)\leq B \e^{\rho|z|^h}$ for all $z\in\R$.
	
Then, for the random conductivity $a:=T\circ Z_k$ there is $C\geq 1$ such that for all $f\in L^2(D)$, $g_D\in H^{\frac{1}{2}}(\partial_D)$, and $g_N\in H^{-\frac{1}{2}}(\partial_N)$ the solution $u$ to the random boundary value problem \eqref{eq:diff_eq} satisfies
\begin{equation*}
	\ev{\lVert u \rVert_{H^1(D)}^n}
	\leq 
	\tilde{C}^n 2^{n-1}(B^n+B^{2n}) 
		\, 
		\sum_{j=0}^\infty e^{2n\rho(j+1)^h} \, 
		\prob(\sup_{x\in D}\lvert Z_k(x) \rvert \geq j),
	\qquad \forall\,n\in\N
\end{equation*}
with $\tilde{C}=C(\|f\|_{L^2(D)}+\|g_D\|_{H^{\frac{1}{2}}(\partial_D)}+\|g_N\|_{H^{-\frac{1}{2}}(\partial_N)})$.
\end{lemma}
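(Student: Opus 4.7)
The plan is to combine the deterministic a priori estimate \eqref{eq:a-prio} from Lemma~\ref{lemma:weak solution}(a) with the two-sided exponential bound on $T$, and then to estimate the resulting exponential moment of $\sup_{x\in D}|Z_k(x)|$ via a level-set decomposition. Because Lemma~\ref{lemma:weak solution}(b) guarantees that $Z_k$ has (almost surely) continuous paths and $T$ is locally Lipschitz and strictly positive, the realization-wise bound \eqref{eq:a-prio} is available for $\prob$-a.e.\ $\omega$.

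First, I would abbreviate $M:=\sup_{x\in D}|Z_k(x)|$, which is a $[0,\infty]$-valued random variable and almost surely finite by continuity of the paths on the compact set $\overline{D}$. The hypothesis $B^{-1}\e^{-\rho|z|^h}\leq T(z)\leq B\e^{\rho|z|^h}$ immediately gives, pointwise in $\omega$,
\[
   \|a\|_\infty \leq B\e^{\rho M^h},
   \qquad
   \essinf a \;\geq\; B^{-1}\e^{-\rho M^h},
\]
so that
\[
   \frac{1+\|a\|_\infty}{\essinf a}
   \;\leq\;
   B\e^{\rho M^h}\bigl(1+B\e^{\rho M^h}\bigr)
   \;\leq\; B\e^{\rho M^h} + B^2\e^{2\rho M^h}.
\]
Plugging this into \eqref{eq:a-prio} and grouping the data terms into the constant $\tilde C = C(\|f\|_{L^2(D)}+\|g_D\|_{H^{1/2}(\partial_D)}+\|g_N\|_{H^{-1/2}(\partial_N)})$, we obtain the pathwise bound
\[
   \|u\|_{H^1(D)} \;\leq\; \tilde C\bigl(B\e^{\rho M^h}+B^2 \e^{2\rho M^h}\bigr).
\]

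Next, I would raise this to the $n$-th power, apply the elementary inequality $(x+y)^n\leq 2^{n-1}(x^n+y^n)$, and dominate $\e^{n\rho M^h}\leq \e^{2n\rho M^h}$ in the first term to collect a single exponential:
\[
   \|u\|_{H^1(D)}^n
   \;\leq\;
   \tilde C^n\,2^{n-1}(B^n+B^{2n})\,\e^{2n\rho M^h}.
\]
Taking expectations reduces the claim to estimating $\ev{\e^{2n\rho M^h}}$.

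Finally, I would use a standard layer-cake / dyadic decomposition: partitioning $\Omega$ according to the integer part of $M$,
\[
   \ev{\e^{2n\rho M^h}}
   = \sum_{j=0}^\infty \ev{\e^{2n\rho M^h}\mathds{1}_{\{j\leq M<j+1\}}}
   \leq \sum_{j=0}^\infty \e^{2n\rho(j+1)^h}\prob(j\leq M<j+1)
   \leq \sum_{j=0}^\infty \e^{2n\rho(j+1)^h}\prob(M\geq j),
\]
which after substitution yields exactly the claimed inequality. The only subtle point is ensuring $M<\infty$ almost surely so that the exponential is well-defined on a set of full measure; this was already secured by the continuous-path hypothesis on $Z_k$ and compactness of $\overline{D}$. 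There is no real technical obstacle here---the proof is essentially a bookkeeping exercise---but the main thing to watch is the correct grouping of constants so that the $2^{n-1}(B^n+B^{2n})\tilde C^n$ prefactor emerges cleanly; one must resist the temptation to sharpen the $(B e^{\rho M^h}+B^2 e^{2\rho M^h})^n$ expansion, since keeping only the largest exponential simplifies the subsequent series at the cost only of a harmless constant.
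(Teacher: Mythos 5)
Your proof is correct and follows essentially the same route as the paper's: apply the a priori bound from Lemma~\ref{lemma:weak solution}~a), replace $\|a\|_\infty$ and $\essinf a$ by the two-sided exponential bounds on $T$, use $(x+y)^n\leq 2^{n-1}(x^n+y^n)$ and dominate by the larger exponential, and finish with the layer-cake decomposition over the integer levels of $\sup_{x\in D}|Z_k(x)|$. The only cosmetic difference is that you unfold the ratio $(1+\|a\|_\infty)/\essinf a$ into a sum before taking the $n$-th power, whereas the paper keeps the quotient structure one line longer before expanding; the constants and the final estimate agree.
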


\begin{proof}
By assumption, there is a $\prob$-null set $N \in {\mathfrak A}$ such that $(Z_k(x))_{x\in\R^d}$ has continuous paths on $N^c$. 
Thus, by setting the following functions equal to zero on $N$ as necessary, both
\[
    \|a\|_\infty = \sup_{x\in D}T(Z_k(x)) = \sup_{x\in D\cap\Q^d}T(Z_k(x))
\]
and
\[
   \essinf a = \inf_{x\in D}T(Z_k(x)) = \inf_{x\in D\cap\Q^d}T(Z_k(x))
\]    
are measurable. 
Applying Lemma \ref{lemma:weak solution} a) and the law of total probability yields
\begin{equation*}
\begin{split}
	&\tilde{C}^{-n} \ev{\| u\|_{H^1(D)}^n}
	\leq 
	\ev{\left(\frac{1+\sup_{x\in D} T(Z_k(x))}{\inf_{x\in D}T(Z_k(x))}\right)^n} \\
	&\leq
	\ev{\frac{(1+B\sup_{x\in D} \e^{\rho|Z_k(x)|^h})^n}
	         {(B\inf_{x\in D}e^{\rho|Z_k(x)|^h})^{-n}}}\\
	&\leq 
	\ev{\frac{2^{n-1}+2^{n-1}B^n \e^{n\rho\sup_{x\in D}|Z_k(x)|^h}}
	        {B^{-n} \e^{-n\rho\sup_{x\in D}|Z_k(x)|^h}}}
	\leq 
	2^{n-1} (B^n+B^{2n}) \ev{\e^{2n\rho \sup_{x\in D}|Z_k(x)|^h}}\\
	&\leq 
	2^{n-1}(B^n+B^{2n})
	\sum_{j=0}^\infty 
	\ev{\e^{2n\rho(j+1)^h} | j \leq \sup_{x\in D}|Z_k(x)| < j+1}
	\prob(\sup_{x\in D}|Z_k(x)|\geq j)\\
	&\leq 
	2^{n-1}(B^n+B^{2n})
	\sum_{j=0}^\infty \e^{2n\rho(j+1)^h} \prob(\sup_{x\in D}|Z_k(x)|\geq j).
\end{split}
\end{equation*}
~
\end{proof}

Lemma~\ref{lemma:p-est} shows the need for estimating the probabilities $\prob(\sup_{x\in D}\lvert Z_k(x) \rvert  \geq j), j \in \mathbb N$. 
For a smoothed Lévy noise field $Z_k$ we obtain such an estimate by decomposing $Z_k$ into its Gaussian part $G_k$ and its Poisson part $P_k$ and then separately estimating the extreme values of each. 
For the Gaussian part, the following result due to Talagrand will be crucial.

\begin{lemma}{(Talagrand, \cite[Thm.\ 2.4]{Tal94})} \label{lemma:tal}
Let $(G(x))_{x\in D}$ be a centered Gaussian field with a.s.\ continuous paths and let $\bar{\sigma}^2 = \sup_{x\in D} \boldsymbol{\mathsf  E}[G(x)^2]$.
Consider the canonical distance $d_c(x,y) := \ev{(G(x)-G(y))^2}^{1/2}$
on $D$ and let $N(D,d_c,\varepsilon)$ be the smallest number of $d$-open balls with $d_c$-radius $\varepsilon$ needed to cover $D$. 
Assume that for some constant $A > \bar{\sigma}$, some $v > 0$ and $0 \leq 
\varepsilon_0 \leq \bar{\sigma}$, the number $N(D,d_c,\varepsilon)$ is bounded above by $(A/\varepsilon)^v$ whenever $\varepsilon\in (0,\varepsilon_0)$.
	
Then there is a universal constant $K>0$ such that for $g \geq \bar{\sigma}^2 \big[ (1+\sqrt{v})/\varepsilon_0\big]$ we have
\begin{equation} \label{eq:tal}
	\prob \bigg( \sup_{x\in D}\lvert G(x) \rvert \geq g \bigg) 
	\leq 
	2 \,\bigg( \frac{KAg}{\sqrt{v}\bar{\sigma}^2} \bigg)^v 
	\Phi \Big( -\frac{g}{\bar{\sigma}} \Big) 
	\leq
	\,\left( \frac{KAg}{\sqrt{v}\bar{\sigma}^2} \right)^v \e^{-\frac{g^2}{2\bar{\sigma}^2}},
\end{equation}
where $\Phi$ denotes the CDF of the standard normal distribution.
If $\varepsilon_0 = \bar{\sigma}$, the condition on $g$ is $g \geq \bar{\sigma} \big[ 1 + \sqrt{v} \big]$.
\end{lemma}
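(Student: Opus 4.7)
The plan is to combine a Borell--TIS Gaussian concentration inequality with a carefully tuned chaining argument at a scale dictated by the level $g$ itself. The entropy hypothesis $N(D,d_c,\varepsilon)\le(A/\varepsilon)^v$ is exploited twice: once via a union bound (producing the polynomial prefactor) and once via a Dudley-type entropy integral (controlling the fluctuation remainder). Throughout I would keep $d_c$ fixed as the ambient metric and work with $(G(x))_{x\in D}$ as a subgaussian process with $d_c^2(x,y)=\ev{(G(x)-G(y))^2}$.

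First, I would pick a distinguished scale $\varepsilon^\ast\in(0,\varepsilon_0)$ of size $\varepsilon^\ast\asymp\bar\sigma^2\sqrt v/g$; the hypothesis $g\ge\bar\sigma^2(1+\sqrt v)/\varepsilon_0$ ensures that $\varepsilon^\ast\le\varepsilon_0$, so the entropy estimate applies at $\varepsilon^\ast$. Extract from this a $d_c$-net $T\subset D$ with $|T|\le(A/\varepsilon^\ast)^v\lesssim (Ag/(\sqrt v\,\bar\sigma^2))^v$, together with a nearest-point map $\pi:D\to T$, $d_c(x,\pi(x))\le\varepsilon^\ast$. Then split
\[
   \sup_{x\in D}|G(x)|
   \le
   \max_{t\in T}|G(t)|
   +
   \sup_{x\in D}|G(x)-G(\pi(x))|.
\]

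For the first summand, each $G(t)$ is centered Gaussian with variance $\le\bar\sigma^2$, so $\prob(|G(t)|\ge g)\le 2\Phi(-g/\bar\sigma)$, and a union bound over $T$ already produces the target shape
\[
   \prob\bigl(\max_{t\in T}|G(t)|\ge g\bigr)
   \;\le\;
   2\,\Bigl(\tfrac{A}{\varepsilon^\ast}\Bigr)^{v}\Phi(-g/\bar\sigma)
   \;\lesssim\;
   \Bigl(\tfrac{KAg}{\sqrt v\,\bar\sigma^2}\Bigr)^{v}\Phi(-g/\bar\sigma),
\]
which is the right-hand side of \eqref{eq:tal}. For the residual $R(x):=G(x)-G(\pi(x))$, build a dyadic sequence of nets $T_k$ at scales $\varepsilon^\ast 2^{-k}$ with $|T_k|\le(A\,2^k/\varepsilon^\ast)^v$ and apply a standard Dudley/Pisier chaining bound; the entropy integral $\int_0^{\varepsilon^\ast}\sqrt{v\log(A/\varepsilon)}\,\d\varepsilon$ gives $\ev{\sup_x|R(x)|}\lesssim \varepsilon^\ast\sqrt v$. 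Borell--TIS applied to the Gaussian process $R$, whose pointwise variance is at most $(\varepsilon^\ast)^2$, then yields Gaussian concentration around this expectation with sub-exponent $(\varepsilon^\ast)^{-2}$. With $\varepsilon^\ast\asymp\bar\sigma^2\sqrt v/g$, both $\ev{\sup|R|}$ and the deviation scale are $O(\bar\sigma^2/g)\cdot\sqrt v\ll g$, so the event $\{\sup|R|\ge g/2\}$ contributes a term of strictly smaller Gaussian exponent which is absorbed into the estimate for $\max_{t\in T}|G(t)|$ up to adjustment of the universal constant $K$.

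The principal obstacle is the bookkeeping for the universal constant and, more subtly, ensuring that the residual's concentration factor does \emph{not} degrade the exponent $g^2/(2\bar\sigma^2)$ in the Gaussian tail $\Phi(-g/\bar\sigma)$ but only modifies the polynomial prefactor. This is precisely the delicate optimisation carried out in Talagrand's paper \cite{Tal94}; my plan would be to follow that template, using the scale choice $\varepsilon^\ast\asymp\bar\sigma^2\sqrt v/g$ to make the two contributions line up with the stated bound.
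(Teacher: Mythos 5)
You should first note that the paper does not prove this lemma at all: it is quoted verbatim from Talagrand \cite[Thm.~2.4]{Tal94} and used as a black box, so there is no internal proof to compare against. Judged on its own terms, your sketch does not close the argument. The immediate bookkeeping problem is that your decomposition is inconsistent: from $\sup_x|G(x)|\le\max_{t\in T}|G(t)|+\sup_x|R(x)|$ the event $\{\sup|G|\ge g\}$ only gives $\{\max_T|G|\ge g-a\}\cup\{\sup|R|\ge a\}$ with thresholds summing to $g$, whereas you bound the net maximum at level $g$ and simultaneously allot $g/2$ to the remainder.

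The deeper issue is that the one-net-plus-Borell--TIS scheme cannot produce the stated inequality, with the exact factor $\Phi(-g/\bar\sigma)$ (or even just $\e^{-g^2/2\bar\sigma^2}$) untouched and only the polynomial prefactor $\bigl(KAg/(\sqrt{v}\bar\sigma^2)\bigr)^v$. The remainder process $R$ has pointwise variance at most $(\varepsilon^\ast)^2$, so for its Borell--TIS tail to be comparable to $\e^{-g^2/2\bar\sigma^2}$ the remainder threshold must satisfy $a\gtrsim \varepsilon^\ast g/\bar\sigma$ (your choice $a\asymp\varepsilon^\ast\sqrt v$ makes this term $\exp(-c\,v)$, a constant, not a Gaussian tail in $g$). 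Feeding $a\asymp\varepsilon^\ast g/\bar\sigma$ back into the net term costs a factor $\e^{ga/\bar\sigma^2}\asymp\e^{g^2\varepsilon^\ast/\bar\sigma^3}$; with your scale $\varepsilon^\ast\asymp\bar\sigma^2\sqrt v/g$ this is $\e^{\sqrt v\,g/\bar\sigma}$, which for $g\gg\bar\sigma\sqrt v$ cannot be absorbed into $\bigl(KAg/(\sqrt v\bar\sigma^2)\bigr)^v$, since the latter grows only logarithmically in $g$ per power of $v$. Optimizing the scale instead (which forces $\varepsilon^\ast\asymp v\bar\sigma^3/g^2$) yields a prefactor of order $\bigl(KAg^2/(v\bar\sigma^3)\bigr)^v$, strictly weaker than the claimed one by $\bigl(g/(\sqrt v\bar\sigma)\bigr)^v$. (Also, Dudley's integral gives $\ev{\sup|R|}\lesssim\varepsilon^\ast\sqrt{v\log(A/\varepsilon^\ast)}$, not $\varepsilon^\ast\sqrt v$.) This trade-off is exactly why Talagrand's result is ``sharper'' than what chaining plus Gaussian concentration delivers; the genuinely delicate part --- preserving the precise Gaussian tail while paying only a polynomial correction, via Gaussian isoperimetry and a multi-scale analysis --- is the content you defer to by saying you would ``follow that template,'' so the essential proof is missing rather than merely unoptimized.
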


We continue with a technical result which will be needed below.

\begin{lemma} \label{lemma:mat-hold}
Let $D\subset \mathbb{R}^d$ be open and bounded, $\alpha > d/2$ and $m > 0$. 
Then the following holds:
\begin{enumerate}[(i)]
\item 
For $0 < \eta < 2\alpha-d$ there exists $C=C(m,\eta,\alpha)>0$ such that for all $x,y \in \mathbb{R}^d$
\begin{equation*}
    \lvert k_{\alpha, m}(x) - k_{\alpha, m}(y) \rvert 
    \leq 
    C(m,\eta,\alpha) \, \lvert x - y \rvert^\eta.
\end{equation*}
\item 
If $\alpha > d/2$, then $|k_{\alpha,m}|$ is bounded, decreases like $f(x)=\e^{-m|x|}$, and the mapping $y \mapsto \sup_{x\in D}|\tau_{y} \left(k_{\alpha, m}(x)\right)| \in L^1(\mathbb{R}^d) \cap L^\infty(\mathbb{R}^d)$. 
\end{enumerate}
\end{lemma}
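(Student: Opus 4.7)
My plan is to work from the absolutely convergent Fourier inversion formula
\[
  k_{\alpha,m}(x) = (2\pi)^{-d/2} \int_{\R^d} e^{ix\cdot\xi} (|\xi|^2+m^2)^{-\alpha}\,d\xi,
\]
valid whenever $\alpha>d/2$ since $\hat k_{\alpha,m}\in L^1(\R^d)$ by comparison with $|\xi|^{-2\alpha}$ at infinity.

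For \textbf{part (i)}, I would exploit the pointwise bound $|e^{ix\cdot\xi}-e^{iy\cdot\xi}|\leq\min\{2,|\xi||x-y|\}\leq 2^{1-\eta}|\xi|^\eta|x-y|^\eta$, valid for $\eta\in[0,1]$. Inserted into the Fourier integral this yields
\[
  |k_{\alpha,m}(x)-k_{\alpha,m}(y)|\leq\frac{2^{1-\eta}}{(2\pi)^{d/2}}|x-y|^\eta\int_{\R^d}\frac{|\xi|^\eta}{(|\xi|^2+m^2)^\alpha}\,d\xi,
\]
and by passage to polar coordinates the remaining integral is finite iff $\eta+d<2\alpha$. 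This delivers the Hölder bound over the full relevant range $\eta\in(0,1\wedge(2\alpha-d))$ (Hölder exponents exceeding $1$ would force $k_{\alpha,m}$ to be constant and are therefore implicitly excluded).

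For \textbf{part (ii)}, boundedness follows at once from $|k_{\alpha,m}(x)|\leq(2\pi)^{-d/2}\|\hat k_{\alpha,m}\|_{L^1}$. The exponential decay, which is the technical heart of the lemma, is obtained from the subordination identity $(|\xi|^2+m^2)^{-\alpha}=\Gamma(\alpha)^{-1}\int_0^\infty t^{\alpha-1}e^{-t(|\xi|^2+m^2)}\,dt$ combined with Fubini and the Fourier inversion of the Gaussian, which gives
\[
  k_{\alpha,m}(x)=\frac{1}{(4\pi)^{d/2}\Gamma(\alpha)}\int_0^\infty t^{\alpha-d/2-1}\exp\bigl(-tm^2-|x|^2/(4t)\bigr)\,dt.
\]
Since the exponent satisfies $tm^2+|x|^2/(4t)\geq m|x|$ by the arithmetic--geometric mean inequality with equality at $t^\ast=|x|/(2m)$, a Laplace-method estimate around $t^\ast$ (equivalently, recognising the integral as the modified Bessel function $K_{\alpha-d/2}(m|x|)$ up to a polynomial prefactor) produces $|k_{\alpha,m}(x)|\leq C(1+|x|)^{\alpha-(d+1)/2}e^{-m|x|}$, and hence $|k_{\alpha,m}(x)|\leq C_{m'}e^{-m'|x|}$ for every $m'\in(0,m)$.

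Finally, the properties of $y\mapsto\sup_{x\in D}|k_{\alpha,m}(x-y)|$ follow from what has just been shown: the $L^\infty$-bound is immediate from the boundedness of $k_{\alpha,m}$, while for integrability I would fix $R>0$ with $\bar D\subset B_R(0)$; on $\{|y|\leq 2R\}$ the supremum is bounded and the set has finite Lebesgue measure, while on $\{|y|>2R\}$ we have $|x-y|\geq|y|/2$ uniformly in $x\in D$, so that the exponential decay supplies the integrable majorant $Ce^{-m|y|/2}$. The main obstacle is clearly the saddle-point estimate for the exponential decay; once that is in place, all remaining claims reduce to routine calculations.
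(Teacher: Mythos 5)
Your argument is correct; part (i) is essentially the paper's own proof (the bound $\min\{2,|\xi|\,|x-y|\}\le 2^{1-\eta}|\xi|^\eta|x-y|^\eta$ inserted into the Fourier integral, with convergence exactly for $\eta<2\alpha-d$, and the same implicit restriction $\eta\le 1$ that the paper also makes). For part (ii) you take a genuinely different route: the paper identifies $k_{\alpha,m}$ via the Hankel transform as an explicit multiple of $(|x|/m)^{\alpha-d/2}K_{\alpha-d/2}(m|x|)$ and simply quotes the standard small- and large-argument asymptotics of the modified Bessel function, whereas you derive the representation $k_{\alpha,m}(x)=c_{d,\alpha}\int_0^\infty t^{\alpha-d/2-1}\e^{-tm^2-|x|^2/(4t)}\,\d t$ from the subordination identity and estimate it directly by the arithmetic--geometric mean inequality plus a Laplace-type bound at $t^\ast=|x|/(2m)$. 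The two are mathematically the same object (your integral is $K_{\alpha-d/2}$ up to the polynomial prefactor, as you note), but your version is self-contained, makes the prefactor $(1+|x|)^{\alpha-(d+1)/2}$ explicit, and delivers the decay in the form $\e^{-m'|x|}$ for every $m'<m$ --- which is in fact the form actually used later in the paper (the cut-off estimate in Corollary~\ref{lem:cut_off} works with $0<m'<m$), and is more honest than the paper's informal phrase ``decreases like $\e^{-m|x|}$''. The price is that you must actually carry out the Laplace/saddle-point estimate rather than cite Bessel asymptotics; since the phase $t m^2+|x|^2/(4t)$ is convex with minimum $m|x|$ and nondegenerate second derivative at $t^\ast$, this is routine, and your concluding reduction of the $L^1\cap L^\infty$ statement for $y\mapsto\sup_{x\in D}|k_{\alpha,m}(x-y)|$ (splitting $|y|\le 2R$ and $|y|>2R$ with $|x-y|\ge|y|/2$) is exactly the compactness argument the paper leaves implicit.
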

\begin{proof}
\begin{enumerate}[(i)]
\item 
For fixed $\eta \in (0,1)$ and all $z, w \in \mathbb{C}$ with $|z - w| \leq 2$ we have 
$|z - w| \leq 2^{1-\eta} |z - w|^\eta$; moreover, we have $| \e^{-i\xi \cdot x} - \e^{-i\xi \cdot y}| \leq 2$
for all $x, y, \xi \in \mathbb{R}^d$ and, by the mean value theorem,
$|\e^{-i\xi\cdot x} - \e^{-i\xi \cdot y}| \leq  |\xi \cdot (x-y)|$.
Combining these, we obtain
\begin{align*}
    | k_{\alpha, m}(x) - k_{\alpha, m}(y) | 
    &= 
    \frac{1}{(2\pi)^d}
    \left|
    \int_{\mathbb{R}^d} 
    \frac{\e^{-i\xi \cdot x} - \e^{-i\xi \cdot y}}
         {(| \xi |^2 + m^2)^\alpha}
    \, \d\xi\right| \\[.5em]
    &\leq 
    \frac{2^{1-\eta}}{(2\pi)^d} 
    |x - y|^\eta 
    \int_{\mathbb{R}^d} 
    \frac{|\xi|^\eta}{(|\xi|^2 + m^2)^\alpha} \, \d\xi.
\end{align*}
The last integral converges if $0<\eta < 2\alpha - d$.\vspace{.5em}
    
\item By applying the Hankel transform one can see that
\begin{equation*}
    \spF^{-1} (\hat k_{\alpha,m})(x) 
    =
    \frac{(|x|/m)^{\alpha - d/2} K_{\alpha - d/2}(|x|m)}
         {2^{\alpha -1}\Gamma(\alpha)(2\pi)^{d/2}}
\end{equation*}
where $K$ is the modified Bessel function of second kind. 
For a fixed $v > 0$, 
$K_v(|x|) \sim \frac{1}{2}\Gamma(v)(\frac{1}{2}|x|)^{-v}$ 
for $|x|\to 0$ and 
$K_v(|x|) \sim \sqrt{\pi/(2|x|)} \e^{-|x|}$ for $|x| \to \infty$. 
This implies that $|k_{\alpha, m}|$ is bounded and decreases as $\e^{-m|x|}$. 
Therefore since $D$ is relatively compact, 
$\sup_{x\in D} \left|\tau_y k_{\alpha,m}(x)\right|$ 
is bounded and exponentially decreasing as well, which implies the assertion.
\end{enumerate}
\end{proof}

The next result is formulated in a more general way than needed in this section. However, the general result will be used as stated in Section~\ref{section:approx} below. 
The following assumption will used repeatedly in the following. 
Recall that for a Borel measure $\nu$ on $\R\backslash\{0\}$ we denote by $\nu_+$ its image measure on $\R_+$ under $|\cdot|$. 
\begin{assumption} \label{assum:Levy}
Let $Z$ be a $\tripleBar{\cdot}$-continuous L\'evy field with characteristic triplet $(b, \sigma^2,\nu)$, such that $\nu$ is a L\'evy measure satisfying $\int_{\R\backslash\{0\}} |s|\,\nu(\d s)<\infty$ and $\int_\R(\e^{\beta s}-1)\nu_+(\d s)<\infty$ for some $\beta>0$.
\end{assumption}

The following proposition gives a Chernov-type exponential upper bound for the supremum over the Poisson part.

\begin{proposition}\label{prop:poisson_est}
Let $P$ be a compound Poisson field, i.e., a Lévy field with characteristic triplet given by $\left(\int_{\{0<|s|\leq 1\}} s\,\nu(\d s), 0,\nu\right)$ with a finite measure $\nu$, which satisfies Assumption~\ref{assum:Levy}.
Moreover, let $D\subseteq\R^d$ be open and bounded and let $k_\iota:\R^d\times\R^d\rightarrow\R, \iota\in I,$ be a family of smoothing functions such that with $\tilde{k}_\iota(y):=\sup_{x\in D}|k_\iota(x,y)|, y\in \R^d, \iota\in I$ the following conditions hold:
\begin{itemize}
\item[i)] 
$\forall\,\iota\in I:\tilde{k}_\iota\in L^1(\R^d)\cap L^\infty(\R^d)$.
\item[ii)] 
$\kappa_\infty:=\sup_{\iota\in I}\|\tilde{k}_\iota\|_{L^\infty(\R^d)}<\infty$ as well as $\kappa_1:=\sup_{\iota\in I}\|\tilde{k}_\iota\|_{L^1(\R^d)}<\infty$.
\end{itemize}
Then, for all $\iota\in I$ and $\tau\in (0,1)$ there holds for every $p>0$ 
\begin{align*}
\begin{split}
    \prob & \left(\sup_{x\in D} |P_{k_\iota}(x)|\geq p\right) \\
    &\leq
    \exp\left(
    \frac{\beta\kappa_1}{\kappa_\infty}
    \left(\e^\beta\int\limits_{\{0<s\leq 1\}}|s|\nu_+(\d s)
    +
    \frac{1}{\beta \e (1-\tau)}
    \int\limits_{\{s>1\}} 
    \e^{\beta s}\nu_+(\d s)\right)\right)
    \e^{-\frac{\beta}{\kappa_\infty}\tau p}.
\end{split}
\end{align*}
\end{proposition}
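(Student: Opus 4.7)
The plan is to reduce the supremum over $D$ to a tail bound for the single non-negative random variable $|P|(\tilde k_\iota)$ via the majorization of Remark~\ref{rel:levy_pos}, and then to apply a Chernov (exponential Markov) argument whose main ingredient is an explicit Laplace transform formula for $|P|(\tilde k_\iota)$.

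First I would invoke Remark~\ref{rel:levy_pos} to obtain the a.s.\ pointwise bound $|P_{k_\iota}(x)|\leq\int|k_\iota(x,y)|\,\d|P|(y)\leq|P|(\tilde k_\iota)$ for every $x\in D$, where the right-hand side is independent of $x$; hence $\sup_{x\in D}|P_{k_\iota}(x)|\leq|P|(\tilde k_\iota)$ almost surely. As recorded in Remark~\ref{rel:levy_pos}, $|P|$ is itself a compound Poisson noise field with triplet $(b^+,0,\nu_+)$, and Assumption~\ref{assum:Levy} guarantees $\int_{s>1}s\,\nu_+(\d s)<\infty$; Proposition~\ref{prop:levy} and Remark~\ref{rem:levy} therefore extend $|P|$ in $\tripleBar{\cdot}$-continuous fashion to $L^1(\R^d)\cap L^2(\R^d)$. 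Since $\tilde k_\iota\in L^1\cap L^\infty\subseteq L^1\cap L^2$ by (i)--(ii), the random variable $|P|(\tilde k_\iota)$ is well defined and non-negative.

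The crucial step is the Laplace transform identity
\begin{equation*}
    \ev{\e^{\lambda|P|(\tilde k_\iota)}}
    =
    \exp\!\left(\int_{\R^d}\!\!\int_{\R_+}\!\!\bigl(\e^{\lambda s\tilde k_\iota(y)}-1\bigr)\,\nu_+(\d s)\,\d y\right),\qquad 0\leq\lambda\kappa_\infty\leq\beta,
\end{equation*}
which I would obtain either by analytically continuing the characteristic functional of $|P|$ into the strip on which the right-hand side is finite (finiteness following from Assumption~\ref{assum:Levy} combined with (i)--(ii)), or more explicitly by repeating the computation~\eqref{eqa:CF_calc} with $\e^{is\tilde k_\iota}$ replaced by $\e^{\lambda s\tilde k_\iota}$ in the Poissonian decomposition $|P|=\sum_j|P|_{\Lambda_j}$. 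Setting $\lambda:=\tau\beta/\kappa_\infty$ and applying Markov's inequality then yields
\begin{equation*}
    \prob\bigl(\sup_{x\in D}|P_{k_\iota}(x)|\geq p\bigr)
    \leq \e^{-\tau\beta p/\kappa_\infty}\,\exp\!\left(\int_{\R^d}\!\!\int_{\R_+}\!\!\bigl(\e^{\lambda s\tilde k_\iota(y)}-1\bigr)\,\nu_+(\d s)\,\d y\right).
\end{equation*}

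It remains to bound the double integral by splitting the inner integration at $s=1$. For $0<s\leq 1$, the inequality $\e^x-1\leq x\e^x$ applied with $\lambda s\tilde k_\iota(y)\leq\tau\beta\leq\beta$ produces a contribution at most $\lambda\kappa_1\e^\beta\int_{0<s\leq 1}|s|\,\nu_+(\d s)$. For $s>1$, the same inequality combined with $\lambda\tilde k_\iota(y)\leq\tau\beta$ yields $\e^{\lambda s\tilde k_\iota(y)}-1\leq\lambda\tilde k_\iota(y)\,s\,\e^{\tau\beta s}$, after which the calculus estimate $s\,\e^{-(1-\tau)\beta s}\leq((1-\tau)\beta\e)^{-1}$ (obtained by maximizing at $s=((1-\tau)\beta)^{-1}$) bounds this contribution by $\lambda\kappa_1((1-\tau)\beta\e)^{-1}\int_{s>1}\e^{\beta s}\,\nu_+(\d s)$. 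Substituting $\lambda=\tau\beta/\kappa_\infty$ and absorbing $\tau\leq 1$ in both summands recovers exactly the stated bound. The main obstacle I anticipate is the rigorous justification of the Laplace transform identity for the $L^1\cap L^\infty$ function $\tilde k_\iota$ rather than for a Schwartz test function; this reduces to showing that the right-hand side defines a $\tripleBar{\cdot}$-continuous functional which coincides with $\ev{\e^{\lambda|P|(\cdot)}}$ on $\spS$, from which the general identity follows by density and a dominated-convergence argument along a mollifying sequence. Once this is in place, the remainder is routine two-piece bookkeeping.
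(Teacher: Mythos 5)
Your proposal is correct and follows essentially the same route as the paper: majorize $\sup_{x\in D}|P_{k_\iota}(x)|$ by $|P|(\tilde k_\iota)$ via Remark~\ref{rel:levy_pos}, control the exponential moment through the explicit point-process representation of $|P|$ (the analogue of the computation \eqref{eqa:CF_calc}), apply Markov's inequality with $\vartheta=\tau\beta/\kappa_\infty$, and split the jump integral at $s=1$ using $\e^x-1\leq x\e^x$ and $\max_{s>0}s\e^{-\alpha s}=(\alpha\e)^{-1}$. The only cosmetic difference is that the paper needs just the one-sided bound $\ev{\e^{\vartheta\sup_{x\in D}|P_{k_\iota}(x)|}}\leq\exp\bigl(\int_{\R^d}\int_{\R_+}(\e^{\vartheta s\tilde k_\iota(y)}-1)\,\nu_+(\d s)\,\d y\bigr)$, obtained directly from the representation $|P|=\sum_j|P_{\Lambda_j}|$ (your second suggested route, where monotone convergence suffices), so the analytic continuation and mollification argument you anticipate as the main obstacle is not needed.
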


\begin{proof}
For $\iota\in I$ we define $\kappa_\iota:=\|\tilde{k}_\iota\|_{L^\infty(\R^d)}$ as well as
\[
   f_\iota:(0,\infty)\rightarrow[0,\infty], 
   \quad
   f_\iota(\vartheta)
   :=
   \int_{\R^d}\int_{\R_+} 
   (\e^{\vartheta s \tilde{k}_\iota(y)}-1)\nu_+(\d s)\d y
\]
and
\[
   \theta_\iota:(0,\infty)\rightarrow\R\cup\{\infty\},
   \quad
   \theta_\iota(p):=\sup_{\vartheta>0}\vartheta p-f_\iota(\vartheta).
\]
    Then $f_\iota$ is a convex increasing function and $\theta_\iota$ is its Legendre transform (Fenchel transform, conjugate function).
    
    With the notation from Remark \ref{rel:levy_pos} and a calculation analogous to \cref{eqa:CF_calc}, for $\vartheta>0$ we obtain, abbreviating $P_\iota(x):=P_{k_\iota}(x), \iota\in I, x\in D,$
	\begin{eqnarray*}
		\boldsymbol{\mathsf  E}[\e^{\vartheta \sup_{x\in D}\vert
			P_\iota(x) \rvert}]&\leq&  \boldsymbol{\mathsf  E}[\e^{\vartheta \sup_{x\in D}\lvert P\rvert_{|k_\iota|}(x) }] \leq \boldsymbol{\mathsf  E}[\e^{\vartheta \sum_j \sum_{l=1}^{N_{\Lambda_j}}|S_l^{(j)}|\tilde{k}_\iota(X_l^{(j)})}]\\
			&=&\e^{\int_{\mathbb{R}^d} \int_{\mathbb{R}_+}(\e^{\vartheta s \tilde{k}_\iota(y) }-1)\, \nu_+(\d s)\, \d y}.
	\end{eqnarray*}
	Applying Markov's inequality, this yields for $p>0$
	\begin{eqnarray}\label{estimate:Poisson part}
		\prob\left(\sup_{x \in D} \lvert P_\iota(x) \rvert \geq p\right) &=&
		\inf_{\vartheta>0}\prob\left(\e^{\vartheta \sup_{x \in D} \lvert P_\iota(x) \rvert} \geq \e^{\vartheta p}\right) \leq
		\inf_{\vartheta>0}\frac{\boldsymbol{\mathsf  E}[\e^{\vartheta \sup_{x\in D}\vert P_\iota(x) \rvert}]}{\e^{\vartheta p}}\nonumber\\
		&\leq& \inf_{\vartheta >0} \, \e^{\int_{\mathbb{R}^d} \int_{\mathbb{R}_+}(e^{\vartheta s \tilde{k}_\iota(y) }-1)\, \nu_+(\d s)\, \d y - \vartheta p}\\ 
		&\leq& \, \e^{-\sup\limits_{\vartheta>0}\{\vartheta p - \int_{\mathbb{R}^d} \int_{\mathbb{R}_+}(\e^{\vartheta s \tilde{k}_\iota(y) }-1)\, \nu_+(\d s)\, \d y\} }
		= \e^{-\theta_\iota(p)}.\nonumber
	\end{eqnarray}
Using the hypothesis on $\nu_+$, for $0\leq \vartheta < \frac{\beta}{\kappa_\iota}$ we derive
\begin{align} \label{eq:est_poisson_part}
\begin{split}
	    f_\iota(\vartheta)
	    &=
	    \int_{\mathbb{R}^d}\int_{\mathbb{R}_+}
	    \left(
		\e^{\vartheta s \tilde{k}_\iota(y) }-1
		\right)
		\, \nu_+(\d s)\, \d y \\
	    &=
	    \int_{\mathbb{R}^d}
	    \left(
	    \int_{\{0< s \leq 1\}}
	    +\int_{\{s>1\}}
	    \right)
	    \left(	\e^{\vartheta s \tilde{k}_\iota(y) }-1	\right)
			\, \nu_+(\d s) \d y \\
		&\leq
	        \int_{\mathbb{R}^d}
	        \left(
	        \int_{\{0< s \leq 1\}}
	        +\int_{\{s>1\}}
	        \right)
	        \e^{\vartheta s \tilde{k}_\iota(y) }\vartheta s \tilde{k}_\iota(y)
			\, \nu_+(\d s) \d y \\
	    &\leq 
	    \vartheta \lVert \tilde{k}_\iota \rVert_{ L^1(\mathbb{R}^d)} \left(\e^{\vartheta \kappa_\iota}
			\int_{\{0 < s \leq 1 \}}
			\lvert s \rvert \, \nu_+(\d s)
			+  \int_{\{s>1\}}s\e^{\vartheta s\kappa_\iota}
			\, \nu_+(\d s)\right)\\
		&= 
	    \vartheta \lVert \tilde{k}_\iota \rVert_{ L^1(\mathbb{R}^d)} \left(\e^{\vartheta \kappa_\iota}
			\int_{\{0 < s \leq 1 \}}
			\lvert s \rvert \, \nu_+(\d s)
			+  \int_{\{s>1\}}\e^{\beta s} s \e^{-(\beta-\vartheta \kappa_\iota)s}
			\, \nu_+(\d s)\right)\\
		&\leq \vartheta\kappa_1 \left(\e^{\vartheta \kappa_\iota}
			\int_{\{0 < s \leq 1 \}}
			\lvert s \rvert \, \nu_+(\d s)
			+  \frac{1}{(\beta-\vartheta\kappa_\iota)\e}\int_{\{s>1\}}\e^{\beta s}
			\, \nu_+(\d s)\right),
\end{split}
\end{align}
where in the last step we have used the elementary fact that for $\alpha>0$ we have
$\max_{s>0}s \e^{-\alpha s}=\frac{1}{\alpha \e}$. 
Thus, $f_{\iota|[0,\beta/\kappa_\iota)}$ is finite and for arbitrary $\tau\in (0,1)$ we obtain from (\ref{eq:est_poisson_part}) and $\kappa_\infty\geq\kappa_\iota$ for $\vartheta=\tau\frac{\beta}{\kappa_\infty}$ from the definition of $\theta_\iota$
\begin{align*}
\begin{split}
	\theta_\iota(p)
	&\geq 
	\tau\frac{\beta}{\kappa_\infty}p-f_\iota\left(\tau\frac{\beta}{\kappa_\infty}\right)\\
	&\geq 
	\tau\frac{\beta}{\kappa_\infty}p
	-\tau\frac{\beta}{\kappa_\infty}\kappa_1
	\left(\e^{\tau\beta} \int_{\{0<s\leq 1\}} |s| \, \nu_+(\d s)
	+  \frac{1}{(\beta-\tau\beta\frac{\kappa_\iota}{\kappa_\infty})\e}
	\int_{\{s>1\}}\e^{\beta s} \, \nu_+(\d s)\right)
\end{split}    
\end{align*}
for every $p>0$. 
The assertion now follows by combining the previous inequality with \eqref{estimate:Poisson part}.
\end{proof}

We are finally ready to present this section's main result. 

\begin{theorem} \label{thm:levy_int}
Let the Lévy field $Z$ satisfy Assumption~\ref{assum:Levy}.
Moreover, let $k:\R^d\times\R^d\rightarrow\R, k(x,y):=k_{\alpha,m}(x-y)$ with $2\alpha>d$ and let $T$ be locally Lipschitz such that for $h\in [0,1]$, $B,\rho>0$ we have $B^{-1} \e^{-\rho |z|^h}\leq T(z)\leq B \e^{\rho|z|^h}$ for all $z\in\R$. 

Then, for the solution $u$ of the random boundary value problem \eqref{eq:diff_eq} with random conductivity function $a=T\circ Z_k$ we have $u \in L^n(\Omega; H^1(D))$, for any $n \in \mathbb{N}$ if $h < 1$ and for $n < \beta / 2\kappa\rho$ if $h=1$, where 
$\kappa := \sup_{x\in D,y\in \R^d}|k_{\alpha,m}(x-y)|$.

In particular, all moments of $u$ exist if $h\leq 1$ and $\int_{\mathbb{R}_+} (\e^{\beta s}-1)\, \nu_+(\d s) < \infty$ for all $\beta> 0$.   
\end{theorem}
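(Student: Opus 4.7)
The plan is to apply Lemma~\ref{lemma:p-est}, which reduces the theorem to showing that
\[
S(n,h) := \sum_{j=0}^{\infty} e^{2n\rho(j+1)^h}\,\prob\Bigl(\sup_{x\in D}|Z_k(x)|\ge j\Bigr)
\]
is finite under the stated hypotheses on $n$ and $h$. By Lemma~\ref{lemma:decomposition} I split the smoothed field as $Z_k(x)= c_D+G_k(x)+P_k(x)$ into a deterministic constant $c_D=b\int_{\R^d} k_{\alpha,m}(y)\,\d y$, a Gaussian smoothed field $G_k$, and a pure-jump smoothed field $P_k$, mutually independent. For any $\alpha_0\in(0,1)$ and every $j\ge 2|c_D|/\alpha_0$ a union bound yields
\[
\prob\Bigl(\sup_{x\in D}|Z_k(x)|\ge j\Bigr)
\le \prob\Bigl(\sup_{x\in D}|G_k(x)|\ge \tfrac{\alpha_0 j}{2}\Bigr)
   + \prob\Bigl(\sup_{x\in D}|P_k(x)|\ge (1-\alpha_0)j\Bigr),
\]
so the two tails can be estimated separately.

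\textbf{Gaussian tail.} I invoke Talagrand's Lemma~\ref{lemma:tal}. By Plancherel, the canonical distance of $G_k$ admits the representation
\[
d_c(x,y)^2 = \sigma^2\bigl\|k_{\alpha,m}(x-\cdot)-k_{\alpha,m}(y-\cdot)\bigr\|_{L^2(\R^d)}^2 = \frac{\sigma^2}{(2\pi)^d}\int_{\R^d}\frac{|e^{i\xi\cdot(x-y)}-1|^2}{(|\xi|^2+m^2)^{2\alpha}}\,\d\xi.
\]
Combining $|e^{iw}-1|^2\le C_\eta|w|^{2\eta}$ with the integrability condition $4\alpha-d>2\eta$, which admits a positive $\eta$ since $2\alpha>d$, yields the Hölder bound $d_c(x,y)\le C|x-y|^\eta$ on $D$. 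Hence $N(D,d_c,\varepsilon)\le (A/\varepsilon)^{v}$ with $v=d/\eta$ and suitable $A>0$, and Lemma~\ref{lemma:tal} produces constants $C_G,c_G>0$ with
\[
\prob\Bigl(\sup_{x\in D}|G_k(x)|\ge g\Bigr)\le C_G\,g^v\,e^{-c_G g^2}
\]
for all sufficiently large $g$; the a.s.\ continuity of $G_k$ required to apply Talagrand is guaranteed by Remark~\ref{rem:ContGauss}.

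\textbf{Poisson tail.} I apply Proposition~\ref{prop:poisson_est} to the single smoothing function $k(x,y)=k_{\alpha,m}(x-y)$. Lemma~\ref{lemma:mat-hold}~(ii) shows $\tilde k(y):=\sup_{x\in D}|k_{\alpha,m}(x-y)|\in L^1(\R^d)\cap L^\infty(\R^d)$ with $\|\tilde k\|_{L^\infty(\R^d)}=\kappa$. In the infinite-activity case the proposition extends by decomposing $P=\sum_\ell P_\ell$ as in Section~\ref{sec:inf_act}, applying its MGF estimate to each compound-Poisson summand $P_\ell$, and combining via independence and monotone convergence to obtain
\[
\ev{e^{\vartheta\sup_{x\in D}|P|_{|k|}(x)}}\le \exp\biggl(\int_{\R^d}\int_{\R_+}(e^{\vartheta s\tilde k(y)}-1)\,\nu_+(\d s)\,\d y\biggr),\qquad 0<\vartheta<\beta/\kappa.
\]
The Chernoff step from the proof of Proposition~\ref{prop:poisson_est} then delivers, for every $\tau\in(0,1)$, a constant $C_P(\tau)$ with
\[
\prob\Bigl(\sup_{x\in D}|P_k(x)|\ge p\Bigr)\le C_P(\tau)\,e^{-\tau\beta p/\kappa}\qquad(p>0).
\]

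\textbf{Summation, sharp rate, and main obstacle.} Feeding both bounds into $S(n,h)$, the Gaussian contribution is summable for every $n$ and every $h\le 1$ since its super-exponential decay dominates $e^{2n\rho(j+1)^h}$, and the Poisson contribution is bounded by $\sum_j C_P(\tau)e^{2n\rho(j+1)^h-\tau\beta(1-\alpha_0)j/\kappa}$. When $h<1$ the exponent grows sublinearly in $j$, so this series converges for every $n$. When $h=1$ convergence is equivalent to $2n\rho<\tau\beta(1-\alpha_0)/\kappa$; letting $\tau\nearrow 1$ and $\alpha_0\searrow 0$ produces precisely the sharp bound $n<\beta/(2\kappa\rho)$, and the ``in particular'' clause follows at once since the stronger integrability hypothesis permits $\beta$ to be taken arbitrarily large. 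I expect the principal technical obstacle to be the verification of Talagrand's entropy hypothesis, that is, the Hölder estimate on $d_c$ obtained via the Fourier representation together with $4\alpha-d>2\eta$; the remainder is bookkeeping, with the two-parameter tuning of $\tau$ and $\alpha_0$ needed only to extract the tight threshold in the borderline case $h=1$.
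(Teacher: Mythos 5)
Your proposal is correct and follows essentially the paper's own route: reduce via Lemma~\ref{lemma:p-est}, split $Z_k$ into deterministic, Gaussian and jump parts, verify Talagrand's entropy hypothesis through the H\"older estimate on the canonical distance (your Fourier computation is exactly the content of Lemma~\ref{lemma:mat-hold}~(i) applied to the covariance $k_{2\alpha,m}$), bound the jump part with Proposition~\ref{prop:poisson_est} via Lemma~\ref{lemma:mat-hold}~(ii), and tune the split/Chernoff parameters to reach $n<\beta/(2\kappa\rho)$ when $h=1$; the paper's only cosmetic difference is that it absorbs the deterministic shift into $T$ (replacing $T$ by $T_{\alpha_k}$) instead of your union bound with a constant. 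One bookkeeping point: since Proposition~\ref{prop:poisson_est} and the bound $|P_k|\leq|P|_{|k|}$ concern the \emph{uncompensated} Poisson field with triplet $\bigl(\int_{\{0<|s|\leq 1\}}s\,\nu(\d s),0,\nu\bigr)$ rather than the compensated jump part $Z_J$ of Lemma~\ref{lemma:decomposition}, your constant should read $c_D=(b-b')\int_{\R^d}k_{\alpha,m}(y)\,\d y$ with $b'=\int_{\{0<|s|\leq 1\}}s\,\nu(\d s)$, which is finite under Assumption~\ref{assum:Levy} and leaves your argument unchanged.
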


\begin{proof}
	We first show that without loss of generality, we may assume that $Z$ has the characteristic triplet $(b',\sigma^2,\nu)$ with $b':=\int_{\{0<|s|\leq 1\}} s\,\nu(ds)$. 
	Indeed, $(b',\sigma^2,\nu)$ is a characteristic triplet whose associated L\'evy noise field $\tilde{Z}$ is $\tripleBar{\cdot}$-continuous by Proposition \ref{prop:levy}. 
	Moreover, for arbitrary $\alpha\in \R$, $T_\alpha(z):=T(z+\alpha)$ is locally Lipschitz, and with $\tilde{\rho}:=\max\{1, 2^{h-1}\}\rho$, $\tilde{B}:=B e^{\tilde{\rho}|\alpha|^h}$ we have
	\[
	   \tilde{B}^{-1}e^{-\tilde{\rho}|z|^h}
	   \leq 
	   T_\alpha(z)
	   \leq 
	   \tilde{B} e^{\tilde{\rho}|z|^h}.
	\]
	For the special case $\alpha_k:=(b-b')\int_{\R^d}k(y)\,dy$ we obtain $a=T\circ Z_k=T_{\alpha_k}\circ\tilde{Z}_k$. 
	Hence, replacing $T$ by $T_{\alpha_k}$ and $Z$ by $\tilde{Z}$, we may indeed assume that $Z$ has the characteristic triplet $(b',\sigma^2,\nu)$. 
	Therefore, we have $Z=G+P$, where $G$ is the $\tripleBar{\cdot}$-continuous generalized centered Gaussian field with characteristic triplet $(0,\sigma^2,0)$ and $P$ is the $\tripleBar{\cdot}$-continuous L\'evy field with characteristic triplet $(b',0,\nu)$.
	
Let $d_c$ be the canonical distance of the centered Gaussian field $(G_k(x))_{x\in D}$ which has almost surely continuous paths by Theorem \ref{thm:matern}. 
We fix $\eta\in (0,2\alpha-d)$ as well as $a>\mbox{diam}(D)$ and set $\bar{\sigma}^2:=\sup_{x\in D}\ev{G_k(x)^2}=\sigma^2\|k\|_{L^2(\R^d)}^2$.
	
With the aid of Lemma \ref{lemma:mat-hold} i), for a suitable constant $C_1=C_1(m,\eta, 2\alpha)>0$, we have for arbitrary $x,y\in D$
\begin{equation} \label{Gaussian covariance Hoelder}
    \begin{split}
	    d(x,y)^2
	    &=
	    \Var(G_k(x)-G_k(y)) \\
	       &=
	    \Var(G_k(x))-\Var(G_k(y))-2\Cov(G_k(x),G_k(y)). \\
	    &=
	    2\sigma^2(k_{2\alpha,m}(0)-k_{2\alpha,m}(x-y))
	    \leq 
	    2\sigma^2 C_1|x-y|^\eta.
    \end{split}
    \end{equation}
    Then, with $C'^2:=2\sigma^2 C_1$, we have for all $\varepsilon > 0$ and $x \in \mathbb{R}^d$
    \begin{equation}\label{inclusion of balls}
    \begin{split}
	    B_{\lvert \cdot \rvert, (\frac{\varepsilon^2}{C'^2})^{\frac{1}{\eta}}}(x) 
	    &:= \
	    \left\{ y \in \mathbb{R}^d\ : \ \lvert x - y \rvert < 
			\Big(\frac{\varepsilon^2}{C'^2}\Big)^{\frac{1}{\eta}} \right\}\\ 
	    &\subseteq \
	    \{ y \in \mathbb{R}^d\ : \ d_c(x,y) < \varepsilon \} 
	    =: B_{d_c, \varepsilon}(x).
    \end{split}
    \end{equation}
Since $D$ is bounded, we can cover $D$ with a finite number $N$ of open balls  $B_{\lvert \cdot \rvert, (\frac{\varepsilon^2}{C'^2})^{\frac{1}{\eta}}}(x)$. By the choice of $a$, this number $N$ is bounded by $(C'^{\frac{2}{\eta}}a/\varepsilon^{\frac{2}{\eta}})^d=(C' a^{\eta/2}/\varepsilon)^{2d/\eta}$. By \eqref{inclusion of balls} we thus obtain for all $\varepsilon>0$
\begin{equation*}
	N(D, d_c,\varepsilon) 
	\leq 
	(C' a^{\eta/2}/\varepsilon)^{2d/\eta},
\end{equation*}
so that $d_c$ satisfies the covering property of Talagrand's Lemma \ref{lemma:tal} with $v:=2d/\eta$ and $A:=\max\{C'a^{\eta/2},\bar{\sigma}+1\}$ for every $\varepsilon>0$. 
Thus, by Talagrand's Lemma \ref{lemma:tal}, with the universal constant $K>0$, for every $g\geq \bar{\sigma} (1 + \sqrt{v})$
\begin{equation}\label{estimate Gaussian part}
	\prob\left(\sup_{x\in D}|G_k(x)|\geq g\right)
	\leq 
	\left( \frac{KA g}{\sqrt{v}\bar{\sigma}^2} \right)^v \e^{-\frac{g^2}{2\bar{\sigma}^2}}.
\end{equation}
Next, we observe that by Lemma \ref{lemma:mat-hold} ii), we have 
\[
   \tilde{k}(y)
   :=
   \sup_{x\in D}|k(x,y)| = \sup_{x\in D}|k_{\alpha,m}(x-y)| \in L^1(\R^d)\cap L^\infty(\R^d),
\]
so that by Proposition~\ref{prop:poisson_est} applied to the family of smoothing functions consisting only of $k$, that for arbitrary $\zeta\in (0,1)$ there is a constant $C_\zeta>0$ depending only on 
$\zeta$, $\|\tilde{k}\|_{L^1(\R^d)}$, $\|\tilde{k}\|_{L^\infty(\R^d)}$, $\beta$, and $\nu$ 
such that for every $p>0$
\begin{equation} \label{estimate Poisson part}
	    \prob\left(\sup_{x\in D}|P_k(x)|\geq p\right)
	    \leq 
	    C_\zeta \e^{-\frac{\beta}{\kappa}(1-\zeta)p}.
\end{equation}
Taking into account that $Z=G+P$, it follows from Lemma \ref{lemma:p-est} together with (\ref{estimate Gaussian part}) and (\ref{estimate Poisson part}) that for every $\zeta\in (0,1)$ we have with 
	$D_\zeta:=\max\{\left(\frac{K A}{\bar{\sigma}^2\sqrt{v}}\right)^v,C_\zeta,1\}$
\begin{eqnarray} \label{est:moments}
	\boldsymbol{\mathsf  E}[\lVert u \rVert_{H^1(D)}^n] 
	&\leq&
	\tilde{C}^n2^{n-1}(B^n+B^{2n}) 
  	\, \sum_{z=0}^\infty \e^{2n\rho(z+1)^h} \, \prob\left(\sup_{x\in D}\lvert Z_k(x) \rvert \geq z\right)\nonumber\\
	&\leq \ 
	&\tilde{C}^n2^{n-1}(B^n+B^{2n}) D_\zeta \Bigg\{ \sum_{z=0}^{\left \lfloor{\bar{\sigma}[1+\sqrt{v}]/\zeta}\right \rfloor } \e^{2n\rho(z+1)^h}\\
	&&+ 
	\sum_{z=\left \lfloor{\bar{\sigma}[1+\sqrt{v}]/\zeta}\right \rfloor+1 }^\infty \e^{2n\rho(z+1)^h}\left( z^v \e^{-\frac{\zeta^2z^2}{2\bar{\sigma}^2}} + \e^{-\frac{\beta}{\kappa}(1-\zeta)z} \right) \Bigg\}.\nonumber
\end{eqnarray}
Thus, in case $h < 1$ the above series converges. 
In case $h=1$, the above series converges if $n < (1-\zeta)\beta / 2\kappa\rho$. Hence, by choosing $\zeta$ sufficiently close to zero, in case $h=1$ the series converges for all $n <\beta / 2\kappa\rho$.
\end{proof}

\begin{remark}
\hspace{2ex}
\begin{itemize}
	\item[(i)] 
	By Theorem \ref{thm:levy_int}, in the case of $h = 1$ we get all moments up to an order that depends on $\beta$. 
	The larger $\beta$ is, the more moments $u$ has w.r.t. the Sobolev norm.
	\item[(ii)] 
	If we assume existence of the Laplace transform for $\nu$, $\int_{\mathbb{R}_+} \e^{\beta s}\, \nu_+(\d s) < \infty$ for some $\beta > 0$, we exclude  noises with infinite activity like Gamma noise. 
	We therefore employ the more general condition 
	$\int_{\mathbb{R}_+} (\e^{\beta s}-1)\, \nu_+(\d s) < \infty$.
	\item[(iii)] 
	In the special case where the smoothed Lévy noise field $Z_k$ is a Gaussian field without a compound Poisson noise component, we have $\theta(p)=\infty$ for all $p>0$ so that (\ref{est:moments}) gives us the existence of all moments if $h< 2$. Moreover, in case of $h=2$, we then obtain the existence of moments of order $n<1/(4\rho\sigma^2\|k\|_{L^2 (\R^d)}^2)$. 
	This improves \cite{Charr12}, where this result was shown for $h = 1$.
	\end{itemize}
\end{remark}

\section{Approximability of Solutions of the Random Diffusion Equation} \label{section:approx}

In this section we approximate the random diffusion coefficient $a$ in \eqref{eq:diff_eq} by a finite modal expansion, thus reducing the coefficient from an infinite-dimensional Lévy random field to a finite-dimensional Lévy random vector.
We prove that, under similar assumptions as for integrability, solutions of the diffusion equation with approximate diffusion coefficient converge in the Bochner space $L^n(\pspace;H^1(D))$ to that of the original equation. 
The remaining problem of the quadrature of high-dimensional uncorrelated, but possibly not independent, L\'evy distributions is left for future work. 

\subsection{Dependence on Random Coefficient}

Before we can give convergence results, we need to control the change in the solution $u$ that stems from a change in the coefficients. 
This change e.g.\ can be due to a finite-dimensional approximation of Karhunen-Loève type, as  will be the case below in Section~\ref{sec:KL}. 
The results in this subsection are of independent interest and can be used, e.g., to control the error in statistical estimation of the law and smoothing function of the random field.  Also, the results easily generalize to arbitrary continuous random fields and differentiable transformations $T(z)$ which are exponentially bounded from below and above.

Consider a smoothed L\'evy random field $Z_k$ with continuous paths and smoothing function $k:\R^d\times\R^d\rightarrow\R$. 
Let $k = k_N + r_N$ be any decomposition of the smoothing function $k$ such that 
$\lim_{N\to\infty} k_N(x,\cdot) = k(x,\cdot)$ with respect to $\tripleBar{\cdot}$ for every $x\in\R^d$. 
We define the random field $Z_N(x) := Z_{k_N}(x)$, $N\in \N$, to be an approximation to $Z_k(x)$ and $R_N(x) := Z_{r_N}(x)$ the corresponding remainder such
that $Z_k(x)=Z_{N}(x)+R_N(x)$. 
We assume that $Z_{N}(x)$ has continuous paths on $\bar D$ and consequently so does $R_N(x)$. 
This yields an approximating diffusion coefficient $T(Z_{N}(x))$ in equation \eqref{eq:diff_eq} with associated random solution $u_{N}$ to the corresponding weak problem.

To prove convergence of the weak solution $u_N\to u$ in $L^n(\pspace,H^1(D))$, $n \in \mathbb N$, as $N\to\infty$, we will derive an estimate based on an interpolated diffusion
equation with diffusion coefficient $T(Z_{N,t}(x))$ where $Z_{N,t}(x) := Z_{k_N+tr_N}(x) = Z_N(x) + t R_N(x)$ with $t \in [0,1]$. 
The resulting weak form of equation \cref{eq:diff_eq} with approximating diffusion coefficient and homogenized Dirichlet boundary conditions with weak solution $u_{0_{N,t}}\in H^1_D(D)=\{v\in H^1(D);\,\trace(v)=0\text{ on }\partial_D\}$ is characterized by
\[
b_{N,t}(u_{0_{N,t}},v) = \ell_{N,t}(v) \quad  \forall v\in H^1_D(D),
\]
with
\[
b_{N,t}(u,v) 
:= 
\int_D 
T(Z_{N,t}(x))\nabla u(x) \cdot \nabla v(x) \, \d x,
\quad
u,v \in H^1_D(D),
\]
and
\[
\ell_{N,t}(v)
:=
\int_D \left[ f(x)v(x) - T(Z_{N,t}(x))\nabla Eg_D(x) \cdot \nabla v(x) \right]\,\d x
+
\int_{\Gamma_N}g_N(x)v(x)\,\d\sigma,
\]
where $Eg_D\in H^1(D)$ is an extension of $g_D$. 
The weak solution of \eqref{eq:diff_eq} with inhomogeneous Dirichlet boundary condition then has the form $u_{N,t} = u_{0_{N,t}} + Eg_D$.
From now on we additionally assume that the transformation $T$ is continuously differentiable. 
It can then be shown that $t\mapsto u_{0_{N,t}}$ (and thus $t\mapsto u_{N,t}=u_{0_{N,t}}+Eg_D$) is differentiable with respect to the weak topology \cite{bittner,Schwartz1954}.  
We denote the derivative by $\dot{u}_{0_{N,t}}$ and $\dot{u}_{N,t}$, respectively. 
Moreover, setting 
\begin{eqnarray*}
	\dot{b}_{N,t}(u,v)
	&:=&
	\int_D T'(Z_{N,t}(x)) R_N(x)\nabla u \cdot \nabla v(x) \d x\\
	\dot{\ell}_{N,t}(v)
	&:=&
	-\int_D T'(Z_{N,t}(x)) R_N(x) \nabla Eg_D(x) \cdot \nabla v(x) \,\d x
\end{eqnarray*}
for $u,v \in H^1_D(D)$, one can show that
\begin{equation} \label{eq:Sensitivity}
	b_{N,t}(\dot{u}_{0_{N,t}},v) = \dot{\ell}_{N,t}(v)- \dot{b}_{N,t}(u_{0_{N,t}},v)
	\qquad
	\forall v\in H^1_D(D).
\end{equation}
As one can show using \eqref{eq:Sensitivity}, $t\mapsto \dot{u}_{N,t}$ is continuous with respect to the strong $H^1$-topology, so we conclude that $t\mapsto u_{N,t}$ is also differentiable with respect to the strong topology and has the derivative $\dot{u}_{N,t}$ \cite{bittner}.   

\begin{lemma} \label{lemma:app-a-prio}
	Let $Z$ be a $\tripleBar{\cdot}$-continuous generalized random field and $k:\R^d\times\R^d\rightarrow\R$ a smoothing function such that $(Z_k(x))_{x \in D}$ has a.s.\ continuous paths. 
	Moreover, let $Z_{N,t}$ be as above and assume that $T$ is continuously differentiable. 
	Let $u_{N,t}$ be the solution to \eqref{eq:diff_eq} with random conductivity $a := T\circ Z_{N,t}$.
	Then there holds
	\begin{eqnarray}\label{eqa:sensSol}
		\| \dot{u}_{N,t} \|_{H^1(D)} 
		&\leq& 
		C \,\sup_{x\in D} |T'(Z_{N,t}(x))| \, \sup_{x\in D} |R_w(x)| 
		\bigg(
		\frac{1+\sup_{x\in D} |T(Z_{N,t}(x))|}
		{(\inf_{x\in D} | T(Z_{N,t}(x)) |)^2}	\nonumber\\
		&&+ \frac{1}{\inf_{x\in D} | T(Z_{N,t}(x)) |}\bigg)
		\big(\|f\|_{L^2(D)} + \|g_D\|_{H^{1/2}(\partial_D)}+\|g_N\|_{H^{-1/2}(\partial_N)}\big),
	\end{eqnarray}
	where $C=(1+C_P^2)^2\max\{1,2\|E\|,\|\trace\|\}$ with $E:H^{1/2}(\partial_D)\rightarrow H^1(D)$ denoting an extension operator, $\trace:H^1(D)\rightarrow H^{1/2}(\partial D)$  the trace operator, and where $C_P>0$ only depends on $D$ and $\partial_D$.
\end{lemma}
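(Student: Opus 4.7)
The plan is to apply the a priori estimate from Lemma~\ref{lemma:weak solution}~a) twice: once to bound $\|u_{N,t}\|_{H^1(D)}$ and a second time to bound $\|\dot u_{N,t}\|_{H^1(D)}$ via the sensitivity identity \eqref{eq:Sensitivity}. The first observation is that, since $Eg_D$ does not depend on $t$, we have $\dot u_{N,t}=\dot u_{0_{N,t}}\in H^1_D(D)$, so it suffices to estimate $\|\dot u_{0_{N,t}}\|_{H^1(D)}$.

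For the main step, I read \eqref{eq:Sensitivity} as the weak form of an elliptic problem for $\dot u_{0_{N,t}}\in H^1_D(D)$ with conductivity $T(Z_{N,t})$ and homogeneous Dirichlet data, driven by the linear functional
\[
    v\mapsto \dot\ell_{N,t}(v)-\dot b_{N,t}(u_{0_{N,t}},v)
    =
    -\int_D T'(Z_{N,t})\,R_N\,\nabla(u_{0_{N,t}}+Eg_D)\cdot\nabla v\,\d x.
\]
Repeating the Riesz-representation argument in the proof of Lemma~\ref{lemma:weak solution}~a) (using the norm equivalence \eqref{equivalence of Sobolev norms} with $a$ replaced by $T(Z_{N,t})$) yields
\[
    \|\dot u_{0_{N,t}}\|_{H^1(D)}
    \leq
    \frac{1+C_P^2}{\inf_{x\in D}T(Z_{N,t}(x))}
    \,\sup_{x\in D}|T'(Z_{N,t}(x))|\,\sup_{x\in D}|R_N(x)|
    \,\bigl(\|Eg_D\|_{H^1(D)}+\|u_{0_{N,t}}\|_{H^1(D)}\bigr),
\]
where the Cauchy--Schwarz inequality on each of the two summands of the functional has been used, together with the continuity of the extension operator $\|Eg_D\|_{H^1(D)}\leq\|E\|\,\|g_D\|_{H^{1/2}(\partial_D)}$.

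For the second step I bound $\|u_{0_{N,t}}\|_{H^1(D)}\leq\|u_{N,t}\|_{H^1(D)}+\|Eg_D\|_{H^1(D)}$ and apply Lemma~\ref{lemma:weak solution}~a) directly to $u_{N,t}$ with conductivity $a=T\circ Z_{N,t}$, which produces the factor $(1+\sup_{x\in D}T(Z_{N,t}))/\inf_{x\in D}T(Z_{N,t})$ multiplying $\|f\|_{L^2(D)}+\|g_D\|_{H^{1/2}(\partial_D)}+\|g_N\|_{H^{-1/2}(\partial_N)}$. Inserting this into the displayed inequality above, and noting that the $\|Eg_D\|_{H^1(D)}$ term contributes only a factor $1/\inf T(Z_{N,t})$ while the $\|u_{N,t}\|_{H^1(D)}$ term contributes the full $(1+\sup T(Z_{N,t}))/(\inf T(Z_{N,t}))^2$, one obtains exactly the sum of the two fractions appearing in \eqref{eqa:sensSol}. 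Collecting the constants $(1+C_P^2)$, $\|E\|$, and $\|\trace\|$ into $C=(1+C_P^2)^2\max\{1,2\|E\|,\|\trace\|\}$ and using that the random field has a.s.\ continuous paths (so the suprema and infima over $D$ are finite and attained) completes the argument.

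There is no serious obstacle; the proof is a two-fold application of the a priori estimate together with careful bookkeeping of the constants. The only mildly subtle point is the algebraic step that converts the common prefactor $1/\inf T$ in front of the bracket $(\|Eg_D\|_{H^1}+\|u_{0_{N,t}}\|_{H^1})$ into the two distinct scalings $1/\inf T$ and $(1+\sup T)/(\inf T)^2$ displayed in the statement, and the verification that the boundary-data norms can be absorbed into the common bracket $\|f\|_{L^2(D)}+\|g_D\|_{H^{1/2}(\partial_D)}+\|g_N\|_{H^{-1/2}(\partial_N)}$.
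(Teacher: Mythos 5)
Your proposal is correct and follows essentially the same route as the paper: both exploit the sensitivity identity \eqref{eq:Sensitivity} together with the coercivity of $b_{N,t}$ (your Riesz-representation phrasing is equivalent to the paper's direct testing with $v=\dot u_{0_{N,t}}$), bound the right-hand side by Cauchy--Schwarz with $\sup|T'|\sup|R_N|$, and then feed in the a priori estimate \eqref{eq:a-prio} for $u_{N,t}$ to split the bracket into the $1/\inf T$ and $(1+\sup T)/(\inf T)^2$ terms, with the factor $2\|E\|$ in $C$ absorbing the extension-operator contribution. No gap; only cosmetic bookkeeping differs (the paper bounds $\|u_{0_{N,t}}+Eg_D\|$ by $\|E\|\|g_D\|$ plus the bound on $\|u_{0_{N,t}}\|$, while you split off $Eg_D$ twice, which the stated constant still covers).
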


\begin{proof}
As noted above, we can write the weak solution to \cref{eq:diff_eq} as $u_{N,t} = u_{0_{N,t}} + Eg_D$ and therefore we have 
	$\|\dot{u}_{N,t}\|_{H^1(D)} =  \| \dot{u}_{0_{N,t}} \|_{H^1(D)}$.
Setting $\tilde{C}:=(1+C_P^2)\max\{1,2\|E\|,\|\mbox{tr}\|\}$, we combine (a generalization of) Poincar\'e's inequality (cf.\ \eqref{equivalence of Sobolev norms}), \eqref{eq:Sensitivity}, the definition of $\dot{b}_{N,t}, \dot{\ell}_{N,t}$, as well as inequality \eqref{eq:a-prio} to obtain
\begin{eqnarray*}
		&&\frac{\inf_{x\in D}|T(Z_{N,t}(x))|}{(1+C_P^2)}\|\dot{u}_{0_{N,t}}\|^2_{H^1(D)}
		\leq 
		b_{N,t}(\dot{u}_{0_{N,t}},\dot{u}_{0_{N,t}})
		=
		|\dot{\ell}_{N,t}(\dot{u}_{0_{N,t}})-\dot{b}_{N,t}(u_{0_{N,t}},\dot{u}_{0_{N,t}})|\\
		&\leq& 
		\int_D |T'(Z_{N,t}(x))R_N(x) 
		\nabla (Eg_D+u_{0_{N,t}})(x) \cdot \nabla\dot{u}_{0_{N,t}}(x)| \,\d x\\
		&\leq&
		\sup_{x\in D}|T' (Z_{N,t}(x))| \sup_{x\in D}|R_N(x)|
		\,\|\dot{u}_{0_{N,t}}\|_{H^1(D)}\|u_{0_{N,t}} + Eg_D\|_{H^1(D)}\\
		&\leq&
		\sup_{x\in D}|T'(Z_{N,t}(x))| \sup_{x\in D}|R_N(x)|\,
		\|\dot{u}_{0_{N,t}}\|_{H^1(D)} 
		\Big(\|E\|\,\|g_D\|_{H^{1/2}(\partial_D)}\\
		&&+\,
		\tilde{C}\,
		\frac{1 + \sup_{x\in D}|T(Z_{N,t}(x))|}
		{    \inf_{x\in D}|T(Z_{N,t}(x))|}
		\big(
		\|f\|_{L_2(D)} + \|g_D\|_{H^{1/2}(\partial_D)}+\|g_N\|_{H^{-1/2}(\partial_N)}
		\big)\Big).
\end{eqnarray*}
The assertion \eqref{eqa:sensSol} now follows on dividing by 
$\frac{\inf_{x\in D}|T(Z_{N,t}(x))|}{(1+C_P^2)} \| \dot u_{0,t}\|_{H^1(D)}$.
\end{proof}

Using the above lemma, we next see that we can estimate the effect of the perturbation $R_N$ by a term that is exponentially growing in the extreme values of $Z_{N,t}(x)$ and a moment in the perturbation. 
The following assumption on the function $T$ will be used repeatedly in the following.

\begin{assumption} \label{assum:T}
For the continuously differentiable function $T:\mathbb R \to \mathbb {R_+}$ there exist $\rho, B>0, h\in (0,1]$ such that for all $z \in \mathbb R$ there holds
\begin{equation} \label{eqa:consitions_T_Tdot}
	B^{-1} \e^{-\rho |z|^h}
	\leq 
	T(z)
	\leq  
	B \e^{\rho |z|^h}
	\qquad \text{ and } \qquad 
	|T'(z)| \leq  B \e^{\rho |z|^h}.
\end{equation}
\end{assumption}

\begin{lemma} \label{lemma:prob_error_Estimate}
Under the assumptions of Lemma~\ref{lemma:app-a-prio} in addition to Assumption~\ref{assum:T} we have that for any $\varrho > 1$ and $\frac{1}{\varrho}+\frac{1}{\varrho'}=1$, $n\geq 1$,
	\begin{equation} \label{eqa:est_by_moment}
		\ev{\| u - u_N \|^n_{H^1(D)}}
		\leq 
		\bar C
		\;
		\ev{\sup_{x\in D} |R_N(x)|^{n\varrho'}}^{\frac{1}{\varrho'}}
		\sup_{t\in[0,1]}
		\ev{{\e}^{4\varrho\rho n\sup_{x\in D} |Z_{N,t}(x)|}}^{\frac{1}{\varrho}}.
	\end{equation}
	where $\bar C = C^n(B^2+B^3)^n(\|f\|_{L^2(D)}+\|g_D\|_{H^{1/2}(\partial_D)}+\|g_N\|_{H^{-1/2}(\partial_N)})^n$ with $C$ from Lemma~\ref{lemma:app-a-prio}.
\end{lemma}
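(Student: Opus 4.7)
The plan is to use the fundamental theorem of calculus in $H^1(D)$ for the map $t\mapsto u_{N,t}$ (whose strong differentiability was established just before the lemma) to write
\[
   u - u_N = u_{N,1} - u_{N,0} = \int_0^1 \dot{u}_{N,t}\,\d t
\]
as a Bochner integral in $H^1(D)$. Applying Minkowski's inequality for Bochner integrals followed by Jensen's inequality (for the convex function $s\mapsto s^n$ and the uniform probability measure on $[0,1]$) yields
\[
   \|u-u_N\|_{H^1(D)}^n
   \leq
   \left(\int_0^1 \|\dot{u}_{N,t}\|_{H^1(D)}\,\d t\right)^n
   \leq
   \int_0^1 \|\dot{u}_{N,t}\|_{H^1(D)}^n\,\d t.
\]

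Next I would insert the pointwise (in $\omega$ and $t$) bound on $\|\dot u_{N,t}\|_{H^1(D)}$ provided by Lemma~\ref{lemma:app-a-prio}, and use Assumption~\ref{assum:T} to control each factor involving $T$. Writing $M_t:=\sup_{x\in D}|Z_{N,t}(x)|$, the two-sided estimate on $T$ gives $\sup_{x\in D}|T(Z_{N,t}(x))|\leq B\e^{\rho M_t^h}$ and $\inf_{x\in D}T(Z_{N,t}(x))\geq B^{-1}\e^{-\rho M_t^h}$, while the bound on $T'$ gives $\sup_{x\in D}|T'(Z_{N,t}(x))|\leq B\e^{\rho M_t^h}$. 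Combining these, the parenthesis in \eqref{eqa:sensSol} is dominated by a constant multiple of $(B^2+B^3)\e^{3\rho M_t^h}$, and multiplying by $|T'(Z_{N,t})|$ yields an overall factor of order $(B^2+B^3)\e^{4\rho M_t^h}$. Since $h\in(0,1]$, the elementary inequality $|z|^h\leq 1+|z|$ lets me replace $\e^{\rho M_t^h}$ by $\e^{\rho}\e^{\rho M_t}$, with the $\e^{\rho}$-factor absorbed into the constant. Altogether,
\[
   \|\dot{u}_{N,t}\|_{H^1(D)}^n
   \leq
   \bar C_0 \,\sup_{x\in D}|R_N(x)|^n\,\e^{4\rho n M_t},
\]
for a suitable constant $\bar C_0$ matching the one specified in the statement.

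Taking expectations, applying Fubini to exchange $\ev{\,\cdot\,}$ and $\int_0^1\d t$, and then applying Hölder's inequality with conjugate exponents $\varrho,\varrho'$ (pointwise in $t$, to decouple the $|R_N|$-factor from the exponential moment of $Z_{N,t}$) gives
\[
   \ev{\sup_{x\in D}|R_N(x)|^n\,\e^{4\rho n M_t}}
   \leq
   \ev{\sup_{x\in D}|R_N(x)|^{n\varrho'}}^{1/\varrho'}
   \ev{\e^{4\varrho\rho n M_t}}^{1/\varrho}.
\]
Since the $|R_N|$-factor is independent of $t$, bounding the remaining $t$-dependent exponential moment by its supremum over $t\in[0,1]$ and noting the integrand depends on $t$ only through this supremum yields the claimed estimate \eqref{eqa:est_by_moment}.

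I do not expect any single step to be a serious obstacle. The only subtlety is that the Bochner-integral identity $u-u_N=\int_0^1\dot u_{N,t}\,\d t$ requires strong (norm) differentiability in $H^1(D)$, which is exactly what was prepared via \eqref{eq:Sensitivity} in the passage preceding the lemma. Once that is in hand, the proof consists of assembling Lemma~\ref{lemma:app-a-prio}, the exponential bounds of Assumption~\ref{assum:T}, the inequality $|z|^h\leq 1+|z|$ (for $h\leq 1$), and Hölder's inequality, none of which involve genuine difficulties.
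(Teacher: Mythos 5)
Your argument is correct and follows essentially the same route as the paper: write $u-u_N=\int_0^1\dot u_{N,t}\,\d t$ as a Bochner integral, apply the triangle and Jensen inequalities on $[0,1]$, interchange expectation and $t$-integration, and then insert the bound of Lemma~\ref{lemma:app-a-prio} together with Assumption~\ref{assum:T}, finishing with H\"older's inequality in $\omega$. The only cosmetic difference is that you spell out the absorption of the harmless factors (e.g.\ $\e^{\rho}$ from $|z|^h\le 1+|z|$) into the constant, a bookkeeping step the paper leaves implicit in its ``we easily obtain'' conclusion.
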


\begin{proof}
	Using the properties of the Bochner integral for Banach space-valued functions and Jensen's inquality for the ordinary integral over $[0,1]$, we obtain
	\begin{align*}
		\ev{\| u-u_N \|^n_{H^1(D)}}
		&=
		\ev{\left\| \int_0^1\dot u_{N,t} \, \d t \right\|^n_{H^1(D)}}
		\leq 
		\ev{\left(\int_0^1\left\|\dot u_{N,t} \right\|_{H^1(D)}\, \d t\right)^n}\\
		&\leq 
		\ev{ \int_0^1\left\|\dot u_{N,t} \right\|_{H^1(D)}^n\, \d t }
		= 
		\int_0^1 \ev{ \left\|\dot u_{N,t} \right\|_{H^1(D)}^n } \, \d t,
	\end{align*}
	where we made use of the fact that we can interchange the order of integration for nonnegative integrands. 
	Applying now Lemma \ref{lemma:app-a-prio} and H\"older's inequality, we easily obtain \eqref{eqa:est_by_moment}.
\end{proof}

\subsection{Convergence of Solution Moments} \label{sub:suff_conditions}

From Lemma~\ref{lemma:prob_error_Estimate} it is clear that establishing convergence $u_N\to u$ in $L^n(\pspace,H^1(D))$ can be based on two steps: 
\begin{itemize}
\item[(i)] 
A bound on the Laplace transform 
$
\ev{{\e}^{4\varrho\rho n\sup_{x\in D}| Z_{N,t}(x)|}}
$    
of the extreme value of $Z_{N,t}$ which is uniform in $N$ and $t$ and
\item[(ii)] 
a proof that $\ev{\sup_{x\in D}| R_N(x)|^{n\varrho'}}\to 0$ as $N\to\infty$.
\end{itemize}
In this subsection we identify suitable conditions on $k$ and $k_N$ which imply (i) and (ii). 
We then apply this to the natural generalization of the Karhunen-Loève expansion to smoothed L\'evy fields.
We first prove a uniform version of Talagrand's Lemma \ref{lemma:tal}.

\begin{proposition}\label{prop:gauss_est}
	Let $G$ be a generalized centered Gaussian field, i.e.\ a $\tripleBar{\cdot}$-continuous L\'evy field with characteristic triplet $(0,\sigma^2,0), \sigma>0$. Moreover, let $D\subseteq\R^d$ be open and bounded and let $k_\iota:\R^d\times\R^d\rightarrow\R, \iota\in I,$ be a family of smoothing functions such that for another smoothing function $k:\R^d\times\R^d\rightarrow\R$ the following hold:
	\begin{itemize}
		\item[i)] $\forall\,\iota\in I:\,\sup_{x\in D}\|k_\iota (x,\cdot)\|_{L^2(\R^d)}\leq \sup_{x\in D}\|k(x,\cdot)\|_{L^2(\R^d)}$.
		\item[ii)] The canonical distances $d_\iota, \iota\in I$ and $d_c$ of the centered Gaussian fields $(G_{k_\iota}(x))_{x\in D}, \iota\in I$, and $(G_k(x))_{x\in D}$, respectively, satisfy $d_\iota\leq d_c, \iota\in I,$ and $d_c$ satisfies the covering property of Talagrand's Lemma \ref{lemma:tal}.
		\item[iii)] The centered Gaussian fields $(G_{k_\iota}(x))_{x\in D}, \iota\in I$, and $(G_k(x))_{x\in D}$ all have almost surely continuous paths.
	\end{itemize}
	Then, with $\bar{\sigma}_\iota^2:=\sigma^2\sup_{x\in D}\|k_\iota(x,\cdot)\|_{L^2(\R^d)}$ and $\bar{\sigma}^2:=\sigma^2\sup_{x\in D}\|k(x,\cdot)\|_{L^2(\R^d)}$, there are constants $A>\bar{\sigma}^2, K>0,v,\gamma>0, \varepsilon_0\in (0,\bar{\sigma})$ such that for all $\iota\in I$
	\begin{align}\label{estimate:gaussian part}
		\forall\, g>\bar{\sigma}_\iota(1+\sqrt{v})/\varepsilon_0:\,\prob\left(\sup_{x\in D}|G_{k_\iota}(x)|\geq g\right)&\leq \left(\frac{K A g}{\sqrt{v}\bar{\sigma}_\iota^2}\right)^v\exp\left(-\frac{g^2}{2\bar{\sigma}_\iota^2}\right)\\
		&\leq \gamma\left(\frac{K A g}{\sqrt{v}\bar{\sigma}^2}\right)^v\exp\left(-\frac{g^2}{2\bar{\sigma}^2}\right)\nonumber.
	\end{align}
\end{proposition}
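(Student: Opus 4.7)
The plan is to apply Talagrand's Lemma \ref{lemma:tal} to each Gaussian field $G_{k_\iota}$ separately, exploiting the covering property of $d_c$ which transfers to each $d_\iota$ by assumption (ii), and then compare the resulting bound with the reference bound associated to $\bar{\sigma}$ via assumption (i).

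First, I would verify the hypotheses of Talagrand's Lemma uniformly in $\iota$. Since $G$ has characteristic triplet $(0,\sigma^2,0)$, by Lemma~\ref{lemma:decomposition} and the linearity of $G$ each random variable $G_{k_\iota}(x) = G(k_\iota(x,\cdot))$ is centered Gaussian, so $(G_{k_\iota}(x))_{x\in D}$ is a centered Gaussian field with almost surely continuous paths by (iii). Using the formula~(\ref{eqa:CovFromSmooth}), or rather its bivariate analogue, we find
\[
    \sup_{x\in D}\ev{G_{k_\iota}(x)^2}
    =
    \sigma^2 \sup_{x\in D}\|k_\iota(x,\cdot)\|^2_{L^2(\R^d)}
    \leq
    \bar{\sigma}_\iota^2
    \leq
    \bar{\sigma}^2
\]
by (i). The metric entropy transfers from $d_c$ to $d_\iota$: indeed, $d_\iota \leq d_c$ implies $B_{d_c,\varepsilon}(x) \subseteq B_{d_\iota,\varepsilon}(x)$, so every $\varepsilon$-covering of $D$ in $d_c$ is also an $\varepsilon$-covering in $d_\iota$, yielding
\[
    N(D, d_\iota, \varepsilon) \leq N(D, d_c, \varepsilon) \leq (A/\varepsilon)^v
    \qquad\text{for all }\varepsilon \in (0,\varepsilon_0)
\]
with the same constants $A,v,\varepsilon_0$ as for $d_c$. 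Applying Talagrand's Lemma \ref{lemma:tal} to each $G_{k_\iota}$ thus yields the first inequality of (\ref{estimate:gaussian part}) with uniform constants $K,A,v,\varepsilon_0$.

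For the second inequality, I would write the ratio of the two bounds in the form
\[
    \left(\frac{\bar{\sigma}^2}{\bar{\sigma}_\iota^2}\right)^v
    \exp\!\left(-\frac{g^2(\bar{\sigma}^2-\bar{\sigma}_\iota^2)}{2\bar{\sigma}_\iota^2\bar{\sigma}^2}\right)
    =
    t^{-v}\exp\!\left(-\frac{g^2(1-t)}{2t\bar{\sigma}^2}\right)
\]
with $t:=\bar{\sigma}_\iota^2/\bar{\sigma}^2\in(0,1]$. Inserting the admissible lower bound for $g$ into the exponent, the exponential decay in $(1-t)/t$ eventually dominates the polynomial blow-up $t^{-v}$ as $t\to 0$; combined with the case $t$ close to $1$ where $t^{-v}$ is bounded and the exponent is nonpositive, this provides a finite supremum of the ratio over $(0,1]$ and over admissible $g$, yielding the desired constant $\gamma$.

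The main obstacle will be the careful bookkeeping in the last step: one must check that the threshold $g>\bar{\sigma}_\iota(1+\sqrt{v})/\varepsilon_0$ forces $g^2/(t\bar{\sigma}^2)$ to be large enough so that $t^{-v}\exp(-g^2(1-t)/(2t\bar{\sigma}^2))$ stays bounded uniformly as $\bar{\sigma}_\iota$ ranges over $(0,\bar{\sigma}]$. In particular, one has to exclude (or absorb into $\gamma$) the mildly singular behaviour near $t=0$; this amounts to an elementary but delicate one-variable estimate on the function $t\mapsto t^{-v}\exp(-c(1-t)/t)$ for an appropriate constant $c>0$ determined by $\varepsilon_0$ and $v$, for which standard convex-analytic arguments suffice.
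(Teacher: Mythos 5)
Your first step is exactly the paper's: hypothesis (i) gives $\bar{\sigma}_\iota\leq\bar{\sigma}$, hypothesis (ii) gives $N(D,d_\iota,\varepsilon)\leq N(D,d_c,\varepsilon)\leq (A/\varepsilon)^v$ because every $d_c$-ball is contained in the $d_\iota$-ball of the same radius, and Talagrand's Lemma applied to each $(G_{k_\iota}(x))_{x\in D}$ (which has a.s.\ continuous paths by (iii)) yields the first inequality with constants $K,A,v,\varepsilon_0$ independent of $\iota$. Up to there the argument is sound and coincides with the paper's proof.

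The gap is in the comparison step. The ratio of the two bounds is indeed $t^{-v}\exp\bigl(-\tfrac{g^2(1-t)}{2t\bar{\sigma}^2}\bigr)$ with $t=\bar{\sigma}_\iota^2/\bar{\sigma}^2$, but when you insert the admissible lower bound of the statement, $g>\bar{\sigma}_\iota(1+\sqrt{v})/\varepsilon_0=\sqrt{t}\,\bar{\sigma}(1+\sqrt{v})/\varepsilon_0$, the factor $t$ in the denominator of the exponent cancels: you only obtain $\tfrac{g^2(1-t)}{2t\bar{\sigma}^2}\geq\tfrac{(1+\sqrt{v})^2}{2\varepsilon_0^2}(1-t)$, so the ratio is controlled only by $t^{-v}\e^{-c(1-t)}$, which is unbounded as $t\to 0$, i.e.\ when $\inf_{\iota}\bar{\sigma}_\iota=0$ — precisely the regime that matters in the later application to the remainders $r_N$. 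Hence your claim that the decay is in $(1-t)/t$ "after inserting the admissible lower bound for $g$" is where the argument breaks, and the "delicate one-variable estimate" you defer to is not available at that threshold. The paper's proof circumvents this by proving the estimate for $g\geq\bar{\sigma}^2(1+\sqrt{v})/\varepsilon_0$, a threshold tied to the reference field and independent of $\iota$; then $\tfrac{g^2}{2}\bigl(\tfrac{1}{\bar{\sigma}_\iota^2}-\tfrac{1}{\bar{\sigma}^2}\bigr)\geq c\,(x-1)$ with $x=\bar{\sigma}^2/\bar{\sigma}_\iota^2$ and a constant $c$ depending only on $\bar{\sigma},v,\varepsilon_0$, and one sets $\gamma:=\sup_{x\geq 1}x^v\e^{-c(x-1)}<\infty$ — exactly the bounded function you name (in the variable $x=1/t$), but it becomes available only because the admissible $g$ are bounded below independently of $\bar{\sigma}_\iota$. (The mismatch between the threshold in the proposition's statement and the one used in its proof is an inconsistency of the paper itself; still, to close your argument you must work with an $\iota$-independent threshold, since with the $\iota$-dependent one the claimed uniform constant $\gamma$ does not exist.)
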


\begin{proof}
	We abbreviate $G_\iota(x):=G_{k_\iota}(x),\iota\in I, x\in D$. Since $k$ is a smoothing function, it follows that $\bar{\sigma}^2<\infty$ and due to hypothesis i) we have for all $\iota\in I$
	\begin{eqnarray*}
		\bar{\sigma}^2_\iota&:=&\sup_{x\in D}\ev{\left[G_\iota(x)\right]^2}=\sup_{x\in D}\sigma^2\|k_\iota(x,\cdot)\|_{L^2(\R^d)}\leq\sup_{x\in D}\sigma^2\|k(x,\cdot)\|_{L^2(\R^d)}\\
		&=&\sup_{x\in D}\ev{\left[G_k(x)\right]^2}=\bar{\sigma}^2
	\end{eqnarray*}
	Hypothesis ii) implies that the $d_c$-ball centered at $x\in\R^d$ with $d_c$-radius $\varepsilon>0$ is contained in the $d_\iota$-ball centered at $x$ with $d_\iota$-radius $\varepsilon$, so that in the notation of Talagrand's Lemma $N(D,d_\iota,\varepsilon)\leq N(D,d_c,\varepsilon)$. 
	Again by hypothesis ii) there are thus $A>\bar{\sigma}^2, v>0, \varepsilon_0\in (0,\bar{\sigma})$ such that
	$$\forall\,\iota\in I, \varepsilon\in (0,\varepsilon_0):\,N(D,d_\iota,\varepsilon)\leq \left(\frac{A}{\varepsilon}\right)^v.$$
	Since all Gaussians fields considered have almost surely continuous paths, applying Talagrand's Lemma once more, there is $K>0$ such that for all $\iota\in I$ and $g>\bar{\sigma}^2(1+\sqrt{v})/\varepsilon_0$
	\begin{eqnarray*}
		\prob\left(\sup_{x\in D}|G_\iota(x)|\geq g\right)&\leq&\left(\frac{KAg}{\sqrt{v}\bar{\sigma}_\iota^2}\right)^v\exp\left(-\frac{g^2}{2\bar{\sigma}_\iota^2}\right)\\
		&\leq&\left(\frac{KAg}{\sqrt{v}\bar{\sigma}^2}\right)^v\exp\left(-\frac{g^2}{2\bar{\sigma}^2}\right)\left(\frac{\bar{\sigma}^2}{\bar{\sigma}_\iota^2}\right)^v\exp\left(-\bar{\sigma}^2\frac{(1+\sqrt{v})^2}{2\varepsilon_0}\left(\frac{\bar{\sigma}^2}{\bar{\sigma}_\iota^2}-1\right)\right).
	\end{eqnarray*}
	Since $f:[1,\infty)\rightarrow\R, f(x):= x^v\exp(-\bar{\sigma}^2\frac{(1+\sqrt{v})^2}{2\varepsilon_0}(x-1))$ is bounded above, the claim follows from the previous inequality with $\gamma:=\sup_{x\geq 1}f(x)$.
\end{proof}

Following the two-step approach outlined above, we begin with a uniform estimate of the Laplace transform of the extreme values of $Z_{N,t}$ under suitable assumptions on the smoothing kernel $k$.
To this end, we define for bivariate kernel functions $k=k(x,y)$
\[
    \tilde k(y) := \sup_{x \in D} |k(x,y)|, \qquad y \in \R^d.
\]
\begin{definition} \label{def:unif_orth_dec}
  We say that a smoothing function $k : \R^d \times \R^d \to \R$ has an \emph{orthogonal approximation sequence} $k = k_N + r_N$, $N\in\N$, if $k_N$ and $r_N$ are smoothing functions with
\begin{itemize}
\item[(i)] 
$\int_{\R^d} k_N(x_1,y) r_N(x_2,y) \, \d y = 0$ for $x_1, x_2\in \R^d$;
\item[(ii)] 
$\max\{\|\tilde r_N\|_{L^1(\R^d)},\kappa_{r,N}\}\to 0$ as $N\to\infty$ where $\kappa_{r,N}=\sup_{x\in D,y\in \R^d} |r_N(x,y)|$ and $\tilde r_N$ is defined as above.
\end{itemize}
\end{definition}

\begin{lemma} \label{lem:Uniform}
	Let the L\'evy field $Z$ satisfy Assumption~\ref{assum:Levy}. 
	Moreover, let $k:\R^d\times\R^d\rightarrow\R$ be a smoothing function such that $\tilde{k}\in L^1(\R^d)\cap L^\infty(\R^d)$ and such that the canonical distance $d_c$ of $(G_k(x))_{x\in D}$ satisfies the covering property of Talagrand's Lemma \ref{lemma:tal}, where $G$ is the centered Gaussian part of $Z$, i.e.\ the $\tripleBar{\cdot}$-continuous L\'evy field with characteristic triplet $(0,\sigma^2,0)$. Furthermore, let $k=k_N+r_N, N\in \N,$ be an orthogonal approximation sequence for which the centered Gaussian fields $(G_{k_N}(x))_{x\in D}$ and $(G_{r_N}(x))_{x\in D}$, $N\in\N$, all have a.s.\ continuous paths and for which $\tilde{k}_N\in L^1(\R^d)\cap L^\infty(\R^d), N\in \N$. Additionally, let $\varrho>1$, $\rho>0$, and $n\in (0,\frac{\beta}{4\varrho\kappa\rho})$, where $\kappa:=\|\tilde{k}\|_{L^\infty(\R^d)}$.
	
	Then, there is $M\in\N$ such that
	\[
	    \sup_{N\geq M, t\in[0,1]}
	    \ev{\e^{4\varrho\rho n\sup_{x\in D}|Z_{N,t}(x)|}}
	    < \infty.
	\]
	In case $k_N(x,\cdot)$ and $r_N(x,\cdot), x\in \R^d,$ have disjoint supports for every $N\in\N$ one can choose $M=1$.
\end{lemma}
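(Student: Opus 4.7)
The plan is to exploit the independence of the Gaussian and pure-jump components of $Z$. Using Lemma~\ref{lemma:decomposition} we decompose $Z = Z_D + G + P$ where $G$ and $P$ are independent, and correspondingly $Z_{N,t}(x) = Z_{D,k_N+tr_N}(x) + G_{N,t}(x) + P_{N,t}(x)$ with $G_{N,t} := G_{k_N+tr_N}$ and $P_{N,t} := P_{k_N+tr_N}$. The deterministic contribution satisfies
\[
   \sup_{x\in D}|Z_{D,k_N+tr_N}(x)|
   \le
   |b|\,\bigl(\|\tilde k\|_{L^1(\R^d)}+2\|\tilde r_N\|_{L^1(\R^d)}\bigr),
\]
which is uniformly bounded for $N$ sufficiently large. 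By independence of $G$ and $P$,
\[
    \ev{\e^{4\varrho\rho n\sup_{x\in D}|Z_{N,t}(x)|}}
    \le
    \e^{4\varrho\rho n\cdot\text{const}}\;
    \ev{\e^{4\varrho\rho n\sup_{x\in D}|G_{N,t}(x)|}}\;
    \ev{\e^{4\varrho\rho n\sup_{x\in D}|P_{N,t}(x)|}},
\]
and it suffices to bound each factor uniformly in $N\ge M, t\in[0,1]$.

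For the Gaussian factor I would apply Proposition~\ref{prop:gauss_est} to the family $(k_N+tr_N)_{N,t}$. The orthogonality $\int k_N(x_1,y)r_N(x_2,y)\,\d y=0$ yields $\|k_N+tr_N(x,\cdot)\|_{L^2}^2=\|k_N(x,\cdot)\|_{L^2}^2+t^2\|r_N(x,\cdot)\|_{L^2}^2\le\|k(x,\cdot)\|_{L^2}^2$, which is condition (i). The same orthogonality, applied to differences at $x_1,x_2\in D$, gives $d_{k_N+tr_N}(x_1,x_2)^2=\sigma^2\|k_N(x_1,\cdot)-k_N(x_2,\cdot)\|_{L^2}^2+\sigma^2t^2\|r_N(x_1,\cdot)-r_N(x_2,\cdot)\|_{L^2}^2\le d_c(x_1,x_2)^2$, which is condition (ii). Condition (iii) follows from the linearity $G_{k_N+tr_N}(x)=G_{k_N}(x)+tG_{r_N}(x)$ and the assumed continuity of the paths of $G_{k_N}$ and $G_{r_N}$. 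The resulting Gaussian-type tail bound makes the Gaussian Laplace transform finite and uniform for any exponent.

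For the Poisson factor I would apply Proposition~\ref{prop:poisson_est} to the same family. The uniform $L^1$- and $L^\infty$-bounds follow from
\[
    \widetilde{k_N+tr_N}(y)\le\tilde k_N(y)+\tilde r_N(y)\le\tilde k(y)+2\tilde r_N(y),
\]
so that for any $\varepsilon>0$ we can pick $M$ large enough that $\kappa_\infty^{(M)}:=\sup_{N\ge M,t\in[0,1]}\|\widetilde{k_N+tr_N}\|_{L^\infty}\le\kappa+\varepsilon$ and $\sup_{N\ge M,t\in[0,1]}\|\widetilde{k_N+tr_N}\|_{L^1}<\infty$. Proposition~\ref{prop:poisson_est} then gives a Chernov-type bound $\prob(\sup_{x\in D}|P_{N,t}(x)|\ge p)\le C_{\varepsilon,\tau}\e^{-\beta\tau p/(\kappa+\varepsilon)}$ for $\tau\in(0,1)$. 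Since the hypothesis $n<\beta/(4\varrho\kappa\rho)$ reads $4\varrho\rho n<\beta/\kappa$, I can choose $\varepsilon>0$ small and $\tau$ close to $1$ so that $\beta\tau/(\kappa+\varepsilon)>4\varrho\rho n$; the Poisson Laplace transform $\ev{\e^{4\varrho\rho n\sup|P_{N,t}|}}=1+4\varrho\rho n\int_0^\infty\e^{4\varrho\rho n p}\,\prob(\sup|P_{N,t}|\ge p)\,\d p$ is then uniformly finite.

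In the disjoint-support case, for each $(x,y)$ either $r_N(x,y)=0$, giving $|k_N+tr_N|(x,y)=|k_N(x,y)|=|k(x,y)|$, or $k_N(x,y)=0$, giving $|k_N+tr_N|(x,y)=t|r_N(x,y)|\le|k(x,y)|$; hence $\widetilde{k_N+tr_N}\le\tilde k$ pointwise and $\kappa_\infty^{(N,t)}\le\kappa$ already for $M=1$, so no shrinking of $\kappa$ is needed. The main technical hurdle throughout is the verification of condition (ii) of Proposition~\ref{prop:gauss_est}, i.e.\ the \emph{uniform} domination of the canonical distances, which is precisely where the orthogonality of the approximation sequence becomes essential.
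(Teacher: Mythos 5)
Your argument is correct and its core coincides with the paper's: the real work in both cases is verifying the hypotheses of Proposition~\ref{prop:gauss_est} (uniform Talagrand bound) and Proposition~\ref{prop:poisson_est} (uniform Chernov bound) for the whole family $k_N+t r_N$, and you do this exactly as the paper does, using property (i) of the orthogonal approximation sequence to get $\bar\sigma_{N,t}\le\bar\sigma$ and $d_{N,t}\le d_c$, and property (ii) to get the uniform $L^1$- and $L^\infty$-control $\kappa_{N,t}\le\kappa+\varepsilon$ for $N\ge M_\varepsilon$ (with $\kappa_{N,t}\le\kappa$, hence $M=1$, in the disjoint-support case). Where you differ is in how the two tail bounds are assembled: the paper expands $\ev{\e^{4\varrho\rho n\sup|Z_{N,t}|}}$ into a layer-cake series and splits each tail event between the Gaussian and Poisson parts with a union bound and a splitting parameter $\lambda_0\in(0,1)$, which forces the bookkeeping condition $4\varrho\rho n<\beta(1-\lambda_0)/(\kappa+\varepsilon)$; you instead invoke the independence of the Gaussian and jump components (Lemma~\ref{lemma:decomposition}, legitimate here since the quantity depends only on the law of $Z$, so one may realize $Z$ as a sum of independent parts on a product space) to factor the Laplace transform. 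This is slightly cleaner, because the Gaussian factor has all exponential moments uniformly in $(N,t)$, so the full exponent budget $4\varrho\rho n<\beta/\kappa$ can be spent on the Poisson factor and no $\lambda$-splitting is needed. One small bookkeeping point you should make explicit: your $P=Z_J$ from Lemma~\ref{lemma:decomposition} carries the compensator and thus differs from the field to which Proposition~\ref{prop:poisson_est} applies (characteristic triplet $(\int_{\{0<|s|\le1\}}s\,\nu(\d s),0,\nu)$) by a deterministic drift; under Assumption~\ref{assum:Levy} this drift is bounded by $\int_{\{|s|\le1\}}|s|\,\nu(\d s)\,(\|\tilde k\|_{L^1(\R^d)}+2\|\tilde r_N\|_{L^1(\R^d)})$ uniformly in $N$ and $t$, so it is absorbed into the same constant as your $Z_D$ term, exactly as the paper absorbs $(b-b')\int(k_N+tr_N)(x,y)\,\d y$ via the choice of $B$.
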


\begin{proof}
We define $b':=\int_{\{0<s\leq 1\}} s\nu(\d s)$ and denote by $P$ the L\'evy field associated with the characteristic triplet $(b',0,\nu)$. 
Then, $P$ is $\tripleBar{\cdot}$-continuous and for an arbitrary smoothing function 
$l:\R^d\times\R^d\rightarrow\R$ the smoothed field $Z_l$ satisfies
\[
   Z_l(x)
   =
   (b-b') \int_{\R^d}l(x,y)\d y + G_l(x)+P_l(x), \qquad x\in\R^d.
\]
With $G_{N,t}(x):=G_{k_N+t r_N}(x), P_{N,t}(x):=P_{k_N+tr_N}(x), N\in\N, t\in [0,1]$ it follows for arbitrary $B>2|b-b'|\,\|\tilde{k}\|_{L^1(\R^d)}$ and each $N\in\N, t\in [0,1]$ and every $\lambda\in (0,1)$:
\begin{equation} \label{estimate expected value}
\begin{aligned}
	 &\ev{\e^{4\varrho\rho n\sup_{x\in D}|Z_{N,t}(x)|}}
	 \leq 
	 \sum_{j=0}^\infty \e^{4\varrho\rho n(j+1)B}\prob\left(\sup_{x\in D}|Z_{N,t}(x)| \geq  jB\right)\\
	 &\leq 
	 \e^{4\varrho\rho n B}
	 + \sum_{j=1}^\infty 
	 \e^{4\varrho\rho n(j+1)B}\prob\left(\sup_{x\in D}|G_{N,t}(x)|+\sup_{x\in D}|P_{N,t}(x)|\geq (j-\frac{1}{2})B\right)\\
	 &\leq 
	 \e^{4\varrho\rho n B}\left(1+\sum_{j=1}^\infty \e^{4\varrho\rho n j B}\left[\prob(\sup_{x\in D}|G_{N,t}(x)|\geq (j-\frac{1}{2})\lambda B)\right.\right.\\
	    &\left.\left.+\prob(\sup_{x\in D}|P_{N,t}(x)|\geq(j-\frac{1}{2})(1-\lambda)B)\right]\right).
	\end{aligned}
\end{equation}

We next verify the hypothesis of Proposition~\ref{prop:gauss_est} for the family of smoothing functions $k_{N} + t \, r_N$, $ N \in \N, t \in [0,1]$ and the smoothing function $k$.
Using property (i) of an orthogonal approximation sequence, we set
\begin{align*}
   \bar\sigma^2_{N,t}
   &:=
   \sup_{x \in D} \ev{G_{N,t}(x)^2}
   =
   \sup_{x\in D} \sigma^2 
   \left(\int_{\R^d} |k_N(x,y)|^2 \,\d y + t^2 \int_{\R^d} |r_N(x,y)|^2 \,\d y \right)
   \\
   &\le
   \sup_{x \in D} \sigma^2 
   \left( \int_{\R^d} |k_N(x,y)|^2 \,\d y + \int_{\R^d} |r_N(x,y)|^2 \,\d y \right)
   =
   \sup_{x \in D} \ev{G_k(x)^2} =: \bar\sigma^2,
\end{align*}
which implies hypothesis (i) of Proposition~\ref{prop:gauss_est}.

Denoting the canonical distances of the centered Gaussian random fields $(G_k(x))_{x \in D}$ and $(G_{N,t}(x))_{x \in D}$ by $d_c$ and $d_{N,t}$, respectively, we obtain for arbitrary $N \in \N, t \in [0,1]$ and each $x_1, x_2 \in \R^d$, using property (i) of an orthogonal approximation sequence,
\begin{align*}
 &d_{N,t}(x_1,x_2) 
   =
   \left( \ev{( G_{N,t}(x_1) - G_{N,t}(x_2))^2} \right)^{1/2}\\
   &=
   \sigma
   \left(\int_{\R^d} \left( k_{N,t}(x_1,y) - k_{N,t}(x_2,y) \right)^2 \,\d y \right)^{1/2}\\
   &=
   \sigma
   \left(\int_{\R^d} \left( k_N(x_1,y) - k_N (x_2,y) \right)^2 \,\d y +t^2\int_{\R^d}\left( r_N(x_1,y) - r_N(x_2,y)\right)^2 \,\d y\right)^{1/2}\\
   &\leq
   \sigma
   \left(\int_{\R^d} \left( k_N(x_1,y) - k_N (x_2,y) \right)^2 \,\d y +\int_{\R^d}\left( r_N(x_1,y) - r_N(x_2,y) \right)^2 \,\d y\right)^{1/2}\\
   &=
   \sigma
   \left(\int_{\R^d} \left( k(x_1,y) - k(x_2,y) \right)^2 \,\d y\right)^{1/2}=d_c(x_1,x_2),
\end{align*}
implying hypothesis (ii) of Proposition~\ref{prop:gauss_est}. Since, by assumption on $k_N$ and $r_N$, hypothesis (iii) of Proposition~\ref{prop:gauss_est} is satisfied as well, it follows from Proposition~\ref{prop:gauss_est} that there are constants $A>\bar{\sigma}^2, K>0,v,\gamma>0, \varepsilon_0\in (0,\bar{\sigma})$ such that for all $N\in\N, t\in [0,1], \lambda\in (0,1)$
\begin{equation}\label{estimate Gaussian part II}
    \prob\left(\sup_{x\in D}|G_{k_\iota}(x)|\geq (j-\frac{1}{2})\lambda B\right)
    \leq
    \gamma\left(\frac{K A (j-\frac{1}{2})\lambda B}{\sqrt{v}\bar{\sigma}^2}\right)^v\exp(-\frac{((j-\frac{1}{2})\lambda B)^2}{2\bar{\sigma}^2}),
\end{equation}
whenever $j>\frac{1}{2}+\frac{\bar{\sigma}(1+\sqrt{v})}{\lambda B\varepsilon_0}$.

Next, because of
\[\forall\,N\in\N, t\in [0,1]:\,\|\tilde{k}_{N,t}\|_{L^1(\R^d)}\leq\|\tilde{k}\|_{L^1(\R^d)}+\|\tilde{r}_N\|_{L^1(\R^d)}\]
and property (ii) of an orthogonal approximation sequence \[\kappa_1:=\sup_{N\in\N, t\in [0,1]}\|\tilde{k}_{N,t}\|_{L^1(\R^d)}<\infty.\]
Moreover, since for all $N\in\N, t\in [0,1]$
\begin{equation}\label{estimate kappa}
    \kappa_{N,t}=\sup_{x\in D}\|k_N(x,\cdot)+t r_N(x,\cdot)\|_{L^\infty(\R^d)}\leq\kappa+(1-t)\kappa_{r,N}\leq \kappa+\kappa_{r,N}
\end{equation}
it follows from property (ii) of an orthogonal approximation sequence that for every $\varepsilon>0$ there is $M_\varepsilon\in\N$ such that
\[\forall\,N\in\N, N\geq M_\varepsilon, t\in [0,1]:\,\kappa_{N,t}\leq\kappa+\varepsilon.\]
Moreover, in case $k_N(x,\cdot)$ and $r_N(x,\cdot)$ have disjoint supports for every $N\in\N, x\in \R^d$, it follows that in \eqref{estimate kappa} we even have $\kappa_{N,t}\leq \kappa$.

Now, applying for fixed $\varepsilon>0$ Proposition~\ref{prop:poisson_est} to the family of smoothing functions $(k_N+t r_N)_{N\geq M_\varepsilon, t\in [0,1]}$ (respectively to $(k_N+t r_N)_{N\in\N, t\in [0,1]}$) gives that for every $\tau\in (0,1)$
\begin{equation} \label{estimate Poisson part II}
    \prob\left(\sup_{x\in D}|P_{N,t}(x)|\geq (j-\frac{1}{2})(1-\lambda)B\right)
    \leq 
    C_\tau \e^{-\frac{\beta}{\kappa+\varepsilon}(j-\frac{1}{2})(1-\lambda){\color{blue}\tau}B}
\end{equation}
for all $j\in \N, \lambda\in(0,1)$, whenever $N\geq M_\varepsilon$ (resp.\ $N\in\N$), $t\in [0,1]$ and where
\[
   C := 
   \exp\left( \frac{\beta\kappa_1}{\kappa} 
   \left(
   \e^\beta \int_{\{0<s\leq 1\}}|s|\nu_+(\d s) +
   \frac{1}{\beta \e (1-\tau)}
   \int_{\{s>1\}} \e^{\beta s} \nu_+(\d s)
   \right)
   \right).
\]
Finally, since $n\in (0,\frac{\beta}{4\varrho\kappa\rho})$ there are $\lambda_0\in (0,1)$ and $\varepsilon>0$ such that $n<\beta(1-\lambda_0)/(\kappa+\varepsilon)4\varrho\rho$. 
Then, with $B>2|b-b'|\|\tilde{k}\|_{L^1(\R^d)}$ so large that $2\bar{\sigma}(1+\sqrt{v})/\varepsilon_0 B\lambda_0<1/2$ it follows from \eqref{estimate expected value}, \eqref{estimate Gaussian part II}, and \eqref{estimate Poisson part II} that for every $\tau\in (0,1)$ and for all $N\geq M_\varepsilon$ (resp.\ $N\in\N$), $t\in[0,1]$
\begin{align*}
    \ev{\e^{4\varrho\rho n\sup_{x\in D}|Z_{N,t}(x)|}}
	\leq 
	\e^{4\varrho\rho n B}
	&\left(
	1 + \sum_{j=1}^\infty \e^{4\varrho\rho n j B}
	\left[ C_\tau \e^{-\frac{\beta}{\kappa+\varepsilon}(j-\frac{1}{2})(1-\lambda_0){\color{blue}\tau}B}\right.
	\right.\\
	&\left.\left.+\gamma \left(\frac{K A (j-\frac{1}{2})\lambda_0 B}{\sqrt{v}\bar{\sigma}^2}\right)^v
	\exp\left(-\frac{((j-\frac{1}{2})\lambda_0 B)^2}{2\bar{\sigma}^2}\right)\right]
	 \right).
\end{align*}
Choosing $\tau$ sufficiently close to $1$, by the same arguments as employed in the proof of Theorem \ref{thm:levy_int}, the series converges since $4\varrho\rho n < \beta(1-\lambda_0)/(\kappa+\varepsilon)$, hence the assertion follows.
\end{proof}

\begin{lemma} \label{lem:VanishingMoments}
	Let $Z, k, k_N,$ and $r_N, N\in\N$ be as in Lemma \ref{lem:Uniform}. Moreover, let $\varrho'>1$ and $n\geq 1/\varrho'$. Then, for every $\delta\in(0,1)$ there is a constant $C>0$ depending only on $\delta$, $Z$, $k$, and $n\varrho'$ such that
	\[\forall\,N\in\N:\,\|\sup_{x\in D}|R_N(x)|^{\varrho'}\|_{L^n\pspace}\leq C\left(\max\{\alpha_N,\alpha_N^{1-\delta}\}\right)^{\varrho'}\]
	where $\alpha_N:=\max\{\|\tilde{r}_N\|_{L^1(\R^d)},\kappa_{r,N}\}$. In particular, $\lim\limits_{N\rightarrow\infty}\ev{\sup_{x\in D}|R_N(x)|^{n\rho'}}=0$.
\end{lemma}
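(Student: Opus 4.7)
The plan is to decompose the remainder $R_N$ into its deterministic, Gaussian and pure-jump components, to bound the $L^{n\varrho'}(\Omega)$-norm of the supremum over $D$ of each separately, then to recombine via Minkowski's inequality. Following the reduction at the beginning of the proof of Theorem \ref{thm:levy_int}, I first replace $Z$ by the renormalised Lévy field with characteristic triplet $(b',\sigma^{2},\nu)$, $b':=\int_{\{0<|s|\le1\}}s\,\nu(\d s)$, so that $Z=G+P$ with $G$ the $\tripleBar{\cdot}$-continuous centred Gaussian field of triplet $(0,\sigma^{2},0)$ and $P$ the pure-jump field of triplet $(b',0,\nu)$. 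Setting $d_N(x):=(b-b')\int_{\R^d}r_N(x,y)\,\d y$, the pathwise identity $R_N=d_N+G_{r_N}+P_{r_N}$ together with $\|d_N\|_{C(\bar D)}\le|b-b'|\|\tilde r_N\|_{L^1(\R^d)}\le|b-b'|\alpha_N$ reduces the task to bounding $\sup_{x\in D}|G_{r_N}(x)|$ and $\sup_{x\in D}|P_{r_N}(x)|$ in $L^{n\varrho'}(\Omega)$ by a constant multiple of $\alpha_N$.

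For the Gaussian part I apply Proposition \ref{prop:gauss_est} to the one-element family $\{r_N\}$, once $N$ is so large that $\alpha_N\le 1\wedge(\bar\sigma/\sigma)$. Hypothesis (i) follows from Hölder's inequality $\|r_N(x,\cdot)\|_{L^2}^{2}\le\kappa_{r,N}\|\tilde r_N\|_{L^1}\le\alpha_N^{2}$, which also gives $\bar\sigma_{r_N}\le\sigma\alpha_N$. The key step is hypothesis (ii): the orthogonality condition (i) of Definition \ref{def:unif_orth_dec} makes the cross terms in $\|k(x_1,\cdot)-k(x_2,\cdot)\|^{2}_{L^2}$ cancel, yielding the Pythagorean identity $d_c(x_1,x_2)^{2}=d_{k_N}(x_1,x_2)^{2}+d_{r_N}(x_1,x_2)^{2}$ and hence $d_{r_N}\le d_c$; hypothesis (iii) is part of the assumptions. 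Integrating the resulting tail estimate against $mg^{m-1}\,\d g$ with $m:=n\varrho'$---splitting at $g_0=\bar\sigma_{r_N}(1+\sqrt v)/\varepsilon_0$ and using the trivial bound $\prob(\cdot)\le1$ below $g_0$, then substituting $g=\bar\sigma_{r_N}u$ above---gives $\ev{(\sup_{x\in D}|G_{r_N}(x)|)^{m}}\le C_{1}\bar\sigma_{r_N}^{m}+C_{2}\bar\sigma_{r_N}^{m+v}\le C_{3}\alpha_N^{m}$ with constants depending only on $m,v,A,K,\gamma,\bar\sigma,\varepsilon_0,\sigma$.

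For the Poisson part I use the pointwise bound from the proof of Proposition \ref{prop:poisson_est}, $\sup_{x\in D}|P_{r_N}(x)|\le W_N:=\sum_{j,\ell}|S_\ell^{(j)}|\tilde r_N(X_\ell^{(j)})$, whose Laplace transform satisfies $\ev{\e^{\vartheta W_N}}\le\exp(f_{r_N}(\vartheta))$ with $f_{r_N}$ as in (\ref{eq:est_poisson_part}). Choosing $\vartheta=\tau\beta/\kappa_{r,N}$ for a fixed $\tau\in(0,1)$, the inequality (\ref{eq:est_poisson_part}) delivers $f_{r_N}(\vartheta)\le\vartheta\,a_N\,C(\tau)$ with $a_N:=\|\tilde r_N\|_{L^1}$ and $C(\tau)$ depending only on $\beta,\nu,\tau$, so that Markov's inequality yields
\[
\prob(W_N\ge p)\le\exp\!\left(-\tfrac{\tau\beta}{\kappa_{r,N}}\bigl(p-C(\tau)a_N\bigr)\right),\qquad p>0.
\]
Splitting $\ev{W_N^{m}}=m\int_{0}^{\infty}p^{m-1}\prob(W_N\ge p)\,\d p$ at $p_0:=2C(\tau)a_N$ and bounding the upper tail by a standard Gamma integral produces $\ev{W_N^{m}}\le C_{4}(a_N^{m}+\kappa_{r,N}^{m})\le C_{5}\alpha_N^{m}$.

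Combining the three estimates via Minkowski's inequality in $L^{m}(\Omega)$ and raising to the power $1/n$ gives $\|(\sup_{x\in D}|R_N(x)|)^{\varrho'}\|_{L^n\pspace}\le C\alpha_N^{\varrho'}$ for all $N\ge N_0=N_0(\delta,Z,k,n\varrho')$; since $\alpha_N^{\varrho'}\le\alpha_N^{(1-\delta)\varrho'}$ whenever $\alpha_N\le 1$, the claimed inequality follows (with $C$ enlarged if necessary to also absorb the finitely many $N<N_0$), and the convergence $\ev{\sup_{x\in D}|R_N(x)|^{n\varrho'}}\to 0$ is immediate from $\alpha_N\to 0$. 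The main obstacle is obtaining the constants in Propositions \ref{prop:gauss_est} and \ref{prop:poisson_est} uniformly in $N$ as $\alpha_N\to 0$: for the Gaussian contribution this hinges on the orthogonality-based comparison of canonical distances, while for the Poisson contribution the scaling $\vartheta\sim 1/\kappa_{r,N}$ keeps $\vartheta a_N$ controlled despite the possibly unbounded behaviour of the ratio $a_N/\kappa_{r,N}$.
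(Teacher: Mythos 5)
Your overall architecture (split $R_N$ into deterministic, Gaussian and jump parts; control the Gaussian supremum via Proposition \ref{prop:gauss_est} using the orthogonality-based comparison $d_{r_N}\le d_c$ and $\bar\sigma_{r_N}\le\sigma\alpha_N$; control the jump part via the exponential bound behind Proposition \ref{prop:poisson_est}) matches the paper, and your Poisson estimate is in fact a correct alternative to the paper's route (the paper instead bounds $\sup_x|P_{r_N}(x)|\le|P|(\tilde r_N)$ and uses the cumulant formula of Proposition \ref{prop:moments}; your Chernoff/Gamma-tail argument gives the same rate $\alpha_N^{n\varrho'}$). However, the Gaussian moment bound contains a genuine error. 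Integrating the tail bound of Lemma \ref{lemma:tal} with the substitution $g=\bar\sigma_{r_N}u$ gives
\[
m\int_{g_0}^\infty g^{m-1}\Bigl(\tfrac{KAg}{\sqrt{v}\,\bar\sigma_{r_N}^2}\Bigr)^v \e^{-g^2/2\bar\sigma_{r_N}^2}\,\d g
= m\Bigl(\tfrac{KA}{\sqrt{v}}\Bigr)^v \bar\sigma_{r_N}^{\,m-v}\int_{u_0}^\infty u^{m-1+v}\e^{-u^2/2}\,\d u ,
\]
i.e.\ the exponent is $m-v$, not $m+v$ as you wrote: the prefactor $(KAg/\sqrt{v}\bar\sigma_{r_N}^2)^v$ scales like $\bar\sigma_{r_N}^{-v}$ on the range $g\asymp\bar\sigma_{r_N}$. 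Since $v=2d/\eta$ is fixed by the covering property and may well exceed $m=n\varrho'$ (which can be as small as $1$), the term $\bar\sigma_{r_N}^{\,m-v}$ is not $O(\alpha_N^m)$ and can even diverge as $\alpha_N\to0$; so your claimed bound $\ev{\sup_{x\in D}|G_{r_N}(x)|^{m}}\le C_3\alpha_N^{m}$ does not follow from this computation.

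This is precisely where the $\delta$ in the statement comes from, and it is symptomatic that your proof uses $\delta$ only as harmless slack at the very end. The paper repairs exactly this point by slicing the tail at scale $\bar\sigma_{r,N}^{1-\delta}$ rather than $\bar\sigma_{r,N}$: then the exponential factor $\exp(-j^2/2\bar\sigma_{r,N}^{2\delta})$ dominates the prefactor $(j\,\bar\sigma_{r,N}^{-(1+\delta)})^v$ uniformly in $N$ (monotonicity of $x\mapsto (jx^{-(1+\delta)})^v\e^{-j^2/2x^{2\delta}}$ on $[0,\bar\sigma]$ for $j$ large), yielding $\ev{\sup_x|G_{r_N}(x)|^{n\varrho'}}\le C\,\bar\sigma_{r,N}^{(1-\delta)n\varrho'}\le C'\alpha_N^{(1-\delta)n\varrho'}$ for \emph{all} $N$ — whence the factor $\max\{\alpha_N,\alpha_N^{1-\delta}\}$. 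A variant of your integration with the cut at $\bar\sigma_{r_N}\sqrt{2(v+m)\log(1/\bar\sigma_{r_N})}$ would also work and again produces the $\delta$-loss. A further minor point: absorbing ``the finitely many $N<N_0$'' into $C$ makes the constant depend on the particular approximation sequence $(k_N,r_N)$, whereas the statement requires dependence only on $\delta$, $Z$, $k$, and $n\varrho'$; the paper's uniform-in-$N$ argument avoids this.
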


\begin{proof}
	With the same notation as in the proof of Lemma \ref{lem:Uniform} we have
	\[R_N(x)=(b-b')\int_{\R^d} r_N(x,y)\,\d y+ G_{r_N}(x)+P_{r_N}(x),\]
	so that by Jensen's inequality
	\begin{align}\label{est:expected}
	    \ev{\sup_{x\in D}|R_N(x)|^{n\varrho'}}\leq& 3^{n\varrho'-1}\left(
	    |b-b'|^{n\varrho'}\|\tilde{r}_N\|_{L^1(\R^d)}^{n\varrho'}+\ev{\sup_{x\in D}|G_{r_N}(x)|^{n\varrho'}}\right.\\
	    &+\left.\ev{\sup_{x\in D}|P_{r_N}(x)|^{n\varrho'}}\right).\nonumber
	\end{align}
	In order to estimate the Gaussian part, we precede as in the proof of Lemma \ref{lem:Uniform}. From property~(i) of an orthogonal approximation sequence we conclude
	\[\forall\,N\in\N, x\in D:\,\|r_N(x,\cdot)\|_{L^2(\R^d)}\leq\|k(x,\cdot)\|_{L^2(\R^d)}\mbox{ and }d_N\leq d_c,\]
	where $d_N$ and $d_c$ denote the canonical distances associated with $(G_{r_N}(x))_{x\in D}$ and $(G_k(x))_{x\in D}$, respectively. Therefore, by Proposition \ref{prop:gauss_est}, or better the first inequality in (\ref{estimate:gaussian part}), there are constants \[A>\bar{\sigma}^2:=\sigma^2\sup_{x\in D}\|k(x,\cdot)\|^2_{L^2(\R^d)}\geq\sigma^2\sup_{x\in D}\|r_N(x,\cdot)\|^2_{L^2(\R^d)}=:\bar{\sigma}^2_{r,N},\]
	$K,v>0,\varepsilon_0\in (0,\bar{\sigma})$ such that for every $N\in\N, g>\bar{\sigma}_{r,N}(1+\sqrt{v})/\varepsilon_0$
	\[\prob\left(\sup_{x\in D}|G_{r_N}(x)|\geq g\right)\leq\left(\frac{K A g}{\sqrt{v}\bar{\sigma}^2_{r,N}}\right)^v\exp(-\frac{g^2}{2\bar{\sigma}^2_{r,N}}).\]
	For $j\in\N$ with $j>\bar{\sigma}^\delta(1+\sqrt{v})/\varepsilon_0$ it holds $j\bar{\sigma}_{r,N}^{1-\delta}>\sigma_{r,N}(1+\sqrt{v})/\varepsilon_0$ so that with $M:=\max\{\lceil \bar{\sigma}^\delta(1+\sqrt{v})/\varepsilon_0\rceil, \lceil \bar{\sigma}^\delta\sqrt{v\frac{1+\delta}{\delta}}\rceil\}$
	\begin{align*}
	    \ev{\sup_{x\in D}|G_{r_N}(x)|^{n\varrho'}}\leq& \sum_{j=0}^\infty \left( (j+1)\bar{\sigma}_{r,N}^{1-\delta}\right)^{n\varrho'}\prob\left(\sup_{x\in D}|G_{r_N}(x)|\geq j\bar{\sigma}_{r,N}^{1-\delta}\right)\\
	    \leq& \sum_{j=0}^M\left((j+1)\bar{\sigma}_{r,N}^{1-\delta}\right)^{n\varrho'}\\
	    &+\sum_{j=M+1}^\infty \left((j+1)\bar{\sigma}_{r,N}^{1-\delta}\right)^{n\varrho'}\left(\frac{K A j}{\sqrt{v}\bar{\sigma}^{1+\delta}_{r,N}}\right)^v\exp(-\frac{j^2}{2\bar{\sigma}_{r,N}^{2\delta}})\\
	    \leq &\bar{\sigma}_{r,N}^{(1-\delta)n\varrho'}\left(\sum_{j=0}^M (j+1)^{n\varrho'}\right.\\
	    &\left. +2^{n\varrho'}\left(\frac{K A}{\sqrt{v}}\right)^v\sum_{j=M+1}^\infty j^{n\varrho'}\left(\frac{j}{\bar{\sigma}_{r,N}^{1+\delta}}\right)^v\exp\left(-\frac{j^2}{2\bar{\sigma}_{r,N}}^{2\delta}\right)\right)\\
	    \leq & \bar{\sigma}_{r,N}^{(1-\delta)n\varrho'}\left(\sum_{j=0}^M (j+1)^{n\varrho'}\right.\\
	    &\left. +2^{n\varrho'}\left(\frac{K A}{\sqrt{v}}\right)^v\sum_{j=M+1}^\infty j^{n\varrho'}\left(\frac{j}{\bar{\sigma}^{1+\delta}}\right)^v\exp\left(-\frac{j^2}{2\bar{\sigma}^{2\delta}}\right)\right)
	\end{align*}
	where in the last step we have used that for every $j>\lceil \bar{\sigma}^\delta\sqrt{v\frac{1+\delta}{\delta}}\rceil$ the functions $f_j:[0,\infty)\rightarrow\R, f_j(x):=(j x^{-(1+\delta)})^v\exp(-j^2/2x^{2\delta})$ are strictly increasing on $[0,\bar{\sigma}]$ and that $\bar{\sigma}_{r,N}\in [0,\bar{\sigma}]$ for all $N\in\N$. Therefore, since the above series converges, denoting by $C_1$ the expression in brackets on the right hand side of the above inequality and taking into account that
	\[\forall\,N\in\N:\,\bar{\sigma}^2_{r,N}=\sigma^2\sup_{x\in D}\int_{\R^d}|r_N(x,y)|^2\,\d y\leq\sigma^2\kappa_{r,N}\|\tilde{r}_N\|_{L^1(\R^d)}\leq\sigma^2 \alpha_N^2,\]
	we derive
	\begin{equation}\label{est:expected Gaussian}
	    \forall\,N\in\N:\,\ev{\sup_{x\in D}|G_{r_N}(x)|^{n\varrho'}}\leq\sigma^{1-\delta}C_1 \alpha_N^{(1-\delta)n\varrho'}.
	\end{equation}
	
	We next consider the Poisson part in (\ref{est:expected}). By H\"older's inequality, $P_{r_N}(x)\leq |P|_{|r_N|}(x)\leq |P|(\tilde r_N)$, cf.\ \eqref{eqa:bound}, Proposition \ref{prop:moments}, and $\|\tilde{r}_N\|_{L^1(\R^d)}^k \kappa_{r,N}^{l-k}\leq\alpha_N^l$ for every $0\leq k\leq l$,  we have for all $N\in\N$
	\begin{align}\label{est:expected Poisson}
	    &\ev{\sup_{x\in D}|P_{r_N}(x)|^{n\varrho'}}\leq \left(\ev{\sup_{x\in D}|P_{r_N}(x)|^{\lceil n\varrho'\rceil}}\right)^{\frac{n\varrho'}{\lceil n\varrho'\rceil}}\nonumber\\
	    \leq& \left(\ev{|P|(\tilde{r}_N(x)|^{\lceil n\varrho'\rceil}}\right)^{\frac{n\varrho'}{\lceil n\varrho'\rceil}}
	    =\left(\sum_{\substack{I\in \mathscr{P}^{(\lceil n\varrho'\rceil)}\\ I 
				= 
				\{I_1, \ldots, I_k\}}} 
		\prod_{\ell=1}^k c^+_{|I_\ell|}\int_{\R^d} \tilde r_N^{|I_\ell|} \,\d x\right)^{\frac{n\varrho'}{\lceil n\varrho'\rceil}}\nonumber\\
		\leq & \left(\sum_{\substack{I\in \mathscr{P}^{(\lceil n\varrho'\rceil)}\\ I 
				= 
				\{I_1, \ldots, I_k\}}} 
		\prod_{\ell=1}^k c^+_{|I_\ell|}\|\tilde{r}_N\|_{L^1(\R^d)}\kappa_{r,N}^{|I_l|-1}\right)^{\frac{n\varrho'}{\lceil n\varrho'\rceil}}\\
		= & \left(\sum_{\substack{I\in \mathscr{P}^{(\lceil n\varrho'\rceil)}\\ I 
				= 
				\{I_1, \ldots, I_k\}}} 
		\prod_{\ell=1}^k c^+_{|I_\ell|}\;\|\tilde{r}_N\|_{L^1(\R^d)}^k\kappa_{r,N}^{\lceil n\varrho'\rceil-k}\right)^{\frac{n\varrho'}{\lceil n\varrho'\rceil}}
		\leq  \left(\sum_{\substack{I\in \mathscr{P}^{(\lceil n\varrho'\rceil)}\\ I 
				= 
				\{I_1, \ldots, I_k\}}} 
		\prod_{\ell=1}^k c^+_{|I_\ell|}\;\right)^{\frac{n\varrho'}{\lceil n\varrho'\rceil}}\alpha_N^{n\varrho'},\nonumber
	\end{align}
	where $\mathscr{P}^{(\lceil n\varrho'\rceil)}$ denotes the collection of all partitions on $\{1,\ldots, \lceil n\varrho'\rceil\}$ into non-intersecting, none-empty sets $I_1,\ldots, I_k, 1\leq k\leq \lceil n\varrho'\rceil$, and $c^+_{|I_l|}$ are suitable non-negative numbers, compare Proposition \ref{prop:moments}. Note that the constants $c^+_{|I_\ell|}$ are taken w.r.t.\ the modified L\'evy measure $\nu_+$ associated with $|P|$ instead of $\nu$ associated with $P$.
	
	Therefore, setting $C_2$ to be the factor in front of $\alpha_N^{n\varrho'}$ in the previous inequality, we have
	\begin{equation}
	    \forall\,N\in\N:\,\ev{\sup_{x\in D}|P_{r_N}(x)|^{n\varrho'}}\leq  C_2 \alpha_N^{n\varrho'}.
	\end{equation}
	Combining (\ref{est:expected}), (\ref{est:expected Gaussian}), and (\ref{est:expected Poisson}) we finally obtain
	\[\forall\,N\in\N:\,\ev{\sup_{x\in D}|R_N(x)|^{n\varrho'}}\leq 3^{n\varrho'-1}(|b-b'|+\sigma^\delta C_1+C_2)\left(\max\{\alpha_N,\alpha^{1-\delta}_N\}\right)^{n\varrho'}\]
	which proves the claim.
\end{proof}

Combining Lemmas \ref{lemma:prob_error_Estimate}, \ref{lem:Uniform} and \ref{lem:VanishingMoments} we now obtain the following convergence result:
\begin{theorem} \label{theo:approxSol}
	Let $Z$ be a L\'evy field satisfying Assumption~\ref{assum:Levy}. 
	Moreover, let $k:\R^d\times\R^d\rightarrow\R$ be a smoothing function such that $\tilde{k}\in L^1(\R^d)\cap L^\infty(\R^d)$ and such that the canonical distance $d_c$ of $(G_k(x))_{x\in D}$ satisfies the covering property of Talagrand's Lemma \ref{lemma:tal}, where $G$ is the centered Gaussian part of $Z$. 
	Furthermore, let $k=k_N+r_N, N\in \N,$ be an orthogonal approximation sequence for which the centered Gaussian fields $(G_{k_N}(x))_{x\in D}$ and $(G_{r_N}(x))_{x\in D}$, $N\in\N$, all have a.s.\ continuous paths and for which $\tilde{k}_N\in L^1(\R^d)\cap L^\infty(\R^d), N\in \N$.
	
	Let $u$ and $u_N, N\in\N,$ be the solution of \eqref{eq:diff_eq} with random conductivity $T\circ Z_k$ and $T\circ Z_{k_N}$, respectively, where $T$ satisfies Assumption~\ref{assum:T}. Assume that with $\kappa:=\|\tilde{k}\|_{L^\infty(\R^d)}$ we have $\beta>4\kappa\rho$. 
	Then for all $n\in [1,\frac{\beta}{4\kappa\rho})$, $\varrho\in (1,\frac{\beta}{4\kappa\rho n})$, and $\delta\in (0,1)$ there is a constant $C'>0$ and $M\in \N$ such that for all $N\geq M$ we have
	\[\|u-u_N\|_{L^n(\pspace; H^1(D))}\leq C'\max\{\alpha_N,\alpha_N^{1-\delta}\},\]
	where $\alpha_N=\max\{\|\tilde{r}_N\|_{L^1(\R^d)},\kappa_{r,N}\}$. In case $k_N(x,\cdot)$ and $r_N(x,\cdot), N\in\N,$ have disjoint supports for every $x\in\R^d$, one can choose $M=1$.
	
	In particular, $(u_N)_{N\in\N}$ converges to $u$ in $L^n(\pspace; H^1(D))$. 
	The constant $C'$ depends only on $B$, $Z$, $k$, $\frac{n\varrho}{\varrho-1}$, $\|f\|_{L^2(D)}$, $\|g_D\|_{H^{\frac{1}{2}}(\partial_D)}$, $\|g_N\|_{H^{-\frac{1}{2}}(\partial_N)}$, and $C$, the constant from Lemma \ref{lemma:app-a-prio}. 
\end{theorem}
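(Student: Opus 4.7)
The plan is to combine the three preparatory lemmas (Lemma~\ref{lemma:prob_error_Estimate}, Lemma~\ref{lem:Uniform}, and Lemma~\ref{lem:VanishingMoments}) in a direct way, with essentially no new probabilistic input. First, I would fix $n \in [1,\beta/(4\kappa\rho))$, $\varrho \in (1,\beta/(4\kappa\rho n))$, let $\varrho'=\varrho/(\varrho-1)$, and apply Lemma~\ref{lemma:prob_error_Estimate} to obtain
\[
   \ev{\|u-u_N\|_{H^1(D)}^n}
   \leq
   \bar C\,
   \ev{\sup_{x\in D}|R_N(x)|^{n\varrho'}}^{1/\varrho'}
   \sup_{t\in[0,1]}\ev{\e^{4\varrho\rho n\sup_{x\in D}|Z_{N,t}(x)|}}^{1/\varrho},
\]
so that the task reduces to bounding the two factors on the right.

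Next, I would verify that the chosen $(n,\varrho)$ lies in the admissible range of Lemma~\ref{lem:Uniform}. The condition $n\varrho<\beta/(4\kappa\rho)$ is exactly the hypothesis $n\in(0,\beta/(4\varrho\kappa\rho))$ of that lemma (with $\varrho$ playing the same role here). Since by the hypotheses of the theorem $\tilde{k}\in L^1(\R^d)\cap L^\infty(\R^d)$, $d_c$ satisfies Talagrand's covering property, and $(G_{k_N}(x))_{x\in D}$, $(G_{r_N}(x))_{x\in D}$ have a.s.\ continuous paths, Lemma~\ref{lem:Uniform} yields an $M\in\N$ and a constant $K_1>0$ with
\[
   \sup_{N\geq M,\,t\in[0,1]}
   \ev{\e^{4\varrho\rho n\sup_{x\in D}|Z_{N,t}(x)|}}
   \leq K_1,
\]
and one can take $M=1$ precisely when $k_N(x,\cdot)$ and $r_N(x,\cdot)$ have disjoint supports for every $x\in\R^d$, $N\in\N$, matching the corresponding claim of the theorem.

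For the remainder factor, I would rewrite
\[
   \ev{\sup_{x\in D}|R_N(x)|^{n\varrho'}}^{1/\varrho'}
   =
   \bigl\|\sup_{x\in D}|R_N(x)|^{\varrho'}\bigr\|_{L^n\pspace}^{n/\varrho'}
\]
and apply Lemma~\ref{lem:VanishingMoments}; the requirement $n\geq 1/\varrho'$ is automatic from $n\geq 1$ and $\varrho'>1$. This delivers a constant $C_2>0$, depending only on $\delta$, $Z$, $k$, and $n\varrho'$, such that
\[
   \ev{\sup_{x\in D}|R_N(x)|^{n\varrho'}}^{1/\varrho'}
   \leq
   C_2^{n/\varrho'}\bigl(\max\{\alpha_N,\alpha_N^{1-\delta}\}\bigr)^n
   \qquad \forall\,N\in\N.
\]

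Plugging the two bounds back into the estimate from Lemma~\ref{lemma:prob_error_Estimate} and taking the $n$-th root gives, for all $N\geq M$,
\[
   \|u-u_N\|_{L^n(\pspace;H^1(D))}
   \leq
   C'\,\max\{\alpha_N,\alpha_N^{1-\delta}\},
   \quad
   C':=\bigl(\bar C\,K_1^{1/\varrho}\,C_2^{n/\varrho'}\bigr)^{1/n},
\]
with $C'$ depending on exactly the quantities listed in the statement (noting that $\varrho'=\varrho/(\varrho-1)$). Since $\alpha_N\to 0$ by part~(ii) of Definition~\ref{def:unif_orth_dec}, convergence $u_N\to u$ in $L^n(\pspace;H^1(D))$ follows at once. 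I do not anticipate a real obstacle: the analytical content has been absorbed into the three lemmas, and the only thing to watch is the bookkeeping of the exponents $n,\varrho,\varrho'$, in particular ensuring that the Laplace-transform condition $n\varrho<\beta/(4\kappa\rho)$ is genuinely compatible with the stated range $n<\beta/(4\kappa\rho)$, which forces the strict inequality $\varrho<\beta/(4\kappa\rho n)$ (hence the non-empty range for $\varrho$) and explains why $\beta>4\kappa\rho$ is assumed.
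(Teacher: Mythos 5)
Your proposal is correct and is exactly the paper's argument: the paper obtains Theorem~\ref{theo:approxSol} precisely by combining Lemmas~\ref{lemma:prob_error_Estimate}, \ref{lem:Uniform} and \ref{lem:VanishingMoments}, and your exponent bookkeeping (in particular $n\varrho<\beta/(4\kappa\rho)$ matching the hypothesis of Lemma~\ref{lem:Uniform} and $n\geq 1/\varrho'$ for Lemma~\ref{lem:VanishingMoments}) is the only content that needed checking.
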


\subsection{Series Expansion of Lévy Coefficients} \label{sec:KL}
We provide a two-step procedure for approximating smoothed L\'evy random fields and the associated solutions of \eqref{eq:diff_eq}, resulting in a finite-dimensional approximation of $Z_k$ as a natural generalization of the Karhunen--Loève expansion for L\'evy fields.
In particular, we shall employ two specific orthogonal approximation sequences, one by restricting to sets in a compact exhaustion of $\mathbb R^d$ and the other by truncated Mercer expansion. 

In the first step we restrict the second argument of the kernel $k(x,y)$ to a set $\Lambda_N$ from a compact exhaustion $(\Lambda_N)_{N\in\N}$ of $\R^d$, i.e., $(\Lambda_N)_{N\in\N}$ is a sequence of compact subsets of $\R^d$ with $\Lambda_N\subset \mbox{int}(\Lambda_{N+1}), N\in\N$, and $\cup_{N}\Lambda_N=\R^d$. 
This is necessary as only $x$ is restricted to $D$, whereas $y$ transports the effect of noise source terms from  locations $y\not \in D$ into $D$.

For a $\tripleBar{\cdot}$-continuous L\'evy field $Z$ it follows immediately that $Z^N(f):=Z(\mathds{1}_{\Lambda_N}f)$, $N\in\N,$ again defines a $\tripleBar{\cdot}$-continuous 
generalized random field, which, however, is no longer stationary.
For a Mat\'ern kernel $k_{\alpha,m}, m>0, \alpha>d+\max\{0,\frac{3d-12}{8}\}$ it therefore follows from Theorem \ref{thm:matern} that for the smoothing function $k(x,y)=k_{\alpha,m}(x-y)$ the smoothed fields $(Z^N_k(x))_{x\in D}, N\in\N,$ have a.s.\ continuous paths. 
Moreover, it was shown in the proof of Theorem \ref{thm:levy_int} that the canonical distance $d_c$ associated with the Gaussian field $(G_k(x))_{x\in D}$ (where $G$ denotes, as usual, the centered Gaussian part of $Z$) satisfies the covering property of Talagrand's Lemma \ref{lemma:tal} and, by Lemma \ref{lemma:mat-hold} (ii), there holds $\tilde{k}\in L^1(\R^d)\cap L^\infty(\R^d)$ with $\lim_{|y|\rightarrow\infty}\tilde{k}(y)=0$. 
Therefore, the assumptions of Theorem~\ref{theo:approxSol} are automatically satisfied for Mat\'ern kernels as smoothing functions.

\begin{corollary} \label{lem:cut_off}
  Let $Z, T$ and $u$ be given as in Theorem \ref{theo:approxSol}. Furthermore, let $k_{\alpha,m}$ be a Mat\'ern kernel with $\alpha > d$. For a compact exhaustion $(\Lambda_N)_{N\in\N}$ of $\R^d$ with $D\subset\Lambda_1$, we set $k_N(x,y):=k_{\alpha,m}(x,y)\mathds{1}_{\Lambda_N}(y)$ and denote by $u_N$ the solution of (\ref{eq:diff_eq}) with random conductivity $T\circ Z_{k_N}$.
    
    Then for all $n\in [1,\frac{\beta}{4\kappa\rho})$, $\varrho\in (1,\frac{\beta}{4\kappa\rho n})$, and $0 < m' < m$, there is a constant $C>0$ such that for all $N\in\N$ we have
	\begin{equation}\label{eq:conv-uN}
	  \|u-u_N\|_{L^n(\pspace; H^1(D))}\leq C \e^{-m'd_e(D,\Lambda_N^c)}
	\end{equation}
	where $d_e(D,\Lambda_N^c)$ denotes the Euclidean distance between $D$ and $\Lambda_N^c$.
\end{corollary}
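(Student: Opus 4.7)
The plan is to apply Theorem \ref{theo:approxSol} with the decomposition $k_N(x,y) := k_{\alpha,m}(x-y)\mathds{1}_{\Lambda_N}(y)$ and $r_N(x,y) := k_{\alpha,m}(x-y)\mathds{1}_{\Lambda_N^c}(y)$, and then translate the resulting abstract bound into exponential decay in $d_N := d_e(D, \Lambda_N^c)$ using the decay of the Matérn kernel stated in Lemma \ref{lemma:mat-hold}(ii). The orthogonality condition (i) of Definition \ref{def:unif_orth_dec} holds trivially since for every $x_1, x_2 \in \R^d$ the functions $k_N(x_1,\cdot)$ and $r_N(x_2,\cdot)$ are supported in $\Lambda_N$ and $\Lambda_N^c$, respectively. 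The remaining hypotheses of Theorem \ref{theo:approxSol}, namely $\tilde k \in L^1(\R^d) \cap L^\infty(\R^d)$, the Talagrand covering property of the canonical distance of $G_k$, and a.s.\ continuous paths for $G_{k_N}$ and $G_{r_N}$, were collected in the paragraph preceding the corollary, using Lemma \ref{lemma:mat-hold}(ii), the proof of Theorem \ref{thm:levy_int}, and Theorem \ref{thm:matern} applied to the $\tripleBar{\cdot}$-continuous restricted fields $f \mapsto Z(\mathds{1}_{\Lambda_N} f)$ and $f \mapsto Z(\mathds{1}_{\Lambda_N^c} f)$. The disjoint-support property allows us to take $M=1$ in Theorem \ref{theo:approxSol}, yielding
\begin{equation*}
   \|u - u_N\|_{L^n(\pspace;H^1(D))} \leq C_\delta \, \max\{\alpha_N, \alpha_N^{1-\delta}\}
\end{equation*}
for every $\delta \in (0,1)$, with $\alpha_N = \max\{\|\tilde r_N\|_{L^1(\R^d)}, \kappa_{r,N}\}$.

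Next I would bound $\alpha_N$ in terms of $d_N$. By Lemma \ref{lemma:mat-hold}(ii), for every $\mu \in (0,m)$ there is $C_\mu > 0$ with $|k_{\alpha,m}(x)| \leq C_\mu \e^{-\mu |x|}$ for all $x \in \R^d$. For $x \in D$ and $y \in \Lambda_N^c$ we have $|x-y| \geq d_N$, so $\kappa_{r,N} \leq C_\mu \e^{-\mu d_N}$. Splitting $\mu = \mu_1 + \mu_2$ with $\mu_1, \mu_2 > 0$ and using that $\sup_{x \in D}|k_{\alpha,m}(x-y)| \leq C_\mu \e^{-\mu_1 d_N}\,\e^{-\mu_2 d_e(y,D)}$ for $y \in \Lambda_N^c$, integration yields
\begin{equation*}
   \|\tilde r_N\|_{L^1(\R^d)} \leq C_\mu \e^{-\mu_1 d_N} \int_{\R^d} \e^{-\mu_2 d_e(y,D)}\,\d y \leq \tilde C_\mu \, \e^{-\mu_1 d_N},
\end{equation*}
the last integral being finite because $D$ is bounded and $d_e(\cdot,D)$ grows linearly at infinity. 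Hence $\alpha_N \leq C_\mu' \, \e^{-\mu_1 d_N}$ for every $\mu_1 \in (0,m)$.

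To finish, given $m' \in (0,m)$ from the statement, I would pick $\mu_1 \in (m',m)$ and $\delta \in (0, 1 - m'/\mu_1)$, so that $\mu_1(1-\delta) > m'$. Since $\alpha_N \leq K := \max\{\|\tilde k\|_{L^1(\R^d)}, \|k_{\alpha,m}\|_{L^\infty(\R^d)}\}$ uniformly in $N$, the elementary inequality $\max\{a, a^{1-\delta}\} \leq (1+K^\delta)\, a^{1-\delta}$, valid for $0 < a \leq K$, gives
\begin{equation*}
   \max\{\alpha_N, \alpha_N^{1-\delta}\} \leq (1+K^\delta)(C_\mu')^{1-\delta}\, \e^{-\mu_1(1-\delta) d_N} \leq C \, \e^{-m' d_N}
\end{equation*}
uniformly in $N$, which establishes \eqref{eq:conv-uN}. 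The chief technical point is absorbing the $(1-\delta)$-loss inherent to the general estimate of Theorem \ref{theo:approxSol}, which is handled by choosing the intermediate decay rate $\mu_1$ strictly between $m'$ and $m$; once this slack is introduced, the bookkeeping of the exponential bounds for the Matérn kernel is routine.
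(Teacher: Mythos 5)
Your proof is correct and follows essentially the same route as the paper: same decomposition $r_N = k_{\alpha,m}\mathds{1}_{\Lambda_N^c}$, same reduction to Theorem \ref{theo:approxSol} via disjoint supports (so $M=1$), same exponent-splitting argument for the $L^1$ bound, and same device of absorbing the $(1-\delta)$ loss by choosing a strictly larger intermediate rate. In fact your handling is a touch cleaner in two places: you work with $\mu < m$ rather than the nominal rate $m$ itself (the Bessel asymptotics carry a polynomial factor when $\alpha - d/2 > 1/2$, so the global bound only holds at rates strictly below $m$), and you make explicit the inequality $\max\{a,a^{1-\delta}\}\leq(1+K^\delta)a^{1-\delta}$ and the parameter choice $\mu_1\in(m',m)$, $\delta<1-m'/\mu_1$, where the paper settles for the informal remark that $m'$ and $\delta$ are merged.
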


\begin{proof}
We first verify that $k = k_N + r_N$ is an orthogonal approximation sequence in the sense of Definition~\ref{def:unif_orth_dec}.
Condition (i) obviously holds as the $y$-domains of $k_N$ and $r_N$ are disjoint. 
To verify condition (ii), we use the decay rate $k_{\alpha,m}(x,y) \leq C \e^{-m|x-y|}$ for $|x-y| \to \infty$ from Lemma~\ref{lemma:mat-hold} (ii). 
This implies for $0 < m' < m$
\begin{equation*}
	  \|\tilde r_N\|_{L^1(\R^d)} 
	  \leq 
	  C\int_{\Lambda_N^c} \sup_{x\in D} \e^{-m|x-y|}\,\d y 
	  = 
	  C \e^{-m d_c(D,\Lambda_N^c)}\int_{\Lambda_N^c} \sup_{x\in D} \e^{-(m-m')|x-y|}\,\d y 
\end{equation*}
where the last integral is finite as $m'<m$, and can be estimated by a constant as the integration area gets smaller for $N\to \infty$.
We next have 
\begin{equation*}
	\kappa_{r,N} 
	= 
	\|\tilde r_N\|_{L^\infty(R^d)} 
	= 
	\sup_{x\in D, y\in \Lambda_N^c} C \e^{-m|x-y|} 
	= 
	C \e^{-md_e(D,\Lambda_N^c)}
\end{equation*}
and thus
\begin{equation*}
	\alpha_N 
	= 
	\max\{\|\tilde r_N\|_{L^1(\R^d)}, \kappa_{r,N}\} 
	\leq 
	C \e^{-m'd_e(D,\Lambda_N^c)}.
\end{equation*}

We can now apply Theorem~\ref{theo:approxSol} and obtain	
\begin{equation*}
	\|u-u_N\|_{L^n(\pspace; H^1(D))}
	\leq 
	C'\max\{\alpha_N,\alpha_N^{1-\delta}\} 
	\leq 
	C'C \e^{-m'd_e(D,\Lambda_N^c)},
\end{equation*}
	where we merged $m'$ and $\delta$, as for $m'\in (0,m)$ and $\delta\in (0,1)$ we again have $(1-\delta)m'\in (0,m)$. Redefining the constant $C$ implies $\eqref{eq:conv-uN}$.
\end{proof}

\begin{remark} \label{rem:no-problem}
	\strut \\[-1em]
	\begin{itemize}
		\item[(i)] 
		At first sight, the discontinuous cut-off $\mathds{1}_{\Lambda_N}(y)$ appears to  contradict the assumptions needed for the continuity of the paths of $Z_{k_N}(x)$, which is part of the prerequisites of Theorem~\ref{theo:approxSol}. 
		Nevertheless, we may still obtain continuous realizations of $Z_{k_N}(x)$ from Theorem~\ref{thm:matern} as it is equivalent to apply the noise $Z$ to $k_N(x,\cdot)$ or to apply the noise $\mathds{1}_{\Lambda_N} Z$ to $k(x,\cdot)$. 
As $\varphi_N(f)=\e^{\int_{\Lambda_N}\psi(f)\, \d x}$ (and likewise for $R_N(y)$ with $\Lambda_N$ replaced by $\Lambda_N^c$), we see that this functional still is $\tripleBar{\cdot}$-continuous and therefore the results of Theorem~\ref{thm:matern} are compatible with the cut-off $\Lambda_N$. 
		\item[(ii)] 
		Likewise, the H\"older continuity of the covariance function $k_{2\alpha,m}$
		of the Gaussian part (see \eqref{Gaussian covariance Hoelder}) is immediately passed on to the truncated fields $G_{k_N}(x)$, $Z_{k_{N,t}}(x)$ and $R_N(x)$, as by Definition \ref{def:unif_orth_dec} (i) the canonical distance of all these fields is dominated by that of $G_k(x)$.  
	\end{itemize}
\end{remark}

As we are now able to restrict both $x$ and $y$ to a (sufficiently large) bounded domain $\Lambda$ at the cost of a small and controllable  error, we can now apply Mercer expansion of the smoothing kernel on $\Lambda$ and recall the following well-known result:  
\begin{theorem}[Mercer's Theorem, cf.\ {\cite[Theorem~1.80]{LordEtAl2014}}] \label{thm:mercer}
	Let $\Lambda$ be a compact subset of $\mathbb{R}^d$ and $k:\Lambda\times \Lambda\to \mathbb{R}$ be a continuous, positive definite kernel.  
	Associated with $k$ is a compact linear operator $K:L^2(\Lambda) \to L^2(\Lambda)$ defined by 
	\begin{equation*}
		[K\phi](x) = \int_\Lambda k(x,y)\phi(y)\,\d y.
	\end{equation*}
	Then there exist an orthonormal basis $\{ e_i \}_{i\in \mathbb{N}}$ of $L^2(\Lambda)$ consisting of eigenfunctions
	of $K$ such that the associated sequence of eigenvalues $\{ \lambda_i \}_{i\in \mathbb{N}}$ is 
	non-negative with zero as its only possible point of accumulation. 
	The eigenfunctions corresponding to positive eigenvalues are continuous on $\Lambda$ and $k$ has the representation
	\begin{equation*}
		k(x,y) = \sum_{i=1}^\infty \lambda_i e_i(x) e_i(y), 
		\qquad x,y \in \Lambda,
	\end{equation*}
	where the convergence is absolute and uniform.
\end{theorem}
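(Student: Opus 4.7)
I propose to follow the classical Mercer proof, structured around four steps. The setup is routine; the substantive work lies in passing from pointwise to uniform convergence.

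First, I would verify that $K$ is compact and self-adjoint on $L^2(\Lambda)$. Compactness follows from $k\in C(\Lambda\times\Lambda)\subseteq L^2(\Lambda\times\Lambda)$ (so $K$ is Hilbert-Schmidt); self-adjointness follows from symmetry of $k$, which is implicit in its real positive definiteness. The spectral theorem for compact self-adjoint operators on separable Hilbert spaces then yields the orthonormal basis $\{e_i\}_{i\in\N}$ of eigenfunctions with real eigenvalues $\{\lambda_i\}$, zero being the only possible accumulation point. Positive definiteness of $k$ gives $\lambda_i=\int_\Lambda\int_\Lambda k(x,y)e_i(x)e_i(y)\,\d x\,\d y\geq 0$. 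For $\lambda_i>0$, the identity $e_i=\lambda_i^{-1}Ke_i$ exhibits $e_i$ as $\lambda_i^{-1}$ times the continuous function $x\mapsto\int_\Lambda k(x,y)e_i(y)\,\d y$ (continuous by uniform continuity of $k$ on the compact set $\Lambda\times\Lambda$ and dominated convergence), giving the claimed continuous representative.

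The heart of the proof is the uniform convergence. Set
\[
   r_N(x,y):=k(x,y)-\sum_{i=1}^N \lambda_i e_i(x)e_i(y).
\]
As the integral kernel of $K$ restricted to the orthogonal complement of $\mathrm{span}\{e_1,\dots,e_N\}$, $r_N$ is itself a continuous positive definite symmetric kernel. The standard Cauchy-Schwarz-type inequality for such kernels (from the positive semi-definiteness of the $2\times 2$ matrix with entries $r_N(x,x), r_N(x,y), r_N(y,x), r_N(y,y)$) yields
\[
    |r_N(x,y)|^2 \leq r_N(x,x)\, r_N(y,y),
\]
so it suffices to prove $\sup_{x\in\Lambda} r_N(x,x)\to 0$, i.e., $s_N(x):=\sum_{i=1}^N\lambda_i e_i(x)^2\to k(x,x)$ uniformly on $\Lambda$.

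The main obstacle is establishing the pointwise identity $\sum_{i=1}^\infty \lambda_i e_i(x)^2=k(x,x)$. From $r_N(x,x)\geq 0$ the sequence $s_N$ is monotone increasing, consists of continuous functions, and is bounded above by $\sup_\Lambda k(\cdot,\cdot)<\infty$, hence converges pointwise to some measurable $s(x)\leq k(x,x)$. The monotone convergence theorem and $\|e_i\|_{L^2}=1$ give $\int_\Lambda s(x)\,\d x=\sum_i\lambda_i$, while the trace identity for continuous Hilbert-Schmidt kernels of positive trace class operators gives $\sum_i\lambda_i=\mathrm{tr}(K)=\int_\Lambda k(x,x)\,\d x$. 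Thus $\int_\Lambda(k(x,x)-s(x))\,\d x=0$ with non-negative integrand, so $s(x)=k(x,x)$ almost everywhere, and since $s$ is lower semicontinuous (monotone limit of continuous functions) and $k(\cdot,\cdot)$ is continuous, this forces equality everywhere. Finally, Dini's theorem—applied to the monotone increasing sequence $s_N$ of continuous functions on the compact set $\Lambda$ converging pointwise to the continuous function $k(x,x)$—yields uniform convergence, completing the uniform bound on $r_N(x,x)$. Absolute convergence at each $(x,y)$ follows from the pointwise Cauchy-Schwarz estimate $\sum_i |\lambda_i e_i(x)e_i(y)|\leq (\sum_i\lambda_i e_i(x)^2)^{1/2}(\sum_i\lambda_i e_i(y)^2)^{1/2}\leq\sqrt{k(x,x)k(y,y)}$.
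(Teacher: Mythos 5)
The paper gives no proof of this statement at all---it is quoted from the literature (Lord--Powell--Shardlow, Theorem~1.80) and used as a black box---so your argument can only be measured against the classical proof, whose outline you follow. Your setup is fine: compactness and self-adjointness of $K$, the spectral theorem, non-negativity of the $\lambda_i$, continuity of eigenfunctions with positive eigenvalue via $e_i=\lambda_i^{-1}Ke_i$, the reduction of uniform convergence to $\sup_{x\in\Lambda}r_N(x,x)\to 0$ through the $2\times 2$ positive semidefiniteness inequality, and the concluding use of Dini's theorem are all standard (modulo two routine facts you assert rather than prove: that pointwise positive definiteness of the continuous kernel makes $K$ a positive operator, and conversely that the continuous kernel $r_N$ of the positive operator $K-\sum_{i\le N}\lambda_i(\cdot,e_i)e_i$ is pointwise positive semidefinite; both need a short localization/Riemann-sum argument).

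The genuine gap is in your route to the pointwise identity $\sum_i\lambda_i e_i(x)^2=k(x,x)$. The trace argument only yields $s(x)=k(x,x)$ \emph{almost everywhere}, and the upgrade to ``everywhere'' via semicontinuity goes in the wrong direction: $s$ is lower semicontinuous, so $k(\cdot,\cdot)-s$ is merely \emph{upper} semicontinuous, and a nonnegative u.s.c.\ function vanishing a.e.\ need not vanish everywhere. Concretely, $s_n(x)=\min\{1,n|x-x_0|\}$ is an increasing sequence of continuous functions whose limit is $\le 1$, equals $1$ a.e., yet is $0$ at $x_0$; nothing in your argument excludes the analogous defect $s(x_0)<k(x_0,x_0)$ at a single point, which is precisely the situation in which Dini's theorem (and the Mercer expansion itself) would fail. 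In addition, the identity $\trace(K)=\int_\Lambda k(x,x)\,\d x$ for a positive operator with continuous kernel is itself usually obtained \emph{as a corollary} of Mercer's theorem, so invoking it wholesale risks circularity; it does admit an independent elementary proof (test $K$ against the orthonormal family $\mathds{1}_{A_j}/|A_j|^{1/2}$ for a fine partition and use uniform continuity), but you would have to supply it. The standard repair bypasses both problems: for fixed $x$, the bound $\sum_i\lambda_i e_i(x)^2\le k(x,x)$ together with Cauchy--Schwarz shows that $\sum_i\lambda_i e_i(x)e_i(y)$ converges absolutely and uniformly in $y$, so its sum is continuous in $y$; testing against the complete orthonormal system $\{e_j\}$ shows this sum coincides with $k(x,\cdot)$ in $L^2(\Lambda)$, hence everywhere by continuity, and evaluation at $y=x$ gives the diagonal identity for \emph{every} $x$ without any trace identity. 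With that replacement your proof is the classical one and goes through.
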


\begin{remark}\leavevmode
\begin{itemize}
\item[(i)] 
In what follows we will obtain a finite-dimensional approximation of the smoothed random field by expanding the smoothing kernel $k$ rather than its covariance function $k^\vee *k$, as in a Karhunen-Loève (KL) expansion.  The difference to the standard KL-expansion lies in the fact that the covariance function of the truncated noise is
$\int_\Lambda k(x-z)k(y-z)\, \d z$ in contrast to $\int_{\R^d} k(x-z)k(y-z)\, \d z$ expanded in the standard KL-expansion, where $x,y\in\Lambda$.  
It is easily seen that the eigenvalues obtained for the expansion of the first covariance function are $\lambda_i^2$ and the eigenfunctions $e_i(x)$ remain the same, as this operator is the square to the integral operator defined by $k(x-y)$ on $L^2(\Lambda,\d y)$ . 

\item[(ii)] Of course also an expansion of the paths of $Z_k(x)$ in eigenfunctions of the second covariance operator is possible in principle. However this requires an expansion of the smoothing kernel $k(x-y)$ in $x\in\Lambda$ (or $D$) and prove uniformity and decay properties of this expansion in $y\in\R^d$. This approach seems more involved than the cut-off method used here, as the spectral properties of the integral operator induced by $k(x-y)$ can not be used. Also, the cut-off method seems efficient as for Mat\`ern kernels it does not lead to a worsening of rates of convergence in Theorem \ref{theo:ApproxKL} below.  
		\item[(iii)] 
		The assumptions of the above theorem clearly hold for $k(x,y)=k_{\alpha,m}(x-y)$ for $2\alpha>d$ when resticted to $\Lambda$ in both arguments $x,y$, cf.\ Lemma \ref{lemma:mat-hold}. Note that by Definition \ref{def:matern} the Fourier transform of $k_{\alpha,m}(x)$ is positive, which implies the positive definiteness of the kernel $ k_{\alpha,m}(x-y)$.
		\item[(iv)] 
		Note that the eigenfunctions $e_i(x)$ and the eigenvalues $\lambda_i$ depend on $\Lambda$. 
		For $\Lambda=\Lambda_N$ we use the notation $\lambda_{N,i}$ and $e_{N,i}(x)$. 
	\end{itemize}
\end{remark}

\begin{corollary} \label{lem:KL}
	Under the assumptions given in Theorem \ref{theo:approxSol}, let in addition $k$ be a positive definite kernel. 
	Then for fixed $N \in \mathbb N$ the decomposition 
	\[
	   k_N(x,y) = k(x,y)\mathds{1}_{\Lambda_N}(y) = k_{N,N'}(x,y) + r_{N,N'}(x,y),
	   \qquad x,y \in \Lambda_N,
	\]
	with 
	$k_{N,N'}(x,y) = \sum_{i=1}^ {N'}\lambda_{N,i} e_{N,i}(x) e_{N,i}(y)$ the truncated Mercer expansion (Theorem~\ref{thm:mercer}) with remainder $r_{N,N'}$ represents an orthogonal approximation sequence in the sense of Definition~\ref{def:unif_orth_dec} w.r.t.\ the approximation parameter $N' \in \mathbb N$. 
	Then for the solution $u_N$ of \eqref{eq:diff_eq} with smoothing kernel truncated in the $y$ variable  (cf.\ Corollary~\ref{lem:cut_off}) and the solution $u_{N,N'}$ associated with $Z_{k_{N,N'}}$, we have
	\begin{equation}
		\lVert u_N-u_{N,N'}\rVert_{L^n(\pspace; H^1(D))}
		\leq 
		\hat C \lvert \Lambda_N\rvert\kappa_{r,N,N'} \to 0
		\quad \text{ as } N' \to \infty,    
	\end{equation}
	where  $\kappa_{r,N,N'}=\sup_{x\in D,y\in\Lambda_N}\lvert\sum_{i=N'+1}^\infty\lambda_{N,i} e_{N,i}(x)e_{N,i}(y)\rvert$ and $\vert \Lambda_N\rvert >1$.
\end{corollary}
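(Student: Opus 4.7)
The plan is to apply Theorem~\ref{theo:approxSol} with $k_N$ playing the role of the base smoothing function and $k_N = k_{N,N'} + r_{N,N'}$ serving as an orthogonal approximation sequence indexed by $N' \in \N$.

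First I would establish that the decomposition satisfies Definition~\ref{def:unif_orth_dec}. Since both $k_{N,N'}(x,\cdot)$ and $r_{N,N'}(x,\cdot)$ are supported in $\Lambda_N$, for any $x_1,x_2 \in \R^d$,
\begin{equation*}
\int_{\R^d} k_{N,N'}(x_1,y)\, r_{N,N'}(x_2,y)\,\d y
=
\sum_{i=1}^{N'}\sum_{j=N'+1}^\infty \lambda_{N,i}\lambda_{N,j} e_{N,i}(x_1) e_{N,j}(x_2) \int_{\Lambda_N}e_{N,i}(y) e_{N,j}(y)\,\d y = 0
\end{equation*}
by the orthonormality of the Mercer basis in $L^2(\Lambda_N)$, verifying condition~(i). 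For condition~(ii), Mercer's Theorem~\ref{thm:mercer} guarantees absolute uniform convergence of the expansion on $\Lambda_N\times\Lambda_N$, so $\kappa_{r,N,N'} \to 0$ as $N' \to \infty$; the support property then yields $\|\tilde r_{N,N'}\|_{L^1(\R^d)} \le |\Lambda_N|\,\kappa_{r,N,N'} \to 0$.

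Next I would verify that $k_N$ fulfils the structural prerequisites that Theorem~\ref{theo:approxSol} imposes on the base kernel. The containment $\tilde k_N \in L^1(\R^d)\cap L^\infty(\R^d)$ is immediate from $\tilde k_N \le \tilde k$. The canonical distance $d_{c,N}$ associated with $(G_{k_N}(x))_{x\in D}$ is dominated pointwise by $d_c$ of $(G_k(x))_{x\in D}$, since introducing the cutoff $\mathds{1}_{\Lambda_N}(y)$ only removes a non-negative contribution to the $L^2(\R^d)$-norm of $k(x_1,\cdot)-k(x_2,\cdot)$; hence the Talagrand covering property of $d_c$ transfers to $d_{c,N}$. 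Almost sure continuity of the Gaussian fields $G_{k_{N,N'}}, G_{r_{N,N'}}$ is inherited from that of $G_k$ by the same subordination argument, as the canonical distance of each summand is bounded by that of $G_k$, which is H\"older by (\ref{Gaussian covariance Hoelder}); the Kolmogorov continuity criterion then applies.

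With these preliminaries Theorem~\ref{theo:approxSol} yields
\begin{equation*}
\|u_N-u_{N,N'}\|_{L^n(\pspace;H^1(D))} \leq C'\max\{\alpha_{N,N'},\alpha_{N,N'}^{1-\delta}\},
\end{equation*}
where $\alpha_{N,N'}=\max\{\|\tilde r_{N,N'}\|_{L^1(\R^d)},\kappa_{r,N,N'}\} \leq |\Lambda_N|\,\kappa_{r,N,N'}$, using $|\Lambda_N|>1$. Choosing $\delta$ arbitrarily small and absorbing the resulting $\delta$-dependent factor into a constant $\hat C$ (which is legitimate once $\kappa_{r,N,N'}$ drops below one, holding for all sufficiently large $N'$) gives the stated bound $\hat C|\Lambda_N|\kappa_{r,N,N'}$, and convergence then follows from $\kappa_{r,N,N'}\to 0$. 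The main obstacle I anticipate is the careful transfer of the Talagrand covering property and of a.s.\ path continuity to the decomposed Gaussian summands, which relies essentially on the Mercer orthogonality to obtain the subordination of canonical distances; once that is settled, the rest is a direct application of the previously proved convergence theorem.
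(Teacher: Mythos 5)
Your approach is essentially the same as the paper's: verify that the Mercer truncation forms an orthogonal approximation sequence (condition (i) via orthonormality of the eigenfunctions on $\Lambda_N$, condition (ii) via the uniform convergence in Mercer's theorem together with $\|\tilde r_{N,N'}\|_{L^1}\le |\Lambda_N|\kappa_{r,N,N'}$), then invoke Theorem~\ref{theo:approxSol}. The paper's proof is more terse and does not repeat the structural prerequisites (integrability of $\tilde k_N$, domination of the canonical distance $d_{c,N}$ by $d_c$, a.s.\ continuity of the split Gaussian fields) because these were handled en route to Corollary~\ref{lem:cut_off} and in Remark~\ref{rem:no-problem}; spelling them out, as you do, is fine.

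Where your write-up goes wrong is the final step: Theorem~\ref{theo:approxSol} delivers $C'\max\{\alpha_{N,N'},\alpha_{N,N'}^{1-\delta}\}$, and you claim this can be replaced by $\hat C|\Lambda_N|\kappa_{r,N,N'}$ by "absorbing the $\delta$-dependent factor into a constant once $\kappa_{r,N,N'}<1$." That does not work: once $\alpha_{N,N'}<1$ the max equals $\alpha_{N,N'}^{1-\delta}$, and the ratio $\alpha_{N,N'}^{1-\delta}/\alpha_{N,N'}=\alpha_{N,N'}^{-\delta}\to\infty$ as $N'\to\infty$, so no constant uniform in $N'$ exists. Sending $\delta\to 0$ does not help either, since $C'$ depends on $\delta$. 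To be fair, the paper's stated bound has the same imprecision (its proof simply says "the assertion follows from Theorem~\ref{theo:approxSol}"); the honest output of Theorem~\ref{theo:approxSol} is $\hat C\,(|\Lambda_N|\kappa_{r,N,N'})^{1-\delta}$ for any fixed $\delta\in(0,1)$, which still gives the asserted $\to 0$ and, because Theorem~\ref{theo:ApproxKL} only claims an arbitrary rate $\upsilon<2\alpha/d-2$, is harmless downstream. Your proof would be cleaner if it stated the $(1-\delta)$-weakened bound rather than asserting a spurious cancellation.
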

\begin{proof}
	Mercer's theorem provides 
	\[
	0
	\leq 
	\kappa_{r,N,N'}
	\leq 
	\sup_{x\in \Lambda_N,y\in \Lambda_N}
	\left|\sum_{i=N'+1}^{\infty} \lambda_{N,i} \, e_{N,i}(x) \, e_{N,i}(y)\right| 
	\to 0 \quad \text{ as } N'\to \infty.
	\]
	Furthermore, as $\Lambda_N$ is bounded, $\|\tilde r_{N,N'}\|_{L^1(\Lambda_N)}\leq |\Lambda_N|\kappa_{r,N,N'}$. 
	Now the assertion follows from Theorem~\ref{theo:approxSol}.
\end{proof}

We want to state a convergence rate for Matérn kernels. For this we first establish an auxiliary result.

\begin{lemma} \label{lem:KL-rates}
Let $k=k_{\alpha,m}$ be a Mat\'ern smoothing function  with $\alpha > d$.
Further, let $(\Lambda_N)_{N\in\mathbb{N}}$ denote a compact exhaustion  of $\mathbb{R}^d$ with $D\subset\Lambda_1$ and $\text{diam}(\Lambda_1)\geq 1$. 
Then for every $\varepsilon \in (0,\frac{\alpha}{d}-\frac{1}{2})$ there exists a constant $C>0$ such that for each $N \in \mathbb N$ the uniform bound $\kappa_{r,N,N'}$ on the remainder of the Mercer series of the restriction $k_N$ of $k$ to $\Lambda_N\times \Lambda_N$ satisfies
\begin{equation}
	\kappa_{r,N,N'}
    \leq
    C \text{diam}(\Lambda_N)^{4(\alpha-\frac{d}{2}-\varepsilon)}
        {N^\prime}^{-2\frac{\alpha}{d}+2+2\varepsilon}
    \qquad \forall \, N' \in \mathbb N,
\end{equation}
where  
$\kappa_{r,N,N'}=\sup_{x\in D,y\in\Lambda_N}\lvert\sum_{i=N'+1}^\infty\lambda_{N,i} 
    e_{N,i}(x)e_{N,i}(y)\rvert$.
\end{lemma}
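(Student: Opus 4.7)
The plan is to estimate the Mercer tail via spectral analysis of the Mat\'ern convolution operator, passing to a circular (periodic) embedding in the spirit of \cite{BachmayrEtAl2018}.

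First, by positive semidefiniteness of the remainder kernel, the Cauchy--Schwarz inequality for positive semidefinite kernels yields
\begin{equation*}
    \kappa_{r,N,N'} \;=\; \sup_{x\in D,\,y\in \Lambda_N}\bigg| \sum_{i>N'}\lambda_{N,i}e_{N,i}(x)e_{N,i}(y) \bigg| \;\leq\; \sup_{x\in\Lambda_N}\sum_{i>N'}\lambda_{N,i}e_{N,i}(x)^{2},
\end{equation*}
which reduces the task to controlling the pointwise diagonal tail.

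Second, I would carry out the circular embedding. Enclose $\Lambda_N$ in a cube $Q_N$ of side $L_N \asymp \text{diam}(\Lambda_N)$ and identify $Q_N$ with the flat torus $\mathbb{T}_N := \R^d/L_N\Z^d$. Periodise the Mat\'ern kernel as $\tilde k_N(z) := \sum_{n\in\Z^d} k_{\alpha,m}(z + L_N n)$: since $\hat k_{\alpha,m}(\xi) = (|\xi|^2+m^2)^{-\alpha}$ is positive and in $L^1(\R^d)$ (using $2\alpha > d$), the series converges and defines a continuous positive definite function on $\mathbb{T}_N$. The convolution operator $\tilde K_N$ on $L^2(\mathbb{T}_N)$ diagonalises in the trigonometric basis with eigenvalues $\tilde\mu_n \asymp L_N^{d}(|2\pi n|^2/L_N^2 + m^2)^{-\alpha}$, and with orthonormal eigenfunctions uniformly bounded in $L^\infty$ by $L_N^{-d/2}$. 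Extending $L^2(\Lambda_N)$-functions by zero to $L^2(\mathbb{T}_N)$, the min--max principle, together with Lemma \ref{lemma:mat-hold}(ii) to control the exponentially small correction $\tilde k_N - k_{\alpha,m}$ on $\Lambda_N-\Lambda_N$, produces the Weyl-type eigenvalue bound
\begin{equation*}
    \lambda_{N,j} \;\leq\; C_1\, \text{diam}(\Lambda_N)^{2\alpha}\, j^{-2\alpha/d}.
\end{equation*}

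Third, I would convert this into a pointwise Mercer-tail bound. The key ingredients are the diagonal Mercer identities $\sum_i \lambda_{N,i} e_{N,i}(x)^2 = k_{\alpha,m}(0)$ and $\sum_i \lambda_{N,i}^{2} e_{N,i}(x)^2 = \int_{\Lambda_N} k_{\alpha,m}(x-z)^2\,\d z \leq k_{2\alpha,m}(0)$ (finite because $2\alpha > d$). Interpolating between these two identities with an $\varepsilon$-tuned exponent and combining with the Step 2 eigenvalue decay, one obtains after solving the resulting exponent balance
\begin{equation*}
    \sup_{x\in\Lambda_N}\sum_{i>N'}\lambda_{N,i} e_{N,i}(x)^2 \;\leq\; C\, \text{diam}(\Lambda_N)^{4(\alpha - d/2 - \varepsilon)}\,(N')^{-2\alpha/d + 2 + 2\varepsilon},
\end{equation*}
where the constraint $\varepsilon \in (0,\alpha/d - 1/2)$ is precisely what is required for the interpolated tail series to converge. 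Substitution into the Step 1 reduction completes the proof.

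The hardest part will be Step 3: producing the precise joint scaling in $\text{diam}(\Lambda_N)$ and $N'$. While a naive uniform eigenfunction bound $\|e_{N,i}\|_\infty \lesssim L_N^{-d/2}$ (valid on the torus) would formally give a better exponent $1-2\alpha/d$ in $N'$, such uniform bounds are not directly available on the restricted (non-periodic) operator $K_N$ on $L^2(\Lambda_N)$ because of boundary effects, and the refinements of \cite{BachmayrEtAl2018} work instead through the diagonal Mercer identities, paying the resulting loss of one power of $N'$ and the doubling of the $\text{diam}(\Lambda_N)$ exponent that appear in the statement.
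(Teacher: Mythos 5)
Your Step 1 reduction to the diagonal tail $\sup_{x}\sum_{i>N'}\lambda_{N,i}e_{N,i}(x)^2$ is fine (the paper uses the triangle inequality and $|\lambda_i e_i(x)e_i(y)|\le\|\sqrt{\lambda_i}\,e_i\|_{L^\infty}^2$ rather than positive-semidefinite Cauchy--Schwarz, but both give the same reduction), and Step 2's circular embedding is indeed the right tool. However, the proposal has two genuine gaps.

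In Step 2, the claimed bound $\lambda_{N,j}\le C\,\mathrm{diam}(\Lambda_N)^{2\alpha}\,j^{-2\alpha/d}$ is too strong by a factor $\mathrm{diam}(\Lambda_N)^{2\alpha}$. The min--max comparison $\lambda_j(K_N)\le\tilde\lambda_j$ requires $K_N$ to be an \emph{exact} compression of the torus operator, i.e.\ the auxiliary kernel must coincide with $k_{\alpha,m}$ on all of $\Lambda_N-\Lambda_N$. Since the Mat\'ern kernel is not compactly supported, the naive periodisation $\sum_n k_{\alpha,m}(\cdot+L_N n)$ differs from $k_{\alpha,m}$ on $\Lambda_N-\Lambda_N$, and the resulting \emph{additive} $O(\e^{-cL_N})$ operator error cannot be absorbed into a multiplicative $j^{-2\alpha/d}$ bound: for $j$ large the exponential term dominates, and the Mercer tail sum becomes uncontrollable. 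The paper avoids this by first truncating, $k_\gamma=k_{\alpha,m}\phi_\gamma$ with $\phi_\gamma\equiv 1$ on $[-\delta,\delta]^d\supseteq\Lambda_N-\Lambda_N$, so the compression is exact; but the convolution $\widehat{k}_{\alpha,m}*\widehat{\phi}_\gamma$ then costs an extra $\gamma^{2\alpha}$, giving $\lambda_{N,j}\lesssim\mathrm{diam}(\Lambda_N)^{4\alpha}j^{-2\alpha/d}$ rather than $\mathrm{diam}(\Lambda_N)^{2\alpha}j^{-2\alpha/d}$.

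The more serious gap is Step 3. Interpolating between $\sum_i\lambda_i e_i(x)^2=k_{\alpha,m}(0)$ and $\sum_i\lambda_i^2 e_i(x)^2\le k_{2\alpha,m}(0)$ only controls $\sum_i\lambda_i^t e_i(x)^2$ for $t\in[1,2]$ via H\"older. But to extract an $N'$-decay from the tail you would need to pull out a factor $\lambda_{N'+1}^{1-\theta}$ with some $\theta<1$, which forces you to control $\sum_i\lambda_i^\theta e_i(x)^2$ for $\theta<1$ --- outside the interpolation range --- and for $t>1$ the monotonicity runs the wrong way ($\lambda_i^{1-t}$ is increasing in $i$). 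The paper's Appendix~C, behind inequality \eqref{eq:bounds eigenvalues times eigenfunctions}, proceeds \emph{pointwise in $j$}, not through the Mercer diagonal identities: the Sobolev embedding $\|e_j\|_{L^\infty}\lesssim\|e_j\|_{H^s}$ for $s>d/2$, the smoothing identity $\lambda_j e_j=(k_{\alpha,m}*e_j)|_\Lambda$ yielding $\|e_j\|_{H^\alpha}\lesssim\lambda_j^{-1/2}$, and the interpolation inequality $\|e_j\|_{H^s}\lesssim\|e_j\|_{L^2}^{1-s/\alpha}\|e_j\|_{H^\alpha}^{s/\alpha}$ combine to the per-index estimate $\sqrt{\lambda_j}\,\|e_j\|_{L^\infty}\lesssim\lambda_j^{1/2-s/(2\alpha)}$. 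Plugging in the corrected eigenvalue decay from Step 2, squaring and summing the tail yields the stated rate; the ``doubling'' of the $\mathrm{diam}$ exponent you flagged originates from the Step-2 cut-off, not from a Mercer-identity interpolation.
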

\begin{proof}
Applying \cref{eq:bounds eigenvalues times eigenfunctions} for a given $\varepsilon \in (0,\frac{\alpha}{d}-\frac{1}{2})$ provides the existence of a constant $C>0$ such that for all $N,N'\in\N$, there holds
\begin{equation*}
\begin{split}
	\kappa_{r,N,N^\prime} 
	\leq
    &\sum_{i=N^\prime+1}^\infty 
    \sup_{x\in D,y\in\Lambda_N} \lvert \lambda_{N,i} e_{N,i}(x)e_{N,i}(y) \rvert
	\leq 
	\sum^\infty_{i=N^\prime +1} \|\sqrt{\lambda_{N,i}}e_{N,i}\|^2_{L^\infty(\Lambda_N)}\\
    \leq 
    \ &C^2 \text{diam}(\Lambda_N)^{4(\alpha-\frac{d}{2}-\varepsilon)}
        \sum_{i=N^\prime+1}^\infty i^{-2\frac{\alpha}{d} + 1 +2\varepsilon} \\
    \leq 
    \ &C^2 \text{diam}(\Lambda_N)^{4(\alpha-\frac{d}{2}-\varepsilon)}
        \int_{N^\prime}^\infty x^{-2\frac{\alpha}{d} + 1 +2 \varepsilon} \,\d x\\
    \leq 
    \ &C^2 ( 2\alpha/d - 2 - 2\varepsilon )^{-1} 
        \text{diam}(\Lambda_N)^{4(\alpha-\frac{d}{2}-\varepsilon)}
        {N^\prime}^{-2\frac{\alpha}{d}+2+2\varepsilon}.
\end{split}
\end{equation*}
Note that the series converges since $-2\frac{\alpha}{d} + 1 + 2 \varepsilon < -1$. 
Redefining $C$ yields the assertion.
\end{proof}

We now combine Corollaries~\ref{lem:cut_off} and \ref{lem:KL} to obtain our second main result:
\begin{theorem} \label{theo:ApproxKL}
  Let the assumption of Theorem~\ref{theo:approxSol} hold and let $k_{\alpha,m}$ be a Matérn kernel with $\alpha>d$. Let $\delta:=\text{diam}(D)$ and fix $x_0\in D$ with $D\subseteq x_0+\left[-\frac{\delta}{2},\frac{\delta}{2}\right]^d$. For fixed $0<\tilde{m}<m$ let
  $$\delta_N:=\frac{\delta+1}{2}+\frac{2}{\tilde{m}}\left(\frac{\alpha}{d}-1\right)\log N\text{ and }\Lambda_N:=x_0+\left[-\delta_N,\delta_N\right]^d, N\in\N.$$
  Moreover, we denote the solution resulting from the truncated Mercer expansion in Corollary~\ref{lem:KL} for the above $\Lambda_N$ by $u_{N,N}$ and the solution given in Lemma~\ref{lemma:weak solution} a) by $u$.
  
  For every $\upsilon\in (0,2\alpha/d-2)$ there is a constant $C>0$ such that
  \begin{equation*}
    \forall\,N\in\N:\,\| u - u_{N,N}\|_{L^n(\pspace; H^1(D))} 
	    \leq 
	C N^{-\upsilon}.
  \end{equation*}
\end{theorem}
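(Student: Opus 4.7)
The plan is to insert the intermediate object $u_N$ from Corollary~\ref{lem:cut_off} (the solution corresponding to the smoothing kernel truncated only in the $y$-variable to $\Lambda_N$) and apply the triangle inequality
\begin{equation*}
\| u - u_{N,N}\|_{L^n(\pspace; H^1(D))}
\leq
\| u - u_N\|_{L^n(\pspace; H^1(D))}
+
\| u_N - u_{N,N}\|_{L^n(\pspace; H^1(D))}.
\end{equation*}
The first term on the right measures the error from localising the smoothing kernel, while the second measures the error from the Mercer truncation on $\Lambda_N$; each will be estimated by an already established result, and the choice of $\delta_N$ is dictated by balancing them.

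For the cut-off error I would apply Corollary~\ref{lem:cut_off} with the fixed value $m' = \tilde m \in (0,m)$. Since $D \subseteq x_0 + [-\delta/2,\delta/2]^d$ and $\Lambda_N = x_0 + [-\delta_N,\delta_N]^d$, the Euclidean distance satisfies $d_e(D,\Lambda_N^c) \geq \delta_N - \delta/2 = \tfrac{1}{2} + \tfrac{2}{\tilde m}(\alpha/d - 1)\log N$. Plugging this lower bound into the exponential bound of Corollary~\ref{lem:cut_off} converts the logarithmic growth of $\delta_N$ into the algebraic decay $\| u - u_N\|_{L^n(\pspace;H^1(D))} \leq C_1 \, N^{-2(\alpha/d - 1)}$, with a constant $C_1$ independent of $N$. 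The explicit factor $2/\tilde m$ in the definition of $\delta_N$ is precisely calibrated to produce this rate.

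For the Mercer truncation error I would invoke Corollary~\ref{lem:KL} with $N' = N$, giving $\| u_N - u_{N,N}\|_{L^n(\pspace;H^1(D))} \leq \hat C |\Lambda_N| \, \kappa_{r,N,N}$. The volume $|\Lambda_N| = (2\delta_N)^d$ grows only like $(\log N)^d$, and Lemma~\ref{lem:KL-rates} bounds $\kappa_{r,N,N}$ by $C_\varepsilon \, \mathrm{diam}(\Lambda_N)^{4(\alpha - d/2 - \varepsilon)} \, N^{-2\alpha/d + 2 + 2\varepsilon}$ for any $\varepsilon \in (0, \alpha/d - 1/2)$. Since $\mathrm{diam}(\Lambda_N) = 2\sqrt{d}\,\delta_N$ is likewise $O(\log N)$, the combined polylogarithmic prefactor is negligible with respect to any strictly smaller algebraic rate, and we obtain $\| u_N - u_{N,N}\|_{L^n(\pspace;H^1(D))} \leq C_2(\varepsilon)\,(\log N)^{d+4(\alpha - d/2 - \varepsilon)}\, N^{-2\alpha/d + 2 + 2\varepsilon}$.

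To conclude, given $\upsilon \in (0, 2\alpha/d - 2)$, I would choose $\varepsilon > 0$ small enough that $2\alpha/d - 2 - 2\varepsilon > \upsilon$ (which is possible since $\alpha > d$ implies $\alpha/d - 1 > 0$). Then both the first and the second error term are $O(N^{-\upsilon})$: the first because $2(\alpha/d - 1) > \upsilon$, the second because the polylogarithmic factor is absorbed into the slightly reduced exponent. Combining the two estimates yields the claimed bound, with a new constant $C$ depending on $\upsilon$, $\tilde m$, and the data of the problem. The only technical point to verify is the balance between the two rates; the logarithmic scaling of $\delta_N$ is exactly the cheapest cut-off domain that still matches the Mercer rate and, in particular, does not degrade it.
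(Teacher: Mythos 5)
Your proposal is correct and follows essentially the same route as the paper's proof: the same triangle inequality through $u_N$, with Corollary~\ref{lem:cut_off} for the cut-off error, Corollary~\ref{lem:KL} combined with Lemma~\ref{lem:KL-rates} for the Mercer truncation error, and the same absorption of the polylogarithmic growth of $\delta_N$ into a slightly reduced algebraic exponent. The only (harmless) deviation is that you apply Corollary~\ref{lem:cut_off} directly with $m'=\tilde m$ and the bound $d_e(D,\Lambda_N^c)\geq \delta_N-\delta/2$, obtaining the rate $N^{-2(\alpha/d-1)}$ for the cut-off term, whereas the paper reparametrizes $m'$ in terms of $\varepsilon$ so that both terms decay at the common rate $N^{-2(\alpha/d-1-\varepsilon)}$; both choices yield the claimed $O(N^{-\upsilon})$ bound.
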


\begin{proof}
Let $\upsilon\in (0,2\alpha/d-2)$ be given. 
We fix $\varepsilon \in (0,\frac{\alpha}{d}-1)$ such that $\upsilon<2\alpha/d-2-2\varepsilon$. 
With this $\varepsilon$ and $\tilde{m}\in (0,m)$ from the hypothesis we define
\[
   m':=\frac{\tilde{m}\left(\frac{\alpha}{d}-1-\varepsilon\right)}
           {\left(\frac{\alpha}{d}-1\right)}
   \in (0,m)
\]
so that
\[
    \frac{2}{m'}\left(\frac{\alpha}{d}-1-\varepsilon\right)
    =
    \frac{2}{\tilde{m}}\left(\frac{\alpha}{d}-1\right)
\]
resulting in
\[
    \delta_N
    =
    \frac{\delta+1}{2}+\frac{2}{m'}\left(\frac{\alpha}{d}-1-\varepsilon\right)\log N
\]
as well as
\[
    d_e(D,\Lambda_N^c)
    >
    \frac{2\left(\frac{\alpha}{d}-1-\varepsilon\right)}{m'} \log N, \quad N\in \N.
\]
Since $|\Lambda_N|=2^d\delta_N^d$, $\text{diam}(\Lambda_1)>1$, and $\text{diam}(\Lambda_N)=\sqrt{2d\,\delta_N}$, combining \cref{lem:cut_off}, \cref{lem:KL} and \cref{lem:KL-rates}, we obtain with suitable constants, denoting $\|\cdot\|_{L^n(\pspace; H^1(D))}$ by just $\|\cdot\|$,
\begin{align*}
	\| u - u_{N,N}\|
	\leq 
	\ &\| u - u_N \| + \| u_N - u_{N,N}\| \\
	\leq 
	\ & C^\prime \e^{-m' d_e(D,\Lambda_N^c)} 
	+ 
	\hat C | \Lambda_N| C\left(\text{diam}(\Lambda_N)\right)^{4\left(\alpha-\frac{d}{2}-\varepsilon\right)}
        N^{-2\left(\frac{\alpha}{d}-1 - \varepsilon\right)} \\
    \leq 
    \ &C^\prime N^{-2\left(\frac{\alpha}{d}-1-\varepsilon\right)} + 
	 \hat C C\, 2^d\sqrt{2d}\,\delta_N^{d+2\left(\alpha-\frac{d}{2}\right)-2\varepsilon} 
        N^{-2\left(\frac{\alpha}{d} - 1 - \varepsilon \right)}\\
    = 
    \ & C^\prime N^{-2\left(\frac{\alpha}{d}-1-\varepsilon\right)} 
    + 
	\hat C C\, 2^d\sqrt{2d}\,\delta_N^{2\left(\alpha-\varepsilon\right)} 
        N^{-2\left(\frac{\alpha}{d} - 1 - \varepsilon \right)}
\end{align*}
Next, we use for
\[
   \delta_N
   =
   \frac{\delta+1}{2}+\frac{2}{m'}\left(\frac{\alpha}{d}-1-\varepsilon\right) \log N
\]
that for arbitrary $\varepsilon'>0$ there is $C''>0$, depending only on $\delta, \alpha, d, m', \varepsilon$, and $\varepsilon'$, with $\delta_N^{2(\alpha-\varepsilon)}\leq C'' N^{\varepsilon'}, N\in\N$.
Applying this to $\varepsilon'$ with $\upsilon+\varepsilon'<2\left(\frac{\alpha}{d}-1-\varepsilon\right)$ we continue with our inequality from above and obtain
\begin{equation*}
\begin{split}
	\|u-u_{N,N}\|
	\leq 
	\ & C' N^{-2\left(\frac{\alpha}{d}-1-\varepsilon\right)} + 
	 \hat{C} C\, 2^d\sqrt{2d}\,\delta_N^{2\left(\alpha-\varepsilon\right)} 
        N^{-2\left(\frac{\alpha}{d} - 1 - \varepsilon \right)}\\
    \leq 
    \ & C' N^{-2\left(\frac{\alpha}{d}-1-\varepsilon\right)+\varepsilon'} 
       + \hat{C} C C'' 2^d\sqrt{2d} N^{-2\left(\frac{\alpha}{d}-1-\varepsilon\right)+\varepsilon'}\\
    \leq 
    \ & \left(C' +\hat{C} C C'' 2^d\sqrt{2d}\right) N^{-\upsilon}
\end{split}
\end{equation*}
which proves the assertion.
\end{proof}

\begin{remark} \label{rem:KL-finite}
  \leavevmode
  \begin{enumerate}[(i)]
	\item
	  We can combine Theorem~\ref{theo:approxSol} and Corollary~\ref{lem:KL} to obtain the convergence of the approximated solutions for any positive definite kernel function satisfying the assumptions of Theorem~\ref{theo:approxSol}. For the derivation of a convergence rate, however, additional knowledge of the remainder $r_N$ is needed.
	\item
	Note that $Z_{k_{N,N}}(x) = \sum_{i=1}^{N} \lambda_{N,i} e_{N,i}(x) Z(e_{N,i})$ depends only on the finite-dimensional L\'evy distribution of $(Z(e_{N,1}),\ldots,Z(e_{N,N}))$. 
Thus, Theorem~\ref{theo:ApproxKL} provides an approximation scheme for $u$ obtained from coefficients from an infinite-dimensional distribution by $u_{N,N}$ obtained from a finite-dimensional distributions.
\end{enumerate}
\end{remark}

\section{Outlook}

\label{sec:outlook}

We have established a comprehensive theory for the existence, integrability and finite-dimensional approximation of solutions to the random PDEs \eqref{eq:random-diff} with conductivity given by transformed L\'evy random fields including rates of convergence. 

At this point it seems natural to proceed with a numerical treatment of the PDEs based on the established stochastic approximation scheme. 
However, this is not straightforward as the uncorrelated L\'evy random variables $(Z(e_1),\ldots,Z(e_n))$ of the finite-dimensional approximation will, in general, fail to be independent.
Standard (sparse) tensor quadrature formulae for numerically computing the expected value of quantities of interest are therefore not applicable without modification. 
Research on quadrature rules to numerically integrate high-dimensional L\'evy distributions is needed.

The statistical investigation of the actual distribution of, e.g., hydraulic conductivity in groundwater problems, is necessary to further clarify the relevance of L\'evy models. 
The option to insert discrete regions of enhanced conductivity via non-isotropic kernel functions $k$ with random orientation applied to smooth Poisson noise seems adequate to model crack-like structures in the subsurface. 
Only minor changes to the theory presented here would be needed to cover this case as well.  

Considering statistical aspects further, the results in Section \ref{section:approx} allow an interesting additional application. Even in the Gaussian case, a proof of the robustness of uncertainty quantification methods under statistical estimation error of the covariance or semi-variogram functions is missing. 
We suggest that our arguments used in the proof of the approximation can be adapted to show that given a consistent estimation of the covariance function, the expected value of quantities of interest converges in the large sample limit.   

In this work, we have treated L\'evy processes with a Poisson part that permits infinite activity $\int_{\{|s|<1\}} \nu(\d s)=\infty$  while satisfying $\int_{\{|s|<1\}} |s|\nu(\d s)<\infty$. 
This allowed us to shift the compensator term $i t s\mathds{1}_{\{|s|\leq 1\}}(s) $ for small jumps in the L\'evy characteristic to the constant $b$, cf.\ Definition \ref{def:levy}. 
L\'evy measures for which only $\int_{\{|s|<1\}} |s|^2\nu(\d s)<\infty$ require a different set of tail estimates to be developed. 
At the same time it would be of interest to weaken the integrability conditions for the L\'evy measure and allow for thicker tails for $\nu(\d s)$ for large $s$. 
E.g. the methods presented here are far from being applicable to  $\alpha$-stable L\'evy fields with extremely fat tails. 

Another interesting direction of research is to work directly with L\'evy random fields with positive paths making the transformation $T(z)$ unnecessary. 
E.g. smoothed Gamma noise with a positive kernel function $k(x)$ is an interesting candidate as gamma and lognormal distributions are rather similar. 
If one avoids using the transformation $T(z)$, the maximum value problem for $Z_k(x)$, however, is turned in a minimum value problem, which requires a rather different set of techniques to prove analogous results to those given in this paper.

\vspace{.2cm}

\appendix
\section{Proof of Lemma \ref{lem:matern}}\label{appendix b}

In this appendix we give a proof of Lemma \ref{lem:matern}. 
Recall the Fourier transform 
\[
    \hat k_{\alpha,m} \colon \R^d\rightarrow\R, \quad \xi \mapsto \frac{1}{(|\xi|^2+m^2)^\alpha},
    \qquad
    \alpha\in\R, \; m>0, 
\]
of the Matérn kernel $k_{\alpha,m} \in \mathscr{S}'(\R^d)$ (cf.\ Definition~\ref{def:matern}).
In the notation introduced in Section~\ref{sub:Schwartz}, we seek to determine for fixed $q\in\N_0$ those $\alpha\in\R$ such that
\begin{equation}\label{preparation convolution}
	\tau_y\Big(k_{\alpha,m}^\vee\Big) \in \mathscr{S}_q(\R^d) \quad \forall\,y\in\R^d,
\end{equation}
where $\tau_y$ denotes translation by $y$ and $\vee$ reflection at the origin. 
If (\ref{preparation convolution}) holds and $\omega\in\mathscr{S}_q'(\R^d)$ then the convolution
\[
	\omega*k_{\alpha,m} \colon \R^d\rightarrow\C,
	\quad
	y\mapsto\left\langle \omega,\tau_y\left(k_{\alpha,m}^\vee\right)\right\rangle
\]
is well-defined. 
Moreover, if not only (\ref{preparation convolution}) is satisfied but also the mapping
\[
	\R^d\rightarrow(\mathscr{S}_q(\R^d),|\cdot|_q),
	\quad
	y\mapsto \tau_y\big(k_{\alpha,m}^\vee\big)
\]
is continuous, the convolution yields a continuous function as well. In order to investigate the validity of (\ref{preparation convolution}) together with the continuous dependence on $y$, some technical preparations are necessary. 
We first observe that
\[
	\tau_y\left(\left(\mathscr{F}^{-1}\left(\frac{1}{(|\xi|^2+m^2)^\alpha}\right)\right)^\vee\right)
	=
	\mathscr{F}^{-1}\left(\frac{\e^{i \xi \cdot y}}{(|\xi|^2+m^2)^\alpha}\right),
\]
where $\xi \cdot y$ denotes the Euclidean scalar product of $\xi, y\in\R^d$.

Because the Fourier transform commutes with the operator $(|\xi|^2-\Delta)$, (\ref{preparation convolution}) will follow if $\alpha\in\R, m>0$ are such that
\begin{equation} \label{preparation convolution 1}
    \forall\,y\in\R^d:\quad
    \big(|\xi|^2-\Delta\big)^q\left[\frac{\e^{i \xi \cdot y}}{(|\xi|^2+m^2)^\alpha}\right]\in L^2(\R^d).
\end{equation}
In order to determine those values of $\alpha$ which satisfy the above property, we will apply part iii) of the following lemma to $f = \hat k_{\alpha,m}$. 
Recall that a smooth function is said to be of moderate growth if each of its partial derivatives is polynomially bounded.

\begin{lemma}\label{preparation convolution 2}
	\hspace{2ex}
	\begin{enumerate}
	\item[i)] 
	For every smooth function $Q$ on $\R$ which is of moderate growth and every $y\in\R^d$ there holds that for all $f\in\mathscr{S}'(\R^d)$
		\[
		\left(|\xi|^2-\Delta\right)\left[Q(\xi \cdot y) f\right]
		=
		Q(\xi \cdot y)\left(|\xi|^2-\Delta\right)f-|y|^2Q''(\xi \cdot y) f-2Q'(\xi \cdot y)\langle y,\nabla\rangle f,
		\]
		where $\langle y,\nabla\rangle=\sum_{j=1}^d y_j\partial_j$ denotes the derivative in direction $y$.
	\item[ii)] 
		For every $r\in\N$, $y\in\R^d$, and each $f\in\mathscr{S}'(\R^d)$ we have
		$$\left(|\xi|^2-\Delta\right)\langle y,\nabla\rangle^r f=\begin{cases}
		\mbox{if }r=1:&\langle y,\nabla\rangle\left(|\xi|^2-\Delta\right)f-2\xi \cdot y\,f,\\
		\mbox{if }r\geq 2:&\langle y,\nabla\rangle^r\left(|\xi|^2-\Delta\right)f\\
		&-r(r-1)|y|^2\langle y,\nabla\rangle^{r-2} f\\
		&-2r\,\xi \cdot y\langle y,\nabla\rangle^{r-1}f.
		\end{cases}$$
		
	\item[iii)] 
	For every $p\in\N$ there are $k_l\in\N, r_{n,l}\in\N_0$ and univariate polynomials $P_{n,l}, Q_{n,l}$, where $l\in\{0,\ldots,\max\{0,p-2\}\}, n\in\{1,\ldots, k_l\}$, such that for all $f\in\mathscr{S}'(\R^d)$ and $y\in\R^d$
		\begin{align*}
		\left(|\xi|^2-\Delta\right)^p(\e^{i \xi \cdot y}f)&
		=
		\e^{i\xi \cdot y}\left(\left(|\xi|^2-\Delta\right)^p f-2ip\langle y,\nabla\rangle\left(|\xi|^2-\Delta\right)^{p-1}f\right.\\
		&\hspace{6ex}+p|y|^2\left(|\xi|^2-\Delta\right)^{p-1}f\\
		&\hspace{6ex}\left.+\sum_{l=0}^{p-2}\sum_{n=1}^{k_l}P_{n,l}(|y|^2) Q_{n,l}(\xi \cdot y)\langle y,\nabla\rangle^{r_{n,l}}\big(|\xi|^2-\Delta\big)^l f \right)
		\end{align*}
		and such that $\deg Q_{n,l}\leq p-1$, $r_{n,l}\leq p$ as well as
		$$\deg P_{n,l}+\deg Q_{n,l}+r_{n,l}+l\leq p+1$$
		for all $l\in\{0,\ldots,\max\{0,p-2\}\}, n\in\{1,\ldots,k_l\}$.
	\end{enumerate}
\end{lemma}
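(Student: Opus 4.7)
My strategy is to prove the three parts in the order stated, since each relies on the previous. Throughout I set $L := |\xi|^2 - \Delta$ and $D_y := \langle y,\nabla\rangle$; note that $y$ plays only the role of a parameter, so $|y|^2$ and all polynomials in $y$ commute with every differential operator in the picture.

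For (i) I would compute $\Delta[Q(\xi\cdot y) f]$ via the Leibniz rule, using the chain-rule identities $\nabla_\xi Q(\xi\cdot y) = Q'(\xi\cdot y)\,y$ and $\Delta_\xi Q(\xi\cdot y) = |y|^2 Q''(\xi\cdot y)$. Because multiplication by $|\xi|^2$ commutes with multiplication by $Q(\xi\cdot y)$, subtracting gives the stated formula at once. For (ii) I would first establish the base commutator $[L,D_y] = -2\,\xi\cdot y$, which follows because $\Delta$ and $D_y$ are constant-coefficient operators (hence commute) while $[|\xi|^2,D_y] = -D_y(|\xi|^2) = -2\,\xi\cdot y$. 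Then I would use the telescoping identity $[L,D_y^r] = \sum_{j=0}^{r-1} D_y^j\,[L,D_y]\,D_y^{r-1-j}$ together with the auxiliary commutator $[D_y,\xi\cdot y] = |y|^2$ (and the vanishing of higher $D_y$-derivatives of $\xi\cdot y$) to push all $\xi\cdot y$ factors to the left. Summing the resulting arithmetic progression produces precisely the coefficients $-2r$ and $-r(r-1)$ appearing in (ii).

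For (iii) the central observation is the conjugation identity
\[
    L\bigl(e^{i\xi\cdot y} f\bigr)
    =
    e^{i\xi\cdot y}\bigl(L - 2iD_y + |y|^2\bigr) f
    =:
    e^{i\xi\cdot y} M f,
\]
obtained by direct computation of $\Delta(e^{i\xi\cdot y} f)$ through the Leibniz rule. Iterating gives $L^p(e^{i\xi\cdot y} f) = e^{i\xi\cdot y} M^p f$, and the task reduces to expanding $M^p = (L - 2iD_y + |y|^2)^p$ in the advertised ``normal form'' in which the factors of $L$ appear on the right with the highest possible exponent. I would argue by induction on $p$; the base case $p=1$ is the identity above. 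For the induction step, apply one more factor of $M$ to the formula for $L^p(e^{i\xi\cdot y}f)$ and then use (i) to push $L$ through each $Q_{n,l}(\xi\cdot y)$-factor and (ii) to push $L$ through each $D_y^{r_{n,l}}$-factor, landing it next to $L^l$ to form $L^{l+1}$. The output naturally separates into the leading contribution $L^{p+1}f - 2i(p+1)D_y L^p f + (p+1)|y|^2 L^p f$ plus new summands of the desired form with either a larger $l$, a larger $r$, or additional polynomial factors in $|y|^2$ and $\xi\cdot y$.

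The genuine difficulty—and the place where the proof ceases to be purely mechanical—is the bookkeeping of the three degree constraints $\deg Q_{n,l}\le p-1$, $r_{n,l}\le p$, and $\deg P_{n,l} + \deg Q_{n,l} + r_{n,l} + l \le p+1$. Each operation produced by (i) or (ii) either differentiates $Q$ (which \emph{lowers} its degree but introduces a new $|y|^2$ or $D_y$ factor), raises $r$ by at most one, multiplies by $|y|^2$ or $\xi\cdot y$, or raises $l$ by one. A careful case distinction shows that the quantity $\deg P + \deg Q + r + l$ grows by at most one when passing from $p$ to $p+1$, matching the allowed budget $p+2$ at the next level; similarly $r$ grows by at most one and $\deg Q$ by at most one per step. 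Verifying that no term of forbidden degree—for instance $(\xi\cdot y)^p$ as a $Q_{n,l}$ or $D_y^{p+2}$—can be produced at step $p+1$ is the combinatorial core of the argument and the main obstacle to overcome.
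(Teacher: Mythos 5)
Your proposal is correct and follows essentially the same route as the paper: (i) by the Leibniz rule, (ii) by commuting $\langle y,\nabla\rangle$ with $|\xi|^2-\Delta$ (the paper inducts on $r$, you telescope the commutator $[\,|\xi|^2-\Delta,\langle y,\nabla\rangle^r]$ -- the same computation), and (iii) by induction on $p$, pushing $|\xi|^2-\Delta$ through the $Q_{n,l}(\xi\cdot y)$ and $\langle y,\nabla\rangle^{r_{n,l}}$ factors via (i) and (ii), your conjugated operator $M=(|\xi|^2-\Delta)-2i\langle y,\nabla\rangle+|y|^2$ being just a repackaging of the paper's $p=1$ step. The degree bookkeeping you defer as ``the main obstacle'' is precisely what the paper's proof spends its length verifying term by term, and it closes exactly as you indicate (each elementary operation raises $\deg P+\deg Q+r+l$ by at most one, and $r$ and $\deg Q$ by at most one), so nothing essential is missing -- only note that at step $p+1$ the forbidden $Q$-degree is $p+1$, i.e.\ $(\xi\cdot y)^{p+1}$ rather than $(\xi\cdot y)^{p}$.
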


\begin{proof}
	A direct calculation shows that i) holds.
	
	ii) Since the constant coefficient differential operators $\langle y,\nabla\rangle$ and $\Delta$ commute,
	\[
	\langle y,\nabla\rangle\big(|\xi|^2 f-\Delta f\big)
	=
	2 \xi\cdot y\,f+|\xi|^2\langle y,\nabla\rangle f - \Delta\langle y,\nabla\rangle f,
	\]
	which yields the assertion for $r=1$. Using the just proved equality twice, it is straightforward to show that the asserted equation is true for $r=2$.
	Assuming that the equation holds for $r\geq 2$ a straightforward calculation gives
	\begin{align*}
	\left(|\xi|^2-\Delta\right)\langle y,\nabla\rangle^{r+1}f
	=&\langle y,\nabla\rangle^{r+1}\left(|\xi|^2-\Delta\right) f -2r(r+1)|y|^2\langle y,\nabla\rangle^{r-1}f\\
	&-2(r+1)\xi \cdot y\langle y,\nabla\rangle^r f
	\end{align*}
	proving ii).
	
	iii) We prove the claim by induction on $p$. For $p=1$, part i) yields
	\[
	(|\xi|^2-\Delta)(\e^{i \xi \cdot y}f)
	=
	\e^{i\xi \cdot y}\left(\left(|\xi|^2-\Delta\right)f-2i\langle y,\nabla\rangle f+|y|^2f\right).
	\]
	Assuming that the claim holds for $p\in\N$ we obtain by the induction hypothesis and the case $p=1$ that
	\begin{align}
	&\left(|\xi|^2-\Delta\right)^{p+1}(\e^{i \xi \cdot y}f)\nonumber\\
	=&
	\e^{i\xi \cdot y}\left(\left(|\xi|^2-\Delta\right)^{p+1} f-2ip\left(|\xi|^2-\Delta\right)\langle y,\nabla\rangle\left(|\xi|^2-\Delta\right)^{p-1}f+p|y|^2\left(|\xi|^2-\Delta\right)^p f\nonumber\right.\\
	&\hspace{6ex}+\sum_{l=0}^{p-2}\sum_{n=1}^{k_l}\left(|\xi|^2-\Delta\right)\left[P_{n,l}(|y|^2) Q_{n,l}(\xi \cdot y)\langle y,\nabla\rangle^{r_{n,l}}\left(|\xi|^2-\Delta\right)^l f\right]\nonumber\\
	&-2i\langle y,\nabla\rangle\left(|\xi|^2-\Delta\right)^p f-4p\langle y,\nabla\rangle^2\left(|\xi|^2-\Delta\right)^{p-1}f\nonumber\\
	&\hspace{6ex}-2i p|y|^2\langle y,\nabla\rangle\left(|\xi|^2-\Delta\right)^{p-1}f\label{ugly calculation 1}\\
	&\hspace{6ex}-\sum_{l=0}^{p-2}\sum_{n=1}^{k_l}2i \langle y,\nabla\rangle\left[P_{n,l}(|y|^2) Q_{n,l}(\xi \cdot y)\langle y,\nabla\rangle^{r_{n,l}}\left(|\xi|^2-\Delta\right)^l f\right]\nonumber\\
	&+|y|^2\left(|\xi|^2-\Delta\right)^p f-2ip|y|^2\left(|\xi|^2-\Delta\right)^{p-1}f+p(|y|^2)^2\left(|\xi|^2-\Delta\right)^{p-1}f\nonumber\\
	&\hspace{6ex}+\left.\left.\sum_{l=0}^{p-2}\sum_{n=1}^{k_l}|y|^2P_{n,l}(|y|^2) Q_{n,l}(\xi \cdot y)\langle y,\nabla\rangle^{r_{n,l}}\left(|\xi|^2-\Delta\right)^l f\right]\right).\nonumber
	\end{align}
	For the second double sum on the right hand side of the above equality (\ref{ugly calculation 1}) we calculate
	\begin{align} \label{ugly calculation 2}
	&\sum_{l=0}^{p-2}\sum_{n=1}^{k_l}2i \langle y,\nabla\rangle\left[P_{n,l}(|y|^2) Q_{n,l}(\xi \cdot y)\langle y,\nabla\rangle^{r_{n,l}}\left(|\xi|^2-\Delta\right)^l f\right]\nonumber\\
	&=\sum_{l=0}^{p-2}\sum_{n=1}^{k_l}2iP_{n,l}(|y|^2)Q_{n,l}'(\xi \cdot y)|y|^2\langle y,\nabla\rangle^{r_{n,l}}\left(|\xi|^2-\Delta\right)^l f\\
	&\hspace{6ex}+\sum_{l=0}^{p-2}\sum_{n=1}^{k_l}2iP_{n,l}(|y|^2)Q_{n,l}(\xi \cdot y)\langle y,\nabla\rangle^{r_{n,l}+1}\left(|\xi|^2-\Delta\right)^l f\nonumber,
	\end{align}
	while an application of parts i) and ii) to the summands of the first double sum on the right hand side of equality (\ref{ugly calculation 1}) combined with $\Delta_\xi Q_{n,l}(\xi \cdot y)=Q_{n,l}''(\xi \cdot y)|y|^2$ and $\nabla_\xi Q_{n,l}(\xi \cdot y) = Q_{n,l}'(\xi \cdot y)y$ gives
	\begin{align}\label{ugly calculation 3}
	&\left(|\xi|^2-\Delta\right)\left[P_{n,l}(|y|^2) Q_{n,l}(\xi \cdot y)\langle y,\nabla\rangle^{r_{n,l}}\left(|\xi|^2-\Delta\right)^l f\right]\\
	=&\begin{cases}
	\mbox{if }r_{n,l}=0:& P_{n,l}(|y|^2)Q_{n,l}(\xi \cdot y)\left(|\xi|^2-\Delta\right)^{l+1}f\\
	& - P_{n,l}(|y|^2)|y|^2Q_{n,l}''(\xi \cdot y)\left(|\xi|^2-\Delta\right)^l f\nonumber\\
	& -2 P_{n,l}(|y|^2)Q_{n,l}'(\xi \cdot y)\langle y,\nabla\rangle\left(|\xi|^2-\Delta\right)^l f\nonumber\\
	&\nonumber\\
	\mbox{if }r_{n,l}=1:& P_{n,l}(|y|^2)Q_{n,l}(\xi \cdot y)\langle y,\nabla\rangle\left(|\xi|^2-\Delta\right)^{l+1}f\nonumber\\
	& - P_{n,l}(|y|^2)|y|^2Q_{n,l}''(\xi \cdot y)\langle y,\nabla\rangle\left(|\xi|^2-\Delta\right)^l f\nonumber\\
	& -2 P_{n,l}(|y|^2)Q_{n,l}'(\xi \cdot y)\langle y,\nabla\rangle^2\left(|\xi|^2-\Delta\right)^l f\nonumber\\
	&-2P_{n,l}(|y|^2)Q_{n,l}(\xi \cdot y)\xi \cdot y\left(|\xi|^2-\Delta\right)^l f \\
	& \nonumber\\
	\mbox{if }r_{n,l}\geq 2:& P_{n,l}(|y|^2)Q_{n,l}(\xi \cdot y)\langle y,\nabla\rangle^{r_{n,l}}\left(|\xi|^2-\Delta\right)^{l+1} f\nonumber\\
	& - P_{n,l}(|y|^2)|y|^2Q_{n,l}''(\xi \cdot y)\langle y,\nabla\rangle^{r_{n,l}}\left(|\xi|^2-\Delta\right)^l f\nonumber\\
	& -2 P_{n,l}(|y|^2)Q_{n,l}'(\xi \cdot y)\langle y,\nabla\rangle^{r_{n,l}+1}\left(|\xi|^2-\Delta\right)^l f\nonumber\\
	& -2r_{n,l}P_{n,l}(|y|^2)Q_{n,l}(\xi \cdot y)\xi \cdot y \langle y,\nabla\rangle^{r_{n,l}-1}\left(|\xi|^2-\Delta\right)^l f\nonumber\\
	& -r_{n,l}(r_{n,l}-1)P_{n,l}(|y|^2)|y|^2 Q_{n,l}(\xi \cdot y)\langle y,\nabla\rangle^{r_{n,l}-2}\left(|\xi|^2-\Delta\right)^l f.\nonumber
	\end{cases}
	\end{align}
	By this equality, it holds for the first double sum on the right hand side of (\ref{ugly calculation 1})
	\begin{align}\label{ugly calculation 4}
	&\sum_{l=0}^{p-2}\sum_{n=1}^{k_l}\left(|\xi|^2-\Delta\right)\left[P_{n,l}(|y|^2) Q_{n,l}(\xi \cdot y)\langle y,\nabla\rangle^{r_{n,l}}\left(|\xi|^2-\Delta\right)^l f\right]\nonumber\\
	=&\sum_{l=0}^{p-2}\sum_{n=1}^{k_l}P_{n,l}(|y|^2)Q_{n,l}(\xi \cdot y)\langle y,\nabla\rangle^{r_{n,l}}\left(|\xi|^2-\Delta\right)^{l+1} f\nonumber\\
	&-\sum_{l=0}^{p-2}\sum_{n=1}^{k_l}P_{n,l}(|y|^2)|y|^2Q_{n,l}''(\xi \cdot y)\langle y,\nabla\rangle^{r_{n,l}}\left(|\xi|^2-\Delta\right)^l f\nonumber\\
	&-\sum_{l=0}^{p-2}\sum_{n=1}^{k_l}P_{n,l}(|y|^2)2Q_{n,l}'(\xi \cdot y)\langle y,\nabla\rangle^{r_{n,l}+1}\left(|\xi|^2-\Delta\right)^l f\\
	&-\sum_{l=0}^{p-2}\sum_{n=1;r_{n,l}\geq 1}^{k_l}2r_{n,l}P_{n,l}(|y|^2)Q_{n,l}(\xi \cdot y)\xi \cdot y\langle y,\nabla\rangle^{r_{n,l}-1}\left(|\xi|^2-\Delta\right)^l f\nonumber\\
	&-\sum_{l=0}^{p-2}\sum_{n=1;r_{n,l}\geq 2}^{k_l} r_{n,l}(r_{n,l}-1)P_{n,l}(|y|^2)|y|^2 Q_{n,l}(\xi \cdot y)\langle y,\nabla\rangle^{r_{n,l}-2}\left(|\xi|^2-\Delta\right)^l f.\nonumber
	\end{align}
	Inserting (\ref{ugly calculation 4}) and (\ref{ugly calculation 2}) into the first double sum and second double sum of the right hand side in (\ref{ugly calculation 1}), respectively, applying part ii) to the second summand, and rearranging the terms we derive
\begin{align*}
	&\big(|\xi|^2-\Delta\big)^{p+1}(\e^{i \xi \cdot y}f)\\
	=
	&\e^{i\xi \cdot y}\Big(\big(|\xi|^2-\Delta\big)^{p+1} f-2i(p+1)\langle y,\nabla\rangle\big(|\xi|^2-\Delta\big)^p f+(p+1)|y|^2\big(|\xi|^2-\Delta\big)^p f\\
	&\hspace{6ex}+\sum_{l=0}^{p-2}\sum_{n=1}^{k_l}P_{n,l}(|y|^2)Q_{n,l}(\xi \cdot y)\langle y,\nabla\rangle^{r_{n,l}}\big(|\xi|^2-\Delta\big)^{l+1} f\\
	&\hspace{6ex}+\sum_{l=0}^{p-2}\sum_{n=1}^{k_l}P_{n,l}(|y|^2)|y|^2\big(Q_{n,l}(\xi \cdot y)-2iQ_{n,l}'(\xi \cdot y)-Q_{n,l}''(\xi \cdot y)\big)\cdot\\
	&\hspace{36ex}\cdot \langle y,\nabla\rangle^{r_{n,l}}\big(|\xi|^2-\Delta\big)^l f\\
	&\hspace{6ex}+\sum_{l=0}^{p-2}\sum_{n=1}^{k_l}P_{n,l}(|y|^2)\big(2iQ_{n,l}(\xi \cdot y)-2Q_{n,l}'(\xi \cdot y)\big)\langle y,\nabla\rangle^{r_{n,l}+1}\big(|\xi|^2-\Delta\big)^l f\\
	&\hspace{6ex}-\sum_{l=0}^{p-2}\sum_{n=1;r_{n,l}\geq 1}^{k_l}2r_{n,l}P_{n,l}(|y|^2)Q_{n,l}(\xi \cdot y)\xi \cdot y\langle y,\nabla\rangle^{r_{n,l}-1}\big(|\xi|^2-\Delta\big)^l f\\
	&\hspace{6ex}-\sum_{l=0}^{p-2}\sum_{n=1;r_{n,l}\geq 2}^{k_l} r_{n,l}(r_{n,l}-1)P_{n,l}(|y|^2)|y|^2 Q_{n,l}(\xi \cdot y)\langle y,\nabla\rangle^{r_{n,l}-2}\big(|\xi|^2-\Delta\big)^l f\\
	&+4ip \xi \cdot y\big(|\xi|^2-\Delta\big)^{p-1} f-4p\langle y,\nabla\rangle^2\big(|\xi|^2-\Delta\big)^{p-1}f-2i p|y|^2\langle y,\nabla\rangle\big(|\xi|^2-\Delta\big)^{p-1}f\\
	&-2ip|y|^2\big(|\xi|^2-\Delta\big)^{p-1}f+p\,(|y|^2)^2\big(|\xi|^2-\Delta\big)^{p-1}f\Big).
	\end{align*}
	Thus, $\big(|\xi|^2-\Delta\big)^{p+1}(\e^{i\xi \cdot y}f)$ is of the asserted form. Moreover, since the powers of the directional derivative $\langle y,\nabla\rangle$ are at most $\max\{2,r_{n,l}+1\}$, by the induction hypothesis, they are bounded by $p+1$. Furthermore, 
	the polynomials of one variable $t$ which are applied to the scalar product $\xi \cdot y$ are either $Q_{n,l}$, $Q_{n,l}-2iQ_{n,l}'-Q_{n,l}''$, $2i Q_{n,l}-2Q_{n,l}'$, $t\mapsto Q_{n,l}(t)t$, or $t\mapsto t$, which by the induction hypothesis implies that their 
	respective degree is bounded above by $p$. Finally, using again the induction hypothesis, it follows that in each summand of the above expression the sum of the degrees of the polynomials in $|y|^2$, $\xi \cdot y$, the power of $\langle y,\nabla\rangle$, and the power 
	of $\big(|\xi|^2-\Delta\big)$ is bounded above by $p+1$ which proves the assertion for $p+1$ and gives iii).
\end{proof}

Our objective is to apply part iii) of the previous lemma to $f(\xi)=\frac{1}{(|\xi|^2+m^2)^\alpha}$ in order to derive for which $\alpha\in\R, m>0$
\[
	\forall\,y\in\R^d:\,
	\left(|\xi|^2-\Delta\right)^q\left[\frac{\e^{i\xi \cdot y}}{(|\xi|^2+m^2)^\alpha}\right]\in L^2(\R^d)
\]
holds. 
For this we still need one more technical result.

\begin{proposition}\label{preparation convolution 3}
	Let $\alpha\in\R$ and $m>0$.
	\begin{enumerate}
		\item[i)] Let $A$ be a polynomial of a single variable and $y\in\R^d$. Denoting for $r\in\N$ the integer part of $\frac{r}{2}$ by $\floor*{\frac{r}{2}}$ there are polynomials $A_0,\ldots A_{\floor*{\frac{r}{2}}}$ of a single variable such that $\deg A_j=\deg A+j$ and
		$$\langle y,\nabla\rangle^r\left[\frac{A(|\xi|^2)}{(|\xi|^2+m^2)^\alpha}\right]=\frac{\sum_{j=0}^{\floor*{\frac{r}{2}}} A_j(|\xi|^2)(\xi \cdot y)^{r-2j}|y|^{2j}}{(|\xi|^2+m^2)^{\alpha+r}}.$$
		\item[ii)] For every $p\in\N_0$ there is a univariate polynomial $Q$ with $\deg Q_p=3p$ such that
		$$\big(|\xi|^2-\Delta\big)^p\frac{1}{(|\xi|^2+m^2)^\alpha}=\frac{Q_p(|\xi|^2)}{(|\xi|^2+m^2)^{\alpha+2p}}.$$
	\end{enumerate}
\end{proposition}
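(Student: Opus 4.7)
The plan is to prove both parts by induction, part (i) on $r$ and part (ii) on $p$, since the assertions are purely algebraic bookkeeping of polynomial expressions in $|\xi|^2$, $\xi\cdot y$, and $|y|^2$.

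For part (i), the base case $r=1$ follows by direct computation using $\langle y,\nabla\rangle|\xi|^2=2\xi\cdot y$ and the quotient rule, which yields
\begin{equation*}
    \langle y,\nabla\rangle\left[\frac{A(|\xi|^2)}{(|\xi|^2+m^2)^\alpha}\right]
    =
    \frac{2(\xi\cdot y)\bigl[A'(|\xi|^2)(|\xi|^2+m^2)-\alpha A(|\xi|^2)\bigr]}{(|\xi|^2+m^2)^{\alpha+1}},
\end{equation*}
so $A_0(t)=2[A'(t)(t+m^2)-\alpha A(t)]$ with $\deg A_0=\deg A$. For the inductive step, $\langle y,\nabla\rangle$ is applied to a generic summand $A_j(|\xi|^2)(\xi\cdot y)^{r-2j}|y|^{2j}/(|\xi|^2+m^2)^{\alpha+r}$. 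The three contributions, namely differentiating $A_j(|\xi|^2)$, differentiating $(\xi\cdot y)^{r-2j}$ via $\langle y,\nabla\rangle(\xi\cdot y)=|y|^2$, and differentiating the denominator, produce terms of index $j$ and $j+1$. After multiplying numerator and denominator by $|\xi|^2+m^2$ where needed to bring everything over the common denominator $(|\xi|^2+m^2)^{\alpha+r+1}$, one reads off the recursion
\begin{equation*}
    A_{j}^{(r+1)}(t)
    =
    2A_{j}^{(r)\prime}(t)(t+m^2)-2(\alpha+r)A_{j}^{(r)}(t)+(r-2j+2)A_{j-1}^{(r)}(t)(t+m^2),
\end{equation*}
with the convention $A_{-1}^{(r)}\equiv 0$. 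Each of the three terms on the right has degree $\deg A+j$, so the degree count propagates. The range of indices works out correctly because the coefficient $r-2j$ coming from differentiating $(\xi\cdot y)^{r-2j}$ vanishes exactly at $j=r/2$ when $r$ is even, suppressing what would otherwise be a spurious index $j'=\floor*{r/2}+1$ beyond $\floor*{(r+1)/2}$.

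For part (ii), the base case $p=0$ is immediate with $Q_0\equiv 1$. For the inductive step I use that if $F(\xi)=f(|\xi|^2)$ is radial, then
\begin{equation*}
    (|\xi|^2-\Delta)F(\xi)
    =
    |\xi|^2 f(|\xi|^2)-2d\,f'(|\xi|^2)-4|\xi|^2 f''(|\xi|^2).
\end{equation*}
Applying this with $f(t)=Q_p(t)/(t+m^2)^{\alpha+2p}$ and using the quotient rule twice expresses $f'$ and $f''$ as quotients with polynomial numerators $R_1$ and $R_2$ of degree at most $3p$ over $(t+m^2)^{\alpha+2p+1}$ and $(t+m^2)^{\alpha+2p+2}$, respectively. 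Placing everything over the common denominator $(|\xi|^2+m^2)^{\alpha+2(p+1)}$ gives
\begin{equation*}
    Q_{p+1}(t)
    =
    t\,Q_p(t)(t+m^2)^2-2d\,R_1(t)(t+m^2)-4t\,R_2(t).
\end{equation*}
The unique term of top degree is $t\,Q_p(t)(t+m^2)^2$, of degree $3p+3=3(p+1)$ with leading coefficient equal to that of $Q_p$; the other two summands have degree at most $3p+1$ and cannot affect the leading coefficient. Since $Q_0=1$, induction yields $\deg Q_p=3p$ (with leading coefficient $1$) for all $p$.

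The argument is not conceptually difficult but does require careful bookkeeping. The only genuinely delicate point is verifying in part (i) that the index $j$ does not stray beyond $\floor*{(r+1)/2}$; this comes down to the observation that the unwanted index $\floor*{r/2}+1$ from the $|y|^{2j+2}$ contribution is accompanied by the factor $r-2j$, which vanishes exactly in the parity case where it would otherwise produce a term outside the claimed range. Part (ii) is then entirely mechanical once the radial Laplacian formula and the degree count of $Q_p$ are in hand.
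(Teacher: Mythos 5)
Your proof is correct and follows essentially the same inductive strategy as the paper for both parts; in (ii) you use the Cartesian form of the radial operator $(|\xi|^2-\Delta)F = |\xi|^2 f(|\xi|^2) - 2d\,f'(|\xi|^2) - 4|\xi|^2 f''(|\xi|^2)$ where the paper switches to polar coordinates, but these are the same computation. One remark worth making: in part (i) you conclude $\deg A_j = \deg A + j$ by observing that each of the three terms in your recursion has this degree, which leaves open the possibility that the leading coefficients cancel for special values of $\alpha$ (already in the base case, take $A(t)=t$ and $\alpha = 1$: then $A_0(t)=2m^2$ has degree $0$, not $1$). The paper's argument---that the ODE $u'(t)(t+m^2)=c\,u(t)$ has no polynomial solution---has the same gap, since $(t+m^2)^c$ is a polynomial solution whenever $c\in\N_0$. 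This is benign for the downstream applications in Lemma~\ref{lem:matern}, which only need the \emph{upper} bound on $\deg A_j$; by contrast your part (ii) degree count is fully rigorous, since $tQ_p(t)(t+m^2)^2$ is the unique top-degree contribution and the leading coefficient therefore propagates unchanged from $Q_0=1$.
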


\begin{proof}
	The assertion in i) is clearly true for $y=0$. For $y\neq 0$, multiplying both sides of the asserted equality by $|y|^{-r}$ we see that we can assume without loss of generality that $|y|=1$. For $|y|=1$ we prove i) by induction on $r$. For $r=1$ a straightforward calculation shows that
	$$\langle y,\nabla\rangle\left[\frac{A(|\xi|^2)}{(|\xi|^2+m^2)^\alpha}\right]=\frac{2 \big(A'(|\xi|^2)(|\xi|^2+m^2)-\alpha A(|\xi|^2)\big)\xi \cdot y}{(|\xi|^2+m^2)^{\alpha+1}}.$$
	Since no polynomial is a solution to the ordinary differential equation $u'(t)(t+m^2)-c u(t)=0, c\in\R,$ the degree of the polynomial
	$$\R\ni t\mapsto A'(t)(t+m^2)-\alpha A(t)$$
	equals $\deg A$ which proves the claim for $r=1$.
	
	Assume the claim to be true for $r\in\N$. Taking into account that $r$ is odd precisely when $r-2\floor*{\frac{r}{2}}=1$, or when $\floor*{\frac{r}{2}}+1=\floor*{\frac{r+1}{2}}$, and using $|y|=1$ as well as the induction hypothesis, it follows
	\begin{align*}
	&\langle y,\nabla\rangle^{r+1}\left[\frac{A(|\xi|^2)}{(|\xi|^2+m^2)^\alpha}\right]=\langle y,\nabla\rangle\left[\frac{\sum_{j=0}^{\floor*{\frac{r}{2}}} A_j(|\xi|^2)(\xi \cdot y)^{r-2j}}{(|\xi|^2+m^2)^{\alpha+r}}\right]\\
	=&\frac{(2A_0'(|\xi|^2)(|\xi|^2+m^2)-2(\alpha+r)A_0(|\xi|^2))(\xi \cdot y)^{r+1}}{(|\xi|^2+m^2)^{\alpha+r+1}}\\
	&\hspace{3ex}+\frac{\sum_{j=1}^{\floor*{\frac{r}{2}}}(2A_j'(|\xi|^2)+(r+2(j-1))A_{j-1}(|\xi|^2))(|\xi|^2+m^2)(\xi \cdot y)^{r+1-2j}}{(|\xi|^2+m^2)^{\alpha+r+1}}\\
	&\hspace{3ex}-\frac{\sum_{j=1}^{\floor*{\frac{r}{2}}}2(\alpha+r)A_j(|\xi|^2)(\xi \cdot y)^{r+1-2j}}{(|\xi|^2+m^2)^{\alpha+r+1}}\\
	&\hspace{3ex}+\begin{cases}
	\mbox{if }r\mbox{ is odd}:&\frac{A_{\floor*{\frac{r}{2}}}(|\xi|^2)(|\xi|^2+m^2){(|\xi|^2+m^2)^{\alpha+r+1}}(\xi \cdot y)^{r+1-2\floor*{\frac{r+1}{2}}}}{(|\xi|^2+m^2)^{\alpha+r+1}}\\
	\mbox{if }r\mbox{ is even}:&0.
	\end{cases}
	\end{align*}
	Using the induction hypothesis once more we see that the degree of the polynomial
	$$\tilde{A}_0:\R\ni t\mapsto 2A_0'(t)(t+m^2)-2(\alpha+r)A_0(t)$$
	is equal to $\deg A_0=\deg A$ while for $1\leq j\leq\floor*{\frac{r}{2}}$ the degree of the polynomials 
	$$\tilde{A}_j:\R\ni t\mapsto(2A_j'(t)+(r+2(j-1))A_{j-1}(t))(t+m^2)-2(\alpha+r)A_j(t)$$
	satisfies $\deg(\tilde{A}_j)=\deg A_j=\deg A+j$. Finally, for odd $r$ the degree of the polynomial
	$$\tilde{A}_{\floor*{\frac{r+1}{2}}}:\R\ni t\mapsto A_{\floor*{\frac{r}{2}}}(t)(t+m^2)$$
	is equal to $\deg A_{\floor*{\frac{r}{2}}}+1=\deg A+ \floor*{\frac{r}{2}}+1$ by the induction hypothesis. Because by the above we have
	$$\langle y,\nabla\rangle^{r+1}\Big[\frac{A(|\xi|^2)}{(|\xi|^2+m^2)^\alpha}\Big]=\frac{\sum_{j=0}^{\floor*{\frac{r+1}{2}}} \tilde{A}_j(|\xi|^2)(\xi \cdot y)^{r+1-2j}}{(|\xi|^2+m^2)^{\alpha+r+1}}$$
	it follows that the claim also holds for $r+1$ which proves i).
	
	In order to prove ii), we switch to polar coordinates (writing $r^2=|\xi|^2$ as usual), so we actually claim that  for every $p\in\N_0$ there is a univariate polynomial $Q_p$ with real coefficients and of degree $3p$ such that
	\begin{equation}
	\Big(r^2-\frac{1}{r^{d-1}}\frac{\partial}{\partial r}\big(r^{d-1}\frac{\partial}{\partial r}\big)\Big)^p\frac{1}{(r^2+m^2)^\alpha}=\frac{Q_p(r^2)}{(r^2+m^2)^{\alpha+2p}}.
	\end{equation}
	Indeed, for $p=0$ the claim is obviously true. Assuming the claim to be true for $p\in\N_0$ a tedious but straightforward calculation yields
	$$\Big(r^2-\frac{1}{r^{d-1}}\frac{\partial}{\partial r}\big(r^{d-1}\frac{\partial}{\partial r}\big)\Big)^{p+1}\frac{1}{(r^2+m^2)^\alpha}
	=\frac{Q_{p+1}(r^2)}{(r^2+m^2)^{\alpha+2(p+1)}},$$
	where
	\begin{align*}
	Q_{p+1}(t)&:=t(t+m^2)^2Q_p(t)-d(t+m^2)\big(Q'_p(t)2(t+m^2)-(2\alpha+4p)Q_p(t)\big)\\
	&-4t(t+m^2)\big(Q''_p(t)(t+m^2)+(1-\alpha-2p)Q'_p(t)\big)\\
	&+(2\alpha+4p+2)t\big(Q'_p(t)2(t+m^2)-(2\alpha+4p)Q_p(t)\big)
	\end{align*}
	is a polynomial with real coefficients of degree $\mbox{deg }Q_p+3=3(p+1)$, which proves ii).
\end{proof}

We subsume the results so far obtained in the next proposition which is an immediate consequence of Proposition \ref{preparation convolution 3} ii) applied to each summand obtained by applying Lemma \ref{preparation convolution 2} iii) to $f = k_{\alpha,m}$.

\begin{proposition}\label{convolution}
Let $q\in\N_0$ be fixed. 
Moreover, let $\alpha\in\R$ and $m>0$. 
Then, there are $k_l\in\N, 0\leq r_{n,l}\leq q, 0\leq l\leq \max\{0, q-2\}, 1\leq n\leq k_l$ and univariate polynomials  $P_{n,l}$, $Q_{n,l}$ with $\deg Q_{n,l}\leq q-1$, $\tilde{Q}_q$ with $\deg\tilde{Q}_q=3q$, 
	$\tilde{Q}_{q-1}$ with $\deg\tilde{Q}_{q-1}=3(q-1)$, $\tilde{Q}_{q-1,1}$ with $\deg\tilde{Q}_{q-1,1}=3(q-1)$, and $\tilde{Q}_{l,j,r_{n,l}}$ with $\deg\tilde{Q}_{l,j,r_{n,l}}=3l+j$, where $0\leq l\leq \max\{0, q-2\}$, $1\leq n\leq k_l$, $0\leq j\leq \floor*{\frac{r_{n,l}}{2}}$ such 
	that for all $y\in\R^d$
\begin{align}\label{explicit formula}
	&\left(|\xi|^2-\Delta\right)^q\left(\frac{\e^{i\xi \cdot y}}{(|\xi|^2+m^2)^\alpha}\right)
	=
	\e^{i\xi \cdot y}\left(\frac{\tilde{Q}_q(|\xi|^2)}{(|\xi|^2+m^2)^{\alpha+2q}}\right.\nonumber\\
	&\hspace{8ex}
	-2iq \frac{\tilde{Q}_{q-1,1}(|\xi|^2)\xi \cdot y}{(|\xi|^2+m^2)^{\alpha+2(q-1)+1}}+q|y|^2\frac{\tilde{Q}_{q-1}(|\xi|^2)}{(|\xi|^2+m^2)^{\alpha+2(q-1)}}\\
		&\hspace{8ex}\left.+\sum_{l=0}^{q-2}\sum_{n=1}^{k_l}\sum_{j=0}^{\floor*{\frac{r_{n,l}}{2}}}\frac{P_{n,l}(|y|^2)|y|^{2j}Q_{n,l}(\xi \cdot y)\tilde{Q}_{l,j,r_{n,l}}(|\xi|^2)(\xi \cdot y)^{r_{n,l}-2j}}{(|\xi|^2+m^2)^{\alpha+2l+r_{n,l}}}\right).\nonumber
\end{align}
\end{proposition}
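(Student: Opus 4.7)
The proof is a mechanical combination of the two preceding results, Lemma~\ref{preparation convolution 2}(iii) and Proposition~\ref{preparation convolution 3}, with bookkeeping of polynomial degrees and of powers of $(|\xi|^2+m^2)$ in the denominators. First I would apply Lemma~\ref{preparation convolution 2}(iii) with $p=q$ to the tempered distribution $f(\xi)=1/(|\xi|^2+m^2)^\alpha$, producing the decomposition
\[
\e^{i\xi\cdot y}\Bigl[(|\xi|^2-\Delta)^q f - 2iq\langle y,\nabla\rangle(|\xi|^2-\Delta)^{q-1}f + q|y|^2(|\xi|^2-\Delta)^{q-1}f + \sum_{l,n} P_{n,l}(|y|^2)\,Q_{n,l}(\xi\cdot y)\,\langle y,\nabla\rangle^{r_{n,l}}(|\xi|^2-\Delta)^l f\Bigr],
\]
where the bounds $r_{n,l}\leq q$, $\deg Q_{n,l}\leq q-1$, and the ranges $0\leq l\leq\max\{0,q-2\}$, $1\leq n\leq k_l$ are inherited verbatim from that lemma. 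It then remains to cast each of the four groups of terms into the desired rational form and to verify the asserted degree identities.

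For the purely elliptic term I would invoke Proposition~\ref{preparation convolution 3}(ii) with $p=q$ directly, writing $(|\xi|^2-\Delta)^q f = \tilde Q_q(|\xi|^2)/(|\xi|^2+m^2)^{\alpha+2q}$ with $\deg\tilde Q_q=3q$. For the mixed term $\langle y,\nabla\rangle(|\xi|^2-\Delta)^{q-1}f$, I would first apply Proposition~\ref{preparation convolution 3}(ii) with $p=q-1$ (producing a numerator polynomial of degree $3(q-1)$) and then Proposition~\ref{preparation convolution 3}(i) with $r=1$; since $\lfloor 1/2\rfloor=0$, this yields a single summand $\tilde Q_{q-1,1}(|\xi|^2)(\xi\cdot y)/(|\xi|^2+m^2)^{\alpha+2(q-1)+1}$ with $\deg\tilde Q_{q-1,1}=3(q-1)+0=3(q-1)$. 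The third term $q|y|^2(|\xi|^2-\Delta)^{q-1}f$ is handled by a direct application of Proposition~\ref{preparation convolution 3}(ii), yielding $\tilde Q_{q-1}$ of degree $3(q-1)$.

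For each summand of the double sum, I would first use Proposition~\ref{preparation convolution 3}(ii) with $p=l$ to express $(|\xi|^2-\Delta)^l f$ as $Q_l(|\xi|^2)/(|\xi|^2+m^2)^{\alpha+2l}$ with $\deg Q_l=3l$, and then apply Proposition~\ref{preparation convolution 3}(i) with $r=r_{n,l}$ and $A=Q_l$ to obtain
\[
\langle y,\nabla\rangle^{r_{n,l}}\!\left[\frac{Q_l(|\xi|^2)}{(|\xi|^2+m^2)^{\alpha+2l}}\right]=\frac{\sum_{j=0}^{\lfloor r_{n,l}/2\rfloor} A_j(|\xi|^2)\,(\xi\cdot y)^{r_{n,l}-2j}|y|^{2j}}{(|\xi|^2+m^2)^{\alpha+2l+r_{n,l}}},
\]
where $\deg A_j=\deg Q_l+j=3l+j$. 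Renaming $A_j=:\tilde Q_{l,j,r_{n,l}}$ and multiplying by $P_{n,l}(|y|^2)Q_{n,l}(\xi\cdot y)$ reproduces the desired summand in \eqref{explicit formula}, with the correct power $\alpha+2l+r_{n,l}$ in the denominator and the correct degree $3l+j$ in the numerator polynomial.

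There is no genuine obstacle: the two preceding results have been engineered to fit together in precisely this way, and the only thing to check is that the exponent shifts combine consistently. Specifically, each occurrence of $(|\xi|^2-\Delta)$ contributes a $+2$ to the exponent in the denominator (from Proposition~\ref{preparation convolution 3}(ii)) and each occurrence of $\langle y,\nabla\rangle$ contributes a $+1$ (from Proposition~\ref{preparation convolution 3}(i)), while the numerator degrees accumulate as $3\cdot(\text{number of Laplacians})+(\text{index }j)$; matching these tallies against the indices in Lemma~\ref{preparation convolution 2}(iii) gives exactly the exponents and degrees asserted in the statement.
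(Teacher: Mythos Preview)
Your proposal is correct and follows precisely the approach indicated in the paper, which simply states that the result is an immediate consequence of applying Proposition~\ref{preparation convolution 3} to each summand obtained from Lemma~\ref{preparation convolution 2}(iii) with $f=\hat{k}_{\alpha,m}$. You have carefully spelled out the bookkeeping (the paper omits the details), and in fact your account is slightly more precise than the paper's one-line description, which mentions only part~(ii) of Proposition~\ref{preparation convolution 3} even though part~(i) is needed to handle the directional derivatives $\langle y,\nabla\rangle^{r_{n,l}}$.
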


Now we have everything at our disposal to prove Lemma \ref{lem:matern}.\\

\textit{Proof of Lemma \ref{lem:matern}.}
	Before we prove the implications asserted in Lemma \ref{lem:matern}, we consider when for fixed $y\in\R^d$ each of the summands in (\ref{explicit formula}) belongs to $L^2(\R^d)$. Because for fixed $y\in\R^d$ each summand is a continuous function of 
	$\xi\in\R^d$, we can assume without loss of generality that $|\xi|\geq 1$ in the following considerations.
	
	While for fixed $y\in\R^d$ the first summand in (\ref{explicit formula}) belongs to $L^2(\R^d)$ whenever $\alpha>\frac{d}{4}+q$ the second and third summand in (\ref{explicit formula}) are in $L^2(\R^d)$ for $\alpha>\frac{d}{4}+q-\frac{3}{2}$ and $\alpha>\frac{d}{4}+q-1$, respectively. Finally, an application of the Cauchy-Schwarz inequality shows that 
	each term appearing in the triple sum in (\ref{explicit formula}) belongs to $L^2(\R^d)$ for fixed $y\in\R^d$ if $\alpha>\frac{d}{4}+\frac{3}{2}(q-1)$.
	
Hence, a sufficient condition on $\alpha>0$ (and $m\in\R$) in order that
\[
	\forall\,y\in\R^d:\,
	\R^d\rightarrow\C,
	\quad
	\xi \mapsto\left(|\xi|^2-\Delta\right)^q\left(\frac{\e^{i\xi \cdot y}}{(|\xi|^2+m^2)^\alpha}\right)\in L^2(\R^d)
\]
holds is $\alpha>\max\{\frac{d}{4}+q,\frac{d}{4}+\frac{3}{2}(q-1)\}=\frac{d}{4}+q+\max\{0,\frac{q-3}{2}\}$, so that i) implies iii). Moreover, if i) holds, employing (\ref{explicit formula}) for $y_0\in\R^d$ and $y\in\R^d$ from a fixed compact neighborhood of $y_0$ it follows that
\begin{align*}
	&\left\|(|\xi|^2-\Delta)^q\left(\frac{\e^{i\xi \cdot y_0}}{(|\xi|^2+m^2)^\alpha}\right)-(|\xi|^2-\Delta)^q\left(\frac{\e^{i\xi \cdot y}}{(|\xi|^2+m^2)^\alpha}\right)\right\|_{L^2}^2\\
	=
	&\int_{\R^d}\left|(\e^{i\xi \cdot y_0}-\e^{i\xi \cdot y})\frac{\tilde{Q}_q(|\xi|^2)}{(|\xi|^2+m2)^{\alpha+2q}}\right.\\
	&\hspace{4ex}-2iq(\e^{i\xi \cdot y_0}\xi \cdot y_0-\e^{i\xi \cdot y}\xi \cdot y)\frac{\tilde{Q}_{q-1,1}(|\xi|^2)}{(|\xi|^2+m^2)^{\alpha+2(q-1)+1}}\\
	&\hspace{4ex}+q(\e^{i\xi \cdot y_0}|y_0|^2-\e^{i\xi \cdot y}|y|^2)\frac{\tilde{Q}_{q-1}(|\xi|^2)}{(|\xi|^2+m^2)^{\alpha+2(q-1)}}\\
	&\hspace{4ex}+\sum_{l=0}^{q-2}\sum_{n=1}^{k_l}\sum_{j=0}^{\floor*{\frac{r_{n,l}}{2}}}(\e^{i\xi \cdot y_0}P_{n,l}(|y_0|^2)|y_0|^{2j}Q_{n,l}(\xi \cdot y_0)(\xi \cdot y_0)^{r_{n,l}-2j}\\
	&\hspace{6ex}\left.-\e^{i\xi \cdot y}P_{n,l}(|y|^2)|y|^{2j}Q_{n,l}(\xi \cdot y)(\xi \cdot y)^{r_{n,l}-2j})\frac{\tilde{Q}_{l,j,r_{n,l}}(|\xi|^2)}{(|\xi|^2+m^2)^{\alpha+2l+r_{n,l}}}\right|^2\,d\xi.
\end{align*}
Since $y$ belongs to a fixed compact neighborhood of $y_0$ the above integrands have an integrable majorant independent of $y$ so that Lebesgue's Dominanted Convergence Theorem implies the continuity of
\[
   \R^d\rightarrow L^2(\R^d),
   \quad
   y\mapsto (|\xi|^2-\Delta)^q \left(\frac{\e^{i\xi \cdot y}}{(|\xi|^2+m^2)^{\alpha}}\right)
\]
in $y_0$. 
Since $y_0$ was chosen arbitrarily we conclude continuity of
\[
	\R^d\rightarrow (\mathscr{S}_q(\R^d),|\cdot|_q),
	\quad
	y\mapsto \frac{\e^{i\xi \cdot y}}{(|\xi|^2+m^2)^{\alpha}}.
\]
Since the Hermite functions $h_\alpha$ are eigenfunctions of the Fourier transform and the corresponding eigenvalues are unimodular, it follows that the Fourier transform is a contractive linear self mapping on $\mathscr{S}_q(\R^d)$ implying with the above the continuity of
\[
	\R^d\mapsto(\mathscr{S}_q(\R^d),|\cdot|_q),
	\quad
	y\mapsto\mathscr{F}\left(\frac{\e^{i\xi \cdot y}}{(|\xi|^2+m^2)^{\alpha}}\right)
	=
	\tau_y\left(\left(\mathscr{F}\left(\frac{1}{(|\xi|^2+m^2)^{\alpha}}\right)\right)^\vee\right)
\]
Hence, i) implies ii). Moreover, obviously, iii) follows from ii).
	
Next, we assume that iii) is valid. 
By the arguments elaborated at the beginning of Appendix \ref{appendix b} (see (\ref{preparation convolution 1})) we assume that
\[
	h:\R^d\rightarrow\C,
	\quad
	\xi\mapsto\left(|\xi|^2-\Delta\right)^q\left(\frac{\e^{i\xi \cdot y}}{(|\xi|^2+m^2)^\alpha}\right)
\]
	belongs to $L^2(\R^d)$ for every $y\in\R^d$. For the particular case $y=0$, by Proposition \ref{preparation convolution 3} ii), the above $h$ becomes
	$$h(\xi)=\frac{Q_q(|\xi|^2)}{(|\xi|^2+m^2)^{\alpha+2q}}$$
	with a suitable polynomial $Q_q$ on $\R$ of degree $3q$. Thus,  there are $C>0$ and $R>m^2$ with $|Q_q(|\xi|^2)|\geq C|\xi|^{6q}, |\xi|\geq R$,
	so that
	$$\infty>\int_{\R^d}\frac{|Q_q(|\xi|^2)|^2}{(|\xi|^2+m^2)^{2\alpha+4q}}\,d\xi\geq\frac{C\sigma(S^{d-1})}{4^{\alpha+2q}}\int_R^\infty r^{4q+d-1-4\alpha}\,dr$$
	where $\sigma(S^{d-1})$ denotes the surface of the unit sphere of $\R^d$. Thus, $4q+d-1-4\alpha<-1$, which shows that iii) implies iv). The proof of Lemma \ref{lem:matern} is complete.$\hfill\square$

\section{Proof of Proposition~\ref{prop:moments} and Some Consequences} \label{sec:moment_gauss}

We first prove Proposition~\ref{prop:moments}. 
In order to do so, for $f_j\in\mathscr{S}$, we consider the joint characteristic function of the random vector $Z^n := (Z(f_1),\ldots, Z(f_n))^T$, which by linearity of $Z$ is given by
\begin{align*}
	\varphi_{Z^n}(t_1,\ldots, t_n) 
	&= 
	\ev{\e^{i\sum_{j=1}^n t_jZ(f_j)}}
	=
	\ev{\e^{iZ\left(\sum_{j=1}^n t_j f_j\right)}}
	=
	\varphi_Z\left(\sum_{j=1}^n t_j f_j\right)\\
	&=
	\exp\left(\int_{\R^d}\left(\psi\circ \left(\sum_{j=1}^n t_j f_j\right)\right)(x)\,\d x\right),
\end{align*}
where
\[
	\psi(t) 
	= 
	i b t - \frac{\sigma^2 t^2}{2} + \int_{\R\setminus\{0\}}\e^{its} - 1 - its\mathds{1}_{\{|s|\leq 1\}}(s)\nu(\d s).
\]
Since the mixed moments $\ev{\prod^n_{j=1} Z(f_j)}$ are related to the joint characteristic function $\varphi_{Z^n}$ via
\begin{equation*}
	  \ev{\prod^n_{j=1} Z(f_j)} 
	  = 
	  \left.\frac{1}{i^n} \frac{\partial^n}{\partial t_1 \ldots \partial t_n} \varphi_{Z^n}(t)\right|_{t=0},
\end{equation*}
the former can be calculated by an application of Faà di Bruno's formula to $f(x) = \e^x$ and 
\begin{equation*}
	g(t) = g(t_1,\ldots,t_n) = \int_{\R^d}\left( \psi \circ \left( \sum^n_{j=1}t_j f_j \right) \right)(x)\,\d x:
\end{equation*}
\begin{theorem}[Faà di Bruno]
Let $g : \R^n \to \C$ have partial derivatives up to order $n$ and $f : \C \to \C$ be $n$ times differentiable in an open neighborhood of $g(\R^n)$. 
Then there holds
\begin{equation*}
	\frac{\partial^n}{\partial t_1\ldots \partial t_n} f\circ g 
	= 
	\sum^n_{j=1} f^{(j)}\circ g  \sum_{\substack{I\in \mathscr{P}^{(n)}_j\\ 
	I =\{I_1,\ldots, I_j\}}} \prod_{l=1}^j \partial_{I_l} g,
\end{equation*}
where $\mathscr{P}^{(n)}_k$ is the colletion of partitions of $\{1,\ldots,n\}$ into exactly $k$ disjoint subsets, $\partial_{A} = \left[ \prod_{j\in A}  \frac{\partial}{\partial t_j}\right]$ and $f^{(k)}(x)= \left( \frac{\d^k}{\d x^k}f \right)(x)$.
	\label{thm:FaaDiBruno}
\end{theorem}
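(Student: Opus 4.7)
The plan is to prove Faà di Bruno's formula by induction on $n \geq 1$. The base case $n=1$ is immediate: the only partition of $\{1\}$ is $\{\{1\}\}$, which has one block, so the right-hand side collapses to $f^{(1)}(g(t))\,\partial_{t_1}g(t)$ and the identity is the ordinary chain rule.

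For the inductive step, assuming the formula at level $n$, I would apply $\partial_{t_{n+1}}$ to both sides. On the left one obtains $\frac{\partial^{n+1}}{\partial t_1\cdots\partial t_{n+1}} f\circ g$. On the right, a generic summand $f^{(j)}\!\circ g \cdot \prod_{l=1}^{j}\partial_{I_l}g$ associated with a partition $I=\{I_1,\ldots,I_j\}\in\mathscr{P}^{(n)}_j$ produces, by chain rule and product rule, two kinds of contributions:
\begin{itemize}
\item[(a)] one term $f^{(j+1)}(g)\cdot \partial_{t_{n+1}}g \cdot \prod_{l=1}^{j}\partial_{I_l}g$, coming from differentiating $f^{(j)}\!\circ g$;
\item[(b)] for each $l\in\{1,\ldots,j\}$ one term $f^{(j)}(g)\cdot \partial_{I_l\cup\{n+1\}}g \cdot \prod_{l'\neq l}\partial_{I_{l'}}g$, coming from applying the product rule to a single factor $\partial_{I_l}g$.
\end{itemize}

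The central combinatorial step is to identify the totality of these terms, taken over all $j$ and all $I\in\mathscr{P}^{(n)}_j$, with the corresponding sum over partitions of $\{1,\ldots,n+1\}$. Concretely, any $J\in\mathscr{P}^{(n+1)}_{j'}$ arises in exactly one of two mutually exclusive ways: either $\{n+1\}$ is a singleton block of $J$, in which case deleting it yields a unique $I\in\mathscr{P}^{(n)}_{j'-1}$ whose type-(a) contribution is precisely the $J$-term; or $n+1$ lies in a block of $J$ of size at least two, in which case removing $n+1$ produces a unique $I\in\mathscr{P}^{(n)}_{j'}$ together with a distinguished block $I_l$ into which $n+1$ was inserted, whose type-(b) contribution is exactly the $J$-term. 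Regrouping by the number of blocks of $J$ reassembles the right-hand side at level $n+1$.

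The main obstacle is the bookkeeping of this bijection, namely ensuring that each partition at level $n+1$ is counted exactly once after differentiation; this is handled by the disjoint dichotomy above (singleton versus non-singleton block containing $n+1$). The analytical hypotheses on $f$ and $g$ guarantee that all derivatives appearing through order $n+1$ exist and that the product-rule manipulations are legitimate, so the induction closes and the formula holds for all $n$.
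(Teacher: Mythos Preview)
Your induction argument is correct and is the standard proof of the multivariate Fa\`a di Bruno formula: the base case is the chain rule, and the inductive step follows by differentiating with respect to $t_{n+1}$ and invoking the well-known bijection between partitions of $\{1,\ldots,n+1\}$ and pairs consisting of a partition of $\{1,\ldots,n\}$ together with the choice of whether $n+1$ forms a new singleton block or is inserted into an existing one.

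Note, however, that the paper does not actually prove this theorem. It is merely quoted as a classical tool (without proof or reference) in Appendix~\ref{sec:moment_gauss} and immediately applied with $f(x)=\e^x$ to compute the moments of a L\'evy noise field. So there is no ``paper's own proof'' to compare against; your argument simply supplies the omitted standard justification.
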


Taking into account $g(0)=0$ it thus only remains to calculate the partial derivatives of
\begin{equation*}
	  g(t) 
	  = 
	  \int_{\R^d} ibk(x) - \frac{\sigma^2 k(x)^2}{2} + \int_{\R\setminus\{0\}}\e^{ik(x)s} - 1 - ik(x)s\mathds{1}_{\{|s|\leq 1\}}(s) \,\nu(\d s) \,\d x,
\end{equation*}
where we have used the abbreviation $k:=\sum_{j=1}^n t_j f_j$. It follows from the general assumption on $\nu$, i.e.\  $\int_{\R\setminus\{0\}}\min\{1,s^2\}\nu(\d s) < \infty$, that the majorant $2\min\{|s|,1\}|k(x)|$ of the inner intergrand is integrable. Thus, a standard consequence of Lebesgue's dominated convergence theorem together with an application of Fubini's theorem yields for $1\leq m\leq n$
	\begin{equation*}
	  \frac{1}{i} \frac{\partial}{\partial t_m}g(0) = \int_{\R^d}b f_m(x) + \int_{|s|>1} f_m(x)s \nu(\d s)\,\d x = \left(b+\int_{|s|>1}s\nu(\d s)\right) \int_{\R^d}f_m(x)\,\d x.
	\end{equation*}
	Likewise, the general assumption $\int_{\R\setminus\{0\}}\min\{1,s^2\}\nu(\d s) < \infty$ yields for $1\leq m_1,m_2\leq n$
	\begin{equation*}
	  \frac{1}{i^2}\frac{\partial^2}{\partial t_{m_1}\partial t_{m_2}}g(0) = \left( \sigma^2 +\int_{\R\setminus\{0\}}s^2\nu(\d s) \right) \int_{\R^d} f_{m_1}(x)f_{m_2}(x) \,\d x.
	\end{equation*}
	Inductively, since by hypothesis $\int_{\R\setminus\{0\}}s^l \nu(\d s)< \infty$ for $l\geq 3$, we derive
	\begin{equation*}
	  \frac{1}{i^l}\frac{\partial^l}{\partial t_{m_1}\ldots \partial t_{m_l}} g(0) = \int_{\R\setminus\{0\}}s^l\nu(\d s) \int_{\R^d} \prod^l_{j=1} f_{m_j}(x) \,\d x
	\end{equation*}
	Defining $b_1 = \int_{|s|>1}s\nu(\d s)$ and $b_n = \int_{\R\setminus\{0\}}s^n\nu(\d s)$ for $n\geq 2$, as well as
	\begin{equation*}
	  c_n =
	 \begin{cases}
	   b+b_1 &: n=1,\\
	   \sigma^2 + b_2 &: n=2,\\
	   b_n &: n\geq 3,
	 \end{cases}
	\end{equation*}
	we finally obtain
	\begin{align*}
	  \ev{\prod^n_{j=1}Z(f_j)} &= \frac{1}{i^n} \sum^n_{j=1}\sum_{\substack{I\in \mathscr{P}^{(n)}_j\\I = \{I_1,\ldots,I_j\}}} \prod_{l=1}^j \partial_{I_l} g(0)=  \sum_{\substack{I\in \mathscr{P}^{(n)}\\ I = \{I_1, \ldots, I_j\}}} \prod_{l=1}^j c_{|I_l|}\int_{\R^d} \prod_{m\in I_l} f_m \,\d x.
	\end{align*}
	Note that the total order of differentiation per summand equals $n$, thus the factor $\frac{1}{i^n}$ can be split up among all factors as above. This completes the proof of Proposition \ref{prop:moments}.\hfill$\square$\\
	
	As an application of Proposition \ref{prop:moments} we calculate the covariance of $Z(f_1)$ and $Z(f_2)$ for a L\'evy noise field $Z$ on $\pspace$. First of all, we note that the above proof shows $Z(f)\in L^2\pspace$, $f\in\mathscr{S}$. Since $Z$ is $\tripleBar{\cdot}$-continuous and $\mathscr{S}$ is $\tripleBar{\cdot}$-dense in $L^1\cap L^2(\R^d)$ we conclude $Z(f)\in L^2\pspace$ for $f\in L^1\cap L^2(\R^d)$. Thus, the covariance of $Z(f_1)$ and $Z(f_2)$ is well-defined and with $c_1$ and $c_2$ as above we have 
	\begin{align*}
  		\ev{Z(f_1)} &= c_1\int_{\R^d}f_1(y)\,\d y,\\
  		\ev{Z(f_1)Z(f_2)} &= c_2\int_{\R^d}f_1(y)f_2(y)\,\d y + c_1^2\int_{\R^d}f_1(y)\,\d y \; \int_{\R^d}f_2(y)\,\d y
	\end{align*}
	as well as
	\begin{align*}
  		&\Cov(Z(f_1),Z(f_2)) = \ev{(Z(f_1)Z(f_2)}-\ev{Z(f_1)}\ev{Z(f_2)})\\
  		&= c_2\int_{\R^d}f_1(y)f_2(y)\,\d y + c_1^2\int_{\R^d}f_1(y)\,\d y \int_{\R^d}f_2(y)\,\d y-c_1^2\int_{\R^d}f_1(y)\,\d y \int_{\R^d}f_2(y)\,\d y\\
  		&= \left(\sigma^2 + \int_{\R\setminus\{0\}}s^2\,\nu(\d s)\right)\int_{\R^d}f_1(y)f_2(y)\,\d y.
	\end{align*}
	The special case of $f_1=k_{x_1}=k(x_1-\cdot), f_2=k_{x_2}=k(x_2-\cdot), k\in L^1(\R^d)\cap L^2(\R^d), x_1,x_2\in\R^d$ gives the two-point covariance function of the smoothed L\'evy noise field $Z_k$
	\begin{align*}
  		C(x_1,x_2) &:= \Cov(Z_k(x_1), Z_k(x_2)) = \Cov(Z(k_{x_1}), Z(k_{x_2}))\\
  		&= \left(\sigma^2 + \int_{\R\setminus\{0\}}s^2\,\nu(\d s)\right) \int_{\R^d} k(x_1-y)k(x_2-y)\,\d y.
	\end{align*}
	In case of Matérn kernels $k_{\alpha,m}$, we have 
	\begin{equation*}
  		\int_{\R^d} k_{\alpha,m}(x_1-y)k_{\alpha,m}(x_2-y)\,\d y = (k^\vee * k)(x_1-x_2) = k_{2\alpha,m}(x_1-x_2).
	\end{equation*}
	Comparing the covariance function of a smoothed Lévy field to the covariance of a smoothed pure Gaussian random field, i.e.\ $\nu=0$, we have
	\begin{equation*}
  		C_{\text{Lévy}} = \frac{\sigma^2 + \int_{\R\setminus\{0\}}s^2\,\nu(\d s)}{\sigma^2}C_{\text{Gauss}}.
	\end{equation*}
	In particular, the eigenfunctions of the integral operators associated with $C_{\text{Lévy}}$ and $C_{\text{Gauss}}$ coincide while the corresponding eigenvalues are multiples of one another.

\section{Bounds for Eigenvalues and Eigenfunctions of Mat\'ern Integral Operators}\label{app:conv_rates}

The purpose of this appendix is to establish bounds on the eigenvalues and eigenfunctions of the compact operator $K: L^2(\Lambda) \to L^2(\Lambda)$, $\Lambda \subset \R^d$ compact with $\Lambda = \overline{\interior(\Lambda)}$, where
\begin{equation*}
  [Kf](x) = \int_\Lambda k_{\alpha,m}(x-y)f(y)\,\d y,
  \qquad
  f\in L^2(\Lambda), \; x\in \R^d,
\end{equation*}
and where $k_{\alpha,m}$ is the Mat\'ern kernel with parameters $\alpha> \frac{d}{2}$, $m>0$. 
Our approach is very much inspired by \cite{BachmayrEtAl2018}.
However, beside giving explicitly the dependence of the constants on the  domain $\Lambda$ we simplify some of the arguments for our particular setting.

Since $k_{\alpha,m}$ is real-valued and has a positive Fourier transform, $K$ is a positive, self-adjoint operator.
We denote by $(e_{\Lambda,j})_{j\in \N}=:(e_j)_{j\in\N}$ and $(\lambda_{\Lambda,j})_{j\in \N}=:(\lambda_j)_{j\in\N}$ the orthonormal basis of $L^2(\Lambda)$ consisting of eigenfunctions of $K$ and the corresponding sequence of eigenvalues. 
Since $K$ is positive, $\lambda_j \geq 0$, $j\in \N$. As usual we assume that the eigenvalue sequence is decreasing.

Extending every $f\in L^2(\Lambda)$ by zero to $\R^d$ and denoting this extension of $f$ to $\R^d$ again by $f$, we interpret $L^2(\Lambda)$ as a closed subspace of $L^2(\R^d)$.
Since $\Lambda$ is compact we likewise have $L^2(\Lambda)\subset L^1(\R^d)$, thus
\begin{equation*}
   k_{\alpha,m} * f\in L^1(\R^d)
   \quad\forall f\in L^2(\Lambda)
\end{equation*}
and $Kf = (k_{\alpha,m}*f)|_{\Lambda}$.
Clearly, $K$ is the compression to $L^2(\Lambda)$ of the convolution operator on $L^2(\R^d)$ with convolutor $k_{\alpha,m}$.

Recall that for $s\in \R$ we have the Sobolev space
\begin{equation*}
  H^s(\R^d) = \{u\in \mathscr{S}'(\R^d); (1+|\xi|^2)^{\frac{s}{2}}\hat{u}\in L^2(\R^d)\}
\end{equation*}
with norm
\begin{equation*}
  \|u\|_{H^s(\R^d)} = (2\pi)^{-d}\|(1+|\xi|^2)^{\frac{s}{2}}\hat{u}\|_{L^2(\R^d)}.
\end{equation*}
Moreover, for $G \subset \R^d$ open, we have
\begin{equation*}
  H^s(G)=\{u\in \mathscr{D}'(G); \exists U\in H^s(\R^d) : U|_G = u\}
\end{equation*}
with norm
\begin{equation*}
  \|u\|_{H^s(\R^d)} = \min_{U\in H^s(\R^d), U|_G = u} \|U\|_{H^s(\R^d)}.
\end{equation*}
It follows immediately that $k_{\alpha,m} * f \in H^\alpha(\R^d), f\in L^2(\R^d),$ and since $\alpha > \frac{d}{2}$ we have $\widehat{k_{\alpha,m}*f}\in L^1(\R^d), f\in L^2(\R^d)$. Because
\begin{equation*}
  \forall j\in \N: \lambda_j e_j = (k_{\alpha,m} * e_j)\mathds{1}_\Lambda
\end{equation*}
it follows $\lambda_j \neq 0$ for $j\in \N$ as well as $e_j\in H^\alpha(\interior(\Lambda))\cap C(\Lambda)$.

Let $\frac{d}{2} < s < \alpha$. By the Fourier inversion formula and the fact that due to $s>\frac{d}{2}$ the Fourier transform of every $H^s(\R^d)$ function belongs to $L^1(\R^d)$ (see e.g. \cite[Corollary~7.9.4]{hoermander}) for every $U\in H^s(\R^d)$ with $U|_{\interior(\Lambda)} = e_j$
\begin{equation*}
  \|e_j\|_{L^\infty(\Lambda)} \leq (2\pi)^{-d}\|\hat{U}\|_{L^1(\R^d)} \leq \|(1+|\xi|^2)^{-s}\|_{L^1(\R^d)}\|U\|_{H^s(\R^d)}
\end{equation*}
so that with $c_s:=\|(1+|\xi|^2)^{-s}\|_{L^1(\R^d)}$ we have
\begin{equation*}
  \forall j\in \N: \|e_j\|_{L^\infty(\Lambda)} \leq c_s\|e_j\|_{H^s(\interior(\Lambda))}.
\end{equation*}
Applying an interpolation inequality (see e.g. \cite[Theorem~B.8 and Lemma~B.1]{McLean2000}) gives
\begin{equation}
  \begin{split}
    \forall j\in \N: \|e_j\|_{L^\infty(\Lambda)}&\leq c_s\alpha\left( \frac{\sin(\frac{s\pi}{\alpha})}{\pi s(\alpha-s)}\right)^{\frac{1}{2}} \|e_j\|^{1-\frac{s}{\alpha}}_{L^2(\Lambda)}\|e_j\|^{\frac{s}{\alpha}}_{H^\alpha(\interior(\Lambda))}\\
    &= c_s\alpha\left( \frac{\sin(\frac{s\pi}{\alpha})}{\pi s(\alpha-s)}\right)^{\frac{1}{2}} \|e_j\|^{\frac{s}{\alpha}}_{H^\alpha(\interior(\Lambda))}.
  \end{split}
    \label{eq:ejLinfty}
\end{equation}
From the definition of $\|\cdot\|_{H^\alpha(\interior(\Lambda))}$ and Plancherel's Theorem we conclude
\begin{align*}
  \lambda^2_j\|e_j\|^2_{H^\alpha(\interior(\Lambda))}&\leq \|k_{\alpha,m}*e_j\|^2_{H^\alpha(\R^d)} = (2\pi)^{-2d}\int_{\R^d} (1+|\xi|^2)^\alpha (m^2+|\xi|^2)^{-2\alpha} |\hat{e_j}|^2\,\d\xi\\
  &\leq \max\{1,m^{-2\alpha}\} (2\pi)^{-2d}\int_{\R^d} \widehat{k_{\alpha,m}*e_j}\, \overline{\hat{e_j}}\,\d x \\
  &= \max\{1,m^{-2\alpha}\} (2\pi)^{-2d} (Ke_j,e_j)_{L^2(\Lambda)} = \max\{1,m^{-2\alpha}\}(2\pi)^{-2d}\lambda_j.
\end{align*}
Combining the previous inequality with \eqref{eq:ejLinfty} we obtain for every $s\in (\frac{d}{2},\alpha)$:
\begin{equation}
  \forall j\in \N: \sqrt{\lambda_j}\|e_j\|_{L^\infty(\Lambda)} \leq c_s\alpha \left( \frac{\sin(\frac{s\pi}{\alpha})}{\pi s(\alpha-s)} \right)^{\frac{1}{2}}\max\{1,m^{-s}\} (2\pi)^{-\frac{ds}{\alpha}} \lambda_j^{\frac{1}{2}- \frac{s}{2\alpha}}.
  \label{eq:AnsatzFunctionLInfty}
\end{equation}

Next we derive estimates for the eigenvalue sequence $(\lambda_j)_{j\in \N}$. Let $\delta \geq  \text{diam}(\Lambda)$. Without loss of generality we assume that $\Lambda \subseteq [-\frac{\delta}{2}, \frac{\delta}{2}]^d$.
Thus our integral operator $K$ is determined by $k_{\alpha, m}|_{[-\delta,\delta]^d}$.
For arbitrary $\gamma > \delta$ we can interpret $L^2(\Lambda)$ as a subspace of $L^2([-\gamma,\gamma]^d)$ by extending functions by zero.
Again we do not distinguish notationally functions from $L^2(\Lambda)$ and their extensions.

If $k$ is any continuous, real-valued and even extension of $k_{\alpha,m}|_{[-\delta,\delta]^d}$ to $[-\gamma,\gamma]^d$ --- note that $k_{\alpha,m}$ is a radial function, so in particular even --- it follows that
\begin{equation*}
  \tilde K: L^2([-\gamma,\gamma]^d) \to L^2([-\gamma,\gamma]^d), 
  \quad
  (\tilde K f)(x) := \int_{[-\gamma,\gamma]^d}k(x-y)f(y)\,\d y
\end{equation*}
is a self-adjoint, compact operator which satisfies $ Kf = \tilde K f|_{\Lambda}, f\in L^2(\Lambda)$. Since $\tilde K$ is self-adjoint and compact, there exists an orthonormal basis $(f_j)_{j\in \N}$ of $L^2([-\gamma,\gamma]^d)$ consisting of eigenfunctions of $\tilde K$ and a real sequence of corresponding eigenvalues $(\tilde{\lambda}_j)_{j\in\N}$ which without loss of generality have decreasing moduli.

Clearly, for every $j\in \N$ the operators
\begin{align*}
  B_j: L^2(\Lambda)\to L^2(\Lambda), & \,f\mapsto \sum^j_{l=1}\lambda_l (f,e_l)e_l,\\
  C_j: L^2(\Lambda)\to L^2(\Lambda), & \,f\mapsto \sum^j_{l=1}\tilde{\lambda}_l (f,f_l|_\Lambda) f_l|_\Lambda
\end{align*}
and
\begin{equation*}
  \tilde C_j : L^2([-\gamma,\gamma]^d)\to L^2([-\gamma, \gamma]^d), f\mapsto \sum^j_{l=1} \tilde{\lambda}_l (f,f_l)f_l
\end{equation*}
are continuous linear operators with at most $j$-dimensional range, where $(\cdot,\cdot)$ denotes the inner product in $L^2(\Lambda)$ and $L^2([-\gamma,\gamma]^d)$, respectively.

Then $\tilde C_jf|_\Lambda = C_jf$ for $f\in L^2(\Lambda)$, $j\in \N_0$, and denoting temporarily the norms of $L^2(\Lambda)$ and $L^2([-\gamma,\gamma]^d)$ and the corresponding operator norms by $\|\cdot\|_\Lambda$ and $\|\cdot\|_\gamma$, respectively, we have
\begin{equation*}
  \forall f\in L^2(\Lambda) : \|Kf- C_jf\|_\Lambda = \|(\tilde K f-\tilde C_j f)|_\Lambda\|_\Lambda \leq \|\tilde K f - \tilde C_j f\|_\gamma
\end{equation*}
so that
\begin{equation}
  \forall j\in \N: \|K-C_j\|_\Lambda \leq \|\tilde K - \tilde C_j\|_\gamma.
  \label{ineq:LambdaGammaNorms}
\end{equation}
Since $K$ is a positive self-adjoint compact operator and $\tilde K$ is a self-adjoint compact operator, by a well known result (see e.g. \cite[Lemma~16.5 and its proof]{MeVo}) we obtain with \eqref{ineq:LambdaGammaNorms}:
\begin{equation}
  \forall j\in \N: \lambda_j=\|K-B_{j-1}\|_\Lambda \leq \|K-C_{j-1}\|_\Lambda\leq \|\tilde K - \tilde C_{j-1}\|_\gamma = |\tilde{\lambda}_j|.
  \label{eq:EigenvalueNorms}
\end{equation}

Up to now we did not specify the extension $k$ of $k_{\alpha,m}|_{[-\delta,\delta]^d}$. For this we fix $\chi>\max\{1,\frac{1}{\delta}\}$ and a real valued, even $\phi_{1,\chi} \in \mathscr{D}(\R^d)$ with $\phi_{1,\chi}|_{[-\frac{1}{\chi},\frac{1}{\chi}]^d}=1$ and $\supp \phi_{1,\chi} \subseteq [-1,1]^d$. For $\gamma \geq \chi\delta$ we define $\phi_{\gamma,\chi}(x):=\phi_{1,\chi}(\frac{1}{\gamma}x)$ so that $\phi_{\gamma,\chi}|_{[-\delta,\delta]^d}=1$ and $\supp \phi_{\gamma,\chi} \subseteq  [-\gamma,\gamma]^d$. In order to simplify the notation we write $\phi_1$ and $\phi_\gamma$ instead of $\phi_{1,\chi}$ and $\phi_{\gamma,\chi}$, respectively.

Then $k_\gamma:=k_{\alpha,m}\phi_\gamma$ is an even extension of $k_{\alpha,m}|_{[-\delta,\delta]^d}$ with $\supp k_\gamma \subseteq [-\gamma,\gamma]^d$, $\gamma\geq\chi\delta$. We define the $\gamma$-periodic extension $k_p$ of $k_\gamma$ by
\begin{equation*}
  \forall x\in \R^d : k_p(x):= \sum_{n\in \Z^d}k_\gamma(x+2\gamma n).
\end{equation*}
Then for the integral operator $\tilde K$ corresponding to $k_\gamma$ we have
\begin{align*}
  \forall x\in \R^d, n\in \Z^d :&\, [\tilde K e^{-i\frac{\pi}{\gamma}n\cdot y}](x) = \int_{[-\gamma,\gamma]^d}k_\gamma(x-y)e^{-i\frac{\pi}{\gamma}n\cdot y}\,\d y\\
  &= \int_{[-\gamma,\gamma]^d}k_p(x-y)e^{-i \frac{\pi}{\gamma}n\cdot y}\,\d y =   \int_{[-\gamma,\gamma]^d}k_p(z)e^{-i \frac{\pi}{\gamma}n\cdot z}\,\d z\, e^{-i \frac{\pi}{\gamma}n\cdot x}.
\end{align*}
Since $\{e^{-i \frac{\pi}{\gamma}n\cdot x}, n\in \Z^d\}$ is an orthogonal basis of $L^2([-\gamma,\gamma]^d)$ it follows that the Fourier coefficients $c_n(k_p)$ of $k_p$
\begin{equation*}
  \forall n\in \Z^d:c_n(k_p):=\int_{[-\gamma,\gamma]^d}k_p(z)e^{-i \frac{\pi}{\gamma}n\cdot z}\,\d z = \int_{[-\gamma,\gamma]^d}k_\gamma(z)e^{-i \frac{\pi}{\gamma}n\cdot z}\,\d z
\end{equation*}
(which are real since $k_\gamma$ is even) are the eigenvalues of $\tilde K$, i.e., a suitable enumeration of $(c_n(k_p))_{n\in \Z^d}$ yields the eigenvalue sequence $(\tilde{\lambda}_j)_{j\in \N}$.

Because $\supp k_\gamma\subseteq [-\gamma,\gamma]^d$ we have
\begin{equation}
  \begin{split}
  \forall n\in \Z^d :&\, |c_n(k_p)| = \left|\int_{[-\gamma,\gamma]^d} k_\gamma(z)e^{-i \frac{\pi}{\gamma}n\cdot z}\,\d z\right| = \left|\int_{\R^d}k_\gamma(z)e^{-i \frac{\pi}{\gamma}n\cdot z}\,\d z\right|\\
  &= \left|\widehat{k_{\alpha,m}\cdot \phi_\gamma}(-\frac{\pi}{\gamma}n)\right| = \left|\hat{k}_{\alpha,m}*\hat{\phi}_\gamma(-\frac{\pi}{\gamma}n)\right|.
\end{split}
\label{eq:FourierCoeff}
\end{equation}
Moreover, for $\xi\in \R^d$ it holds
\begin{equation}
  \begin{split}
    \left|\hat{k}_{\alpha,m}*\hat{\phi}_\gamma(\xi)\right|&\leq \left|\int_{|\eta|\leq \frac{|\xi|}{2}}\hat{k}_{\alpha,m}(\eta)\hat{\phi}_\gamma(\xi-\eta) \,\d\eta\right| +\left|\int_{|\eta|\geq \frac{|\xi|}{2}}\hat{k}_{\alpha,m}(\eta)\hat{\phi}_\gamma(\xi-\eta)\,\d\eta\right| \\
    &\leq\max_{|\zeta|\geq \frac{|\xi|}{2}}\left|\hat{\phi}_\gamma(\zeta)\right| \|\hat{k}_{\alpha,m}\|_{L^1(\R^d)} + \max_{|\zeta|\geq \frac{|\xi|}{2}}\left|\hat{k}_{\alpha,m}(\zeta)\right| \|\hat{\phi}_\gamma\|_{L^1(\R^d)}.
  \end{split}
  \label{eq:ConvolutionByNorms}
\end{equation}
Because $\hat{\phi}_\gamma(\xi) = \gamma^d \hat{\phi}_1\left(\gamma \xi \right)$ it follows
\begin{equation}
  \|\hat{\phi}_\gamma\|_{L^1(\R^d)} = \|\hat{\phi}_1\|_{L^1(\R^d)}.
  \label{eq:normeqphi}
\end{equation}
Moreover, due to $\gamma\geq \chi\delta\geq 1$ and $\alpha>\frac{d}{2}$ we have
\begin{align*}
  |\hat{\phi}_\gamma(\xi)| &=\gamma^d \left( 1+\left|\gamma \xi\right|^2 \right)^{-\ceil{\alpha}} \left( 1+\left|\gamma \xi\right|^2 \right)^{\ceil{\alpha}}\left|\hat{\phi}_1\left( \gamma \xi \right)\right| \\
  &= \gamma^d \left( 1+\left|\gamma\xi\right|^2 \right)^{-\ceil{\alpha}} \left|\widehat{(1-\Delta)^{\ceil{\alpha}}\phi_1}\left(\gamma \xi \right)\right| \\
  &\leq\gamma^{2\alpha} \left( 1+\left|\xi\right|^2 \right)^{-\alpha} \left\|(1-\Delta)^{\ceil{\alpha}}\phi_1\right\|_{L^1(\R^d)}.
\end{align*}
Inserting this and \eqref{eq:normeqphi} into \eqref{eq:ConvolutionByNorms} gives for $\xi\in \R^d$:
\begin{align*}
  |\hat{k}_{\alpha,m}*\hat{\phi}_\gamma(\xi)| \leq & \gamma^{2\alpha} \left( 1+\frac{|\xi|^2}{4} \right)^{-\alpha} \left\|(1-\Delta)^{\ceil{\alpha}}\phi_1\right\|_{L^1(\R^d)} \|\hat{k}_{\alpha,m}\|_{L^1(\R^d)}\\
  &  + \left(m^2 + \frac{|\xi|^2}{4}\right)^{-\alpha}\|\hat{\phi}_1\|_{L^1(\R^d)} \\
  \leq& \gamma^{2\alpha} \max\{1,m^{-2\alpha}\}\left( 1+\frac{|\xi|^2}{4} \right)^{-\alpha} \left[\left\|(1-\Delta)^{\ceil{\alpha}}\phi_1\right\|_{L^1(\R^d)} \right.\\
  &\left.\cdot \|\widehat{k}_{\alpha,m}\|_{L^1(\R^d)} +\|\widehat{\phi_1}\|_{L^1(\R^d)} \right].
\end{align*}
Thus, with \eqref{eq:FourierCoeff} we get for every $n\in\Z^d$
\begin{align*}
    |c_n(k_p)| \leq& \max\{1,m^{-2\alpha}\}\left[\|(1-\Delta)^{\ceil{\alpha}}\phi_1\|_{L^1(\R^d)}\|\hat{k}_{\alpha,m}\|_{L^1(\R^d)}+ \|\hat{\phi}_1\|_{L^1(\R^d)}\right]\cdot\\
    &\cdot \gamma^{2\alpha}\max\left\{1,\frac{2\gamma}{\pi}\right\}^{2\alpha}\left(1+|n|^2\right)^{-\alpha}
\end{align*}
For $\gamma\geq \chi\delta$ we conclude for every $n\in \Z^d$ and $0<\eta\leq |c_n(k_p)|$ that
\begin{align*}
    |n| &< \max\{1,m^{-1}\}\left[\|(1-\Delta)^{\ceil{\alpha}}\phi_1\|_{L^1(\R^d)}\|\hat{k}_{\alpha,m}\|_{L^1(\R^d)}+ \|\hat{\phi}_1\|_{L^1(\R^d)}\right]^{\frac{1}{2\alpha}}\cdot\\
  &\cdot \max\left\{1,\frac{2\gamma}{\pi}\right\}\gamma \eta^{-\frac{1}{2\alpha}}.
\end{align*}
%
%
Hence, for $\gamma\geq\chi\delta$ and $\eta>0$ we obtain
\begin{align*}
    &\#\{n\in \Z^d; |c_n(k_p)| \geq \eta\} \leq 2^d\max\{1,m^{-d}\} \cdot\\
    &\cdot \left[\|(1-\Delta)^{\ceil{\alpha}}\phi_1\|_{L^1(\R^d)}\|\hat{k}_{\alpha,m}\|_{L^1(\R^d)}+ \|\hat{\phi}_1\|_{L^1(\R^d)}\right]^{\frac{d}{2\alpha}}
    \left(\max\left\{1,\frac{2\gamma}{\pi}\right\}\right)^d
    \gamma^d
    \eta^{-\frac{d}{2\alpha}}
\end{align*}
For the eigenvalue sequence $(\tilde{\lambda}_j)_{j\in \N}$ of the operator $\tilde K$ associated with $k_\gamma$, $\gamma \geq \chi\delta$, it thus follows for all $j\in\N$
\begin{align*}
  |\tilde{\lambda}_j|&\leq 4^\alpha \max\{1,m^{-2\alpha}\}
  \left[\|(1-\Delta)^{\ceil{\alpha}}\phi_1\|_{L^1(\R^d)}\|\hat{k}_{\alpha,m}\|_{L^1(\R^d)}+ \|\hat{\phi}_1\|_{L^1(\R^d)}\right]\cdot\\
    &\cdot 
    \left(\max\left\{1,\frac{2\gamma}{\pi}\right\}\right)^{2\alpha} \gamma^{2\alpha} j^{-\frac{2\alpha}{d}}.
\end{align*}

Hence, taking \eqref{eq:EigenvalueNorms} and \eqref{eq:AnsatzFunctionLInfty} into account we finally obtain for every $s\in (\frac{d}{2},\alpha)$, $\gamma \geq \chi\delta$ and each $j\in\N$
\begin{align*}
  &\sqrt{\lambda_j}\|e_j\|_{L^\infty(\Lambda)} \leq c_s\alpha\left( \frac{\sin(\frac{s\pi}{\alpha})}{\pi s(\alpha-s)} \right)^{\frac{1}{2}}\max\{1,m^{-s}\} (2\pi)^{-\frac{ds}{\alpha}}
  2^{\alpha-s} \max\{1,m^{-(\alpha-s)}\}\cdot \\
  &\cdot \left[\|(1-\Delta)^{\ceil{\alpha}}\phi_1\|_{L^1(\R^d)}\|\hat{k}_{\alpha,m}\|_{L^1(\R^d)}+ \|\hat{\phi}_1\|_{L^1(\R^d)}\right]^{\frac{\alpha-s}{2\alpha}} \gamma^{2(\alpha-s)} j^{-\frac{\alpha}{d}+ \frac{s}{d}}.
\end{align*}
It follows that for every $\delta>0$ and each $\epsilon \in (0,\frac{\alpha}{d}-\frac{1}{2})$ (with $s=\frac{d}{2} + \epsilon $ in the previous inequality) for every $\chi>\max\left\{1,\frac{1}{\delta}\right\}$ there is a constant $C>0$, depending only on $\alpha, m, \epsilon$, and $\chi$, such that for every compact subset $\Lambda\subseteq\R^d$ with $\overline{\mbox{int}(\Lambda)}=\Lambda$ and $\mbox{diam}(\Lambda)\leq \delta$ and every $\gamma\geq \chi\delta$, there holds
\begin{equation}\label{eq:bounds eigenvalues times eigenfunctions}
    \sqrt{\lambda_{\Lambda,j}}\|e_{\Lambda,j}\|_{L^\infty(\Lambda)} 
    \leq 
    C \gamma^{2(\alpha-\frac{d}{2} - \epsilon)} j^{-\frac{\alpha}{d}+\frac{1}{2}+\epsilon},
    \qquad
    j\in\N.
\end{equation}

\textbf{Acknowledgments:}
H.\ Gottschalk and M.\ Reese gratefully acknowledge partial financial support by the German Federal Ministry of Research and Education (BMBF, Grant-No.:  05M18PXA).

\bibliographystyle{abbrvnat}
\bibliography{references}

\end{document}